\newtheorem{thm}{Theorem}[section]
\newtheorem{lem}[thm]{Lemma}
\newtheorem{prop}[thm]{Proposition}
\newtheorem{defn}{Definition}[section]
\theoremstyle{definition}
\newtheorem{rem}{Remark}[section]
\numberwithin{equation}{section}
\newcommand{\eps}{\varepsilon}
\newcommand{\cL}{\mathcal{L}}
\def\R{\mathbb R}
\def\N{\mathbb N}
\def\be{\begin{equation}}
\def\ee{\end{equation}}
\def\rife#1{(\ref{#1})}
\def\de{\delta}
\def\vfi{\varphi}
\begin{document}
\title[Superquadratic viscous Hamilton-Jacobi equation] 
{Blow-up and regularization  rates, \\ loss and recovery of boundary conditions for \\ the superquadratic viscous Hamilton-Jacobi equation}


\author[Porretta]{Alessio Porretta}%
\address{Universit\`a di Roma Tor Vergata,
Dipartimento di Matematica,
Via della Ricerca Scientifica 1,
00133 Roma, Italia}
\email{porretta@mat.uniroma2.it}

\author[Souplet]{Philippe Souplet}%
\address{Universit\'e Paris 13, Sorbonne Paris Cit\'e,
CNRS UMR 7539, Laboratoire Analyse, G\'{e}om\'{e}trie et Applications,
93430 Villetaneuse, France}
\email{souplet@math.univ-paris13.fr}

\begin{abstract}
 We study the qualitative properties of the unique global viscosity solution of the superquadratic diffusive Hamilton-Jacobi equation
with (generalized) homogeneous Dirichlet conditions.
We are interested in the phenomena of gradient blow-up (GBU), loss of boundary conditions (LBC), recovery of boundary conditions and eventual regularization, and in their mutual connections.

In any space dimension, we establish the sharp minimal rate of GBU.
Only partial results were previously known except in one space dimension.
We also obtain the corresponding minimal regularization rate.

In one space dimension, under suitable conditions on the initial data, we give a quite detailed description of the  behavior of solutions for all $t>0$. In particular, we show that
nonminimal  
GBU solutions immediately lose the boundary conditions after  
 the blow-up time and are immediately regularized after recovering the boundary data. Moreover, both GBU and regularization occur with the minimal rates, while loss and recovery of boundary data occur with linear rate. We describe further the intermediate singular life of those solutions in the time interval between GBU and  regularization. 

We also study minimal GBU solutions, for which GBU occurs {\it without} LBC: those solutions are immediately regularized, but their GBU and regularization rates are more singular.

Most of our one-dimensional results crucially depend on zero-number arguments,
which do not seem to have been used so far in the context of viscosity solutions of Hamilton-Jacobi equations.
\end{abstract}

\maketitle



\section{Introduction}

Let $\Omega$ be a smooth  bounded domain of $\R^n$. In this article we consider the diffusive Hamilton-Jacobi equation (also called viscous Hamilton-Jacobi equation)
\be\label{VHJ}
\begin{cases}
u_t -\Delta u = |\nabla u|^p, & t>0,\ x\in \Omega, \\
u=0,  & t>0,\ x\in \partial\Omega, \\
u(0,x)= \phi, & x\in \Omega,
\end{cases} 
\ee
with $p>2$ and $\phi\in X$, where
 $$
X=\bigl\{\phi\in C^1(\overline\Omega),\ 
 \phi=0 \hbox{ on $\partial\Omega$}\bigr\}. 
 $$
Nowadays, it is well known that problem \rife{VHJ}  exhibits very interesting phenomena whenever $p>2$, i.e.~in the case of  superquadratic growth of the nonlinearity. 
  By standard theory, problem \rife{VHJ} admits a unique, maximal classical solution with existence time $T^*(\phi)\in(0,\infty]$.  
Even if the initial data $\phi$ is smooth and satisfies the compatibility condition $\phi=0$ at the boundary $\partial \Omega$, 
the classical solution  of  \rife{VHJ} may blow up 
 in finite time
  i.e., $T^*(\phi)<\infty$, in which case 
$$\lim_{t\to T^*_-}\|u(t)\|_{C^1(\overline\Omega)}=\infty.$$
This does happen for suitably large $\phi$. 
On the other hand, by maximum principle, $u$ cannot blow up in $L^\infty$-norm; so what happens is that $\|\nabla u(t)\|_\infty$ 
has to blow up while the solution itself  remains bounded. This is referred to in the literature  as {\it gradient blow-up} (GBU). Several facts are by now well understood, and two main points are to be recalled:

\smallskip

\begin{itemize}
 \item[(a)] GBU can only happen at the boundary $\partial \Omega$ (as a consequence of local interior universal gradient estimates), 
 see e.g. \cite{SZ06}. 

\smallskip
 \item[(b)] The solution survives after the blow-up time and  it can be continued as a generalized viscosity solution of \rife{VHJ}. To be more precise, problem \rife{VHJ} admits a unique (generalized) viscosity solution $u$ in $(0,\infty) \times \Omega$; this solution $u$ is continuous in $[0,\infty)\times \overline \Omega$,    with $u\ge 0$ on $[0,\infty)\times\partial\Omega$,   and coincides with the (unique) classical solution in $(0,T^*)$
(see \cite{BDL04}).

\end{itemize}

\smallskip
But there is more than that. When we say a {\it generalized} viscosity solution, this means that the boundary condition should be understood in a relaxed sense, as it is done for example in first order problems. Namely, the prescribed boundary condition may not be satisfied by the solution in the classical sense, but rather in a weaker sense which is encoded in the viscosity formulation. 
Somehow, when the gradient blows-up at the boundary, one may need to relax the boundary prescription in order that the problem be still solvable; 
the superquadratic 
 growth of the nonlinearity overcomes the diffusive 
smoothing of the second order term and a  {\it first order behavior} is observed.
\smallskip

Recalling that $u\in C([0,\infty) \times \overline \Omega)$, we see that such solutions,
which are meant to satisfy zero boundary conditions in a generalized sense,
nevertheless have to continuously take on {\it positive} boundary values 
(at some points) for some times after the blow-up time.
This apparently paradoxical situation can however be interpreted in a more intuitive way,
when one recalls that the global viscosity solution can also be obtained as the limit 
of a sequence of global classical solutions of regularized versions of problem \rife{VHJ}, with truncated nonlinearity
(see Section~\ref{Sec3}). Since this convergence is monotone increasing but not uniform up to the boundary,
the loss of boundary conditions can in this framework be seen as a more familar boundary layer phenomenon.

\smallskip
The possibility of {\it loss of (classical) boundary conditions} (LBC) for problem \rife{VHJ} was suggested in \cite{BDL04}, 
and its actual occurence was recently confirmed in \cite{PS2}, \cite{QR16} for suitable initial data:

\smallskip

\begin{itemize}
 \item[(c)] 
For all sufficiently large initial data $\phi$, the solution $u$ undergoes GBU as well as LBC.
Namely:
$$\hbox{$T^*(\phi)<\infty$ and there exist $t_0>T^*(\phi)$ and $x_0\in\partial\Omega$ such that $u(t_0,x_0)>0$.}$$

\end{itemize}

\noindent But another key property of solutions, which was known before (see \cite{PZ}), is the eventual {\it recovery of boundary conditions:}
\smallskip

\begin{itemize}
 \item[(d)] Any GBU solution eventually becomes a classical solution (including the boundary conditions) for all $t$ large enough.
Moreover,  any solution  $u(t)$ decays exponentially to $0$ in $C^1(\overline\Omega)$ as $t\to\infty$.
\end{itemize}

\smallskip

Therefore, the behavior of solutions of \rife{VHJ} presents   three main issues: 

\vskip 1pt

\begin{itemize}
 \item[(i)] gradient blow-up

 \item[(ii)] loss of boundary condition

 \item[(iii)] recovery of boundary condition and regularization. 

\end{itemize}

\vskip 1pt

 \noindent 

Up to now, no description of the way the solution loses and/or recovers its boundary conditions seems to be known.
As for the GBU phenomenon itself, it has been investigated so far in several papers and many things are known,
although several questions remain open. 
Sufficient blow-up conditions appear in \cite{ABG89}, \cite{A96}, \cite{S02}, \cite{HM04}. 
It is known that GBU is localized on some part of the boundary \cite{SZ06} and that single-point  
blow-up may occur \cite{LS}, \cite{Est18}. 
The     space profiles at $t=T^*$  are also investigated (see \cite{CG96}, \cite{ARS04}, \cite{SZ06}, \cite{GH08}, \cite{PS}).
Partial results on the GBU rate are available (see \cite{CG96}, \cite{GH08}, \cite{ZL13}).
   On the other hand, for related one-dimensional equations, typically of the form $u_t-u_{xx}=e^{u_x}$, with zero boundary conditions,
a global weak continuation after GBU was constructed in \cite{FL94}.
As an important difference with \rife{VHJ}, the boundary conditions of this weak continuation remain lost for all $t>T^*$.

\smallskip

The purpose of this article is to give new results on all aspects (i)-(iii), investigating the connections between the three questions. 

\smallskip

In any space dimension, we establish the sharp minimal rate of GBU.
Only partial results were previously known (\cite{GH08}, \cite{ZL13}), except in one space dimension \cite{CG96}.
We also obtain the corresponding minimal regularization rate, which had not been studied before.

\smallskip

In one space dimension, under suitable conditions on the initial data, we give a quite detailed description 
of the qualitative behavior of solutions for all $t>0$.
Most of our one-dimensional results crucially depend on zero-number arguments,    applied to the function $u_t$. 
Zero-number arguments   do not seem to have been used so far in the context of 
viscosity solutions of Hamilton-Jacobi equations.

\smallskip

For {\it nonminimal} GBU solutions (see Definition~\ref{defmin}), we especially describe the intermediate singular life of the solution 
in the time interval between GBU and eventual regularization, showing:

\vskip 1pt

\begin{itemize}
 \item[-] immediate LBC after GBU and immediate regularization after recovery of boundary conditions;

 \item[-]  loss and recovery of boundary conditions occur at linear rates;

 \item[-] $C^1$ regularity of the boundary value as a function of time in the interval of LBC; 
 
 \item[-]  GBU and regularization according to the minimal rates.

 \end{itemize}

\vskip 1pt

\noindent We note that upper estimates on the GBU rate were previously known only in the case of inhomogeneous boundary conditions or forcing terms
 (for some class of one-dimensional or radial solutions, see \cite{GH08}, \cite{QS07}, \cite{ZL13})),
due to time-monotonicity restrictions of the existing methods.

\smallskip

On the other hand, we recently showed in \cite{PS2} that    (in any space dimension) the LBC set can be quite arbitrary depending on the initial data, 
and that gradient blow-up may also occur {\it without} LBC, typically for minimal GBU solutions.  
Here we prove that, in one space dimension, minimal GBU solutions have a completely different behavior 
than nonminimal  
 GBU solutions: they do not lose boundary data,  and 
they are immediately regularized, 
but their GBU and regularization rates are more singular.

\section{Main results}

We split them in three subsections.
The first one concerns minimal GBU and regularization rates and covers the general $n$-dimensional problem.
In the last two we specialize to the one-dimensional problem and  we  give detailed description of a class of GBU solutions, respectively with or without LBC.

\subsection{Minimal GBU and regularization rate}
We start with the general, minimal lower estimate of the GBU rate.

\begin{thm}\label{minimal_rate}
Let $\phi\in X$ and assume $T^*=T^*(\phi)<\infty$. 
Then there exists a constant $C_1>0$ such that
\be\label{lower-bu0}
\|\nabla u(t)\|_\infty\ge C_1(T^*-t)^{-1/(p-2)},\quad t\to T^*_-.
\ee
\end{thm}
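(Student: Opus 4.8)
The plan is to establish the lower bound by a rescaling-and-contradiction argument, exploiting the scaling invariance of the equation and the known interior gradient estimates. Suppose, for contradiction, that \eqref{lower-bu0} fails: then there is a sequence $t_k \to T^*_-$ with $(T^*-t_k)^{1/(p-2)}\|\nabla u(t_k)\|_\infty \to 0$. Set $M_k = \|\nabla u(t_k)\|_\infty$ and choose $x_k \in \overline\Omega$ where the sup (essentially) is attained; by property (a) (GBU is interior-free), $x_k$ must approach $\partial\Omega$, and moreover $M_k \to \infty$ since $T^*<\infty$ forces $\|\nabla u(t)\|_\infty \to \infty$. I would then rescale: writing $\lambda_k = M_k^{-(p-1)}$ (the natural parabolic scale associated with gradient amplitude $M_k$), define
\[
v_k(s,y) = \lambda_k^{-1/(p-1)}\, u\bigl(t_k + \lambda_k^2 s,\ x_k + \lambda_k y\bigr) \cdot \lambda_k^{-1/(p-1)},
\]
more precisely choosing exponents so that $\nabla v_k$ has amplitude $O(1)$ and $v_k$ solves the same equation $\partial_s v_k - \Delta v_k = |\nabla v_k|^p$. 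The key point is that the rescaled time window over which $v_k$ remains defined, namely $s \in \bigl(-t_k/\lambda_k^2,\ (T^*-t_k)/\lambda_k^2\bigr)$, has right endpoint $(T^*-t_k)/\lambda_k^2 = (T^*-t_k) M_k^{2(p-1)}$. Here is where the contradiction hypothesis enters: one checks that $2(p-1)$ and $1/(p-2)$ are linked so that $(T^*-t_k) M_k^{2(p-1)} \to \infty$ precisely when $(T^*-t_k)^{1/(p-2)} M_k \to 0$ — indeed $(T^*-t_k) M_k^{2(p-1)} = \bigl[(T^*-t_k)^{1/(p-2)} M_k\bigr]^{2(p-1)(p-2)/\text{(something)}}$; the clean way is to verify directly that if $(T^*-t)^{1/(p-2)}\|\nabla u(t)\|_\infty$ stays bounded then integrating a differential inequality for $\|\nabla u\|_\infty$ keeps $T^*=\infty$, so the failure of the bound yields arbitrarily long rescaled existence times.

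Concretely, I expect the cleanest route avoids full blow-up analysis and instead argues by an ODE comparison. The standard heuristic is that near GBU the equation behaves like $u_t = |\nabla u|^p$ (first-order, superquadratic), so along characteristic-like directions $\frac{d}{dt}\|\nabla u\|_\infty \lesssim \|\nabla u\|_\infty^{p-1}$ cannot hold in the wrong direction; rather, one wants a \emph{lower} bound on how fast $\|\nabla u\|$ can grow. The mechanism: if at time $t$ one has $\|\nabla u(t)\|_\infty = M$, then by the local interior gradient estimate applied in a parabolic cylinder of size $\sim M^{-(p-1)}$ around an interior point near where the gradient is large, the solution cannot blow up before time $t + c M^{-(p-1) \cdot \text{(scaling exponent)}}$; matching exponents gives that the remaining life is at least $c M^{-(p-2)}$, i.e. $T^* - t \ge c \|\nabla u(t)\|_\infty^{-(p-2)}$, which rearranges to \eqref{lower-bu0}. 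So the proof reduces to: (1) recall/state the scaling-invariant local interior estimate — for a solution on $Q_r = (\tau-r^2,\tau)\times B_r(x_0)$ with $B_r(x_0)\subset\Omega$, one has $|\nabla u(\tau,x_0)| \le C\bigl(r^{-1} + r^{-1}(\sup_{Q_r}|u|)^{1/2} + \ldots\bigr)$ in the form that, for the superquadratic equation, yields $|\nabla u| \le C\, d(x,\partial\Omega)^{-1/(p-2)}$-type bounds away from the boundary; and (2) combine with a covering/continuation argument to convert a spatial estimate into the temporal rate.

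The main obstacle, I expect, is handling the boundary carefully: since GBU happens \emph{at} $\partial\Omega$ and $x_k \to \partial\Omega$, the rescaled domains $\lambda_k^{-1}(\Omega - x_k)$ converge either to all of $\R^n$ (if $\mathrm{dist}(x_k,\partial\Omega)/\lambda_k \to \infty$) or to a half-space (otherwise), and in the half-space case one must know the rescaled boundary datum behaves well — but the boundary values of $u$ are only known to be $\ge 0$ and continuous, not zero, after $T^*$; however, \emph{before} $T^*$ (where we are working, since $t_k<T^*$) the classical solution does satisfy $u=0$ on $\partial\Omega$, so the rescaled functions $v_k$ vanish on the rescaled boundary and one gets a clean limit problem on a half-space with zero Dirichlet data, or on $\R^n$. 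One then needs a Liouville-type statement: a global (ancient, eternal, or merely forward-global) bounded viscosity solution of $v_s - \Delta v = |\nabla v|^p$ on $(-\infty,\infty)\times\R^n$ (or half-space with zero boundary data) with $\|\nabla v(0)\|_\infty = 1$ cannot exist — or more simply, cannot have $\|v\|_\infty$ bounded, since $u$ is uniformly bounded by the maximum principle while the rescaling preserves a positive amount of it. Producing or citing this Liouville property, and ensuring the rescaled solutions have the requisite compactness (equicontinuity from interior parabolic regularity away from the reduced boundary, plus a lower bound on the oscillation of $v_k$ near the origin so the limit is nonconstant), is the technical heart; alternatively one sidesteps Liouville entirely by making the ODE-comparison argument of the preceding paragraph fully rigorous via the local interior estimate of \cite{SZ06}, which I believe is the route the authors intend given their emphasis on (a).
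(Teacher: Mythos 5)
Your exponent bookkeeping does not close, and this is not a presentation issue but a genuine obstruction to the route you propose. With $M_k=\|\nabla u(t_k)\|_\infty$ and $\lambda_k=M_k^{-(p-1)}$, the rescaled remaining lifetime is
$$
\frac{T^*-t_k}{\lambda_k^2}=M_k^{2(p-1)}(T^*-t_k)=M_k^{\,p}\cdot\bigl[M_k^{\,p-2}(T^*-t_k)\bigr],
$$
and the negation of \eqref{lower-bu0} gives precisely $M_k^{\,p-2}(T^*-t_k)\to 0$, so the right-hand side is of the form $\infty\cdot 0$ and need not diverge; you even flag this yourself with the unresolved ``(something)''. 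The same mismatch sinks the ODE-comparison variant: the parabolic window attached to gradient amplitude $M$ has time-length $\lambda^2=M^{-2(p-1)}$, not $M^{-(p-2)}$, so a continuation/local-existence argument based only on scaling can at best yield $T^*-t\gtrsim M^{-2(p-1)}$, i.e.\ the strictly weaker rate $\|\nabla u(t)\|_\infty\gtrsim (T^*-t)^{-1/(2(p-1))}$ (weaker because $1/(2(p-1))<1/(p-2)$ for $p>2$). Also, the interior gradient bound from \cite{SZ06} is $|\nabla u|\lesssim d(x)^{-1/(p-1)}$, not $d(x)^{-1/(p-2)}$; this is a spatial estimate and by itself carries no temporal rate.

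What makes the sharp exponent $-1/(p-2)$ reachable is a structural ingredient your sketch never uses: the uniform bound $\|u_t(t)\|_\infty\le A$ for $t\ge t_0>0$ (Theorem~\ref{prelimprop}(iv)), which is \emph{not} scale-invariant and is exactly the extra information that breaks the naive scaling barrier. The paper's proof (Theorem~\ref{minimal_rateG}(i)) feeds this into the variation-of-constants formula for $w=u_t$,
$$
w(s+\tau)=e^{\tau\Delta}w(s)+p\int_0^\tau e^{(\tau-\sigma)\Delta}\bigl(|\nabla u|^{p-2}\nabla u\cdot\nabla w\bigr)(s+\sigma)\,d\sigma,
$$
and, after optimizing the length of the window $[s(t),t]$ against $M(s,t):=\max_{[s,t]}\|\nabla u\|_\infty$, deduces $\|\nabla u_t(t)\|_\infty\le C(\|\nabla u(t)\|_\infty+1)^{p-1}$, hence the differential inequality $|m'(t)|\le C(m(t)+1)^{p-1}$ for $m(t)=\|\nabla u(t)\|_\infty$, which integrates to \eqref{lower-bu0}. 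You allude to ``integrating a differential inequality'' as the clean way, but never establish it — and without the $u_t$ bound one cannot. If you insist on a rescaling/Liouville route, note that under $\lambda_k\to 0$ the bound $|u_t|\le A$ forces $|\partial_s v_k|\to 0$, so any limit would be a \emph{stationary} solution of $\Delta v+|\nabla v|^p=0$; that is the right object to classify, but it is a different analysis from the one you outline, and you would still need the $u_t$ bound to even set it up.
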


Theorem~\ref{minimal_rate} improves the result obtained in \cite{GH08},
where the lower blowup estimate was only obtained under the
weaker form 
\be\label{lower-bu-weaker}
\sup_{s\in[0,t]}\|\nabla u(s)\|_\infty\ge C(T^*-t)^{-1/(p-2)}. 
\ee
Estimate \rife{lower-bu0} had been proved earlier in the case $n=1$ by a completely different method
(intersection-comparison, which does not extend to higher dimensions).
  We here refine some ideas from \cite{GH08} and \cite{QS07}.
The improvement from \rife{lower-bu-weaker} to \rife{lower-bu0} stems from a rather delicate argument based on the variation of constants formula.

\medskip

  Next, in view of property (b) in the previous subsection, we may introduce
the ultimate\footnote{We cannot a priori exclude that the solution loses and recovers regularity and/or boundary conditions again at several times, although we 
presently do not know examples of such solutions.}
regularization time $T^r(\phi)$, defined by:
\be\label{defTr}
T^r(\phi):=  \inf  \bigl\{\tau >T^*(\phi);\ u(t,\cdot)\in C^1_0(\overline\Omega) \hbox{ for all } t>\tau\bigr\} \in [T^*(\phi),\infty).
\ee
Our next main result is a minimal lower estimate for the regularization rate.

\begin{thm}\label{minimal_rate2}
Let $\phi\in X$ with $T^*=T^*(\phi)<\infty$ and let $T^r=T^r(\phi)$ be defined by \rife{defTr}.
Then there exists a constant $C_2>0$ such that
$$
\|\nabla u(t)\|_\infty\ge C_2(t-T^r)^{-1/(p-2)},\quad t\to T^r_+.
$$
\end{thm}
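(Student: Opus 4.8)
The plan is to reduce the regularization rate estimate to the already-established GBU rate estimate of Theorem~\ref{minimal_rate} by means of a time-reversal/symmetry argument at the level of the regularized approximations, combined with a backward-in-time reading of the variation of constants formula. The key observation is that the ultimate regularization time $T^r$ plays, as $t\to (T^r)_+$, exactly the role that $T^*$ plays as $t\to (T^*)_-$: at $t=T^r$ the solution fails to be in $C^1_0(\overline\Omega)$ (by definition of the infimum and continuity considerations, $\|\nabla u(t)\|_\infty$ must be unbounded as $t$ approaches $T^r$ from below, or the boundary condition is lost arbitrarily close to $T^r$), while for $t>T^r$ the solution is a genuine classical solution again. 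So near $t=T^r$ from the right we have a classical solution that is ``emerging'' from a singular configuration, and we want to show it cannot emerge faster than the rate $(t-T^r)^{-1/(p-2)}$.

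First I would fix $t_1>T^r$ and work on the interval $(T^r,t_1)$ where $u$ is a classical solution, with $\|\nabla u(t_1)\|_\infty<\infty$. Following the strategy behind Theorem~\ref{minimal_rate}, I would use the variation of constants formula, but written backward from $t_1$: for $T^r<t<t_1$,
\be
\nabla u(t) = \nabla\bigl[e^{-(t_1-t)\Delta}u(t_1)\bigr] - \int_t^{t_1} \nabla e^{-(s-t)\Delta}\,|\nabla u(s)|^p\,ds,
\ee
or rather, to exploit the singular structure, I would estimate $\|\nabla u(t)\|_\infty$ from below by isolating the contribution of the nonlinear term near $t=T^r$. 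The heart of the matter is the same Bernstein-type / integral inequality that drives the forward estimate: setting $M(t)=\|\nabla u(t)\|_\infty$, one derives a differential inequality which near the singular time forces $M(t)^{p-2}(|t-T^r|)\gtrsim 1$. The point is that this inequality is, up to harmless lower-order terms, invariant under $t\mapsto 2T^r-t$ reflection in the relevant regime, so the lower bound valid as $t\to(T^*)_-$ has a mirror image valid as $t\to(T^r)_+$.

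Concretely, the key steps in order: (1) verify that $\limsup_{t\to T^r}\|\nabla u(t)\|_\infty=+\infty$ — i.e.\ that $T^r$ really is a genuine singular time approached from the right by the classical branch, using the definition \rife{defTr} together with local smoothing (if $u(t_n)$ stayed bounded in $C^1_0$ along a sequence $t_n\downarrow T^r$, parabolic regularity would extend the classical solution across $T^r$, contradicting the infimum); (2) on the interval $(T^r,t_1)$, reproduce the variation-of-constants argument of Theorem~\ref{minimal_rate} with time running forward from $T^r$, controlling the linear contribution of the (bounded, smooth) datum at $t_1$ as a lower-order remainder; (3) extract the differential/integral inequality for $M(t)$ and integrate it on $(T^r,t)$ to obtain $M(t)\ge C_2(t-T^r)^{-1/(p-2)}$.

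The main obstacle, I expect, is step (2): in the forward GBU problem one has a clean smooth classical solution on $(0,T^*)$ emanating from a nice datum $\phi\in X$, whereas here the ``initial'' configuration at $T^r$ is singular — $u(T^r,\cdot)$ need not be in $C^1$, and the boundary condition may be lost there. So one cannot simply quote Theorem~\ref{minimal_rate}; one must rerun its proof while being careful that all the estimates are \emph{local in time near $T^r$} and only use that $u$ is a bounded classical solution on $(T^r,t_1)$ with $u(t_1)$ smooth, never appealing to regularity at the left endpoint. A secondary subtlety is that the delicate improvement from \rife{lower-bu-weaker} to the pointwise-in-time bound \rife{lower-bu0} relied on a careful handling of the variation of constants formula; the backward version near $T^r$ should be structurally identical, but one must check that the relevant kernel estimates and the bootstrap do not secretly use monotonicity in time or the sign structure available only before GBU.
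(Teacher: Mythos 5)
Your overall strategy in steps (2)--(3) -- running the variation-of-constants argument for $w=u_t$ forward in time from $T^r$ on the interval $(T^r,T^r+\delta)$ where $u$ is classical, extracting the integral inequality for $m(t)=\|\nabla u(t)\|_\infty$, and integrating -- is essentially what the paper does in the proof of Theorem~\ref{minimal_rateG}(ii). (Your opening ``backward Duhamel'' display involving $e^{-(t_1-t)\Delta}$ is ill-defined, since the backward heat semigroup does not act on $L^\infty$ data, but you discard it and switch to the forward reading, so that is only a false start, not the real issue.)

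The real gap is step (1), which you wave away as a consequence of ``parabolic regularity extending the classical solution across $T^r$.'' That is precisely the step that does \emph{not} follow from parabolic regularity, because of the irreversibility of the equation: knowing that $\|\nabla u(t)\|_\infty\le M$ for $t\in(T^r,T^r+\tau)$ gives you no direct control on $u$ for $t<T^r$, so you cannot ``extend backward'' to contradict the definition of $T^r$. The paper flags this explicitly (``the argument does not directly apply for the regularization time due to the irreversibility of the equation'') and devotes a separate Lemma~\ref{unbddT} to it. The argument there is genuinely nontrivial: one works with the global classical approximations $u_k$, proves a uniform-in-$k$ Bernstein bound $\|\nabla u_k(t)\|_\infty\le C(t-T^r)^{-1/2}$ on $(T^r,T^r+\tau)$, upgrades the pointwise convergence $\nabla u_k\to\nabla u$ to get $\|\nabla u_k(t)\|_\infty\to\|\nabla u(t)\|_\infty$ for $t>T^r$, and then -- via a careful variation-of-constants bootstrap on $\partial_t u_k$ and $\nabla\partial_t u_k$, using the uniform bound on $\partial_tu_k$ -- propagates the bound $\|\nabla u_k(t)\|_\infty\le M+2$ \emph{backward} from just after $T^r$ to an interval $[T^r-\eta,T^r]$, uniformly in $k$. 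Only then can one pass to the limit and conclude $u$ is classical on $[T^r-\eta,T^r+\tau)$, which contradicts the definition of $T^r$. Without supplying an argument of this kind, your step (1) is asserted, not proved, and the rest of the proof has no singular time to anchor the integration. Your closing caveat correctly suspects that care is needed, but misattributes the difficulty to sign or monotonicity structure in the Duhamel estimate; the actual obstruction is establishing $\limsup_{t\to T^r_+}\|\nabla u(t)\|_\infty=\infty$ in the first place.
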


Theorem~\ref{minimal_rate2} seems completely new. We refer to Section~\ref{sec-rates} for further results related with 
Theorems \ref{minimal_rate} and \ref{minimal_rate2}.

\medskip

\subsection{Description of a class of one-dimensional GBU solutions with LBC}

\smallskip

An important notion in the subsequent analysis is that of minimal/non-minimal GBU solution.
 For shortness, here we use the notation $T^*(u)$ and $T^*(v)$ rather than $T^*(u(0))$, $T^*(v(0))$, respectively.

\begin{defn} \label{defmin}
Let $n\ge 1$. A solution $u$ of \rife{VHJ} is called a minimal GBU solution if  
 $T^*(u)<\infty$ and every solution $v$ such that $v(0)\leq u(0)$ and $v(0) \neq  u(0)$ is  a global classical solution, i.e. $T^*(v)=\infty$.
\end{defn}

This definition is consistent with the monotonicity of the function $T^*$. Indeed, if $v_0\le u_0$ then $T^*(v_0)\ge T^*(u_0)$ 
(see, e.g.,  \cite{LS}); and if, moreover, $v$ undergoes LBC, then so does $u$ -- see \cite{PS2}).
The existence of minimal blow-up solutions in any dimension was shown in  \cite{PS2}, where such solutions were constructed as threshold solutions.
Namely for any $\psi\in X$ with $\psi\ge 0$ and $\psi\not\equiv 0$, 
one considers $\lambda^*=\sup\{\lambda>0;\,T^*(\lambda\psi)=\infty\}$ and
shows that $\lambda^*\in (0,\infty)$ and that $T^*(\lambda^*\psi)<\infty$ (the latter fact is not obvious).

\smallskip

  We also recall (see \cite[Theorem 3]{PS2}) that,  at least in one space dimension, loss of boundary conditions 
occurs if and only if the GBU solution is nonminimal.\footnote{Actually   it is not difficult to show that the implication  
'LBC $\Rightarrow$ $u$ nonminimal' remains valid in any dimension, but we are presently unable to show the reciprocal
  in general.  }

\begin{prop}\label{prop-min} 
Let $\Omega=(0,1)$. Then $u$ is a minimal blow-up solution if and only if there is no loss of boundary conditions.
\end{prop}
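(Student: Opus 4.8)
The plan is to prove the two implications separately, with the bulk of the work going into showing that LBC forces non-minimality (equivalently, minimal $\Rightarrow$ no LBC), since the reverse direction rests on an already-established result.

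First I would dispose of the direction \emph{$u$ nonminimal $\Rightarrow$ LBC}. This is exactly the content of the cited \cite[Theorem 3]{PS2} in one space dimension, so it suffices to quote it (or, if a self-contained argument is wanted here, to recall that on $\Omega=(0,1)$ a GBU solution which keeps its boundary data for all $t$ could be used, via the monotonicity $v(0)\le u(0)\Rightarrow T^*(v(0))\ge T^*(u(0))$ and the fact that LBC is inherited upward, to produce a contradiction with non-minimality; but the clean route is to invoke \cite{PS2}).

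The heart of the matter is the converse: \emph{no LBC $\Rightarrow$ $u$ is a minimal GBU solution}. So assume $u$ undergoes GBU ($T^*(u)<\infty$) with no loss of boundary conditions, i.e.\ $u(t,0)=u(t,1)=0$ for all $t>0$, and suppose for contradiction that $u$ is not minimal. Then there exists $v$ with $v(0)\le u(0)$, $v(0)\ne u(0)$, and $T^*(v)<\infty$. By the comparison principle $v\le u$ on $[0,\infty)\times\overline\Omega$, and by strong comparison $v<u$ in the interior for $t>0$; in particular $v(0)$ can be replaced by a smooth datum strictly below $u(0)$ that still blows up, and one may even take $v(0)=\lambda u(0)$ with $\lambda<1$ close to $1$ (using that $T^*$ is continuous/monotone along the ray and that $u$ itself blows up). Now the key dichotomy: either $v$ also has no LBC, or $v$ does undergo LBC. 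In the first case, both $u$ and $v$ are GBU solutions retaining their boundary data; here I would bring in the one-dimensional zero-number machinery applied to $w=u-v$ (or to the time-derivatives), exploiting that $w>0$ in $\Omega\times(0,\infty)$ with $w=0$ on the lateral boundary, to control how the gradient singularities of $u$ and $v$ at $x=0$ (and $x=1$) compare — the point being that a GBU solution with intact boundary data has a definite one-sided behavior of $u_x$ at the boundary, and $0\le v_x(t,0)\le u_x(t,0)$ forces the blow-up times to be ordered in a way incompatible with $v$ blowing up strictly before $u$ would if $v(0)<u(0)$ near the blow-up point. In the second case — $v$ loses its boundary conditions — we get $v(t_0,0)>0$ for some $t_0>T^*(v)$; but then $u(t_0,0)\ge v(t_0,0)>0$ by comparison, contradicting the standing assumption that $u$ has no LBC. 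This second case is the easy one; the first case is the main obstacle.

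The main obstacle, then, is ruling out the possibility that a \emph{non-minimal} GBU solution could nevertheless keep its boundary data. I expect this to be handled by combining two ingredients already available in the paper's framework: (1) the sharp minimal GBU rate of Theorem~\ref{minimal_rate}, $\|\nabla u(t)\|_\infty\ge C_1(T^*-t)^{-1/(p-2)}$, which pins the blow-up profile from below near $t=T^*$; and (2) a characterization — to be extracted from the zero-number analysis of $u_t$ in Section~3 and from the relaxed-boundary viscosity formulation — that keeping the boundary data is equivalent to the solution being a \emph{local} (in time) minimal solution at the GBU time, i.e.\ to $u_x(\cdot,0)$ blowing up but $u$ not detaching from $0$. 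Concretely, I would argue that if $u$ is non-minimal there is $\lambda\in(0,1)$ with $w:=\lambda u(0)$ satisfying $T^*(w)<T^*(u)$ would fail (since $T^*$ is nonincreasing, $T^*(w)\ge T^*(u)$), so in fact $T^*(\lambda u(0))\ge T^*(u(0))$ for all $\lambda\le 1$; non-minimality then means some $v(0)\le u(0)$, \emph{not} on the ray, still blows up with $T^*(v)\ge T^*(u)$, and one places $v$ below $u$ and compares gradient behavior at $x=0$: since $0\le v_x(t,0)\le u_x(t,0)$ for $t<\min(T^*(u),T^*(v))$ and both must blow up at (essentially) the same point $x=0$, the lower rate for $v$ and an upper control for $u$ near that boundary point collide unless $v$ loses its boundary data first — bringing us back to the easy case and the contradiction $u(t_0,0)\ge v(t_0,0)>0$. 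Thus in all sub-cases non-minimality contradicts ``no LBC,'' completing the proof. I would flag that the delicate point is making ``compare the boundary gradient behavior'' rigorous; the natural tool is the zero number of $u-v$ (finite and nonincreasing, dropping at interior zeros), which on the interval $(0,1)$ is extremely rigid and should force the boundary-layer structures of $u$ and $v$ to be compatible only when $v$ itself detaches from the boundary.
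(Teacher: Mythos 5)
The paper does not actually prove Proposition~\ref{prop-min}; it is stated as a verbatim recollection of \cite[Theorem~3]{PS2}, cited in the sentence that immediately precedes it, and no independent argument is supplied. So a self-contained proof would be welcome, but yours has a structural error that makes the result unproved.

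Your two purported ``directions'' are the \emph{same} implication. You dispatch ``$u$ nonminimal $\Rightarrow$ LBC'' by citing \cite{PS2}, and you then declare the heart of the matter to be ``no LBC $\Rightarrow$ $u$ minimal''. But these two statements are contrapositives of one another, not converses: you have cited a result and then set out to re-prove it. The genuinely distinct half of the equivalence — ``LBC $\Rightarrow$ $u$ nonminimal'', equivalently ``$u$ minimal $\Rightarrow$ no LBC'' — is never addressed, even though your own opening paragraph correctly identifies it as the one you intended to prove, and even though the paper's footnote flags it as the easy direction, valid in any space dimension.

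That easy direction follows in two lines from the $L^\infty$ contraction \eqref{contdep}. Suppose $u$ loses the boundary condition, so there exist $t_0>0$ and $x_0\in\partial\Omega$ with $c:=u(t_0,x_0)>0$. Pick $\psi\in X$ with $0\le\psi\le 1$, $\psi\not\equiv 0$, and set $v_0:=u_0-\eps\psi\in X$ with $0<\eps<c$. Then $v_0\le u_0$, $v_0\ne u_0$, and by \eqref{contdep} the corresponding viscosity solution satisfies $v(t_0,x_0)\ge u(t_0,x_0)-\|u_0-v_0\|_\infty>0$, so $v$ also loses its boundary condition. Since a classical solution keeps $v(t,x_0)=0$, this forces $T^*(v_0)\le t_0<\infty$, and hence $u$ is nonminimal by Definition~\ref{defmin}.

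For the direction you (re)attempt — ``no LBC $\Rightarrow$ minimal'' — simply cite \cite[Theorem~3]{PS2} and stop, as you initially proposed. Your sketch is not a proof of it: the sub-case in which the comparison solution $v$ also retains its boundary data is handled only with a vague appeal to ``comparing gradient behavior at $x=0$'' and to the ``boundary-layer structures of $u$ and $v$ being compatible only when $v$ itself detaches''. Nothing in this paper supplies a lemma of that shape; the pointwise inequality $v_x(t,0)\le u_x(t,0)$ does not by itself force any collision of blow-up rates or times; the GBU lower bound of Theorem~\ref{minimal_rate} gives one-sided information only; and the two solutions need not even have GBU at the same boundary point. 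Ruling out this case is precisely the content of \cite[Theorem~3]{PS2} and cannot be obtained by the informal comparison you describe.
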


 It turns out that, for a suitable class of initial data on $\Omega=(0,1)$, we are able to provide a fairly precise description of the 
life of nonminimal solutions, including their GBU, LBC and regularization behaviors in time and in space.
We consider the initial data   $\phi\in W^{3,\infty}(0,1)$  
satisfying the following properties:
\be\label{hypID1}
\hbox{$\phi$ is symmetric w.r.t. $x=\frac12$,\ \ $\phi'\ge 0$ on $[0,\frac12]$,\ \  $\phi(0)=\phi''(0)+{\phi'}^p(0)=0$,}
\ee
\be\label{hypID2}
\hbox{there exists $a\in (0,1/2)$ such that } 
\phi''+{\phi'}^p
\begin{cases}
\,\ge 0 & \hbox{on $[0,a]$} \\
\,\le 0 & \hbox{on $[a,\frac12]$.} \\
\end{cases}
\ee
These assumptions are motivated by intersection-comparison -- or zero-number -- arguments that will be crucially used  in our proofs.
Minimal and nonminimal GBU solutions starting from such initial data can be easily constructed   (see Lemma~\ref{constuctID}).
Also we will denote by $U_*$ the one-dimensional stationary solution 
\be\label{defUstar}
U_*(x):= c_p\, x^{\frac{p-2}{p-1}}, \quad x\ge 0, \qquad\hbox{with }  c_p= (p-2)^{-1}(p-1)^{\frac{p-2}{p-1}}.
\ee
Note that $U_*$ is singular near $x=0$ (since $U_*'(x)=((p-1)x)^{-1/(p-1)}$) and recall, as is well known,  that $U_*$ plays the role of a reference GBU profile
(see, e.g., \cite{CG96}, \cite{ARS04}).

\begin{thm}\label{nonmin0} 
Let $\Omega=(0,1)$, let    $\phi\in W^{3,\infty}(0,1)$  satisfy \eqref{hypID1}-\eqref{hypID2},
and assume that $u$ is a nonminimal GBU solution of \rife{VHJ}.
\smallskip

\begin{itemize}
 \item[(i)] There is immediate loss of boundary conditions after GBU
and immediate and permanent regularization after recovery of boundary conditions. 
More precisely, we have
$$u(t,0)>0,\quad\hbox{for all $t\in(T^*,T^r),$}$$
and $u$ is a classical solution (including boundary conditions) for all $t\in(T^r,\infty)$
(recalling the definition in \rife{defTr}).

\smallskip
 \item[(ii)]   $u$ satisfies the following GBU, detachment, reconnection and regularization estimates:
\be\label{estA0}
c_1(T^*-t)^{-1/(p-2)}\le \|u_x(t)\|_\infty \le c_2(T^*-t)^{-1/(p-2)},\quad\hbox{ as $t\to T^*_-,$}
\ee
\be\label{estB0}
u(t,0) \sim \ell_1(t-T^*),\quad\hbox{ as $t\to T^*_+,$}
\ee
\be\label{BeqConclAa0}
u(t,0) \sim \ell_2(T^r-t),\quad\hbox{ as $t\to T^r_-,$}
\ee
\be\label{BeqConclA0}
c_3(t-T^r)^{-1/(p-2)} \le \|u_x(t)\|_\infty\le c_4(t-T^r)^{-1/(p-2)},\quad\hbox{ as $t\to T^r_+,$}
\ee
with some constants $c_i,\ell_i>0$.

\smallskip
 \item[(iii)]  In the interval $[T^*,T^r]$, the solution behaves near the boundary like a shifted copy of the singular stationary profile $U_*$:
  there exists $K>0$ such that
$$|u(t,x)-u(t,0)-U_*(x)|\le K\,  {\frac{  x^2} 2} \quad\hbox{ in $[T^*,T^r]\times [0,1/2]$},$$
$$|u_x(t,x)-U_*'(x)|\le Kx\quad\hbox{and}\quad  |u_{xx}(t,x)-U_*''(x)|\le K \ \quad\hbox{ in $[T^*,T^r]\times (0,1/2]$},$$ 
and the restriction of the function $u-U_*$ to $(T^*,T^r]\times [0,1/2]$ is of class $C^1$ in $t$ and $x$.

\vskip2pt

\noindent Furthermore, the boundary value enjoys the following monotonicity and regularity properties:
\vskip2pt

 \begin{itemize}
 
 \item[$\bullet$]  $u(t,0)$ admits a unique global maximum at some $T_m\in (T^*,T^r)$;

\vskip1pt

 \item[$\bullet$] $u(t,0)$ is  increasing on $[T^*,T_m]$ and  decreasing on $[T_m,T^r]$;

\vskip1pt

 \item[$\bullet$]  the restriction of the function $t\mapsto u(t,0)$ to $[T^*,T^r]$ is of class $C^1$
and $u_t(t,0)$ undergoes jump discontinuities at $t=T^*$ and $t=T^r$. 

 \end{itemize}
  
\vskip2pt

\noindent In addition, we have
$$
u_t\le 0\quad\hbox{ in $[T_m,\infty)\times (0,1)$.}
$$
   \end{itemize}
\end{thm}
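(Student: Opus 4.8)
The plan is to prove Theorem~\ref{nonmin0} by combining two ingredients that pervade the one-dimensional analysis: (a) sharp local barriers built from the singular stationary profile $U_*$, which pin down the behavior of $u$ near $x=0$ on the whole singular interval $[T^*,T^r]$; and (b) zero-number (intersection-comparison) estimates applied to $u_t$ (and to $u_x$ minus suitable comparison functions), which upgrade one-sided information into two-sided asymptotics and force the various rates. Assumptions \eqref{hypID1}--\eqref{hypID2} are exactly what is needed to control the sign-change structure of $u_t$ at $t=0$ (note that $u_t(0,\cdot)=\phi''+|\phi'|^p$ changes sign only once on $[0,\tfrac12]$, at $x=a$), and by the zero-number principle the number of sign changes of $u_t(t,\cdot)$ is nonincreasing in $t$; together with the reflection symmetry this will confine $u_t$ to having a very rigid structure for all later times, which is the backbone of statement (iii). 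I would first record these structural facts, then treat the four regimes $t\to T^*_-$, $t\in(T^*,T^r)$, $t\to T^r_\pm$ in turn.

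For part (i) and the estimates \eqref{estA0} and \eqref{BeqConclA0}, the lower bounds are immediate from Theorems~\ref{minimal_rate} and \ref{minimal_rate2}, so the work is the matching upper bounds. The idea is to use the comparison with (space- and time-shifted) copies of $U_*$: near $t=T^*$ one shows $u(t,x)\le U_*(x-\xi(t))$-type barriers with $\xi(t)\to0$, and $u_x$ inherits the $\|U_*'\|$-controlled blow-up, yielding the upper rate $(T^*-t)^{-1/(p-2)}$; the nonminimality hypothesis is what guarantees that LBC actually occurs, i.e.\ that $u(t,0)>0$ for $t$ slightly beyond $T^*$, via Proposition~\ref{prop-min} and the threshold characterization from \cite{PS2}. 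The ``immediate'' nature of LBC, and the fact that $u(t,0)>0$ persists up to exactly $T^r$, should follow from a continuity/connectedness argument: the set $\{t>T^*: u(t,0)>0\}$ is open, nonempty just after $T^*$, and one shows it is an interval by ruling out recovery-then-loss using monotonicity of $u_t$ on $[T_m,\infty)$ (proved via zero number: once $u_t\le0$ somewhere near the boundary it stays so). The linear detachment/reconnection rates \eqref{estB0} and \eqref{BeqConclAa0} come from differentiating the barrier construction: writing $u(t,0)$ against the parabolic boundary behavior of the regularized approximations, $\tfrac{d}{dt}u(t,0)$ has nonzero one-sided limits $\ell_1>0$ at $T^*_+$ and $-\ell_2<0$ at $T^r_-$, which also gives the claimed jump discontinuities of $u_t(t,0)$.

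For part (iii), the plan is: establish the uniform two-sided comparison $|u(t,x)-u(t,0)-U_*(x)|\le K x^2/2$ on $[T^*,T^r]\times[0,\tfrac12]$ by constructing, for each fixed $t$ in this range, upper and lower stationary-type barriers of the form $u(t,0)+U_*(x)\pm K x^2/2$ and invoking the maximum principle on a suitable space-time region where the boundary data at $x=\tfrac12$ and at the two times are controlled (this is where one needs that $u$ stays detached, so that the ``inner'' problem near $x=0$ is quasi-stationary). Differentiating these barriers in $x$ gives the stated $u_x$ and $u_{xx}$ estimates; the $C^1$ regularity of $u-U_*$ in $(t,x)$ then follows from parabolic regularity applied to $w:=u-U_*$, which solves a nondegenerate equation with bounded data away from, and now also up to, $x=0$ because of these very estimates. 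The monotonicity of $t\mapsto u(t,0)$ and the existence of a unique interior maximum $T_m$ is the delicate zero-number payoff: $\partial_t u(t,0)$ inherits at most one sign change in $t$ from the single sign change of $u_t(0,\cdot)$ in $x$ (after passing the information through the equation and the barrier structure), hence $u(t,0)$ is unimodal on $[T^*,T^r]$; and $u_t\le0$ on $[T_m,\infty)\times(0,1)$ follows because the last sign change of $u_t$ disappears at $T_m$ and cannot reappear. The $C^1$-in-$t$ regularity of $u(t,0)$ on $[T^*,T^r]$ is then read off from the $C^1$ regularity of $u-U_*$ together with the fact that $U_*$ is $t$-independent.

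I expect the main obstacle to be the rigorous justification of the quasi-stationary barrier estimates uniformly on the \emph{closed} interval $[T^*,T^r]$, including the endpoints where $u_t$ jumps: one must show that the constant $K$ does not degenerate as $t\to T^*_+$ or $t\to T^r_-$, which requires a careful interplay between the GBU/regularization rates from part (ii) and the barrier construction, likely using the monotone regularized approximations of Section~\ref{Sec3} to pass to the limit cleanly. A secondary difficulty is transferring zero-number information, which is classical for smooth parabolic solutions, to the viscosity solution $u$ past the singular times $T^*$ and $T^r$ — this presumably is handled by working on the regularized problems (where $u_t$ is a classical solution of a linear parabolic equation with smooth coefficients) and passing the bound on the number of sign changes to the limit, exploiting that the limit is monotone and the relevant quantities converge locally uniformly in $(0,\infty)\times(0,1)$.
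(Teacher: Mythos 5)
Your high-level plan (barriers built from $U_*$ plus zero-number arguments on $u_t$) is the right framework, and you have correctly identified that the lower bounds in \eqref{estA0}, \eqref{BeqConclA0} come from Theorems~\ref{minimal_rate} and \ref{minimal_rate2}, that LBC forces the $U_*$-profile via Lemmas of the type \ref{bdl} and \ref{basic-prop}, and that the approximations $u_k$ are the vehicle for passing zero-number information across $T^*$. However, there are three genuine gaps where the sketch either proposes an approach that would not work, or glosses over the hardest technical step.

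First, the upper GBU rate at $T^*_-$. You propose a moving-barrier comparison $u(t,x)\le U_*(x-\xi(t))$-type with $\xi(t)\to0$. But moving barriers of the form $U_*(x+a(t))-U_*(a(t))$ are only supersolutions when $a'(t)<0$, which forces a time-reversed flavor, and in that direction you must dominate $u$ at the \emph{initial} time $t_0<T^*$ with an $a(t_0)$ that is already small; establishing that ordering is at least as hard as the estimate you want. The paper does not use a barrier here at all. The upper GBU rate is obtained by an auxiliary-function device in the spirit of Guo--Hu (Lemma~\ref{lem73}): one shows, for the approximations, that $u_{k,t}\ge C\bigl(1-u_{k,x}/m_k(t)\bigr)$ near the boundary, then passes to the limit and differentiates at $x=0$ to get the ODE inequality $m'(t)\ge C\,m^{p-1}(t)$, from which the upper rate follows by integration. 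This is a fundamentally different mechanism, and without it (or something equivalent) the matching upper bound in \eqref{estA0} is not obtained. The moving-barrier argument you describe is, by contrast, exactly what the paper \emph{does} use for the upper \emph{regularization} rate at $T^r_+$ (Lemma~\ref{barrier}), and there the monotone direction of time works in your favor; but this requires first proving the quadratic separation $u(T^r,x)\le U_*(x)-bx^2$, which in the paper comes from uniform negativity of $u_t$ just before $T^r$ (Lemma~\ref{neg2}), established by a zero-number argument on the \emph{perturbed} function $u_t+\eps$, a refinement you do not mention.

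Second, the $C^1$ regularity of $u-U_*$ on $(T^*,T^r]\times[0,1/2]$. You assert this follows from ``parabolic regularity applied to $w:=u-U_*$, which solves a nondegenerate equation with bounded data\ldots up to $x=0$.'' That is not the case: the linearized equation has a first-order coefficient $p|u_x|^{p-2}u_x\sim p U_*'(x)^{p-1}\sim c\,x^{-1}$ which is unbounded at $x=0$, so standard Schauder/$L^q$ theory does not apply up to the boundary. The paper's Lemma~\ref{bounduxt} handles this by the key observation that the finite differences $\tau_h u_x$ satisfy a parabolic inequality with \emph{strong nonlinear absorption}, with the zero-order coefficient $A(t,x)\ge c\,x^{-2}$ coming precisely from the singular structure of $u_{xx}\sim U_*''\sim -c\,x^{-1-\alpha}$. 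The uniform-in-$h$ bound then comes from comparison with the blowing-up barrier $\overline w(t)=[C_1\gamma(t-t_0)]^{-1/\gamma}$. This mechanism (singularity of the coefficient turned from liability into asset) is not captured by ``nondegenerate parabolic regularity.''

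Third, the sharp asymptotics $u(t,0)\sim\ell_i$ in \eqref{estB0},\eqref{BeqConclAa0}. ``Differentiating the barrier construction'' does not give one-sided limits of $\partial_t u(t,0)$; the barriers only give two-sided linear bounds $c(t-T^*)\le u(t,0)\le C(t-T^*)$. The existence of the limit $\ell_1=\lim_{t\to T^*_+}u_t(t,0)$ is a consequence of the $C^1$ regularity of $t\mapsto u(t,0)$ on $[T^*,T^r]$ (Theorem~\ref{proppersist}(ii)), whose proof needs the $u_{xt}$ and $u_{tt}$ bounds discussed above plus a monotonicity argument (Lemma~\ref{boundutt}) to show the limit at $T^*_+$ exists. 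Similarly, your monotonicity argument for $u(t,0)$ needs the specific trick of applying zero-number to the time-translates $u(\cdot+\tau)-u(\cdot)$ (Lemma~\ref{defz}(iii)), since $u_t$ is not a priori well-behaved at the boundary during the LBC interval; and ruling out a plateau (needed for ``increasing'' rather than ``nondecreasing'') uses Hopf's lemma on $u-U_*$, which is also absent from your sketch.
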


The shape of the solution described in Theorem~\ref{nonmin0} is depicted in the following figure.

$$
\includegraphics[width=12cm, height=5.5cm]{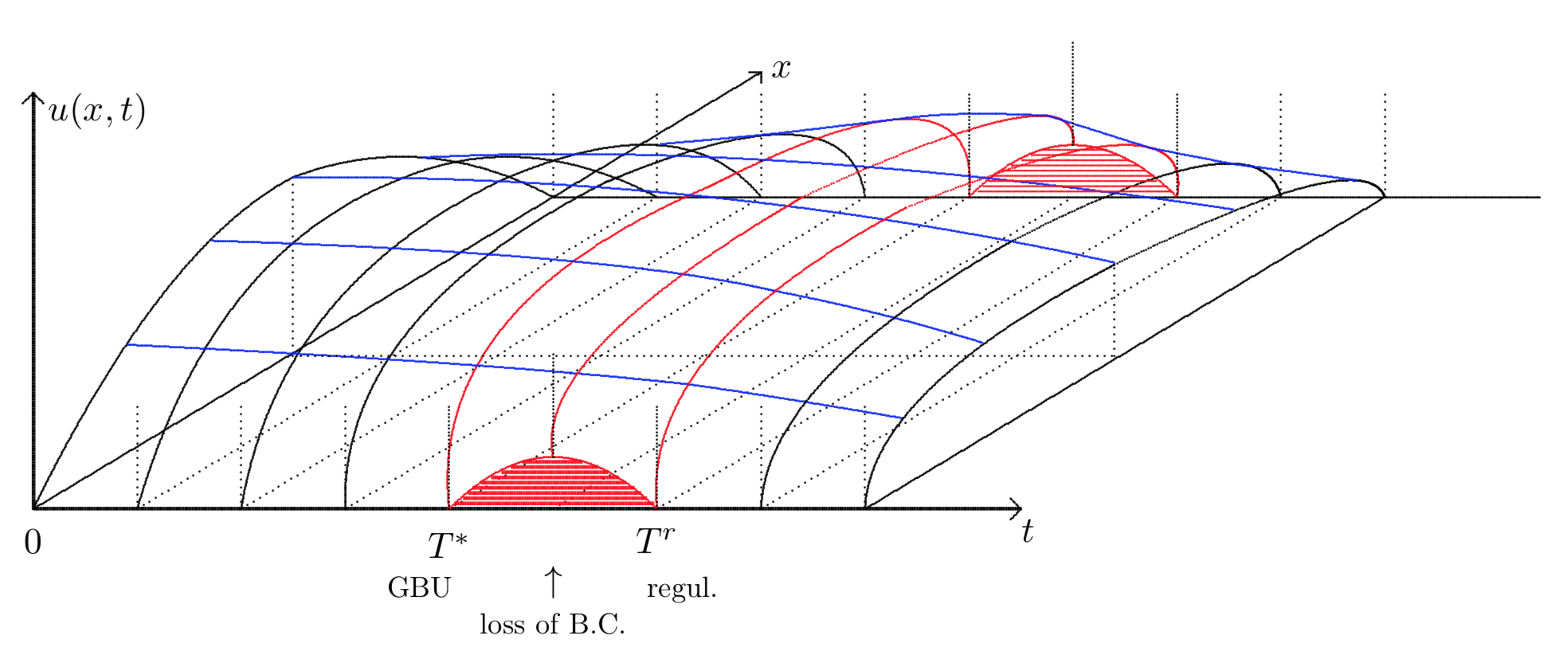}
$$

More generally, as shown in our next result,   
the conclusions of Theorem~\ref{nonmin0} concerning the regularity and behavior of the function $u-U_*$ 
are actually valid for any LBC solution in one space dimension,
on any time interval where the boundary conditions are lost at $x=0$.

In the following, we set
$$
X_1:=\{\phi\in C^1([0,1]),\ \phi(0)=\phi(1)=0\}.
$$

\begin{thm}
\label{proppersist}
Let $\Omega=(0,1)$, and let $\phi\in X_1$ with $T^*(\phi)<\infty$.
Let $T_1, T_2$ be such that $T^*\le T_1<T_2\le T^r$ and 
$$u(t,0)>0 \quad\hbox{ on  $(T_1,T_2)$.}$$

\begin{itemize}

\item[(i)] Then there exists a constant $K>0$ (depending only on $u$), such that
\be\label{shiftedcopy0}
|u(t,x)-u(t,0)-U_*(x)|\le K\, {\frac{x^2}2}\quad\hbox{ in $[T_1,T_2]\times (0,1/2]$},
\ee
\be\label{shiftedcopy}
|u_x(t,x)-U_*'(x)|\le Kx\quad\hbox{and}\quad |u_{xx}(t,x)-U_*''(x)|\le K \ \quad\hbox{ in $[T_1,T_2]\times (0,1/2]$}. 
\ee
\vskip 3pt
\item[(ii)] The restriction of the function $u-U_*$ to $(T_1,T_2]\times [0,1/2]$ is of class $C^1$ in $t$ and $x$
and the restriction of the function $t\mapsto u(t,0)$ to $[T_1,T_2]$ is of class $C^1$.
\end{itemize}
\end{thm}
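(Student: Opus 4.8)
The plan is to localize the analysis near the lateral boundary point $x=0$ — near $x=1/2$ there is nothing to prove, by interior regularity — and to study the function $w:=u-U_*$, using that $U_*$ is an exact singular stationary solution of $u_t-u_{xx}=|u_x|^p$ and that, precisely because $u(t,0)>0$ on $(T_1,T_2)$, the homogeneous Dirichlet condition is inactive there in the viscosity sense, so that $u$ should behave to leading order like a vertical translate of $U_*$. First I would record the basic reductions. By interior parabolic regularity and the fact that GBU can only occur at $\partial\Omega$ (property (a)), $u$ is a smooth classical solution on $(T_1,T_2)\times(0,1)$, so the claimed estimates are statements about $x\to 0^+$. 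Since $m(t):=u(t,0)>0$ on $(T_1,T_2)$, the generalized Dirichlet condition is not active at $x=0$ and $u$ is there a viscosity supersolution up to $x=0$; comparing with the exact stationary solutions $M+U_*(x)$ (supersolutions of the relaxed problem on $(0,1/2)$ once $M$ is large, using $U_*\ge0$ to dominate $u$ on $\{x=1/2\}$ and at $t=T_1$) and using $u\ge0$, one obtains that $w$ is bounded on $[T_1,T_2]\times[0,1/2]$. Moreover, by the known structure of LBC boundary points, $u_x(t,x)>0$ near $x=0$ with $u_x(t,x)\to+\infty$ as $x\to0^+$; combined with $u\le M+U_*$ and interior gradient estimates in parabolic cylinders of size $\sim x$, this gives the rough a priori bound $u_x(t,x)\le C\,x^{-1/(p-1)}$, uniformly on $[T_1,T_2]\times(0,1/2]$. (It suffices to prove the theorem with $t$-independent constants on each $[T_1+\eta,T_2]$ and let $\eta\to0^+$; the endpoint $t=T_1$ is reached by continuity if $T_1>T^*$, and is exactly the known GBU profile statement if $T_1=T^*$.)

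Next comes the heart of the matter: the sharp estimate $|u_x(t,x)-U_*'(x)|\le Kx$ on $[T_1,T_2]\times(0,1/2]$, which after one integration in $x$ (recall $U_*(0)=0$) yields \eqref{shiftedcopy0}, and whose $x$-derivative, via interior estimates in cylinders of size $\sim x$, gives $|u_{xx}(t,x)-U_*''(x)|\le K$, i.e. \eqref{shiftedcopy}. I would combine two ingredients. On the one hand, an intersection-comparison (zero-number) argument against the family of stationary solutions $\{c+U_*(\cdot):c\in\R\}$: the difference $u(t,\cdot)-c-U_*(\cdot)$ solves a linear parabolic equation, so its number of sign changes in $(0,1/2)$ is nonincreasing in $t$; controlling it at $t=T_1$ forces $u(t,\cdot)$, for every later $t$, into a thin neighborhood of the translate $m(t)+U_*(\cdot)$. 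On the other hand, a Bernstein-type auxiliary function: setting $J:=((p-1)x)^{1/(p-1)}u_x$, so that $J\equiv1$ exactly when $u_x=U_*'$, one computes the parabolic equation for $J$ and checks that its zeroth-order structure forces $J\to1$ as $x\to0^+$ with $|J-1|\le Cx^{p/(p-1)}$. The underlying reason is that the linearization of the stationary equation $-v''=(v')^p$ around $v=U_*$ has indicial exponents $1$ and $-p/(p-1)$: the perturbation $H:=u_x-U_*'$ satisfies, modulo genuinely lower-order terms, $H_t-H_{xx}-\frac{p}{(p-1)x}H_x+\frac{p}{(p-1)x^2}H=0$, for which $\pm Kx$ are exact solutions, while the competing behaviour $\sim x^{-p/(p-1)}$ is ruled out by boundedness of $w$; the strongly absorbing singular potential $+\frac{p}{(p-1)x^2}$ then lets a maximum-principle/barrier argument pin $H$ between $-Kx$ and $Kx$ once $K$ controls $H$ on $\{x=1/2\}$ and at $t=T_1$. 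Since all of this must be done for the (a priori only continuous) viscosity solution $u$, I would carry it out at the level of the monotone regularized approximations of Section~\ref{Sec3} and pass to the limit, the quadratic-in-$H$ error terms being absorbed by a short bootstrap ($|H|\le C\Rightarrow|H|\le Cx^\gamma\Rightarrow\cdots\Rightarrow|H|\le Kx$).

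Finally, for part (ii), once $|u_x-U_*'|\le Kx$ is known, $w=u-U_*$ is a bounded solution of the linear equation $w_t=w_{xx}+b(t,x)\,w_x$ on $(T_1,T_2)\times(0,1/2)$, where $b(t,x)=p\int_0^1|U_*'+s\,w_x|^{p-2}(U_*'+s\,w_x)\,ds\sim\frac{p}{(p-1)x}$ as $x\to0^+$. Writing $N:=1+\frac{p}{p-1}\in(2,3)$, this operator is, up to a perturbation vanishing as $x\to0^+$, the heat operator of the radial Laplacian in dimension $N$, for which bounded solutions are $C^1$ up to $x=0$, with $w_x$ (here automatically $\to0$) and $w_t$ continuous up to the axis. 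Hence the restriction of $u-U_*$ to $(T_1,T_2]\times[0,1/2]$ is $C^1$ in $(t,x)$, and $t\mapsto u(t,0)=w(t,0)$ is $C^1$ on $[T_1,T_2]$ (the left endpoint again by continuity and the remark above).

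The main obstacle is the sharp gradient estimate $|u_x-U_*'|\le Kx$, uniform on the \emph{closed} interval $[T_1,T_2]$: comparison with the stationary profiles $M+U_*$ only yields boundedness of $w$, and upgrading this to second-order tangency of $u(t,\cdot)$ to $u(t,0)+U_*(\cdot)$ requires exploiting the precise resonance ($\alpha=1$ versus $\alpha=-p/(p-1)$) of the linearization around $U_*$ while working only with the continuous viscosity solution — hence through the regularized problems — and controlling the construction uniformly as $t\to T_1^+$, in particular at $t=T^*$ when $T_1=T^*$, where $\|u_x(t)\|_\infty$ itself diverges.
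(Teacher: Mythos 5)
Your proposal for part (i) takes a genuinely different, and far heavier, route than the paper's, which at this point is essentially a one-variable ODE argument. The observation you miss is that the uniform bound $|u_t|\le M$ on $[t_0,\infty)\times\Omega$ (Theorem~\ref{prelimprop}(iv)), combined with the fact that $u(t,0)>0$ forces $\lim_{x\to 0^+}u_x(t,x)=\infty$ (Lemma~\ref{bdl}), already yields the sharp profile by a direct integration in $x$ at each fixed $t$. Indeed, since $-u_{xx}=|u_x|^p-u_t$, the functions $z_\pm:=(u_x\mp Mx)_\pm$ satisfy the Riccati-type inequalities $z_+'+z_+^p\le 0$ and $z_-'+z_-^p\ge 0$, and integrating these from $x=0$ --- where $u_x\to\infty$ kills the constant of integration --- gives $U_*'(x)-Mx\le u_x(t,x)\le U_*'(x)+Mx$ near $x=0$, whence \eqref{shiftedcopy} and then \eqref{shiftedcopy0} (Lemmas~\ref{basic-bounds} and \ref{basic-prop}). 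Your intersection-comparison against $\{c+U_*\}$, the Bernstein function $J$, and the indicial-exponent analysis (whose exponents $1$ and $-p/(p-1)$ you compute correctly) could plausibly be pushed through, but they would require controlling the zero-number at $t=T_1$ (delicate if $T_1=T^*$), absorbing the quadratic error terms, and a nontrivial bootstrap --- none of which is needed once one exploits the $L^\infty$ bound on $u_t$. You also take the implication ``LBC $\Rightarrow u_x\to\infty$'' for granted; the paper proves it (Lemma~\ref{bdl}) by a separate integration argument at the level of the approximating $u_k$.

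For part (ii) there is a genuine gap. Your appeal to ``$w$ is a bounded solution of a radial-type heat operator in fractional dimension $N\in(2,3)$, hence $C^1$ up to the axis'' is not a citable fact: here $N=(2p-1)/(p-1)$ is non-integer, and the perturbation $b(t,x)-\frac{N-1}{x}$ is merely $O(x^{1/(p-1)})$ in $L^\infty$ with no a priori control in higher norms, so the Schauder-type results you are implicitly invoking do not apply off the shelf. The paper instead proves exactly the bounds it needs by hand: Lemma~\ref{bounduxt} gives $|u_{xt}|\le C(\eta)$ on $(T_1+\eta,T_2)\times(0,1/2)$ by writing a parabolic inequality with a strongly absorbing zero-order term $A(t,x)\gtrsim x^{-2}$ (coming from $pu_x^{p-1}u_{xx}$ near the $U_*$-profile) for the finite time-differences $\tau_h u_x$, and comparing with the self-improving barrier $[C_1\gamma(t-t_0)]^{-1/\gamma}$; Lemma~\ref{boundutt} gives $u_{tt}\ge -C$ by the maximum principle applied to $u_{k,tt}$. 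From these, existence and continuity of $u_t(t,0)$ on $(T_1,T_2]$ follow directly. Finally, ``by continuity'' is not sufficient to reach the closed endpoint $T_1$ in the $C^1$ regularity of $t\mapsto u(t,0)$: the paper uses the one-sided $u_{tt}$ bound to show that $u_t(t,0)+Ct$ is monotone and bounded, hence admits a limit as $t\to T_1^+$, which is the key missing ingredient in your sketch.
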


\begin{rem} 
(a) We note that the solutions in Theorem~\ref{nonmin0} behave according to the minimal GBU and regularization rates 
(compare with Theorems \ref{minimal_rate} and \ref{minimal_rate2}).
As for the linear rates of loss and recovery of boundary conditions, interestingly, we see on the contrary that they are maximal.
Indeed, since all solutions satisfy bounds of the form $|u_t|\le C(t_0)$ in $[t_0,\infty)\times\Omega$ for all $t_0>0$ (see Section~\ref{Sec3}), 
it is immediate that loss or recovery of the zero boundary conditions can never occur at a rate faster than linear.
\smallskip

(b) Theorem~\ref{nonmin0} is the first known result on upper GBU estimates for the homogeneous problem~\rife{VHJ}.
Previously, the upper GBU estimate in \eqref{estA0} was only known for the modified inhomogeneous problem
\be\label{vhjModif}
\begin{cases}
u_t-u_{xx} =|u_x|^p+\lambda, & \quad t>0,\ 0<x<1,\\
u(t,0) =0, \ u(t,1)=M, & \quad t>0,\\
u(0,x) =\phi(x),  & \quad 0<x<1,\, 
\end{cases}
\ee
where either $\lambda=0$ and $M>c_p$, or $M=0$ and $\lambda>0$ large enough.
The result in the case $\lambda=0$ and $M>c_p$ was proved in \cite{GH08}
 for time-increasing GBU solutions (such solutions exist only for $M>c_p$).
The argument of \cite{GH08} was modified in \cite{QS07} to cover the case $M=0$ with $\lambda>0$ large enough, still for time-increasing GBU solutions.
The upper GBU rate remains an open problem for the nonradial higher dimensional case
(see \cite{ZL13} for a result on a radial inhomogeneous problem).
\smallskip

(c) 
We see from Theorem~\ref{nonmin0} that $u_t$ exists and is continuous on $((0,\infty)\times [0,1/2])\setminus \{(T^*,0),(T^r,0)\}$.
Moreover, $u_t$ is respectively uniformly positive or negative near the corners.
Namely, under the assumptions of Theorem~\ref{nonmin0}, we have
$$
u_t \ge c \ \hbox{ in $(T^*,t_0)\times(0,a)$}
\quad\hbox{ and }\quad
u_t\le -c\ \hbox{ in $(t_1,T^r)\times (0,1)$}
$$
for some times $t_0, t_1$ with $T^*<t_0<t_1<T^r$, and some $a\in (0,1/2)$ and $c>0$
(see Propositions~\ref{nonmin} and \ref{recon}).
  On the other hand, the restriction of the function $V:=u-U_*$ to $Q:=[T^*,T^r]\times [0,1/2]$ is actually differentiable
(including at $(T^*,0)$), with $V_x$ continuous (see the end of the proof of Theorem~\ref{proppersist}(ii)).
But we do not know if the restriction of $u_t=V_t$ to $Q$ is continuous  
at $(T^*,0)$. 
Some additional qualitative properties of $u$ under the assumptions of Theorems~\ref{nonmin0}, \ref{proppersist} or \ref{minimal0}
are given in Lemmas~\ref{bounduxt}, \ref{boundutt} and \ref{minlem1}--\ref{minlem3} and in Proposition~\ref{zeronumbermonotone}.

\smallskip

(d) Theorem~\ref{nonmin0}, as well as Theorem~\ref{minimal0} below, remain true for a larger class of symmetric initial data, 
characterized by the zero-number of $u_t$ being at most four
(see Remark~\ref{remhigherN}).
The case of higher zero-number remains an open problem. But we suspect that different behaviors might occur,
such as multiple losses and recoveries of boundary conditions.
Also, other interesting and largely open problems concern the behavior of LBC solutions in higher dimensions. 

On the other hand, in Theorems~\ref{nonmin0} and \ref{minimal0}, we have assumed for simplicity that $\phi$ is symmetric and nondecreasing on $[0,1/2]$.
Without these assumptions, some results of the same type might still be obtainable by our methods, 
at the expense of additional technical difficulties.
Since our purpose is to focus on the presentention of new phenomena,
we have refrained from considering  
 such generality.
\end{rem}

\subsection{Description of a class of one-dimensional GBU solutions without LBC}

We already know that minimal solutions have no LBC.
Within the same class of initial data as in the previous subsection, 
we have the following more precise information on the behavior of minimal solutions. 
As a crucial difference, we find in particular that such solutions have more singular blow-up and regularization rates
than the 'standard' ones obtained in Theorem~\ref{nonmin0} for nonminimal solutions.
Moreover, $T^r=T^*$ so that $T^*$ is the only singular time of the solution. 

\begin{thm}\label{minimal0}
Let $\Omega=(0,1)$, let $\phi\in W^{3,\infty}(0,1)$ satisfy \eqref{hypID1}-\eqref{hypID2},
and assume that $u$ is a minimal GBU solution of \rife{VHJ}.
Then:

\begin{itemize}

\item[(i)] The GBU  
rate is more singular than the minimal  
rate:
$$
(T^*-t)^{1/(p-2)}\|u_x(t)\|_\infty\to\infty,\quad t\to T^*_-\,;
$$
\vskip0.3em
\item[(ii)]  There is  instantaneous and permanent regularization at $T^*_+$, namely,
$u$ is a classical solution (including boundary conditions)
on $(T^*,\infty)$;
\vskip0.3em
\item[(iii)] The regularization rate is more singular than the minimal  
 rate:
$$
(t-T^*)^{1/(p-2)}\|u_x(t)\|_\infty\to\infty,\quad t\to T^*_+.
$$
\end{itemize}
\end{thm}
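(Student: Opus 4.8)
\textbf{Plan of proof for Theorem~\ref{minimal0}.}

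The plan is to exploit the dichotomy provided by Proposition~\ref{prop-min}: a minimal GBU solution has no LBC, hence $u(t,0)=0$ for all $t>0$ and $T^r=T^*$. Parts (ii) and (i)--(iii) will then follow from this fact combined with the minimal-rate lower estimates of Theorems~\ref{minimal_rate} and~\ref{minimal_rate2}, together with zero-number information on $u_t$ adapted to the symmetric initial data satisfying \eqref{hypID1}--\eqref{hypID2}.

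First I would settle (ii). Since $u$ is minimal, Proposition~\ref{prop-min} gives no LBC, so the boundary value $u(\cdot,0)$ stays at $0$; by interior regularity (GBU can only occur at the boundary, point~(a) in the introduction) and parabolic estimates up to the boundary once the boundary data are classically attained, one obtains that $u$ is a classical solution on $(T^*,\infty)$. The monotonicity structure encoded in \eqref{hypID1}--\eqref{hypID2} (via the sign pattern of $\phi''+{\phi'}^p$ and a zero-number argument applied to $u_t$, as announced in the introduction) is what prevents any later re-loss of boundary conditions and gives \emph{permanent} regularization: the zero-number of $u_t$ being small forces $u_t$ to have a controlled sign configuration near the boundary for $t>T^*$, ruling out a second detachment. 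This yields $T^r=T^*$.

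For (i) and (iii) — the more singular rates — the key idea is a contradiction argument. Suppose the minimal rate $\|u_x(t)\|_\infty \le C(T^*-t)^{-1/(p-2)}$ held along some sequence $t_j\to T^*_-$ (resp. $t_j\to T^*_+$). Combined with $u(t,0)=0$ and the $W^{3,\infty}$ bounds on $\phi$ (hence bounds on $u_{xx}$ up to the boundary on the relevant side), one would perform the rescaling around the blow-up point $x=0$ at the natural parabolic GBU scale, using the self-similar variables associated with the exponent $1/(p-2)$; the rescaled solutions would be uniformly bounded in $C^1$ and converge to a nontrivial entire solution of the rescaled (ancient or eternal) problem with zero boundary value. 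One then shows, using the zero-number control on $u_t$ and the fact that $u$ is minimal (so it is a threshold solution, squeezed between global classical subsolutions), that such a limit must be \emph{stationary}, i.e. it must be the singular profile $U_*$ up to scaling — but $U_*$ has infinite $C^1$ norm at $x=0$, so it cannot arise as a $C^1$ limit with the assumed bound. This contradiction forces $(T^*-t)^{1/(p-2)}\|u_x(t)\|_\infty\to\infty$, and symmetrically for $t\to T^*_+$, giving (i) and (iii).

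The main obstacle I expect is the rescaling/limit step in (i) and (iii): one must identify the limit of the rescaled solutions and rule out \emph{moving} or time-dependent limits, which is exactly where the zero-number argument for $u_t$ is essential — the smallness of the zero-number (at most four, cf. Remark~\ref{remhigherN}) must be shown to survive in the rescaled limit and to force the limit profile to be $t$-independent. A secondary technical point is handling the one-sided parabolic estimates near $x=0$ for $t<T^*$ (subcritical regularization) versus $t>T^*$, where the boundary condition is already recovered but the solution is still close to the singular profile; the argument of Theorem~\ref{proppersist}, giving closeness of $u-U_*$ to a $C^1$ function near the boundary during LBC intervals, has to be replaced here by the analogous statement for the degenerate interval $\{T^*\}$, i.e. one works directly at the single singular time and extracts the contradiction from the rate assumption alone.
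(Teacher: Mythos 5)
Your plan takes a genuinely different route from the paper, and both parts have gaps that prevent it from going through as written.

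For part (ii), the assertion that ``no LBC $+$ interior regularity $+$ parabolic estimates up to the boundary once the boundary data are classically attained'' yields a classical solution is not correct: the boundary condition $u(t,0)=0$ being satisfied continuously does \emph{not} imply $u_x(t,0^+)$ is finite, and parabolic estimates up to the boundary are unavailable precisely when the gradient is unbounded there. The stationary profile $U_*$ is the basic example: $U_*(0)=0$ but $U_*'(0^+)=\infty$. Something quantitative is needed to pull $u(t,\cdot)$ strictly below $U_*$ near $x=0$. The paper's mechanism is: use $u_t(T^*,\cdot)\le 0$ (Proposition~\ref{nonmin}) and Proposition~\ref{utneq} to get $u\le U_*$ and $u_x\le U_*'$ for $t\ge T^*$; then $W:=U_*-u\ge 0$ is a supertemperature with $W(t,0)=0$ (since there is no LBC) and $W(t,1/2)>0$, so Hopf's lemma yields $W(T^*+t,x)\ge c(t)x$; finally Lemma~\ref{barrier} converts this separation into regularization for all later times. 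No zero-number input is actually needed for (ii) once Proposition~\ref{utneq} is in hand.

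For parts (i) and (iii), your rescaling/Liouville strategy is not the one used in the paper, and it runs into a structural difficulty you do not address: the minimal GBU rate $(T^*-t)^{-1/(p-2)}$ is \emph{not} the self-similar rate $(T^*-t)^{-1/(2(p-1))}$ of the equation $u_t-u_{xx}=|u_x|^p$ (note $\frac1{p-2}>\frac1{2(p-1)}$ for $p>2$). Rescaling at the scale dictated by the GBU rate therefore produces a non-autonomous rescaled equation (the time scale is not $T^*-t$), and the usual machinery giving convergence to an ancient/eternal solution does not apply directly. Moreover, the two key steps you flag as ``the main obstacle'' — that the limit is time-independent, and that the rescaled zero-number control survives in the limit — are precisely where there is no argument, and it is not at all clear they can be supplied; the one-dimensional results here do not rely on any monotonicity or Lyapunov functional that would force quasi-convergence of the rescalings. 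The paper takes a much more elementary path: it first establishes, via the maximum principle on subdomains and the one-sided bound $u_{tx}\ge -C$ (Lemma~\ref{minlem1}) combined with zero-number control of $u_t+\eps$ (Lemmas~\ref{minlem2} and~\ref{minlem3}), that $u_t$ is continuous at the corners $(T^*_-,0)$ and $(T^*_+,0)$; it then applies the quantitative variation-of-constants argument of Proposition~\ref{contut} (a sharpening of the proof of Theorems~\ref{minimal_rate}--\ref{minimal_rate2} using Gaussian bounds on $\nabla_xG$), which shows that vanishing of $u_t$ at the corner forces the rate to be strictly more singular than $(T^*\mp t)^{-1/(p-2)}$. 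This avoids any blow-up/limit-profile classification entirely.
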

\medskip

\begin{rem} 
The solutions described in Theorem~\ref{minimal0}  are a kind of analogue of the {\it peaking solutions}
known for the semilinear heat equation $u_t-\Delta u=u^p$, $t>0$, $x\in \R^n$.
Such solutions (which exist for $n\ge 3$ in suitable ranges of $p$) blow up (in $L^\infty$ norm) at the origin
as $t\to T_-$ and are instantaneously and forever regularized for $t>T$.
See, e.g., \cite{GV97}, \cite{FMP05}, \cite{MM09} and the references therein for more details.
\end{rem} 
\medskip

The outline of the rest of the paper is as follows. 
A number of preliminary facts are collected in Section~3.
Section~\ref{sec-rates} is devoted to the lower estimates of blow-up and regularization rates, including
the proofs of Theorems \ref{minimal_rate} and \ref{minimal_rate2}.
The rest of the paper is then concerned with the one-dimensional problem.
The useful, preliminary one-dimensional results are given in Section~\ref{prelim1d},
except for the basic material on zero-number, which is the object of Section~\ref{SecZ}.
The behavior of nonminimal solutions is studied in Section~\ref{Sec-nonmin},
which consists of three subsections. The first two are respectively devoted to the GBU and LBC behavior near $T^*$,
and to the reconnection and regularization behavior near~$T^r$.
The last subsection studies the singular life of the solution in the time interval $(T^*,T^r)$.
In particular, the proofs of  
  Theorems~\ref{nonmin0} and \ref{proppersist}  are completed in Subsection~\ref{Sec-nonmin3}.
The behavior of minimal solutions is then studied in Section~\ref{Sec-min}, where Theorem~\ref{minimal0} is proved.
   Finally, the paper is complemented with 
two appendices, respectively devoted to an additional zero-number property and to the proof of 
 a useful approximation result, via Bernstein-type estimates. 

\section{Preliminary facts on the viscous HJ equation}\label{Sec3}

In this section, we recall some    preliminary  facts about the unique solution of the problem 
\begin{eqnarray}
   &u_t-\Delta u=|\nabla u|^p,\ \ \ &t>0,\ x\in \Omega,\label{vhj-1}\\
   &u(t,x)=0,\ &t>0,\ x\in \partial \Omega,\label{vhj-2}\\
   &u(0,x)=\phi(x),\ &x\in \Omega. \label{vhj-3}
\end{eqnarray}
Recall the notation 
 $$
 X=\{\phi\in C^1(\overline\Omega),\ \phi=0 \hbox{ on $\partial\Omega$}\}.
 $$
The following theorem collects a few results established in the literature
 (see, e.g., \cite{BDL04}, \cite[Section 40]{QS07} and the references therein).
 
\medskip

    \goodbreak 
 
 \begin{thm}\label{prelimprop}
Let $\Omega$ be a smooth bounded domain of $\R^n$, $p>2$ and $\phi\in X$. Then:  
 \begin{itemize}
 
 \item[(i)] There exists a unique generalized viscosity solution $u$ of problem \rife{vhj-1}-\rife{vhj-3} for $t\in (0,\infty)$, and  $u\in C([0,\infty) \times \overline \Omega)$   with $u\ge 0$ on $[0,\infty)\times\partial\Omega$. Moreover, if $u,v$ are the solutions corresponding to   different initial data $u_0,v_0\in X$, then 
\be\label{contdep}
\|u(t)-v(t)\|_\infty\le \|u_0-v_0\|_\infty, \quad\hbox{ for all $t>0$.}
\ee

\item[(ii)] There exists $T^*\in (0,\infty]$ such that    $u\in C^{1,2}((0,T^*)\times\overline\Omega)$,
$\nabla u\in C([0,T^*)\times\overline\Omega)$,  $u$ is a classical solution in $(0, T^*)$ and, if $T^*<\infty$, then
$$
\lim\limits_{t \to T^*-} \|\nabla u(t)\|_\infty = \infty\,.
$$

\item[(iii)]  We have $u\in  C^{1,2}((0,\infty) \times \Omega)$, i.e. $u$ is smooth inside the domain~$\Omega$.

\item[(iv)] For all $t>0$ we have $u_t(t,\cdot)\in L^\infty(\Omega)$ 
and, given any $t_0>0$, there exists a constant 
  $M(t_0)>0$  such that
\be\label{u_t-bounded}
\|u_t(t)\|_\infty\le M(t_0),\quad t\ge t_0.
\ee
The same estimate also holds for $t_0=0$ provided $\Delta u_0 + |\nabla u_0|^p \in C(\overline \Omega)$.
  \end{itemize}
  \end{thm}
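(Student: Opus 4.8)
The plan is to assemble the four items from the existing literature, filling in the standard arguments. For (i), the existence and uniqueness of a global generalized (continuous) viscosity solution, together with the contraction estimate \rife{contdep} and the sign condition $u\ge 0$ on the boundary, is precisely the content of \cite{BDL04}: one constructs $u$ as the (monotone, locally uniform) limit of classical solutions $u_k$ of regularized problems with truncated nonlinearity $\min(|\nabla u|^p,k)$, which are globally defined by the maximum principle and a Bernstein-type gradient bound depending on $k$; the limit is then identified as the unique viscosity solution via stability of the viscosity notion, and \rife{contdep} passes to the limit from the comparison principle for the approximate problems. The sign $u\ge 0$ on $\partial\Omega$ follows since each $u_k\ge 0$ on $\overline\Omega$ (comparison with $0$, as $\phi\ge 0$ is not assumed but one uses comparison with the solution of the linear heat equation, which is $\ge$ its boundary data $0$... more precisely one compares with the subsolution that is the heat semigroup applied to $\phi$, noting $|\nabla u|^p\ge 0$).

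For (ii), the local classical theory: since $\phi\in C^1(\overline\Omega)$ with $\phi=0$ on $\partial\Omega$, parabolic regularity with the compatibility condition gives a unique maximal classical solution on $[0,T^*)\times\overline\Omega$ with $\nabla u\in C([0,T^*)\times\overline\Omega)$ and $u\in C^{1,2}((0,T^*)\times\overline\Omega)$, and by the usual continuation criterion the only obstruction to global existence is blow-up of $\|\nabla u(t)\|_\infty$ (the $L^\infty$ norm of $u$ itself staying bounded by the maximum principle). The coincidence of this classical solution with the viscosity solution on $(0,T^*)$ is again from \cite{BDL04}. For (iii), interior smoothness on $(0,\infty)\times\Omega$: one invokes the interior universal gradient estimate for \rife{VHJ} (valid for $p>2$; see \cite{SZ06} and the references in \cite{QS07}), which bounds $|\nabla u|$ locally inside $\Omega$ independently of the solution; bootstrapping via interior parabolic Schauder estimates then yields $u\in C^{1,2}_{loc}((0,\infty)\times\Omega)$.

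For (iv), the bound \rife{u_t-bounded} on $u_t$: the standard route is to differentiate the regularized equations in time and apply the maximum principle to $w_k=\partial_t u_k$, which solves a linear parabolic equation $w_t-\Delta w = p|\nabla u_k|^{p-2}\nabla u_k\cdot\nabla w$ (with the truncation, a bounded drift) with zero boundary data; comparing with the explicit barrier of the form $M(t_0) := C/t_0$ type — obtained from the bound $\|w_k(t)\|_\infty\le (C/t)\|u_k\|_\infty$ available by a regularizing estimate, or more simply from the concavity-in-time structure — gives a bound uniform in $k$, which passes to the limit. When $\Delta u_0+|\nabla u_0|^p\in C(\overline\Omega)$, this quantity equals $u_t(0,\cdot)$ and one gets the bound up to $t_0=0$ by the maximum principle directly (no regularizing factor needed). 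The main subtlety — and the only point requiring care rather than citation — is justifying that these a priori bounds for the truncated problems pass to the generalized viscosity solution in the region where it is merely continuous, i.e.\ near the boundary after GBU; this is handled by the monotone convergence $u_k\uparrow u$ and the fact that $u_t$ is a bounded measurable function identified as the weak/a.e.\ limit of $\partial_t u_k$. I expect the bookkeeping of this limiting argument near the boundary, rather than any single estimate, to be the main obstacle, but it is entirely routine given \cite{BDL04} and \cite{QS07}.
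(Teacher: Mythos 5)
The paper states Theorem~\ref{prelimprop} as a compilation of known results, citing \cite{BDL04} and \cite[Section 40]{QS07}, and supplies no proof; so there is nothing in the paper to compare against directly. Your sketch is broadly consistent with the arguments in those references, and the route you outline (regularized problems with truncated nonlinearity, monotone approximation, comparison, interior universal gradient bound and Schauder bootstrap) is the standard one.

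The one point worth flagging is your argument for (iv). The regularizing bound $\|\partial_t u_k(t)\|_\infty\le (C/t)\|u_k\|_\infty$ you invoke is not an off-the-shelf estimate for this problem, and there is no obvious ``concavity-in-time'' structure available either, since $|\nabla u|^p$ is not homogeneous in $u$ itself. The cleaner (and the paper's implicit) route, spelled out where the paper discusses \rife{eqabA}, is the following: fix any $t_0\in(0,T^*(\phi))$; by uniqueness for the truncated problems, $u_k\equiv u$ on $[0,t_0]\times\overline\Omega$ for all $k$ large, so that $\|\partial_tu_k(t_0)\|_\infty=\|u_t(t_0)\|_\infty=:M(t_0)<\infty$ is $k$-independent; then apply the maximum principle to the linear drift equation $w_t-\Delta w=\nabla F_k(\nabla u_k)\cdot\nabla w$ solved by $w=\partial_tu_k$ with zero Dirichlet data, giving $\|\partial_tu_k(t)\|_\infty\le M(t_0)$ for all $t\ge t_0$ uniformly in $k$; pass to the limit using the interior $C^{1,2}_{loc}$ convergence $u_k\to u$. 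For $t_0\ge T^*$ one simply applies this with any $t_1\in(0,T^*)$ in place of $t_0$. The $t_0=0$ case under the compatibility hypothesis $\Delta u_0+|\nabla u_0|^p\in C(\overline\Omega)$ is handled exactly as you describe, since then $\|\partial_tu_k(0)\|_\infty$ is already finite and $k$-independent.
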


As mentioned before, it is quite useful to recover the solution $u$ as the limit
of global classical solutions of regularized problems of the form:  
\be\label{approxpbm}
\begin{cases}
\partial_tu_k-\Delta u_k =F_k(\nabla u_k), & \quad t>0,\ x\in\Omega, \\
u_k=0, & \quad t>0,\ x\in\partial\Omega,\\
u_k(0,x)=\phi(x), & \quad x\in\Omega.
\end{cases}
\ee
We note that the simple truncated nonlinearities $F_k(\xi)=\min\bigl(|\xi|^p,k^{p-2}|\xi|^2\bigr)$ are often used.
 However these $F_k$ are only locally Lipschitz continuous and choices with better regularity, and additional features such as convexity,
 may be useful for certain properties
 (see Section~\ref{prelim1d}). 
 
To this end, we formulate the following general approximation result. The first assertion shows
--~without resorting to viscosity solution theory~-- that the maximal classical solution of problem~\rife{VHJ} 
admits a unique global weak continuation $U$
after $t=T^*$, defined in a natural way through monotone approximation.
This notion of ``limit solution'' $U$ is similar to the concept of ``proper extension'', 
which is customary in the blow-up theory for 
nonlinear parabolic equations with zero-order nonlinearities, such as 
\be\label{NLH}
u_t-\Delta u= u^p
\ee
(see \cite{BC87}, \cite{GV97}, \cite{QS07})\footnote{   As an important difference 
with the diffusive Hamilton-Jacobi equation, 
we recall that for equation \rife{NLH} the existence of a global weak continuation after blow-up is an exceptional phenomenon 
(it only occurs for ``minimal'' blow-up solutions in certain ranges of $p$), and that blow-up is generically complete. }.
As for the second assertion, it shows that the limit solution $U$ coincides with the global viscosity solution $u$.

\begin{thm}\label{prelimprop2}
Let $\Omega$ be a smooth bounded domain of $\R^n$, $p>2$ and $\phi\in X$.
\smallskip

(i) Consider a sequence of nonlinearities $F_k(\xi)\in W^{1,\infty}_{loc}(\R^n)$ such that:
\be\label{approxHyp1}
\hbox{$F_k(\xi)$ is nondecreasing with respect to $k$, with
$\lim_k F_k(\xi)=|\xi|^p$ for all $\xi\in\R^n$,}
\ee
\be\label{approxHyp3}
|\nabla F_k(\xi)|\le C_1\bigl(1+|\xi|^{-2}F_k^{2-\theta}(\xi)\bigr)\quad\hbox{for all $\xi\ne 0$,}
\ee
\be\label{approxHyp4}
F_k(\xi)\ge C_2|\xi|^2\quad\hbox{ for all $|\xi|\ge 1$,}
\ee
\be\label{approxHyp4b}
0\le F_k(\xi)\le C(k)(1+|\xi|^2)\quad\hbox{ for all $\xi\in\R^n$,}
\ee
where $\theta\in (0,1), C_1, C_2>0$ 
are constants independent of $k$ and $C(k)>0$ is a constant depending on $k$.
Then problem \rife{approxpbm} has a unique global classical solution $u_k$ and there exists a function
 $U\in C([0,\infty)\times\overline\Omega)\cap C^{1,2}((0,\infty)\times\Omega)$,
with the following properties:
\be\label{approxpbm2}
\hbox{ $u_k$ is nondecreasing with respect to $k$, \quad $\displaystyle\lim_{k\to\infty}u_k=U$   in $C^{1,2}_{loc}((0,\infty)\times\Omega)$,}
\ee
and $U$ solves the problem
\be\label{VHJ2}
\begin{cases}
U_t -\Delta U = |\nabla U|^p, & t>0,\ x\in \Omega, \\
U(0,x)= \phi, & x\in \Omega.
\end{cases} 
\ee
Moreover we have
 \be  \label{BernsteinPhAux6ca}
U\ge 0 \ \hbox{ on $(0,\infty)\times\partial\Omega$}
\ee
and $U$ is independent of the choice of the $F_k$ satisfying the above assumptions.
\smallskip

(ii) The solution $U$ from assertion (i) concides with the global viscosity solution $u$ given by Theorem~\ref{prelimprop}. 
\end{thm}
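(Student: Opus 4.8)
The plan is to build the limit solution $U$ by a monotone approximation scheme using the nonlinearities $F_k$, then identify it with the viscosity solution by a stability argument. For assertion (i), first I would note that assumption \eqref{approxHyp4b} makes $F_k$ globally Lipschitz-like in the relevant sense (at most quadratic growth, locally Lipschitz gradient), so that problem \eqref{approxpbm} has a unique classical solution $u_k$; a priori $L^\infty$ bounds come from the maximum principle (using $F_k\ge 0$ and comparing with $0$ from below and with $\phi$ translated in time, or with a suitable supersolution, to get $0 \le u_k \le \|\phi\|_\infty$), and global existence then follows because, by the quadratic growth of $F_k$, a Bernstein-type gradient estimate (of the form $\|\nabla u_k(t)\|_\infty \le C(k,t_0)$ on $[t_0,\infty)$ and $\|\nabla u_k(t)\|_\infty \le \|\nabla\phi\|_\infty e^{C(k)t}$ near $t=0$) prevents gradient blow-up — this is the subquadratic/quadratic regime where \eqref{VHJ} is globally well posed. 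Monotonicity of $u_k$ in $k$ is the comparison principle applied to \eqref{approxpbm} together with \eqref{approxHyp1}: $u_k$ is a subsolution of the problem solved by $u_{k+1}$. Hence $U:=\lim_k u_k$ exists pointwise, with $0\le U\le\|\phi\|_\infty$, and $U\ge 0$ on the parabolic boundary, giving \eqref{BernsteinPhAux6ca}.

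The heart of assertion (i) is to upgrade pointwise convergence to $C^{1,2}_{loc}((0,\infty)\times\Omega)$ convergence and to show $U$ solves \eqref{VHJ2}. The key is a \emph{uniform-in-$k$} interior gradient estimate: on any compact $Q'\subset\subset (0,\infty)\times\Omega$ one has $\|\nabla u_k\|_{L^\infty(Q')}\le C(Q')$ independent of $k$. This is exactly where hypotheses \eqref{approxHyp3}–\eqref{approxHyp4} are designed to be used: a Bernstein argument applied to $w=|\nabla u_k|^2$ (or a suitable power/cut-off variant) produces a differential inequality whose ``bad'' terms are controlled by the structure condition \eqref{approxHyp3} with $\theta<1$ — the sublinear-in-$F_k$ growth of $|\nabla F_k|$ relative to $F_k$ is what closes the estimate, in the spirit of Lions' gradient bounds and the local estimates used for \eqref{VHJ} (cf.\ \cite{SZ06}). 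This is the step I expect to be the main technical obstacle: getting the Bernstein computation to produce a bound depending only on $C_1,C_2,\theta$ and the geometry of $Q'$, not on $C(k)$. Given this uniform gradient bound, $F_k(\nabla u_k)$ is locally bounded uniformly in $k$, so parabolic $L^q$ and then Schauder estimates give uniform $C^{1+\alpha,2+\alpha}_{loc}$ bounds; Arzelà–Ascoli plus the pointwise limit identify the limit as $U\in C^{1,2}((0,\infty)\times\Omega)$, and passing to the limit in the equation (using $F_k(\xi)\to|\xi|^p$ locally uniformly, which follows from \eqref{approxHyp1} and Dini's theorem since the limit is continuous) shows $U$ solves $U_t-\Delta U=|\nabla U|^p$ in $(0,\infty)\times\Omega$. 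Continuity up to $t=0$ with $U(0,\cdot)=\phi$ follows from the uniform estimate near $t=0$ together with the sandwich $0\le u_k\le$ (classical solution of the full problem on $(0,T^*)$), e.g.\ by comparison on the initial layer. Independence of $U$ from the choice of $\{F_k\}$ is then a squeezing argument: given two admissible sequences $F_k,\tilde F_k$, for each fixed $k$ one can find $j(k)$ with $F_k\le \tilde F_{j(k)}$ on any fixed ball (by \eqref{approxHyp1} and Dini again), hence $u_k\le\tilde u_{j(k)}$ by comparison, so $U\le\tilde U$, and symmetrically $\tilde U\le U$.

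For assertion (ii), the strategy is to show that each $u_k$ is a viscosity subsolution of the limiting problem \eqref{VHJ} (in the generalized boundary sense), so that by the comparison principle for viscosity solutions one has $u_k\le u$ on $(0,\infty)\times\overline\Omega$, whence $U\le u$. For the reverse inequality, I would use the known characterization of $u$ from Theorem~\ref{prelimprop} together with \eqref{approxpbm2}: $u$ coincides with the classical solution on $(0,T^*)$, and there $U=u$ by uniqueness of the classical solution (both are classical solutions with data $\phi$, and $U\le u$ with $u$ classical forces equality up to $T^*$ by the gradient estimate, actually more simply $u_k\le u$ and $u_k\uparrow$ a classical solution on $(0,T^*)$ pinned below $u$); for $t\ge T^*$ one uses that $U$ is, by construction, a generalized (viscosity) solution — because it is a locally uniform limit of the classical solutions $u_k$ of equations converging to \eqref{VHJ} with the right boundary behavior $U\ge 0$ — and invokes the uniqueness statement in Theorem~\ref{prelimprop}(i), together with the contraction \eqref{contdep}, to conclude $U=u$. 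The only delicate point here is the boundary condition: one must check that the monotone limit $U$ satisfies the \emph{relaxed} (viscosity) Dirichlet condition on $\partial\Omega$, which follows from stability of viscosity sub/supersolutions under locally uniform convergence (Barles–Perthame) applied up to the boundary, using that the $u_k$ vanish on $\partial\Omega$ and $F_k\to|\nabla\cdot|^p$; this is precisely the mechanism by which the loss of boundary condition is encoded, as discussed after \eqref{approxpbm} in the text.
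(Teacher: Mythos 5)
Your plan follows the paper's strategy in its broad outlines -- monotone approximation, a Bernstein-type interior gradient estimate uniform in $k$ that exploits the structure conditions \eqref{approxHyp3}--\eqref{approxHyp4}, parabolic compactness and passage to the limit, then comparison arguments. The Bernstein step you identify as the main technical obstacle is indeed the core of the paper's proof (Lemma~\ref{prelimprop2lem}), and your account of why the hypotheses are designed to close the estimate is accurate. However, there are two genuine gaps.

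First, you only address continuity of $U$ at $t=0$, not continuity up to the spatial boundary $\partial\Omega$ for $t>T^*$, which is the nontrivial part of the conclusion $U\in C([0,\infty)\times\overline\Omega)$. The interior Bernstein estimate \eqref{approxpbm2aEstim} degenerates as $\delta(x)\to 0$, so $C^{1,2}_{loc}$ compactness says nothing near $\partial\Omega$, and the sandwich with the classical solution of \rife{VHJ} is only available before $T^*$. The paper handles this in Step 2 by observing $|U_t|\le M_0$ for $t\ge t_0$, hence $\Delta U+|\nabla U|^p\le M_0$, and then invoking the boundary H\"older estimate $U(t)\in C^{1-\alpha}(\overline\Omega)$ of \cite[Theorem 1.1]{CLP}, combined with Lipschitz-in-time. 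This step is not elementary and must be supplied; it is absent from your outline.

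Second, your Dini-squeezing argument for independence of the choice $\{F_k\}$ does not go through in general. From $F_k\le|\xi|^p$ and $\tilde F_j\uparrow|\xi|^p$ (uniformly on compacts by Dini) one only gets $F_k-\tilde F_j\le\eps$ for large $j$, not $F_k\le\tilde F_j$; in fact for the natural truncations one has $F_k(\xi)=|\xi|^p$ on a ball, and the desired ordering can fail for every $j$. The paper avoids this by not comparing $u_k$ with $\tilde u_{j(k)}$ at all: since each $G_k\le|\cdot|^p$ by \eqref{approxHyp1}, $v_k$ is a classical subsolution of the full problem, and it lies below $U$ at $t=0$ and on $\partial\Omega$ (using \eqref{BernsteinPhAux6ca}), so Proposition~\ref{compP} directly yields $v_k\le U$, hence $V\le U$; the reverse is symmetric. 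You should adopt this cleaner comparison. Relatedly, in part (ii) your route through Barles--Perthame stability and ``the uniqueness statement'' is more machinery than needed: $U\le u$ follows from the elementary comparison Proposition~\ref{compP} applied to $u_k$ and $u$ (both $C^{1,2}$ inside, continuous up to $\overline\Omega$, $u_k=0\le u$ on $\partial\Omega$), and $U\ge u$ follows because $U$ is a classical, hence viscosity, supersolution with $U\ge 0$ on $\partial\Omega$, and one invokes the comparison principle of \cite{BDL04} between the generalized viscosity subsolution $u$ and the supersolution~$U$.
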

 
   Results similar to Theorem~\ref{prelimprop2} are probably known but we could not find in the literature a statement and proof suitable to our needs.
 We therefore give in Appendix a proof, based on a simple Bernstein type argument.
 
 \medskip

 We shall always consider truncated nonlinearities, i.e. $F_k$ such that, for all $A>0$, there exists $k_A>0$ such that
 \be\label{eqabA00}
 F_k(\xi)=|\xi|^p\quad\hbox{ for all $|\xi|\le A$ and $k\ge k_A$}.
 \ee
The following facts will be useful:
for each $t_0\in (0,T^*(\phi))$, there exist $k_0=k_0(t_0)$ and $M=M(t_0)>0$ such that
\be\label{eqabA0}
u_k=u\quad\hbox{ on $[0,t_0]\times \overline\Omega$ for all $k\ge k_0$}
\ee
and
\be\label{eqabA}
|\partial_tu_k|\le M \quad\hbox{ on $[t_0,\infty)\times\Omega$ for all $k\ge k_0$}.
\ee
   Indeed, \rife{eqabA0} is a consequence of \rife{eqabA00} and uniqueness for problem \rife{approxpbm}, 
whereas property \rife{eqabA} easily follows by applying the maximum principle to the equation satisfied by $\partial_tu_k$.

 \vskip1em

Recall the following standard comparison principle.

 \begin{prop}\label{compP}
 Let $\Omega\subset\R^n$ be a bounded domain and $T>0$. Set $Q:=(0,T)\times \Omega$ and 
$$\mathcal{P}v:=v_t-\Delta v-|\nabla v|^p.$$ 
If $v_1,v_2\in C^{1,2}(Q)\cap C(\overline Q)$ satisfy 
$\mathcal{P}v_1\le \mathcal{P}v_2$ in $Q$ and $v_1\le v_2$ on $\partial_PQ$, then $v_1\le v_2$ on $Q$.
 \end{prop}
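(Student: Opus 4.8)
The plan is to reduce the statement to a differential inequality for the difference $w:=v_1-v_2$ and then run a classical maximum point argument, inserting a small linear-in-time perturbation so that the extremum is pushed away from the parabolic boundary, where only continuity (and not $C^{1,2}$ regularity) is available.

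First I would record that $w\in C^{1,2}(Q)\cap C(\overline Q)$ satisfies $w\le 0$ on $\partial_PQ$ and, in $Q$,
$$
w_t-\Delta w \;=\; (\mathcal{P}v_1-\mathcal{P}v_2)+\big(|\nabla v_1|^p-|\nabla v_2|^p\big)\;\le\; |\nabla v_1|^p-|\nabla v_2|^p,
$$
the inequality coming from $\mathcal{P}v_1\le \mathcal{P}v_2$. The only obstruction to the naive comparison argument is the gradient term on the right-hand side; the key observation is that this term \emph{vanishes} at any point where $\nabla w=0$, in particular at an interior maximum of $w$. So, fix $\tau\in(0,T)$ and $\eps>0$, set $Q_\tau:=(0,\tau)\times\Omega$, and consider $w_\eps:=w-\eps t$ on the compact set $\overline{Q_\tau}$. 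Since $w\le 0$ on $\partial_PQ_\tau$ while $\eps t\ge 0$ there, we have $w_\eps\le 0$ on $\partial_PQ_\tau$; hence if $\max_{\overline{Q_\tau}}w_\eps>0$, it is attained at some point $(t_0,x_0)$ with $t_0\in(0,\tau]$ and $x_0\in\Omega$, a point at which $w$ is $C^{1,2}$ (since $(t_0,x_0)\in Q$, using $0<t_0\le\tau<T$).

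At such a point, the usual first- and second-order optimality conditions give $\nabla w(t_0,x_0)=0$, $\Delta w(t_0,x_0)\le 0$, and $\partial_tw_\eps(t_0,x_0)\ge 0$, hence $w_t(t_0,x_0)\ge\eps$. Moreover $\nabla w(t_0,x_0)=0$ means $\nabla v_1(t_0,x_0)=\nabla v_2(t_0,x_0)$, so the right-hand side of the displayed inequality vanishes at $(t_0,x_0)$. Therefore $\eps\le w_t(t_0,x_0)\le\Delta w(t_0,x_0)\le 0$, a contradiction. Thus $\max_{\overline{Q_\tau}}w_\eps\le 0$; letting $\eps\to 0$ and then $\tau\to T$, and invoking continuity of $w$ on $\overline Q$, we conclude $w\le 0$ on $Q$, i.e. $v_1\le v_2$ on $Q$.

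The point requiring care is precisely the low boundary regularity: since $v_1,v_2$ are assumed $C^{1,2}$ only in the open cylinder and merely continuous up to $\partial_PQ$, one cannot simply linearize $|\nabla v_1|^p-|\nabla v_2|^p=\mathbf b\cdot\nabla w$ and invoke a maximum principle for parabolic operators with bounded coefficients, because $\mathbf b$ need not be bounded near $\partial_PQ$. The perturbation $-\eps t$ together with the exhaustion by $Q_\tau$, $\tau<T$, circumvents this: it forces the maximum of $w_\eps$ into the region where $w$ is smooth and, there, the nonlinear term disappears at no cost. (Alternatively one could add a spatial barrier vanishing on $\partial\Omega$ and work with the linearized operator on slightly shrunk domains, but the argument above is the shortest route.)
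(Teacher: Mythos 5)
Your proof is correct and follows essentially the same approach as the paper: introduce $w_\eps=v_1-v_2-\eps t$, observe that at an interior maximum one has $\nabla w_\eps=0$ so the gradient terms cancel, derive the contradiction $0\le -\eps$, and conclude by letting $\eps\to 0$ and $\tau\to T$. Your added commentary on why the $-\eps t$ perturbation and the exhaustion by $Q_\tau$ circumvent the low boundary regularity is a fair elaboration of the same idea.
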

 
Note that the $v_i$ need not be $C^1$ up to the boundary.
Let us give the short proof for convenience (cf., e.g., \cite{SZ06}).

\begin{proof} Fix $\tau\in (0,T)$, $\eps>0$ and let $w = v_1-v_2-\eps t$.
We see that $w$ cannot attain a local maximum in $(0,\tau]\times \Omega$, since at such a point, we would have 
$0\le  w_t-\Delta w=|\nabla v_1|^p-|\nabla v_2|^p-\eps = -\eps < 0$.
Therefore, $\max_{Q_\tau} w = \max_{\partial_PQ_\tau}w$, and the conclusion follows by letting $\eps\to 0$ and $\tau\to T$.
\end{proof}

We shall also use the following simple comparison principle for the viscosity solution 
of \rife{vhj-1}-\rife{vhj-3}.\footnote{We recall that a comparison principle holds even in the strong form, 
 i.e. for possibly discontinuous {\it generalized} solutions satisfying the boundary conditions in a relaxed sense (see \cite{BDL04}).
 However we will not need this stronger form.}
 
\begin{prop}\label{compP2}
Let $\phi,\psi\in X$ and let $u, v$ be the corresponding
global viscosity solutions of \rife{vhj-1}-\rife{vhj-3}.
If $u(t_0)\le v(t_0)$ in $\overline\Omega$ for some $t_0\ge 0$,
then $u(t)\le v(t)$ in $\overline\Omega$ for all $t\in [t_0,\infty)$.
\end{prop}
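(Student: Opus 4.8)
The plan is to deduce this from the monotone approximation of Theorem~\ref{prelimprop2} combined with the elementary maximum principle used in Proposition~\ref{compP}. The essential difficulty is that, because of the possible loss of boundary conditions, we have no ordering between $u$ and $v$ on the lateral boundary $(t_0,\infty)\times\partial\Omega$, so Proposition~\ref{compP} cannot be applied directly to the pair $(u,v)$. The way around this is to compare the \emph{approximate} solutions associated with $\phi$ — which genuinely vanish on $\partial\Omega$ — against the exact solution $v$, using that $v$ is a supersolution of each truncated equation.

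Concretely, fix a sequence of truncated nonlinearities $F_k$ as in Theorem~\ref{prelimprop2} and let $u_k$ be the corresponding global classical solutions of \rife{approxpbm} with datum $\phi$, so that $u_k\nearrow u$ with $u_k\to u$ in $C^{1,2}_{loc}((0,\infty)\times\Omega)$. Since $F_k\le|\cdot|^p$ by \rife{approxHyp1}, the function $v$ satisfies $v_t-\Delta v-F_k(\nabla v)=|\nabla v|^p-F_k(\nabla v)\ge 0$ on $(0,\infty)\times\Omega$, i.e.\ it is a classical supersolution of the $k$-th truncated equation there, and $v\in C([0,\infty)\times\overline\Omega)\cap C^{1,2}((0,\infty)\times\Omega)$ by Theorem~\ref{prelimprop}. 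Moreover $u_k=0\le v$ on $(0,\infty)\times\partial\Omega$ and $u_k(t_0)\le u(t_0)\le v(t_0)$ in $\overline\Omega$.

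Then I would run the argument of Proposition~\ref{compP} for the truncated equation: fix $T>t_0$, $\tau\in(t_0,T)$ and $\eps>0$, and set $w=u_k-v-\eps t$ on $\overline{Q_\tau}$ with $Q_\tau=(t_0,\tau)\times\Omega$. At a maximum point of $w$ in $(t_0,\tau]\times\Omega$ one has $w_t\ge 0$, $\Delta w\le 0$ and $\nabla u_k=\nabla v$, whence $0\le w_t-\Delta w=\big[F_k(\nabla u_k)-(v_t-\Delta v)\big]-\eps\le F_k(\nabla v)-F_k(\nabla v)-\eps=-\eps<0$, a contradiction; hence $w$ attains its maximum over $\overline{Q_\tau}$ on $\partial_PQ_\tau$, where $w\le 0$ by the boundary inequalities above. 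Thus $u_k\le v+\eps t$ on $\overline{Q_\tau}$; letting $\eps\to0$, $\tau\to T$ and then $k\to\infty$ yields $u\le v$ on $(t_0,T)\times\Omega$ (using $u_k\to u$ in the interior), which extends to $[t_0,T]\times\overline\Omega$ by continuity of $u$ and $v$. Since $T>t_0$ is arbitrary, this is the claim.

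The only mildly delicate points are that $v$ is $C^{1,2}$ only in the interior (its gradient may blow up at $\partial\Omega$ during LBC) and that $u_k$ does not converge to $u$ on the lateral boundary; but, exactly as in Proposition~\ref{compP}, the maximum principle step only inspects a single interior contact point where all quantities are finite, and the boundary values are recovered afterwards from the continuity of the limit $u$. So neither causes any real obstruction — the one genuine idea is the use of the honestly-zero-boundary approximations $u_k$ on the subsolution side to bypass the missing lateral ordering of $u$ and $v$.
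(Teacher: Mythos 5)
Your proof is correct and follows essentially the same route as the paper: both use the monotone approximations $u_k$ (which vanish on $\partial\Omega$, supplying the missing lateral ordering), exploit $F_k\le|\cdot|^p$ to set up a sub/supersolution comparison between $u_k$ and $v$, and pass to the limit $k\to\infty$ in the interior before recovering the boundary by continuity. The only cosmetic difference is that the paper reads the inequality $F_k\le|\cdot|^p$ as making $u_k$ a subsolution of the full equation and invokes Proposition~\ref{compP} directly, whereas you read it as making $v$ a supersolution of the truncated equation and re-run the same maximum-principle step by hand.
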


   For convenience, we give a short proof, based on the approximation result in Theorem~\ref{prelimprop2}. 

\begin{proof} 
In view of Theorem~\ref{prelimprop2}(ii), this amounts to showing that 
$U\le V$ in $[t_0,\infty)\times\overline\Omega$,
where $U,V$ are the limit solutions given by Theorem~\ref{prelimprop2}(i).
Let $u_k$ be given by Theorem~\ref{prelimprop2}(i). 
 For each $k\ge 0$, we have $u_k(t_0)\le U(t_0)\le V(t_0)$ in~$\overline\Omega$.
Moreover $u_k(t)=0\le V(t)$ on $\partial\Omega$ for all $t\in [t_0,\infty)$.
Since both $u_k$ and $V$ have the required regularity to apply Proposition~\ref{compP},
we deduce that $u_k(t)\le V(t)$ in $\Omega$ for all $t\in [t_0,\infty)$.
Letting $k\to\infty$ it follows that $U(t)\le V(t)$ in $\Omega$ for all $t\in [t_0,\infty)$.
Finally letting $x\to\partial\Omega$, we reach the desired conclusion.
\end{proof}

\section {On the minimal GBU and regularization rate}
\label{sec-rates}

   In this section we first state and prove a slightly more general version of Theorems \ref{minimal_rate} and \ref{minimal_rate2}
concerning the minimal GBU and regularization rates.
Later, at the end of the section, we will give a sufficient condition
for the GBU or regularization rates to be more singular than the minimal ones
(this will be one of the ingredients of the proof of Theorem~\ref{minimal0} in Section~\ref{Sec-min}).

\begin{thm}\label{minimal_rateG}

Let $\phi\in X$, $T>0$ {and let $u$ be the unique viscosity solution of \rife{vhj-1}-\rife{vhj-3}}.

(i) 
Assume that $u$ is a classical solution of \rife{vhj-1}-\rife{vhj-2} 
on the time interval $(T-\delta,T)$ for some $\delta>0$ and
 is not a classical solution on any interval of the form $(T-\delta, T+\eps)$ with $\eps>0$.
Then
$$
\|\nabla u(t)\|_\infty\ge C(T-t)^{-1/(p-2)},\quad t\to T_-.
$$
In particular, this is true with $T=T^*(\phi)$. 

  (ii) 
Assume that $u$ is a classical solution of \rife{vhj-1}-\rife{vhj-2}  
on the time interval $(T,T+\delta)$ for some $\delta>0$ and 
  is not a classical solution on any interval of the form $(T-\eps,T+\delta)$ with $\eps>0$.
Then
$$
\|\nabla u(t)\|_\infty\ge C(t-T)^{-1/(p-2)},\quad t\to T_+.
$$
In particular, this is true with $T=T^r(\phi)$. 
\end{thm}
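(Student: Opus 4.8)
The plan is to prove (i); the proof of (ii) is obtained by an obvious modification (reversing time orientation near $T$, now working forward from $T$). The strategy is a contradiction argument: assume the lower bound fails, i.e. there is a sequence $t_j\to T_-$ with $(T-t_j)^{1/(p-2)}\|\nabla u(t_j)\|_\infty\to 0$, and deduce that $u$ extends as a classical solution slightly past $T$, contradicting the hypothesis. The key analytic input is the variation of constants formula for $u$ on the interval $(T-\delta,T)$, where $u$ is a bona fide classical solution:
$$
\nabla u(t) = \nabla e^{(t-s)\Delta}u(s) + \int_s^t \nabla e^{(t-\sigma)\Delta}\bigl(|\nabla u(\sigma)|^p\bigr)\,d\sigma,
\qquad T-\delta<s<t<T,
$$
where $e^{t\Delta}$ is the Dirichlet heat semigroup on $\Omega$. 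Using the smoothing estimates $\|\nabla e^{\tau\Delta}f\|_\infty\le C\tau^{-1/2}\|f\|_\infty$ together with the $L^\infty$ bound $\|u(t)\|_\infty\le\|\phi\|_\infty$ (maximum principle, Theorem~\ref{prelimprop}), one controls the first term; for the nonlinear term one sets $M(t):=\sup_{s\in[T-\delta,t]}\|\nabla u(s)\|_\infty$ (which is finite for $t<T$ and, by Theorem~\ref{prelimprop}(ii), tends to $\infty$ as $t\to T$) and derives an integral inequality of Gronwall type for $M(t)$ near $T$.

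The heart of the matter — and the delicate point flagged by the authors as "a rather delicate argument based on the variation of constants formula" — is to bootstrap from the \emph{weak} blow-up rate \eqref{lower-bu-weaker}, which follows relatively easily (scaling/self-similar comparison, as in \cite{GH08}), to the \emph{pointwise-in-time} rate \eqref{lower-bu0}. Concretely: if $\|\nabla u(t_j)\|_\infty$ were much smaller than $(T-t_j)^{-1/(p-2)}$ at the sequence $t_j$, one chooses $s=t_j$ in the variation of constants formula and runs the fixed-point/Gronwall estimate on $[t_j, t_j+\tau]$ with $\tau$ a fixed small multiple of $T-t_j$. The scaling is dimensionally consistent: under $u\mapsto \lambda^{-2/(p-2)}u(\lambda^{2(p-1)/(p-2)}\,\cdot\,, \lambda\,\cdot\,)$ the equation is invariant and the relevant quantity $(T-t)^{1/(p-2)}\|\nabla u(t)\|_\infty$ is scale-free. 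The estimate then shows that $\|\nabla u\|_\infty$ stays bounded on $[t_j,t_j+c(T-t_j)]$ by a constant multiple of its value at $t_j$, hence stays bounded — but for $j$ large, $t_j+c(T-t_j)>T-\delta$ would... no: rather, one iterates, or more cleanly, one shows $\sup_{[t_j,T)}\|\nabla u\|_\infty<\infty$, contradicting Theorem~\ref{prelimprop}(ii). To close the loop one must also upgrade to a bound on $\|\nabla u(t)\|_\infty$ up to $t=T$ and on $u_t$ (via the equation and interior parabolic estimates, Theorem~\ref{prelimprop}(iii)--(iv)), so that $u(T,\cdot)\in C^1_0(\overline\Omega)$ and the classical solution can be restarted at $T$ by standard local well-posedness — this contradicts the assumption that $u$ is not classical on any $(T-\delta,T+\eps)$.

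I expect the main obstacle to be the careful handling of the boundary: the gradient heat-semigroup estimate $\|\nabla e^{\tau\Delta}f\|_\infty\le C\tau^{-1/2}\|f\|_\infty$ holds for the Dirichlet Laplacian on a smooth bounded domain, but the compatibility of $|\nabla u|^p$ as a forcing term (which does \emph{not} vanish on $\partial\Omega$) with the Dirichlet semigroup must be tracked, and one must be sure the constants in the Gronwall argument do not degenerate as $t\to T$. A secondary technical point is justifying the variation of constants formula for $\nabla u$ up to (but not including) the blow-up time, which requires knowing $u\in C^{1,2}((0,T^*)\times\overline\Omega)$ and $\nabla u\in C([0,T^*)\times\overline\Omega)$ — both supplied by Theorem~\ref{prelimprop}(ii). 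For part (ii), the only additional subtlety is that at $T=T^r$ the solution is merely a generalized viscosity solution just before $T$, so one works on $(T,T+\delta)$ where it \emph{is} classical and argues symmetrically, using the uniform $u_t$-bound from Theorem~\ref{prelimprop}(iv) on $[t_0,\infty)$ to control $|\nabla u|^p=\Delta u+u_t$ away from $t=0$.
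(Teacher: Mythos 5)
Your proposal applies the variation-of-constants formula to $u$ (equivalently $\nabla u$) and tries to close a Gronwall loop using the $L^\infty$ bound $\|u\|_\infty\le\|\phi\|_\infty$. This cannot yield the claimed rate. The scaling assertion is incorrect: the change of variables you write does not leave $u_t-\Delta u=|\nabla u|^p$ invariant (the diffusion term scales as $\lambda^{a+2}$, the time derivative as $\lambda^{a+b}$, which forces $b=2$, not $b=2(p-1)/(p-2)$). The genuinely scale-invariant quantity is $(T-t)^{1/(2(p-1))}\|\nabla u(t)\|_\infty$, and the naive Duhamel estimate for $\nabla u$ — with the first term bounded by $C(t-s)^{-1/2}\|\phi\|_\infty$ and the Duhamel integral by $C(t-s)^{1/2}M^p(s,t)$ — only produces a ``survival time'' $\sim m^{-2(p-1)}$, i.e.\ the weaker scale-invariant lower bound $\|\nabla u(t)\|_\infty\ge c(T-t)^{-1/(2(p-1))}$. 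Since $1/(p-2)>1/(2(p-1))$ for $p>2$, the theorem asserts something strictly stronger, and your contradiction argument (run the Gronwall estimate on $[t_j,t_j+c(T-t_j)]$) does not close: the assumption $m(t_j)\ll(T-t_j)^{-1/(p-2)}$ does not imply $m(t_j)^{-2(p-1)}\gg T-t_j$.

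The correct mechanism — and the source of the sharper rate — is the \emph{non-scale-invariant} bound $\|u_t(t)\|_\infty\le A$ for $t\ge t_0$ from Theorem~\ref{prelimprop}(iv). One must apply the variation-of-constants formula to $w:=u_t$, not to $u$: the identity $w(s+\tau)=e^{\tau\Delta}w(s)+p\int_0^\tau e^{(\tau-\sigma)\Delta}(|\nabla u|^{p-2}\nabla u\cdot\nabla w)(s+\sigma)\,d\sigma$ makes the uniform $u_t$-bound effective in the zeroth-order term, giving $\|\nabla w(s+\tau)\|_\infty\le CA\tau^{-1/2}+\ldots$. Setting $K(s,t)=\max_{\sigma\le t-s}\sigma^{1/2}\|\nabla w(s+\sigma)\|_\infty$ and $M(s,t)=\max_{[s,t]}\|\nabla u\|_\infty$ yields $K(s,t)\le A_1+C_1(t-s)^{1/2}K(s,t)M^{p-1}(s,t)$; the delicate step is to choose $s=s(t)$ so that $C_1(t-s(t))^{1/2}M^{p-1}(s(t),t)=c_0\le\frac12$, giving $K(s(t),t)\le 2A_1$, and then to \emph{integrate in time} the bound $|m'(\rho)|\le 2A_1(\rho-s(t))^{-1/2}$ on $[s(t),t]$ so as to replace the a priori unknown supremum $M(s(t),t)$ by $m(t)+C$. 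That is precisely the upgrade from the supremum-in-time estimate \eqref{lower-bu-weaker} to the pointwise-in-time rate \eqref{lower-bu0}; your proposal names this as ``the delicate point'' but does not supply it, and the tools you invoke (Duhamel for $u$, boundedness of $u$ in $L^\infty$) cannot produce it. Finally, for part (ii) your ``obvious modification by time reversal'' glosses over a real issue: the equation is irreversible, so $\limsup_{t\to T_+}\|\nabla u(t)\|_\infty=\infty$ does \emph{not} follow from local $C^1$ theory as in case (i); the paper proves this separately (Lemma~\ref{unbddT}) via a Bernstein estimate and a nontrivial continuity argument for the truncated solutions $u_k$, which is a necessary preliminary step you would need to supply.
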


\medskip

We notice that the assumptions of Theorem~\ref{minimal_rateG}(i) guarantee that 
$$\limsup_{t\to T_-} \|\nabla u(t)\|_\infty=\infty$$
as direct consequence of the local $C^1$ theory.
  The argument does not direcly apply for the regularization time due to the irreversibility of the equation. 
However, it turns out that the analogous property is true under the assumptions of Theorem~\ref{minimal_rateG}(ii),
but this fact is more delicate.  
This is the content  of the following lemma,
which will be useful in the proof of assertion (ii).

\begin{lem}
\label{unbddT}
Under the assumptions of Theorem~\ref{minimal_rateG}(ii), we have
\be\label{lower-bu}
\limsup_{t\to T_+} \|\nabla u(t)\|_\infty=\infty.
\ee
\end{lem}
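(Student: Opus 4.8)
The plan is to argue by contradiction: suppose that $\|\nabla u(t)\|_\infty$ stays bounded as $t\to T_+$, say $\|\nabla u(t)\|_\infty\le M$ for all $t\in(T,T+\delta)$ (shrinking $\delta$ if necessary, and using that $u$ is classical there so $\nabla u$ is continuous up to the boundary on that slice). The key point is that such a uniform gradient bound on a one-sided neighborhood of $T$ should propagate \emph{backward} across $t=T$, contradicting the hypothesis that $u$ fails to be classical on any interval $(T-\eps,T+\delta)$. The mechanism to exploit is the monotone approximation of Theorem~\ref{prelimprop2}: write $u=\lim_k u_k$ with $u_k$ the global classical solutions of the truncated problems \rife{approxpbm}, the $u_k$ increasing in $k$, using truncated nonlinearities $F_k$ satisfying \rife{eqabA00}. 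Since $u=u_k$ on $[0,t_0]\times\overline\Omega$ for $t_0<T^*$ and $k$ large, and since on $(T,T+\delta)$ we have $u$ classical with $\|\nabla u\|_\infty\le M$, the first task is to transfer the bound $M$ to the approximants: show that $\|\nabla u_k(t)\|_\infty$ is bounded, uniformly in $k$, at some time level $t_1\in(T,T+\delta)$ (or on a whole slab there).

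The core of the argument is then a \emph{backward-in-time} gradient estimate for the smooth, global solutions $u_k$ of \rife{approxpbm}. Because $F_k$ is globally Lipschitz and the $u_k$ are smooth classical solutions on all of $(0,\infty)\times\Omega$, one can run a Bernstein-type computation on $w=|\nabla u_k|^2$ (as in the Appendix referenced for Theorem~\ref{prelimprop2}) together with interior/boundary parabolic estimates. The subtlety, and the reason the forward argument of Theorem~\ref{minimal_rateG}(i) does not simply dualize, is irreversibility; but for the \emph{regularized} problems there is no blow-up, so a bound at time $t_1>T$ controls the solution on $(T-\delta',t_1)$ provided one already controls it near $t=T^*$ from the other side. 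Concretely, I would combine: (a) the known bound $u_k=u$ near $t=0$ hence a gradient bound for $t$ near any fixed $t_0<T^*$; (b) the hypothesized bound $M$ near $t_1>T$; and (c) a maximum-principle argument on $w=|\nabla u_k|^2$ on the intermediate slab $[t_0,t_1]\times\overline\Omega$ — the interior term from $F_k$ has a favorable sign after the Bernstein trick, and the boundary term is handled as usual for the HJ equation using the curvature of $\partial\Omega$ and the bound on $w$ itself. This yields $\|\nabla u_k(t)\|_\infty\le M'$ on $[t_0,t_1]\times\overline\Omega$ uniformly in $k$.

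Passing $k\to\infty$ and using $u_k\uparrow u$ with convergence in $C^{1,2}_{loc}((0,\infty)\times\Omega)$, the uniform gradient bound descends to $u$: $\|\nabla u(t)\|_\infty\le M'$ on $[t_0,t_1]\times\overline\Omega$. With a gradient bound in hand on a full space-time slab straddling $t=T$, standard parabolic regularity for \rife{vhj-1}-\rife{vhj-2} (the right-hand side $|\nabla u|^p$ is now bounded, Schauder/$L^q$ estimates up to the boundary) upgrades $u$ to a classical solution on $(T-\delta',t_1)\times\overline\Omega$, in particular on an interval of the form $(T-\eps,T+\delta)$. This contradicts the hypothesis of Theorem~\ref{minimal_rateG}(ii), and hence \rife{lower-bu} must hold. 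The main obstacle I anticipate is step (c): making the uniform-in-$k$ backward gradient bound rigorous, since one must ensure the Bernstein computation closes on a slab whose left endpoint $t_0$ is below $T^*$ (where we have control) without assuming anything about $u_k$ in the window $(t_0,T)$ where $u$ itself is singular — the resolution is precisely that $u_k$ remains smooth there and the Bernstein inequality for $w=|\nabla u_k|^2$ is a genuine (sub)parabolic inequality whose maximum is therefore attained on the parabolic boundary of $[t_0,t_1]\times\overline\Omega$, i.e. controlled by the two good time-slices together with the zero boundary data.
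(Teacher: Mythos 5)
Your high-level strategy---argue by contradiction, transfer the assumed bound $\|\nabla u(t)\|_\infty\le M$ on $(T,T+\delta)$ to the monotone approximants $u_k$, and propagate it backward across $t=T$ to contradict the non-classicality hypothesis---is the same starting point as the paper's proof. But the mechanism you propose for the backward propagation, a Bernstein maximum-principle argument on $w=|\nabla u_k|^2$ over the full slab $[t_0,t_1]\times\overline\Omega$ with $t_0<T^*<T<t_1$, has a genuine gap that the paper's argument is specifically designed to circumvent.

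The parabolic boundary of $[t_0,t_1]\times\overline\Omega$ is the bottom slice $\{t_0\}\times\overline\Omega$ together with the lateral boundary $[t_0,t_1]\times\partial\Omega$; the top slice $\{t_1\}$ is not part of it, so the hypothesized bound $M$ at $t_1>T$ contributes nothing to a maximum principle on that slab. On the lateral boundary $w=|\partial_\nu u_k|^2$ (the Dirichlet condition on $u_k$ only kills tangential derivatives), and in the window $(t_0,T)$ the normal derivatives $\partial_\nu u_k$ are exactly the quantities that become unbounded as $k\to\infty$, since $u$ itself undergoes gradient blow-up and possibly loses its boundary conditions there. The usual boundary closure ``via curvature and the bound on $w$ itself'' requires a uniform sub-truncation estimate $|\nabla u_k|<k$, which is precisely unavailable in that window. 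And there is a structural reason this must fail: if a uniform-in-$k$ gradient bound on a slab $[t_0,t_1]\supset[t_0,T^*]$ were provable, passing to the limit would show $u$ is classical up to and past $T^*$, contradicting $T^*<\infty$. So step (c) as you describe it cannot be repaired.

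The paper avoids this by never running a maximum principle across the singular window. It performs two separate moves. First, a Bernstein comparison on the \emph{one-sided} slab $(T,T+\tau)$ only: there the lateral boundary data for $|\nabla u_k|^2$ are controlled (since $\|\nabla u\|_\infty\le M$, $0\le u_k\le u$, and $u=0$ on $\partial\Omega$ give $|\partial_\nu u_k|\le|\partial_\nu u|\le M$), yielding $\|\nabla u_k(t)\|_\infty\le C(t-T)^{-1/2}$ uniformly in $k$, and hence $m_k(t)\to m(t)$ for $t>T$. Second, the backward step across $T$ is carried out by a completely different device: the variation-of-constants formula applied to $\partial_t u_k$, combined with the uniform $L^\infty$ bound on $u_t$, controls the oscillation of $m_k(t)=\|\nabla u_k(t)\|_\infty$ on a short interval $[t_{1,k},t_2]$ with $t_2>T$. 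It shows that $m_k$ cannot climb from $M+1$ at $t_2$ back up to $M+2$ at an earlier $t_{1,k}$ unless $t_{1,k}$ is already a fixed distance $\eta$ below $T$. Crucially, the backward reach is only the fixed small $\eta$, not all the way to $t_0<T^*$, so no control on $\nabla u_k$ is ever needed in the singular window $(t_0,T-\eta)$. Replacing your step (c) by this variation-of-constants oscillation estimate on $\partial_t u_k$ is what makes the proof go through.
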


\begin{proof}
Assume for contradiction that
\be\label{contrad-nablau}
m(t):=\|\nabla u(t)\|_\infty\le M,\quad T<t<T+\tau,
 \ee
for some $M>0$ and $\tau\in {(0,\delta)}$. 
We shall reach a contradiction by estimating {the solutions $u_k$ of the approximate problems \rife{approxpbm} with $F_k(\xi)=\min\bigl(|\xi|^p,k^{p-2}|\xi|^2\bigr)$}.
\smallskip

{\bf Step 1.} We first claim that
\be\label{contrad-nablavk}
m_k(t):=\|\nabla u_k(t)\|_\infty\le C(t-T)^{-1/2},\quad T<t<T+\tau,
\ee
for some $C>0$ independent of $k$.

We prove \eqref{contrad-nablavk} by a Bernstein argument.
Note that $T^*(\phi)<\infty$ by assumption and fix some $t_0\in (0,T^*)$. 
We know from \rife{u_t-bounded} that
$$
\|\partial_tu_k\|_\infty\le A,\quad t\ge t_0,
$$ 
for some constant $A>0$. 
For fixed $k\ge 1$, the function $v=u_k$ saisfies
\be\label{eqvg}
v_t-\Delta v=g(|\nabla v|^2) \quad\hbox{ in $Q:=(T,T+{\delta})\times\Omega$}, 
\ee
where $g(s)=g_k(s)=(\min(s,k))^{(p-2)/2}s$.
Let $z=|\nabla v|^2$. By direct calculation, we see that $z$ {is a strong solution of } 
$$z_t-\Delta z+b\cdot\nabla z +2|D^2v|^2= 0,$$
where $b:=-2g'(z)\nabla v$. 
Using the fact that $|g(z)-v_t|=|\Delta v|\leq \sqrt{n}|D^2v|$, along with the bound $|v_t|\le \tilde C$
(where $\tilde C$ is independent of $k$),
we obtain
$$
n^{-1}z^2\leq n^{-1}g^2(z)\leq 2( |D^2v|^2+ |v_t|^2)\leq 2|D^2v|^2+\tilde C,
$$
hence
$$z_t-\Delta z+b\cdot\nabla z +n^{-1}z^2\le \tilde C.$$
Moreover, combining assumption \eqref{contrad-nablau} with the fact that
$u=0$ on the boundary for $t\in (T,T+\delta)$ and $0\le u_k\le u$, we have
$z=|\partial_\nu v|^2\le |\partial_\nu u|^2\le M^2$ on $(T,T+\tau)\times\partial\Omega$.
The claim now follows by comparing with a supersolution of the form $\overline z:={C_1}(t-T)^{-1}$,
{with $C_1>0$ large enough (independent of $k$). }
\smallskip

{\bf Step 2.} 
Next we claim that
\be\label{contrad-mk}
  \lim_{k\to\infty} m_k(t)=m(t),   \quad T<t<T+{\tau}. 
\ee
Indeed, it follows from \eqref{contrad-nablavk} and parabolic estimates that
$\nabla u_k(t)$ is precompact in $C(\overline\Omega)$ for each $t\in (T,T+{\tau})$. 
Since we already know that 
\be\label{contrad-cvptw}
\hbox{$\nabla u_k(t,\cdot)$ converges to $\nabla u(t,\cdot)$ pointwise in $\Omega$,}
\ee
we deduce that, for each $t\in (T,T+\tau)$,
$\nabla u_k(t,\cdot)\to \nabla u(t,\cdot)$ in $C(\overline\Omega)$,
hence in particular \eqref{contrad-mk}.

\smallskip

{\bf Step 3.}  We shall then show the existence of $\eta>0$ and $k_0$ such that
\be\label{contrad-mk5}
m_k(t)\le M+2 \quad\hbox{ for all $t\in [T-\eta,T]$ and all $k\ge k_0$.}
\ee
To this end, we suitably estimate the oscillation of $m_k$ by means of the variation-of-constants formula.

For any $t_2\in (T,T+{\tau})$, 
by \eqref{contrad-nablau} and \eqref{contrad-mk}, there exists an integer $k_0=k_0(t_2)$ such that 
\be\label{contrad-mk6}
m_k(t_2)\le M+1 \quad\hbox{ for all $k\ge k_0$.}
\ee
For $k\ge k_0$, let 
$$t_1=t_{1,k}:=\min E_k,\quad\hbox{ where $E_k:=\{t\in [t_0,t_2],\, m_k(s)\le M+2 \hbox{ on $[t,t_2]$}\}$}$$
(observe that $E_k$ is nonempty by continuity).
Since $v=u_k$ is a classical solution of \eqref{eqvg},
the function $w:= \partial_tu_k$ is a {strong} solution of 
$$
w_t -\Delta w = {2} g'_k(|\nabla v|^2)\nabla v \cdot \nabla w\,,\qquad t>0,\ x\in \Omega.
$$
For $s,\tau>0$, the variation-of-constants formula for $w$ yields
$$
w(s+\tau)=e^{\tau \Delta}w(s)+
\int_0^\tau e^{(\tau-\sigma)\Delta }\bigl[{2}  g_k'(|\nabla v|^2)\nabla v\cdot\nabla w\bigr](s+\sigma)\, d\sigma.
$$ 
   Here and in the rest of the paper, $(e^{t\Delta})_{t\ge 0}$ denotes the Dirichlet heat semigroup on $\Omega$. 
Define
$$L:=\max_{\sigma\in [0,t_2-t_1]} \sigma^{1/2} \|\nabla w(t_1+\sigma)\|_\infty.$$
Using $m_k(t)\le M+2$ on $[t_1,t_2]$,
{$0\le g_k'(s)\le (p/2)s^{(p-2)/2}$ a.e.~and} 
$\int_0^\tau (\tau-\sigma)^{-1/2}\sigma^{-1/2}\, d\sigma=\int_0^1 (1-z)^{-1/2}z^{-1/2}\, dz$,
it follows that
$$
\begin{aligned}
\|\nabla w(t_1+\tau)\|_\infty
&\leq C\tau^{-1/2}\|w(t_1)\|_\infty+
C\int_0^\tau (\tau-\sigma)^{-1/2}\|\nabla v(t_1+\sigma)\|_\infty^{p-1}\|\nabla w(t_1+\sigma)\|_\infty\, d\sigma\\
&\leq CA\tau^{-1/2}+
C(M+2)^{p-1}\int_0^\tau (\tau-\sigma)^{-1/2}\|\nabla w(t_1+\sigma)\|_\infty\, d\sigma \\
&\leq CA\tau^{-1/2}+C_1L(M+2)^{p-1},
\end{aligned}
$$
where $C,C_1>0$ depend only on $\Omega$. Multiplying by $\tau^{1/2}$ and taking the supremum for $\tau\in [0,t_2-t_1]$, we obtain
\be\label{contrad-mk3}
L\leq CA+C_1(t_2-t_1)^{1/2}L(M+2)^{p-1}.
\ee

Now we claim that
\be\label{contrad-mk4}
t_1\le \bar t_2:=\max\big[t_0,t_2-(2C_1(M+2)^{p-1})^{-2},t_2-(4CA)^{-2}\bigl]. 
\ee
Assume for contradiction that $t_1>\bar t_2$.
In particular, we have $(t_2-t_1)^{1/2} < [2C_1(M+2)^{p-1}]^{-1}$, 
so that \eqref{contrad-mk3} guarantees $L\leq 2CA$. We also have $m_k(t_1)=M+2$ by definition of $E_k$.
Using \eqref{contrad-mk6}, we then get
$$
\begin{aligned}
M+2=m_k(t_1)
&\le m_k(t_2) + \int_0^{t_2-t_1} \| {\nabla v_t}(t_1+\tau)\|_\infty\,d\tau \\
&\le M+1 + 2CA\int_0^{t_2-t_1}\tau^{-1/2}\,d\tau
\le M+1 + 4CA(t_2-t_1)^{1/2}<M+2,
\end{aligned}$$
which is impossible.
Therefore, inequality \eqref{contrad-mk4} is satisfied and \eqref{contrad-mk5} follows by choosing $t_2>T$ close enough to $T$.

\smallskip

{\bf Step 4.} Conclusion.
For each $t\in[T-\eta,T]$, owing to \eqref{contrad-cvptw},
we have $m(t)\le \liminf_{k\to\infty} m_k(t)\le M+2$.
This along with \eqref{contrad-nablau} implies that $u$ is actually a classical solution on $[T-\eta,T+\tau)$, 
contradicting the hypotheses of Theorem~\ref{minimal_rateG}(ii).
The assumption \eqref{contrad-nablau} thus cannot hold and \eqref{lower-bu} is proved.
\end{proof}

{We now proceed to prove Theorem~\ref{minimal_rateG}.
The proof relies on estimate \rife{u_t-bounded} and on the variation-of-constants formula applied to $u_t$,
pushing further the arguments in the proof of Lemma~\ref{unbddT}.
Actually, we here modify the proof from \cite[pp.~369-370]{QS07} of the weaker estimate \rife{lower-bu-weaker}.
Namely we eliminate the possible time oscillations
by considering the supremum of $|\nabla u|$ over time-space cylinders of the form $(s(t),t)\times \Omega$
with $s(t)$ suitably chosen,
and by integrating in time the intermediate estimate \rife{varcost3} below. }

\begin{proof}[Proof of Theorem~\ref{minimal_rateG}.]
Set
$$
I=(T_1,T_2):=\ \ 
\begin{cases}
(T-\delta,T) \quad & \hbox{ in case (i)}\\
\noalign{\vskip 1mm}
(T,T+\delta) \quad & \hbox{ in case (ii).}
\end{cases}
$$
Since $u$ is a classical solution of \rife{vhj-1}-\rife{vhj-2} on the time interval I,
the function $w:= u_t$ is a solution of 
$$
w_t -\Delta w = p|\nabla u|^{p-2}\nabla u \cdot \nabla w\,,\qquad {t\in I,\  x\in \Omega}. 
$$
For $s,t\in I$ {with $s<t$,} 
we may use the variation-of-constants formula for $w$ to write
\be\label{varcost}
w(s+\tau)=e^{\tau\Delta}w(s)+
p\int_0^\tau e^{(\tau-\sigma)\Delta }(|\nabla u|^{p-1}\nabla u\cdot\nabla w)(s+\sigma)\, d\sigma,
\quad \tau\in (0,t-s).
\ee
Also,  fixing any $t_0\in (0,T^*)$, we know from \rife{u_t-bounded} that
\be\label{Beqaa}
\|u_t\|_\infty\le A,\quad t\ge t_0,
\ee
for some constant $A>0$. Without loss of generality, we may assume that $t_0< T_1$. 
\vskip0.4em
Now we define
$$
m(t):=\|\nabla u(t)\|_\infty,\quad t\in I
$$
and
$$
M(s,t):=\max_{\tau\in[s,t]} m(\tau), \qquad 
K(s,t):=\max_{\sigma\in [0,t-s]} \sigma^{1/2} \|\nabla w(s+\sigma)\|_\infty
$$
for $s,t\in I$ {with $s<t$. }
Using \rife{Beqaa} and
$\int_0^\tau (\tau-\sigma)^{-1/2}\sigma^{-1/2}\, d\sigma
=\int_0^1 (1-z)^{-1/2}z^{-1/2}\, dz$,
it follows from \rife{varcost} that
\be\label{varcost2}
\begin{split}
\|\nabla w(s+\tau)\|_\infty
&\leq C\tau^{-1/2}\|w(s)\|_\infty+
C\int_0^\tau (\tau-\sigma)^{-1/2}\|\nabla u(s+\sigma)\|_\infty^{p-1}\|\nabla w(s+\sigma)\|_\infty\, d\sigma\\
&\leq CA\tau^{-1/2}+
CM^{p-1}(s,t)\int_0^\tau (\tau-\sigma)^{-1/2}\|\nabla w(s+\sigma)\|_\infty\, d\sigma\\
&\leq A_1\tau^{-1/2}+C_1K(s,t)M^{p-1}(s,t).
\end{split}
\ee
Multiplying by $\tau^{1/2}$ and taking the supremum for $\tau\in [0,t-s]$, we obtain
\be\label{Kst}
K(s,t)\leq A_1+C_1(t-s)^{1/2}K(s,t)\,M^{p-1}(s,t).
\ee
Let now $t\in I$, and assume in addition that $t\ge T-\delta/2$ in case (i). We notice that, for fixed $t$,  $M(s,t)$ is a continuous function of $s$ which satisfies 
$$
\displaystyle\lim_{s\to (T_1)_+}(t-s)^{1/2}M^{p-1}(s,t)\ \ 
\begin{cases}
\ge (\delta/2)^{1/2}M^{p-1}(T-\delta,T-\delta/2)>0 
&\quad \hbox{ in case (i)}\\
\noalign{\vskip 2mm}
=\infty 
&\quad\hbox{ in case (ii)}\end{cases}
$$
(applying Lemma~\ref{unbddT} in case (ii)),
as well as
$\displaystyle\lim_{s\to t-}(t-s)^{1/2}M^{p-1}(s,t)=0$. So, 
in both cases, we may choose some $s=s(t)\in (T_1,t)$ such that 
\be\label{Mst}
C_1(t-{s(t)})^{1/2}M^{p-1}({s(t)},t)=c_0, 
\ee
with $c_0\in (0,1/2]$ independent of $t$. 
With this choice of $s$,  it follows from \rife{Kst} that
$$ 
K(s(t),t)\leq 2A_1.
$$ 
{In particular, taking $\sigma=t-s$ in the definition of $K(s,t)$,  we get
\be\label{varcost3}
\|\nabla w(t)\|_\infty \le 2A_1(t-s(t))^{-\frac12}.
\ee
}

On the other hand, a standard argument (see e.g. \cite[p.~454]{QS07}) shows that
$m(t)$ is locally Lipschitz and satisfies 
\be\label{Beqc}
|m'(t)|\le \|\nabla u_t(t)\|_\infty,\quad a.e.\  t\in I\,.
\ee
Hence
$$
|m'(t)|\le C (t-s(t))^{-1/2},\quad a.e.\  t\in \tilde I\,, 
$$
where $\tilde I = (T-\frac\de2, T)$ in case (i) and $\tilde I=I$ in case (ii).
By integration, for $\tau\in [s(t),t)$, we get
$$m(\tau)=m(t)-\int_\tau^t m'(\sigma)\,d\sigma
\le m(t)+C\int_{s(t)}^t (\sigma-s(t))^{-1/2}\,d\sigma
\le m(t)+C (t-s(t))^{1/2}, 
$$
hence 
$$
M(s(t),t)\le m(t)+C.
$$
Now going back to \rife{varcost3}, and using \rife{Mst}, we conclude that
$$
\|\nabla w(t)\|_\infty\le {2A_1} (t-s(t))^{-1/2} = {2A_1} \, c_0^{-1}C_1 \,M^{p-1}(s(t),t)\le C(m(t)+1)^{p-1}. 
$$
Property \rife{Beqc} then guarantees that
$$
|m'(t)|\le C(m(t)+1)^{p-1},\quad a.e.\  t\in \tilde I.
$$
The desired estimates then follow by integration, using the fact that 
$\limsup_{t\to T_\pm}m(t)=\infty$.
\end{proof}

\medskip

By modifying the proof of Theorem~\ref{minimal_rateG},
we obtain the following proposition, which gives a sufficient condition
for the GBU rate to be more singular than the minimal one.
We also give a similar property in case of immediate regularization after $T^*$.
This will be one of the ingredients of the proof of Theorem~\ref{minimal0} in Section~\ref{Sec-min}. 
{We here denote $\delta(x):= {\rm dist}(x,\partial\Omega)$.}

\begin{prop}\label{contut}
Let $\phi\in X$ and assume $T^*<\infty$.
\vskip0.4em
(i) Assume that $u_t$ is continuous at $\{T^*_-\}\times\partial\Omega$, i.e.:
\be\label{continuityutA}
\lim_{t\to T^*_-,\,\delta(x)\to 0}u_t(t,x)=0.
\ee
Then
$$(T^*-t)^{1/(p-2)} \|\nabla u(t)\|_\infty\to\infty,\quad t\to T^*_-.$$
\vskip0.4em
(ii) Assume that $u$ becomes a classical solution 
again (including boundary conditions)
on some interval $(T^*,T^*+\delta)$. Assume in addition that $u_t$ is continuous at $\{T^*_+\}\times\partial\Omega$, i.e.:
\be\label{continuityutB}
\lim_{t\to T^*_+,\,\delta(x)\to 0}u_t(t,x)=0.
\ee
Then
$$
(t-T^*)^{1/(p-2)} \|\nabla u(t)\|_\infty\to\infty,\quad t\to T^*_+.
$$
\end{prop}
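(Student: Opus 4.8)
The plan is to mimic the proof of Theorem~\ref{minimal_rateG}, but this time to exploit the \emph{continuity} hypothesis \eqref{continuityutA} (resp. \eqref{continuityutB}) to replace the uniform bound $\|u_t\|_\infty\le A$ from \eqref{Beqaa} by a \emph{vanishing} boundary contribution, and then to run the variation-of-constants argument so as to beat the critical exponent. For concreteness I describe case (i); case (ii) is identical after reversing time-orientation and invoking Lemma~\ref{unbddT} (as in Theorem~\ref{minimal_rateG}) to guarantee $\limsup_{t\to T^*_+}\|\nabla u(t)\|_\infty=\infty$. First I would fix $\eta>0$ arbitrary; by \eqref{continuityutA} there exist $\delta_0>0$ and $\tau_0>0$ so that $|u_t(t,x)|\le\eta$ whenever $t\in(T^*-\tau_0,T^*)$ and $\delta(x)<\delta_0$. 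Away from the boundary, i.e. on $\{\delta(x)\ge\delta_0\}$, interior parabolic regularity together with the $L^\infty$ bound on $u$ and the local interior gradient estimate already forces $|\nabla u(t)|$, and hence $|u_t|=|\Delta u+|\nabla u|^p|$, to stay bounded by some $C(\delta_0)$ there; in particular GBU is a purely boundary phenomenon and $m(t):=\|\nabla u(t)\|_\infty$ is attained near $\partial\Omega$ for $t$ close to $T^*$.

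The key step is the variation-of-constants identity \eqref{varcost} for $w:=u_t$, now with the heat semigroup acting on data that is small near the boundary. Splitting $w(s)=w_1(s)+w_2(s)$ according to whether $\delta(x)<\delta_0$ or not, the term $e^{\tau\Delta}w_1(s)$ contributes at most $\eta$ in sup-norm (contraction of $e^{\tau\Delta}$ on $L^\infty$), while $e^{\tau\Delta}w_2(s)$ is bounded by $C(\delta_0)$ but, crucially, its \emph{gradient} satisfies a better-than-$\tau^{-1/2}$ bound: since $w_2$ is supported away from the boundary and the gradient heat kernel estimate on the half-space / near $\partial\Omega$ gives exponential gain from the distance $\delta_0$, one has $\|\nabla e^{\tau\Delta}w_2(s)\|_\infty\le C(\delta_0)$ uniformly for $\tau$ in a fixed interval (no singularity as $\tau\to0^+$ on the relevant region). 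So the analogue of \eqref{varcost2} becomes
\be\label{varcost2-mod}
\|\nabla w(s+\tau)\|_\infty\le \eta\,C\tau^{-1/2}+C(\delta_0)+C_1K(s,t)M^{p-1}(s,t),
\ee
and thus, with $K(s,t)$ as before, $K(s,t)\le C_1(t-s)^{1/2}K(s,t)M^{p-1}(s,t)+C\eta+C(\delta_0)(t-s)^{1/2}$. Choosing $s=s(t)$ exactly as in \eqref{Mst} so that $C_1(t-s(t))^{1/2}M^{p-1}(s(t),t)=c_0\le1/2$, we absorb the $K$ term and obtain $K(s(t),t)\le C\eta+C(\delta_0)(t-s(t))^{1/2}$, hence the improved pointwise bound $\|\nabla u_t(t)\|_\infty\le \big(C\eta+C(\delta_0)(t-s(t))^{1/2}\big)(t-s(t))^{-1/2}\le C\eta\,(t-s(t))^{-1/2}+C(\delta_0)$ replacing \eqref{varcost3}. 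The gain is the factor $\eta$ in front of the singular term.

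From here the argument of Theorem~\ref{minimal_rateG} goes through with the extra $\eta$. Using \eqref{Beqc}, $|m'(t)|\le C\eta(t-s(t))^{-1/2}+C(\delta_0)$ a.e., and integrating over $[s(t),t]$ gives $M(s(t),t)\le m(t)+C\eta(t-s(t))^{1/2}/(t-s(t))^{1/2}\cdot(t-s(t))^{1/2}+\dots$; more carefully, $M(s(t),t)\le m(t)+C\eta+C(\delta_0)(t-s(t))^{1/2}\le m(t)+C\eta+o(1)$ as $t\to T^*_-$. Plugging this into \eqref{Mst} yields $C_1(t-s(t))^{1/2}(m(t)+C\eta+o(1))^{p-1}\ge c_0/2$ for $t$ near $T^*$, and combining with $|m'(t)|\le C\eta(m(t)+C\eta+1)^{p-1}+o(1)$ and integrating from $t$ to $T^*$ (using $\limsup_{t\to T^*_-}m(t)=\infty$) gives the differential-inequality lower bound
\be\label{final-rate}
m(t)\ge c\,\eta^{-1/(p-2)}\,(T^*-t)^{-1/(p-2)}\big(1+o(1)\big),\qquad t\to T^*_-.
\ee
Since $\eta>0$ was arbitrary, letting $\eta\to0$ forces $(T^*-t)^{1/(p-2)}m(t)\to\infty$, which is the claim.

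\textbf{Main obstacle.} The delicate point is making rigorous the ``gradient heat-kernel gain away from the boundary'' step: one must control $\|\nabla e^{\tau\Delta}\big(\chi_{\{\delta\ge\delta_0\}}\,w(s)\big)\|_{L^\infty(\Omega)}$ uniformly in $\tau\in(0,\delta)$ and in $s$ up to $T^*$, using only that the initial slice $w(s)$ is a.e.-bounded (not continuous up to the boundary) — this requires either standard Dirichlet-semigroup smoothing estimates with the distance-to-boundary weight, or a direct localization/cutoff on the subsolution side of the Bernstein argument as in the proof of Lemma~\ref{unbddT}. A clean alternative, which avoids sharp kernel bounds, is to redo the whole estimate on the level of the approximate solutions $u_k$ (as in Lemma~\ref{unbddT}): carry out \eqref{varcost2-mod} for $w_k=\partial_tu_k$, obtaining constants independent of $k$, then pass to the limit using \eqref{contrad-cvptw}. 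I expect this is how the authors proceed; the bookkeeping of the $\eta$-factor through the $k$-uniform estimates is then the only real work.
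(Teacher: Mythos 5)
Your proposal follows the same overall strategy as the paper: both run the variation-of-constants identity for $w=u_t$, exploit the smallness hypothesis on $u_t$ near $\partial\Omega$ to replace the global bound $\|u_t\|_\infty\le A$ by a vanishing boundary contribution, then use the same $s(t)$-selection as in \eqref{Mst}, obtain a differential inequality of the form $|m'(t)|\le C\eps(m(t)+1)^{p-1}$, integrate, and let $\eps\to 0$. Your invocation of Lemma~\ref{unbddT} for case (ii) to get $\limsup_{t\to T^*_+}\|\nabla u(t)\|_\infty=\infty$ is also what the paper implicitly relies on. So the skeleton is correct.

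Where your write-up has a real gap --- which you in fact flag as your ``main obstacle'' --- is the claim that $\|\nabla e^{\tau\Delta}w_2(s)\|_{L^\infty(\Omega)}\le C(\delta_0)$ uniformly in $\tau\in(0,1)$. This cannot be true as a sup over all of $\Omega$: for $x$ lying in the support of $w_2$ (i.e.\ $\delta(x)\ge\delta_0$) there is no distance gain from the Gaussian factor, and the gradient of the heat semigroup applied to a merely bounded function is $O(\tau^{-1/2})$, no better. The paper's resolution is simpler than either of the two remedies you propose and it does not pass through the $u_k$: one splits the estimate of $\|\nabla w(s+\tau)\|_\infty$ by the position of $x$, not by decomposing $w(s)$. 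For $x$ near $\partial\Omega$ (say in $\Omega_{a/2}$), one writes $\nabla[e^{\tau\Delta}w(s)](x)=\int_\Omega \nabla_x G(\tau,x,y)w(s,y)\,dy$ and splits the $y$-integral: over the near-boundary region $\Omega_a$ the bound $|w|\le\eps$ gives $c_3\eps\,\tau^{-1/2}$, and over $\Omega\setminus\Omega_a$ the Gaussian estimate $|\nabla_x G(\tau,x,y)|\le c_1\tau^{-(n+1)/2}\exp[-c_2|x-y|^2/\tau]$ together with $|x-y|\ge a/2$ gives a term that is uniformly bounded (in fact tends to $0$ as $\tau\to0$). For $x$ away from $\partial\Omega$, one does not use the variation-of-constants representation at all: one bounds $|\nabla w(s+\tau,x)|\le C(\eps)$ directly by interior parabolic regularity of $u$, since the solution stays smooth for all time at fixed distance from the boundary. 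This yields exactly your displayed inequality \eqref{varcost2-mod} without needing a global semigroup bound, after which your remaining bookkeeping is correct.
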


\begin{proof} 
We slightly modify the proof of Theorem~\ref{minimal_rateG}. We use the same notations where, we recall, $w(t,x):= u_t(t,x)$.
Now, in  \rife{varcost},  we estimate $e^{t\Delta }w(s)$ by taking advantage of assumptions
\rife{continuityutA} and \rife{continuityutB}.
Fix $\eps>0$. There exists $a=a(\eps)>0$ and an interval 
$I_\eps=[T_\eps,T^*)$ in case (i) and $I_\eps=[T^*,T_\eps)$ in case (ii), such that
$$\sup_{\Omega_a} |w(s,x)|\le \eps,\quad s\in I_\eps,\quad\hbox{ where $\Omega_a=\{x\in \Omega;\, \delta(x)\le a\}$}.$$
Denote by $G(t,x,y)$ the Dirichlet heat kernel of $\Omega$ and recall the Gaussian estimate:
$$|\nabla_x G(t,x,y)|\le c_1t^{-(n+1)/2}\exp\Bigl[-c_2{|x-y|^2\over t}\Bigr].$$
Using \rife{continuityutA}, \rife{continuityutB}, \rife{Beqaa}, we may then write, 
for all $s\in I_\eps$, {$\tau\in (0,1)$} 
 and $x\in \Omega_{a/2}$,
\begin{align*}
\bigl|\nabla  [e^{\tau \Delta }w(s)](x)\bigr|
&=\Bigl|\int_\Omega \nabla_x G(\tau,x,y)w(s,y)\, dy\Bigr| \\
&\le c_1\eps\int_{\Omega_a}  \tau^{-\frac{n+1}{2}}\exp\Bigl[-c_2{|x-y|^2\over \tau}\Bigr]\, dy
+c_1A\int_{\Omega\setminus \Omega_a}  \tau^{-\frac{n+1}{2}}\exp\Bigl[-c_2{|x-y|^2\over \tau}\Bigr]\, dy\\
&\le c_3\eps \tau^{-1/2}
+c_1A \tau^{-\frac{n+1}{2}}\exp\Bigl[{-c_2a^2\over 4\tau}\Bigr].
\end{align*}
On the other hand, for all $s\in I_\eps$, { $\tau\in (0,1)$ }
 and $x\in \Omega\setminus\Omega_{a/2}$, we have $|\nabla w(s+\tau,x)|\le C(\eps)$,
since the solution remains smooth for all times away from the boundary.
Putting this together, we obtain, 
for all $s\in I_\eps$, {$t\in (s,s+1)$ and $\tau\in (0,t-s)$, } 
$$\|\nabla w(s+\tau)\|_\infty
\le c_3\eps \tau^{-1/2}
+c_1A \tau^{-\frac{n+1}{2}}\exp\Bigl[{-c_2a^2\over 4\tau}\Bigr]+C(\eps)+C_1K(s,t)M^{p-1}(s,t).$$
Multiplying by $\tau^{1/2}$, taking the supremum for $\tau\in [0,t-s]$, 
and assuming $t-s\le \tau_0(\eps)$ with $\tau_0(\eps)>0$ sufficiently small, we obtain
$$K(s,t)\leq 2c_3\eps+C_1(t-s)^{1/2}K(s,t)M^{p-1}(s,t).$$

Let now $t\in I$. Choosing $s=s(t)$ as in \rife{Mst}, we see that the conditions $s\in I_\eps$ and $t-s\le \tau_0(\eps)$
are satisfied whenever $t>T^*-\delta_\eps$ (resp., $t<T^*+\delta_\eps$), with $\delta_\eps$ sufficiently small.
Indeed, in case~(i) this follows from
$M(s,t)\ge m(t)$ and $\lim_{t\to T^*_-}m(t)=\infty$, 
whereas in case (ii) this is just a consequence of $T^*<s<t$.
Consequently, we obtain
$$K(s(t),t)\leq 4c_3\eps.$$ 

Arguing as before, we end up with 
$$
|m'(t)|\le c_4\eps(m(t)+1)^{p-1},
\quad\hbox{ for a.e. $t\in(T^*-\delta_\eps,T^*)$ \ (resp., $t\in (T^*,T^*+\delta_\eps))$.}
$$
Integrating and using the fact that 
$\limsup_{t\to T^*_\pm}m(t)=\infty$, we obtain
$$
m(t)+1\ge [(p-2)c_4\eps(T^*-t)]^{-1/(p-2)} 
\quad\hbox{ for a.e. $t\in(T^*-\delta_\eps,T^*)$}
$$
(and similarly in case (ii)).
Since $\eps>0$ was arbitrarily small, the conclusion follows. 
\end{proof}

\section{  Preliminary results in one space dimension} 
\label{prelim1d}

  In this section, we state and prove a number of useful preliminary properties of the solution in one space-dimension,
  with $\Omega=(0,1)$.  {Let us recall the notation}
$$
X_1:=\{\phi\in C^1([0,1]),\ \phi(0)=\phi(1)=0\}.
$$
Hence, for $\phi\in X_1$, we consider   the unique global viscosity solution $u$ of the problem
\be\label{vhj1}
\begin{cases}
u_t-u_{xx} =|u_x|^p, & \quad t>0,\ 0<x<1,\\
u =0, & \quad t>0,\ x\in\{0,1\},\\
u(0,x) =\phi(x),  & \quad 0<x<1.
\end{cases}
\ee

We will here consider the specific approximation
\be\label{app-1d}
\begin{cases}
u_{k,t}-u_{k,xx} =F_k(u_{k,x}), & \quad t>0,\ 0<x<1,\\
u_k =0, & \quad t>0,\ x\in\{0,1\},\\
u_k(0,x) =\phi(x), & \quad 0<x<1,
\end{cases}
\ee
where the nonlinearity    $F_k\in C^2(\R)$    is given by the truncation of $|s|^p$ by its second order Taylor expansion at $s=\pm k$.
More precisely, $F_k$ is the even function defined by 
\be\label{eqaaa}
F_k(s)=\begin{cases}
s^p,\quad 0\le s\le k, \\
\noalign{\vskip 1mm}
    k^p+pk^{p-1}(s-k)+p(p-1)k^{p-2}\displaystyle{(s-k)^2\over 2},\quad s>k.  
\end{cases}
\ee
Note that $0\le F_k(s)\le |s|^p$ 
and $F_k$ is convex.    Also, it is easy to check that $F_k$ satisfies
the assumptions of Theorem~\ref{prelimprop2} with $\theta=1/2$, as well as  
\be\label{eqaa}
2F_k(s)\le sF_k'(s)\le pF_k(s),\quad s\ge 0.  
\ee

We start with some basic facts.

\begin{lem}\label{basic-prop0} 
Let $\phi\in X_1$.
\vskip0.3em
(i) Let $t_0\in (0,T^*(\phi))$ and let the constant $M=M(t_0)$ be given by \rife{u_t-bounded}.
Then for all $t\ge t_0$, we have $u_{xx}\le M$ and the function $x\to u(t,x)-\frac{M}{2}x^2$ is concave in $(0,1)$.
  Moreover, the limits 
\be\label{controlnormux0}
\displaystyle\lim_{x\to 0}u_x(t,x)\in \R\cup\{\infty\}\quad\hbox{and}\quad
\displaystyle\lim_{x\to 1}u_x(t,1)\in \R\cup\{-\infty\}
\ee
exist. Whenever they are finite, 
$u_x(t,0)$ and $u_x(t,1)$ exist (and respectively coincide with the limits in \eqref{controlnormux0}) and we have
\be\label{controlnormux}
\|u_x(t)\|_\infty\le \max(u_x(t,0),-u_x(t,1))+M.
\ee
\vskip0.3em
(ii)   Assume in addition that $\phi\not\equiv 0$ is symmetric and nondecreasing on $(0,1/2)$. Then, 
  for all $t>0$, the functions $u_k(t,\cdot)$ ($k\ge 1$) and $u(t,\cdot)$ are symmetric and nondecreasing on $(0,1/2)$. Moreover, we have 
\be\label{utcenter}
u_t(t,1/2)<0,\quad t>0,
\ee
and
\be\label{uktcenter}
u_{k,t}(t,1/2)<0,\quad t>0.
\ee
\end{lem}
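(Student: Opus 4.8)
The plan is to prove the two parts in order, relying on the $u_t$-bound \rife{u_t-bounded}, the approximation scheme \rife{app-1d}-\rife{eqaaa}, and the comparison/maximum principles of Section~\ref{Sec3}.

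\smallskip
\textit{Part (i).} First I would use \rife{u_t-bounded}: for $t\ge t_0$ we have $|u_t(t,\cdot)|\le M$, so from the equation $u_{xx}=u_t-|u_x|^p\le u_t\le M$ in $(0,1)$. (Here $u$ is still a classical solution, since $t_0<T^*$, so this is licit; one should note that the bound propagates by the smoothing property even though $u$ may have lost the boundary data — but for $t<T^*$ this issue does not arise.) Hence $x\mapsto u(t,x)-\tfrac M2 x^2$ has nonpositive second derivative, i.e.\ is concave on $(0,1)$. Its derivative $u_x(t,x)-Mx$ is therefore nonincreasing on $(0,1)$, so it admits limits in $\R\cup\{+\infty\}$ as $x\to 0_+$ and in $\R\cup\{-\infty\}$ as $x\to 1_-$; adding back $Mx$ gives \rife{controlnormux0}. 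When the limit at $0$ is finite, the mean value theorem together with $u(t,0)\ge 0$ and continuity gives that $u_x(t,0)$ exists as a genuine one-sided derivative and equals the limit (similarly at $x=1$); I would spell this out with a short $\eps$-argument. Finally, since $u_x(t,x)-Mx$ is nonincreasing, for every $x\in(0,1)$ we have $u_x(t,0)\ge u_x(t,x)-Mx\ge u_x(t,1)-M$, whence $u_x(t,x)\le u_x(t,0)+Mx\le u_x(t,0)+M$ and $-u_x(t,x)\le -u_x(t,1)+M(1-x)\le -u_x(t,1)+M$; combining yields \rife{controlnormux}.

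\smallskip
\textit{Part (ii).} For the symmetry and monotonicity of $u_k(t,\cdot)$: the reflection $x\mapsto 1-x$ maps problem \rife{app-1d} to itself (since $F_k$ is even and the data are symmetric), so by uniqueness $u_k(t,x)=u_k(t,1-x)$; for monotonicity I would apply the standard moving-plane / reflection comparison on the half-interval — compare $u_k(t,x)$ with $u_k(t,2\mu-x)$ on $(\mu,1/2)$ for $\mu\in(0,1/2)$, using the comparison principle (the reflected function solves the same equation, as $F_k$ is even, with ordered data on the parabolic boundary because $\phi$ is nondecreasing on $(0,1/2)$ and symmetric). Passing to the limit $k\to\infty$ via the monotone convergence \rife{approxpbm2} transfers both properties to $u(t,\cdot)$. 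The heart of Part (ii) is \rife{uktcenter}: since $u_k(t,\cdot)$ is symmetric, $u_{k,x}(t,1/2)=0$, so $F_k(u_{k,x}(t,1/2))=0$ and the equation gives $u_{k,t}(t,1/2)=u_{k,xx}(t,1/2)$. As $x=1/2$ is an interior maximum of the (symmetric, nondecreasing-then-nonincreasing) function $u_k(t,\cdot)$ one gets $u_{k,xx}(t,1/2)\le 0$, hence $u_{k,t}(t,1/2)\le 0$. To upgrade this to the \emph{strict} inequality \rife{uktcenter} I would invoke the strong maximum principle applied to the equation satisfied by $w:=u_{k,t}$, namely $w_t-w_{xx}=F_k'(u_{k,x})\,w_x$ (a linear parabolic equation with bounded coefficients on compact time intervals): $w$ attains a nonpositive value at the interior point $(t,1/2)$; if it vanished there it would vanish identically on $(0,t]\times(0,1)$ by the strong maximum principle, forcing $u_k$ to be time-independent and hence $\equiv 0$ since $u_k=0$ on the lateral boundary, contradicting $\phi\not\equiv 0$. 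Finally \rife{utcenter} follows by letting $k\to\infty$: from \rife{approxpbm2}, $u_{k,t}(t,1/2)\to u_t(t,1/2)$, giving $u_t(t,1/2)\le 0$; strictness is then recovered by the same strong-maximum-principle argument applied directly to $u$ (which is a classical solution near $x=1/2$ by Theorem~\ref{prelimprop}(iii)), or by noting $u(\cdot,1/2)$ cannot be eventually constant.

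\smallskip
\textit{Main obstacle.} The routine parts are the convexity/limit bookkeeping in (i) and the reflection argument in (ii). The delicate point is getting the \emph{strict} sign in \rife{utcenter}-\rife{uktcenter} cleanly: one must be careful that the strong maximum principle is applied on a domain and for a linear equation with coefficients that are bona fide bounded — which is fine for $u_k$ on $[t_0,t]\times[0,1]$ since $F_k'$ is globally Lipschitz and $\nabla u_k$ is bounded there, but for $u$ itself one only has interior regularity, so the cleanest route is to prove strictness at the $u_k$ level and then argue that the limit cannot be identically zero. I also want to be slightly careful that $u_k(t,\cdot)$ is $C^2$ up to $x=1/2$ (true by interior parabolic regularity) so that the pointwise evaluation $u_{k,xx}(t,1/2)\le 0$ is legitimate.
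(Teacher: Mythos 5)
Part (i) is correct and essentially the paper's argument, modulo one small point of phrasing: the estimate $u_{xx}\le M$ and the ensuing concavity hold on the whole half-line $t\ge t_0$, not only for $t<T^*$, because $u$ is smooth in $(0,\infty)\times(0,1)$ (Theorem~\ref{prelimprop}(iii)) and the bound $|u_t|\le M(t_0)$ from \rife{u_t-bounded} persists; the remark ``for $t<T^*$ this issue does not arise'' suggests you are only covering the pre-blow-up range, which would not suffice for the statement as given.

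In part (ii), the symmetry/monotonicity step is correct but takes a more roundabout route than necessary: a moving-plane comparison of $u_k(t,x)$ with $u_k(t,2\mu-x)$ needs care with the comparison domain (your choice $(\mu,1/2)$ sends $2\mu-x$ outside $(0,1)$ when $\mu\le 1/4$), whereas the paper simply applies the maximum principle to $v:=u_{k,x}$ on $(0,\infty)\times(0,1/2)$, using $v(0,\cdot)=\phi'\ge0$, $v(t,1/2)=0$ (symmetry) and $v(t,0)\ge 0$ (since $u_k\ge 0$ and $u_k(t,0)=0$). That said, your route is salvageable and this is not a genuine error.

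The genuine gap is in the argument for the strict inequalities \rife{utcenter}--\rife{uktcenter}. You correctly obtain $u_{k,t}(t,1/2)=u_{k,xx}(t,1/2)\le 0$ from the fact that $x=1/2$ is a spatial maximum of $u_k(t,\cdot)$. But to upgrade this to a strict inequality you then apply the strong maximum principle to $w:=u_{k,t}$, claiming that $w(t,1/2)=0$ forces $w\equiv 0$. The strong maximum principle requires $(t,1/2)$ to be a maximum (or minimum) of $w$ over a parabolic neighbourhood in $(s,x)$; what you have established is only the one-dimensional fact that $w(s,1/2)\le 0$ along the line $\{x=1/2\}$, and nothing rules out $w>0$ at nearby points with $x\ne 1/2$ (indeed for $N(0)=2$ one has $u_t>0$ near the boundary for $t<T^*$). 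So the hypotheses of the strong maximum principle are not met, and the step ``if it vanished there it would vanish identically'' does not follow. The correct argument (the one the paper uses) applies the strong maximum principle and Hopf's lemma to $v=u_{k,x}$ rather than to $u_{k,t}$: one has $v\ge 0$ on $(0,1/2)$, $v(t,1/2)=0$, and the coefficients of the linear equation for $v$ are bounded on compact subsets of the interior (e.g.\ on $(0,\infty)\times(1/4,1/2)$), so either $v\equiv 0$ (which would force $u_k\equiv 0$, contradicting $\phi\not\equiv 0$) or $v>0$ on $(0,1/2)$ and Hopf's lemma gives $v_x(t,1/2)=u_{k,xx}(t,1/2)<0$; the equation then yields $u_{k,t}(t,1/2)<0$. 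The same reasoning applied directly to $u$ on the interior strip gives \rife{utcenter}.
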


\begin{proof} 
(i) By \rife{u_t-bounded}, we have $u_{xx}=u_t-|u_x|^p\le M$ in $[t_0,\infty)\times (0,1)$
and the first part of the assertion immediately follows.
  Since $u_x(t,x)-Mx$ is nonincreasing, the limits in \eqref{controlnormux0} exist.
Since $u(t,\cdot)\in C([0,1])\cap C^1(0,1)$, 
the assertion after \eqref{controlnormux0} follows 
and we have
$$u_x(t,1)-M\le u_x(t,x)-Mx\le u_x(t,0),\quad 0\le x\le 1,$$
which yields \eqref{controlnormux}.
\smallskip

(ii)    The symmetry of the $u_k$ is guaranteed by their uniqueness.
Their monotonicity property is an immediate consequence of the maximum principle applied to the equation for $u_{k,x}$.
Both properties are inherited by $u$ after passing to the limit $k\to\infty$.

Let us show \eqref{utcenter}. Since $u$ is smooth in $(0,\infty)\times(0,1)$, we deduce from the strong  maximum principle that $u_x>0$ for $x\in (0,1/2)$.
Since also $u_x(t,1/2)=0$,
and since the equation for $u_x$ in $(0,\infty)\times(1/4,1/2)$ 
has smooth bounded coefficients,
we may apply the Hopf lemma  to get $u_{xx}(t,1/2)<0$, hence the conclusion. 
The proof of \eqref{uktcenter} is similar. 
\end{proof}

  Our next lemma gives useful bounds on $u_x$.
We note that the bounds in Lemmas \ref{basic-bounds}--\ref{basic-prop} have been already known for classical solutions (see, e.g., \cite{CG96}, \cite{ARS04}),
but since we here deal with viscosity solutions, with possible loss of boundary conditions, it is safer to prove exactly what we need.

\begin{lem}\label{basic-bounds} 
Let $\phi\in X_1$ with $T^*(\phi)<\infty$. Then, for all $t\ge t_0>0$, we have
\be\label{SGBUprofileUpperEst}
u_x(t,x) \le \bigl[\bigl(u_x(t,y)-My\bigr)_+^{1-p}+(p-1)(x-y)\bigr]^{-1/(p-1)}+\, Mx, \quad 0<y<x<1,
\ee
\be\label{SGBUprofileUpperEst2}
u_x(t,x) \le U_*'(x)+\, Mx, \quad 0<x<1,
\ee
\vskip -7pt
\be\label{SGBUprofileUpperEst3}
u_x(t,x) \ge -U_*'(1-x)-\, M(1-x), \quad 0<x<1,
\ee
and
\be\label{SGBUprofileLowerEst}
(u_x(t,x))_+ \ge \bigl[\bigl((u_x(t,y))_++My\bigr)^{1-p}+(p-1)(x-y)\bigr]^{-1/(p-1)}-\, Mx, \quad 0<y<x<1, 
\ee
where $M=M(t_0)$ is given by \rife{u_t-bounded}
and where the reference singular profile   $U_*$ is defined in \eqref{defUstar}.
\end{lem}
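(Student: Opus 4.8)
The plan is to argue at a fixed time $t\ge t_0$, using two facts from the preliminary material: by Theorem~\ref{prelimprop}(iii), $u$ is smooth in $(0,\infty)\times(0,1)$, and by \eqref{u_t-bounded} it satisfies $|u_t|\le M$ there, with $M=M(t_0)$. In particular $u_{xx}=u_t-|u_x|^p\le M$, so that (as already recorded in Lemma~\ref{basic-prop0}(i)) the function $g(x):=u_x(t,x)-Mx$ is nonincreasing on $(0,1)$; the companion function $h(x):=u_x(t,x)+Mx$ will be used on intervals where $u_x>0$. At fixed $t$ the PDE becomes the ordinary differential relation $u_{xx}=u_t-|u_x|^p$ in $x$, and combined with the two‑sided bound $|u_t|\le M$ it yields one‑sided differential inequalities for $g$ and $h$ that we compare with the separable ODE $V'=-V^p$ solved by $U_*'$; recall that if $V>0$ solves $V'=-V^p$, then $(V^{1-p})'=p-1$.

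\emph{Upper bounds.} Fix $0<y<x<1$. For \eqref{SGBUprofileUpperEst}: if $g(x)\le 0$ there is nothing to prove, since the right‑hand side is $>Mx\ge u_x(t,x)$. If $g(x)>0$, then by monotonicity $g(\xi)\ge g(x)>0$, hence $u_x(t,\xi)=g(\xi)+M\xi\ge g(\xi)>0$ for all $\xi\in(0,x]$, and on $(0,x]$
$$
g'=u_{xx}-M=u_t-u_x^p-M\le -u_x^p\le -g^p .
$$
Since $g>0$, this gives $(g^{1-p})'\ge p-1$ on $(0,x]$; integrating from $y$ to $x$ yields $g(x)^{1-p}\ge g(y)^{1-p}+(p-1)(x-y)$ with $g(y)=(u_x(t,y)-My)_+$, and inverting (a decreasing operation, as $1-p<0$) gives \eqref{SGBUprofileUpperEst}. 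Letting $y\to0^+$ and using $g(y)^{1-p}>0$ gives $g(x)^{1-p}\ge(p-1)x=(U_*'(x))^{1-p}$, i.e.\ \eqref{SGBUprofileUpperEst2}. Finally, the reflected function $\tilde u(t,x):=u(t,1-x)$ is, by uniqueness, the viscosity solution of \eqref{vhj1} with initial datum $\tilde\phi:=\phi(1-\cdot)\in X_1$; it satisfies $\|\tilde u_t\|_\infty=\|u_t\|_\infty\le M$ and $T^*(\tilde\phi)=T^*(\phi)<\infty$, so applying \eqref{SGBUprofileUpperEst2} to $\tilde u$ and replacing $x$ by $1-x$ yields \eqref{SGBUprofileUpperEst3}.

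\emph{Lower bound.} Fix $0<y<x<1$. If $u_x(t,y)\le 0$, then with the convention $0^{1-p}=+\infty$ the right‑hand side of \eqref{SGBUprofileLowerEst} equals $-Mx\le 0\le (u_x(t,x))_+$, so the inequality is trivial; assume $u_x(t,y)>0$. On any interval where $u_x>0$ we have $h>0$ and
$$
h'=u_{xx}+M=u_t-u_x^p+M\ge -u_x^p\ge -h^p ,
$$
hence $(h^{1-p})'\le p-1$ there. If $u_x(t,\cdot)>0$ on all of $[y,x]$, integrating from $y$ to $x$ gives $h(x)^{1-p}\le h(y)^{1-p}+(p-1)(x-y)$; since $h(y)=(u_x(t,y))_++My$ and $h(x)=(u_x(t,x))_++Mx$, this is exactly \eqref{SGBUprofileLowerEst}. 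Otherwise, let $\sigma$ be the smallest point of $(y,x]$ at which $u_x(t,\sigma)=0$ (it exists by continuity); then $u_x>0$ on $[y,\sigma)$, and integrating the inequality for $h$ on $[y,\sigma)$ and letting $\xi\to\sigma^-$ gives $(M\sigma)^{1-p}=h(\sigma)^{1-p}\le h(y)^{1-p}+(p-1)(\sigma-y)$. Using $\sigma\le x$, $p-1>0$, $h(y)=(u_x(t,y))_++My$, and the monotonicity of $s\mapsto(Ms)^{1-p}$,
$$
\bigl((u_x(t,y))_++My\bigr)^{1-p}+(p-1)(x-y)\ \ge\ h(y)^{1-p}+(p-1)(\sigma-y)\ \ge\ (M\sigma)^{1-p}\ \ge\ (Mx)^{1-p},
$$
so that $\bigl[((u_x(t,y))_++My)^{1-p}+(p-1)(x-y)\bigr]^{-1/(p-1)}\le Mx$; hence the right‑hand side of \eqref{SGBUprofileLowerEst} is $\le 0\le (u_x(t,x))_+$, and we are done.

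\emph{Main obstacle.} The upper bounds are routine once the monotonicity of $g$ is exploited. The delicate point is the lower bound: no monotonicity of $\phi$ (hence of $u_x(t,\cdot)$) is assumed, so $u_x$ may change sign between $y$ and $x$ and the differential inequality for $h$ is only available on intervals of positivity of $u_x$; this forces the short case analysis above and some care with the convention at $u_x=0$ and with the orientation reversal caused by raising to the negative power $1-p$. It is worth noting that the whole argument uses only interior smoothness and the a~priori bound $|u_t|\le M$, so it applies to the viscosity solution with no information on the boundary behavior, which is exactly what is needed in the regime of loss of boundary conditions.
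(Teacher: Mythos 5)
Your proof is correct and follows essentially the same approach as the paper: fix $t$, read the equation as an ODE in $x$, use $|u_t|\le M$ to get one-sided differential inequalities for $u_x\mp Mx$, and integrate the resulting $V'=-V^p$-type inequalities. The paper phrases it slightly more compactly by working directly with the positive parts $z=(u_x-Mx)_+$ and $z=(u_x)_+ + Mx$ (which gives $z'+z^p\le 0$, resp.\ $\ge 0$, a.e.\ on all of $(0,1)$ with no case split), whereas you work with the raw functions $g$ and $h$ and patch across sign changes of $u_x$; both are fine.

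One small slip in your degenerate case of \eqref{SGBUprofileLowerEst}: when $u_x(t,y)\le 0$ the bracket is $\bigl((My)^{1-p}+(p-1)(x-y)\bigr)$, not $0^{1-p}$, since $(u_x(t,y))_++My=My>0$. So the right-hand side is not $-Mx$; it equals $\bigl[(My)^{1-p}+(p-1)(x-y)\bigr]^{-1/(p-1)}-Mx$. The conclusion that this is $\le 0$ still holds, because $(My)^{1-p}\ge (Mx)^{1-p}$ (as $y<x$ and $1-p<0$), so the bracket is $\ge (Mx)^{1-p}$ and its $-1/(p-1)$ power is $\le Mx$. The rest of the case analysis is sound, and in particular your handling of the first zero $\sigma$ of $u_x$ in $(y,x]$ is correct.
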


\begin{proof} 
For fixed $t\ge t_0$, let $z(x)=(u_x(t,x)-M x)_+$. The function $z$ satisfies
$$z'+z^p=(u_{xx}-M)\chi_{\{u_x>M x\}}+(u_x-Mx)_+^p,
\quad\hbox{for a.e. $x\in (0,1)$.}$$
For each $x$ such that $u_x(t,x)>Mx$, 
we have $(z'+z^p)(x)\leq (u_{xx}-M+|u_x|^p)(x)\le 0$ by \rife{u_t-bounded}. 
Therefore, we have 
\be\label{SGBUprofileUpperEstAux}
z'+z^p\le 0\quad\hbox{ a.e. on $(0,1)$.}
\ee
 In particular $z$ is nonincreasing on $(0,1)$. 
 We may assume that $E:=\{x\in (0,1);\, z(x)>0\}\neq\emptyset$
(otherwise there is nothing to prove) and, letting $a=\sup E\in (0,1]$, we have $z>0$ on $(0,a)$. 
For each $x\in(0,a)$, by integrating
\eqref{SGBUprofileUpperEstAux} we get $z^{1-p}(x)\ge z^{1-p}(y)+(p-1)(x-y)$
for all $y\in (0,x)$.
Since $z\le 0$ on $[a,1)$, we obtain inequality \eqref{SGBUprofileUpperEst}.

\smallskip
Next, inequality \eqref{SGBUprofileUpperEst2} follows by letting $y\to 0$ in \eqref{SGBUprofileUpperEst},
and \eqref{SGBUprofileUpperEst3} follows by applying \eqref{SGBUprofileUpperEst2} to the solution $v(t,x):=u(t,1-x)$.

\smallskip
To prove \eqref{SGBUprofileLowerEst} we now let 
$z(x)=(u_x(t,x))_++\,Mx.$
The function $z$ satisfies
$$
z'+z^p=u_{xx} \chi_{\{u_x>0\}}+M+\bigl[(u_x(t,x))_++\,Mx\bigr]^p 
\ge (u_{xx}+|u_x|^p) \chi_{\{u_x>0\}}+M\ge 0$$
a.e.~on $(0,1)$ by \rife{u_t-bounded}. 
By integration, noting that $z>0$ on $(0,1)$, we get 
$z^{1-p}(x)\le z^{1-p}(y)+(p-1)(x-y)$
and inequality \eqref{SGBUprofileLowerEst} follows. 
\end{proof}

The next result guarantees that unboundedness of $u_x$ near a given time $t\ge T^*$
implies that the space behavior at that time is described by the reference singular profile   $U_*$.

\begin{lem}\label{basic-prop} 
  Let $\phi\in X_1$ with $T^*(\phi)<\infty$.  
  Fix any $t_0\in (0,T^*)$ and let $M(t_0)$ be given by \rife{u_t-bounded}. 
If, for some   $t\ge T^*$,  there exists a sequence $(t_j, x_j)\to (t,0)$ such that $u_x(t_j,x_j)\to \infty$, then 
\be\label{profile2}
|u(t,x)-u(t,0)-U_*(x)| \le K \, {\frac{x^2}2}, \qquad 0< x\le 1/2,
\ee
\be\label{profile}
|u_x(t,x)-U_*'(x)| \le K x, \qquad 0<x\le 1/2,\ee
  and
\be\label{profile3}
|u_{xx}(t,x)-U_*''(x)| \le K, \qquad 0<x\le 1/2,\ee
for some constant $K>0$   depending only on $p$ and $M(t_0)$. 
\end{lem}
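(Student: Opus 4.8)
The hypothesis says that $u_x$ is unbounded along a sequence $(t_j,x_j)\to(t,0)$, and the goal is to upgrade this to the quantitative profile estimates \eqref{profile2}--\eqref{profile3}. The plan is to sandwich $u_x(t,\cdot)$ between two copies of the ODE profile $z'+z^p=0$, using Lemma~\ref{basic-bounds}. The upper bound \eqref{SGBUprofileUpperEst2} already gives $u_x(t,x)\le U_*'(x)+Mx$ for all $t\ge t_0$, so the real work is the matching lower bound. First I would fix $t\ge T^*$ as in the statement and pass to the limit in the lower estimate \eqref{SGBUprofileLowerEst}: for each fixed $y>0$ and $x\in(y,1/2]$, applying \eqref{SGBUprofileLowerEst} at times $t_j$ (with the point $y$) and letting $j\to\infty$ — using the fact that $u_x(t_j,y)\to u_x(t,y)$ by interior regularity (Theorem~\ref{prelimprop}(iii)) and continuity of $\nabla u$ away from the boundary — gives
$$
(u_x(t,x))_+ \ge \bigl[\bigl((u_x(t,x_j'))_++Mx_j'\bigr)^{1-p}+(p-1)(x-x_j')\bigr]^{-1/(p-1)}-Mx
$$
where I instead want to exploit that along a suitable subsequence the base point can be taken to be $x_j\to 0$ with $u_x(t_j,x_j)\to\infty$; then $\bigl((u_x(t_j,x_j))_++Mx_j\bigr)^{1-p}\to 0$ and $(p-1)(x-x_j)\to (p-1)x$, so in the limit one obtains
$$
u_x(t,x) \ge \bigl[(p-1)x\bigr]^{-1/(p-1)}-Mx = U_*'(x)-Mx,\qquad 0<x\le 1/2.
$$
Combining with \eqref{SGBUprofileUpperEst2} yields \eqref{profile} with $K=M$, i.e. $|u_x(t,x)-U_*'(x)|\le Mx$ on $(0,1/2]$.

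\textbf{Deriving \eqref{profile2} and \eqref{profile3} from \eqref{profile}.} Inequality \eqref{profile2} follows by integrating \eqref{profile} from $0$ to $x$: since $U_*(0)=0$ and $u(t,\cdot)\in C([0,1])$, we get $u(t,x)-u(t,0)-U_*(x)=\int_0^x\bigl(u_x(t,s)-U_*'(s)\bigr)\,ds$, whose absolute value is at most $\int_0^x Ms\,ds=Mx^2/2$. For \eqref{profile3} one returns to the PDE: $u$ is a classical solution in the interior, so $u_{xx}(t,x)=u_t(t,x)-|u_x(t,x)|^p$ for $x\in(0,1/2]$. By \rife{u_t-bounded} we have $|u_t(t,x)|\le M$ (here one needs $t\ge t_0$, which holds since $t\ge T^*>t_0$), and $U_*''(x)=-U_*'^p(x)$ because $U_*$ is a stationary solution. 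Hence
$$
|u_{xx}(t,x)-U_*''(x)| \le |u_t(t,x)| + \bigl||u_x(t,x)|^p-U_*'^p(x)\bigr| \le M + \bigl|u_x(t,x)^p-U_*'(x)^p\bigr|,
$$
and the last term is controlled using \eqref{profile} together with the fact that both $u_x(t,x)$ and $U_*'(x)$ lie in a bounded range once $x$ is bounded away from $0$, while near $x=0$ one uses the mean value theorem: $|a^p-b^p|\le p\max(a,b)^{p-1}|a-b|\le p(U_*'(x)+Mx)^{p-1}\cdot Mx$, and $(U_*'(x))^{p-1}=((p-1)x)^{-1}$, so $x\cdot(U_*'(x))^{p-1}$ stays bounded. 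This gives \eqref{profile3} with a constant $K$ depending only on $p$ and $M(t_0)$, after enlarging $K$ if necessary; one also needs to absorb the corrections coming from the $Mx$ term, which is routine since $x\le 1/2$.

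\textbf{Main obstacle.} The delicate point is the limiting argument for the lower bound: I must justify that the sequence $(t_j,x_j)\to(t,0)$ with $u_x(t_j,x_j)\to\infty$ can actually be used as base points in \eqref{SGBUprofileLowerEst}, and that the resulting limit is uniform in $x\in(0,1/2]$. The cleanest route is: apply \eqref{SGBUprofileLowerEst} with $y=x_j$ and a \emph{fixed} $x\in(0,1/2]$ (legitimate once $x_j<x$, i.e. for $j$ large); the right-hand side is then an explicit continuous function of the single number $(u_x(t_j,x_j))_++Mx_j$ and of $x_j$; as $j\to\infty$ the first quantity tends to $+\infty$ and $x_j\to 0$, so the bracket tends to $(p-1)x$ from below and the whole right-hand side tends to $U_*'(x)-Mx$, giving the bound pointwise in $x$ — which is all that is needed since \eqref{profile2}--\eqref{profile3} are pointwise statements. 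A secondary subtlety is that $u_x(t,x)$ may a priori fail to exist at $x=t$'s boundary, but the estimates are only claimed on $(0,1/2]$ where interior smoothness (Theorem~\ref{prelimprop}(iii)) guarantees $u(t,\cdot)$ is $C^2$, so differentiation and the use of the equation are all justified.
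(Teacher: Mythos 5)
Your approach is essentially the same as the paper's, and the reduction of \eqref{profile2} and \eqref{profile3} to \eqref{profile} (integration, and the PDE combined with the mean value theorem for the $p$-th powers) is correct and matches the paper. However, there is a genuine gap in your derivation of the lower bound in \eqref{profile}, and the constant $K=M$ you claim is too optimistic.

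The limiting argument you describe — applying \eqref{SGBUprofileLowerEst} with $t=t_j$, $y=x_j$ and sending $j\to\infty$ — yields
$$(u_x(t,x))_+ \ \ge\ \bigl[(p-1)x\bigr]^{-1/(p-1)}-Mx \ =\ U_*'(x)-Mx,\qquad 0<x\le 1/2.$$
Note the positive part on the left: \eqref{SGBUprofileLowerEst} controls $(u_x)_+$, not $u_x$, and you silently dropped it. This only forces $u_x(t,x)\ge U_*'(x)-Mx$ on the interval $0<x\le x_0:=\bigl(M^{1-p}/(p-1)\bigr)^{1/(p-2)}$, where the right-hand side is nonnegative. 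For $x\in[x_0,1/2]$ the inequality $(u_x)_+\ge\text{(negative number)}$ is vacuous and provides no lower bound on $u_x$ at all. The lemma is stated for general $\phi\in X_1$ — no symmetry or monotonicity is assumed — so you cannot appeal to $u_x\ge 0$ to identify $u_x$ with $(u_x)_+$ there. On $[x_0,1/2]$ you must instead invoke the separate bound \eqref{SGBUprofileUpperEst3}, namely $u_x(t,x)\ge -U_*'(1-x)-M(1-x)\ge -U_*'(1/2)-M$, and then enlarge the constant: the paper takes $K=\max\bigl[M,\,x_0^{-1}\bigl(U_*'(1/2)+U_*'(x_0)+M\bigr)\bigr]$ so that $-U_*'(1/2)-M\ge U_*'(x)-Kx$ holds on $[x_0,1/2]$. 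With this $K$ (depending only on $p$ and $M$), \eqref{profile} holds on all of $(0,1/2]$, and the rest of your argument goes through.
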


\begin{proof} 
Let us first show \rife{profile}.
Since the upper bound in \rife{profile} is guaranteed by Lemma~\ref{basic-bounds}, we only need to show the lower bound.
To this end, writing \eqref{SGBUprofileLowerEst} with $t=t_j$, $y=x_j$ and letting $j\to\infty$, we have
\be\label{SGBUprofileUpperEstAux2}
(u_x(t,x))_+ \ge ((p-1)x)^{-1/(p-1)}-\, Mx, \quad 0<x\le 1/2,
\ee
with $M=M(t_0)$. Since the RHS of \eqref{SGBUprofileUpperEstAux2} is positive for $x<x_0:=(M^{1-p}/(p-1))^{1/(p-2)}$, we get
\be\label{SGBUprofileUpperEstAux3}
u_x(t,x) \ge U_*'(x)-\, Mx, \quad 0<x<x_0.
\ee
On the other hand, by \eqref{SGBUprofileUpperEst3}, we have
$$u_x(t,x)\ge -U_*'(1/2)-\, M, \quad x_0\le x\le 1/2.$$
Now choosing
$$K=K(p,M):=\max\bigl[M,x_0^{-1}(U_*'(1/2)+U_*'(x_0)+M)\bigr],$$
we get
$$
-U_*'(1/2)-\, M\ge U_*'(x_0)-\, Kx_0\ge U_*'(x)-\, Kx, \quad x_0\le x\le 1/2.
$$
This together with \eqref{SGBUprofileUpperEstAux3} yields the lower estimate on $u_x$ in \rife{profile}.

\smallskip
To show \rife{profile3}, we use \rife{profile} to  write:
\be\label{SGBUprofileUpperEstAux4}
|u_{xx}(t,x)-U_*''(x)|\le |u_t|+\bigl||U_*'|^p-|u_x|^p\bigr|\le |u_t|+|U_*'+Kx|^p-U_*'^p, \quad 0<x\le 1/2.
\ee
On the other hand, we have
\be\label{SGBUprofileUpperEstAux5}
|U_*'+Kx|^p-U_*'^p=U_*'^p\bigl[(1+cKx^{p/(p-1)})^p-1\bigr]\le cKU_*'^px^{p/(p-1)}=cK,
\ee
where $c=c(p)$ denotes a generic positive constant.
Property \rife{profile3} then follows from \eqref{SGBUprofileUpperEstAux4}, 
\eqref{SGBUprofileUpperEstAux5} and \rife{u_t-bounded}.

\smallskip
Finally, \rife{profile2} follows from \rife{profile} by integration.
\end{proof}

We now point out  that, should the solution $u$ have lost the boundary condition at some $t_0>0$,
 then necessarily the gradient must blow up near the boundary at this time. This is a consequence of general results of \cite{BDL04}.
However, we shall provide a direct and more elementary proof in the next lemma.
\medskip

\begin{lem}\label{bdl} 
  Let $\phi\in X_1$ with $T^*(\phi)<\infty$  
and assume that, for some $t>T^*$, we have
\be\label{HypInfiniteDeriv}
 u(t,0)>0.
 \ee 
Then we have 
\be\label{InfiniteDeriv}
\lim\limits_{ x \to 0_+} u_x(t,x) =\infty.
\ee
\end{lem}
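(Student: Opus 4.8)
\textbf{Proof plan for Lemma~\ref{bdl}.}
The plan is to argue by contradiction: suppose that $\lim_{x\to 0_+}u_x(t,x)$ is finite (it exists in $\R\cup\{\infty\}$ by Lemma~\ref{basic-prop0}(i), using $u_{xx}\le M$ for $t\ge t_0$ with $t_0<T^*$ fixed). Then $u(t,\cdot)$ extends to a function in $C^1([0,1/2])$ near the left endpoint, with bounded derivative, and in particular the Lipschitz constant of $u(t,\cdot)$ on some interval $[0,\eta]$ is finite. The strategy is then to use this one-sided $C^1$ bound at time $t$, together with the interior bound $u_{xx}\le M$ coming from \rife{u_t-bounded}, to build a classical supersolution of \rife{vhj1} on a small space-time box $(t-\sigma,t]\times(0,\eta)$ that dominates $u$ on the parabolic boundary and forces $u(t,0)=0$, contradicting \rife{HypInfiniteDeriv}.

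More concretely, first I would fix $t_0\in(0,T^*)$, let $M=M(t_0)$ be given by \rife{u_t-bounded}, and use Lemma~\ref{basic-prop0}(i) to write $u_x(t,x)\le A+Mx$ on $(0,1)$ where $A:=\lim_{x\to 0_+}u_x(t,x)<\infty$ (this is the contradiction hypothesis). Hence $u(t,x)\le (A+1)x$ for $x\in(0,\eta)$ with $\eta$ small. Since $u$ is continuous on $[0,\infty)\times[0,1]$ and $u=0$ on $\{x=0\}\cup\{x=1\}$ up to approximation — more precisely, recall that $u$ is the increasing limit of the $u_k$ from Theorem~\ref{prelimprop2}, with $u_k=0$ on the lateral boundary — I would work with the approximations $u_k$, which are genuine classical solutions of \rife{app-1d} vanishing at $x=0,1$, satisfy $0\le u_k\le u$, and obey $0\le F_k(s)\le |s|^p$. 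The key point is that for these one has a \emph{clean} barrier argument.

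The main step is the barrier construction. On the box $Q_\sigma:=(t-\sigma,t)\times(0,\eta)$ I would look for a supersolution of the $u_k$-equation of the form $\overline{v}(s,x)=B x + \lambda(t-s)$ (or a slightly bent variant $Bx - \tfrac{M}{2}x^2 + \lambda(t-s)$ to absorb curvature), where $B$ is chosen larger than the finite bound on $u_x(t,\cdot)$ near $0$ and larger than the bound on $u$ coming from continuity on the compact lateral strips $\{x=\eta\}$ over the short time interval, and $\lambda$ is chosen large relative to $B^p$. For such $\overline v$: on $\{s=t-\sigma\}$ we need $\overline v\ge u_k$, which holds if $\lambda\sigma$ is large enough since $u_k$ is bounded on $[0,\infty)\times[0,1]$ by $\|\phi\|_\infty$ (maximum principle), so in fact a constant suffices there; on $\{x=0\}$, $\overline v=\lambda(t-s)\ge 0 = u_k$; on $\{x=\eta\}$, $\overline v\ge B\eta - \tfrac M2\eta^2 \ge u_k(s,\eta)$ provided $B$ is large, using the uniform bound $u_k\le\|\phi\|_\infty$; and the differential inequality $\overline v_s-\overline v_{xx}\ge F_k(\overline v_x)$ reads $-\lambda + M \ge F_k(B-Mx)$, which holds for $\lambda$ large since $F_k(B - Mx)\le (B+M)^p$ uniformly in $k$ — wait, the sign: actually I need $\overline v$ to be a supersolution, i.e.\ $\mathcal P\overline v\le\mathcal P u_k=0$, giving $\overline v_s - \overline v_{xx} - F_k(\overline v_x)\le 0$, i.e.\ $-\lambda + M - F_k(B-Mx)\le 0$, which is automatic. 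Then the comparison principle (Proposition~\ref{compP}, whose hypotheses the $u_k$ and $\overline v$ satisfy) yields $u_k\le \overline v$ on $Q_\sigma$, and letting $k\to\infty$ gives $u(t,x)\le Bx$ on $(0,\eta)$; but this does not yet contradict anything. The actual contradiction must come from pushing $B$ down to the value forced at the \emph{final} time: I would instead run the comparison on $(t-\sigma,t]\times(0,\eta)$ with the top face prescribed by $u(t,\cdot)$ itself, concluding $u(s,x)\le u(t,x) + C(t-s) \le (A+1)x + C(t-s)$ backward, and then combine with the \emph{lower} bound that LBC at time $t$ forces $u(s,0)$ to stay positive on a left-neighborhood of $t$ (from continuity and \rife{HypInfiniteDeriv}) — and the discrepancy between $u(s,0)>0$ and the barrier value $\lambda(t-s)\to 0$ as $s\to t^-$ is what fails. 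So the cleaner route: assume the finite limit, deduce $u(t,x)\le (A+1)x$ near $0$; by continuity and the interior bound $u_s = u_{xx}+|u_x|^p$ with $u_{xx}\le M$, get $u(s,x)\le (A+1)x + C(t-s)$ on $(t-\sigma,t]\times(0,\eta)$; let $x\to 0$ to find $u(s,0)\le C(t-s)$, so $u(t,0)=\lim_{s\to t}u(s,0)$ need not vanish — this last point is where the argument is delicate, because $u(s,0)=0$ for $s<T^*$ is not available once $s>T^*$.

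\textbf{Main obstacle.} The genuine difficulty is that for $s>T^*$ we cannot assume $u(s,0)=0$, so a naive "compare with a barrier vanishing at $x=0$" does not immediately close. I expect the correct fix is to exploit that $u$ is the monotone limit of the classical, boundary-vanishing $u_k$: for each fixed $k$, $u_k(s,0)=0$, and the finite-slope bound on $u(t,\cdot)$ near $0$ transfers (via $0\le u_k\le u$) to a finite-slope bound on $u_k(t,\cdot)$ near $0$, \emph{uniform in $k$}; then the barrier/comparison argument above applied to $u_k$ gives $u_k(s,0)$ small backward from $t$... but $u_k(s,0)=0$ anyway. The real content is forward: the finite slope at time $t$ for all $k$, combined with a parabolic-boundary-layer estimate, should give $u_k(t,x)\le Cx$ on $(0,\eta)$ with $C$ independent of $k$; passing to the limit yields $u(t,x)\le Cx$, and crucially $u(t,0)\le \lim_{x\to 0}Cx = 0$ — which is the desired contradiction with \rife{HypInfiniteDeriv}. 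Thus the one real step to get right is the uniform-in-$k$ linear bound $u_k(t,x)\le Cx$ near the boundary under the hypothesis that $\sup_k \sup_{x\in(0,\eta)} \partial_x u_k(t,x) <\infty$; this follows from integrating $\partial_x u_k(t,\cdot)$ from $0$, using $\partial_x u_k(t,0^+)$ finite and the uniform upper bound $\partial_{xx}u_k\le M$ (from \rife{eqabA}, since $\partial_{xx}u_k = \partial_s u_k - F_k(\partial_x u_k)\le M$ for $s\ge t_0$). Everything else is routine: existence of the one-sided limit (Lemma~\ref{basic-prop0}), the uniform bounds (Theorem~\ref{prelimprop}(iv), \rife{eqabA}), and monotone convergence $u_k\uparrow u$ (Theorem~\ref{prelimprop2}).
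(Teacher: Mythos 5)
Your high-level plan matches the paper: argue by contradiction, work with the approximations $u_k$ of Theorem~\ref{prelimprop2} (which vanish at $x=0$), derive a uniform-in-$k$ linear bound $u_k(t,x)\le Cx$ near the boundary, and pass to the limit to conclude $u(t,0)=0$. But the decisive step — obtaining a \emph{uniform in $k$} bound on $u_{k,x}(t,\cdot)$ on an interval $(0,y_0)$ — is not actually established, and the tools you invoke cannot establish it.

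The gap is as follows. You first claim $u(t,x)\le (A+1)x$ on $(0,\eta)$, but this is false under the contradiction hypothesis: integrating $u_x(t,\cdot)\le A+Mx$ only gives $u(t,x)\le u(t,0)+(A+1)x$, and the offending constant $u(t,0)>0$ is precisely the quantity you are trying to kill. Because of this, the chain ``$u_k\le u\le(A+1)x$, hence $u_{k,x}(t,0^+)\le A+1$, hence $u_{k,x}(t,x)\le A+1+Mx$'' is circular — its first link fails. Later you acknowledge this is ``delicate'' and fall back on the assertion that a uniform bound on $u_{k,x}(t,\cdot)$ ``follows from integrating \ldots using $\partial_x u_k(t,0^+)$ finite and the uniform upper bound $\partial_{xx}u_k\le M$.'' But finiteness of $\partial_x u_k(t,0^+)$ for each fixed $k$ is trivial and useless; what you need is $\sup_k\partial_x u_k(t,0^+)<\infty$. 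And the bound $u_{k,xx}\le M$ alone cannot give this: it says $u_{k,x}(t,\cdot)-Mx$ is nonincreasing, which propagates an upper bound on $u_{k,x}$ from smaller $x$ to larger $x$, not backward from an interior point toward $x=0$. So $u_{k,xx}\le M$ is the wrong half of the $|\partial_t u_k|\le M$ estimate for your purpose.

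The paper's argument uses the \emph{other} half of \rife{eqabA}, namely $\partial_t u_k\ge -M$, i.e.\ $u_{k,xx}+F_k(u_{k,x})\ge -M$. This gives the auxiliary function $z(x)=(u_{k,x}(t,x))_+ +Mx$ the ODE inequality $z'+z^p\ge 0$, hence $(z^{1-p})'\le p-1$. Since $u_{k,x}(t,y_0)\to u_x(t,y_0)$ for any fixed interior $y_0$ and $u_x(t,y_0)\le A+M$ is finite by the contradiction hypothesis, one gets a $k$-independent bound on $z(y_0)$, and the ODE inequality propagates it \emph{backward} to $(0,y_0)$ — \emph{provided $y_0$ is small enough} that the ODE comparison $z'\ge -z^p$ does not blow up on the interval (whence the explicit requirement $y_0\le(A+2M+1)^{1-p}/2(p-1)$ in the paper). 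This backward-propagation-via-an-ODE-with-absorption step is exactly what your proposal lacks, and without it the uniform linear bound on $u_k(t,\cdot)$, and hence the contradiction, does not follow. Your earlier barrier constructions (linear or slightly bent supersolutions on a box $(t-\sigma,t)\times(0,\eta)$) correctly fail without the finite-slope hypothesis — as they must, otherwise they would disprove loss of boundary conditions altogether — but you never succeed in feeding that hypothesis into the construction in a $k$-uniform way.
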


\begin{proof} 
  Fix some $t_0\in (0,T^*(\phi))$ and let $M=M(t_0)$ be given by \rife{u_t-bounded}. 
Set $V(x)=u(t,x)$, $V_k(x)=u_k(t,x)$. 
  By Lemma~\ref{basic-prop0},  the function $V_x-Mx$ is nonincreasing  and has a limit (finite or $+\infty$) as $x\to 0$.
\medskip
Assume for contradiction that this limit is finite.  Then there exists $A>0$ such that 
 $$
 V_x(x)\le A+Mx\le A+M,\quad 0<x<1.
 $$
 Let $z(x)=(V_{k,x}(t,x))_++\,Mx.$
Then, owing to \rife{eqabA}, for all $k\ge k_0$, the function $z$ satisfies
$$
z'+z^p=V_{k,xx} \chi_{\{V_{k,x}>0\}}
 +M+\bigl[(V_{k,x})_++\,Mx\bigr]^p  
\ge (V_{k,xx}+|V_{k,x}|^p) \chi_{\{u_{k,x}>0\}}+M\ge 0
$$
almost everywhere on $(0,1)$. 
By integration, noting that $z>0$ on $[0,1]$, we get 
$$
z^{1-p}(y)\le z^{1-p}(x)+(p-1)(y-x),
\quad 0<x<y\le 1/2.
$$ 
Now fix $y_0\in (0,1/2)$ such that $y_0\le {(A+2M+1)^{1-p}\over 2(p-1)}$.
For $k\ge k_1$ sufficiently large, we have
$$
V_{k,x}(y_0)\le V_x(y_0)+1\le A+M+1,
$$
hence
$$
\bigl((V_{k,x})_+(x)+\,Mx\bigr)^{1-p} 
\ge (A+2M+1)^{1-p}-(p-1)y_0\ge \textstyle\frac12(A+2M+1)^{1-p},
\quad 0<x<y_0.
$$
Therefore,
$$
V_{k,x}(x)\le C:={2^{\frac1{p-1}}}(A+2M+1),
\quad 0<x<y_0.
$$
Since $V_{k}(0)=0$, by integration we get
$$
V_{k}(x)\le Cx,
\quad 0<x<y_0.
$$
Passing to the limit $k\to\infty$   for each fixed $x\in(0,y_0)$,  we get
$V(x)\le Cx$ for $0<x<y_0$.   Then passing to the limit $x\to 0$, we finally obtain
$u(t,0)=V(0)=0$, contradicting the loss of boundary condition assumption \eqref{HypInfiniteDeriv}.
\end{proof}

As a key tool in order to 
  study the regularization after $T^r$ (respectively $T^*$) for nonminimal (respectively  minimal) solutions, 
we shall perform an analysis in terms of the distance of $u(t,\cdot)$ to the reference profile $U_*$. 
The next result is a regularizing barrier argument showing how the smoothing rate can be related to the gap between $U_*$ and $u$.

\begin{lem}\label{barrier}
Let $\phi\in X_1$ with $\phi$   symmetric.  {We set $\alpha= \frac1{p-1}$.}
Let $T>0$, $m\in [2, 3-\alpha)$ and assume that
\be\label{BeqabAAa}
u(T,x)\le U_*(x)-bx^m, \quad 0<x<1,
\ee
for some $b>0$. Then $u$ is a classical solution of problem \rife{vhj1} (including boundary conditions)
for all $t\in(T,\infty)$. Moreover, we have
\be\label{BeqabAAb}
\|u_x(t)\|_\infty\le C(t-T)^{-\gamma},\quad t\to T_+,
\quad\hbox{ where }
\gamma={\alpha\over 3-m-\alpha}.
\ee
We note that $\gamma={1\over p-2}$ for $m=2$ and that
$\gamma={1\over (3-m)(p-1)-1}>{1\over p-2}$ for $2<m<3-\alpha$.
\end{lem}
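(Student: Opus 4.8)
The idea is to construct an explicit supersolution (barrier) for the equation that dominates $u$ at $t=T$ and remains finite, with finite gradient, for $t>T$; by the comparison principle (Proposition~\ref{compP}, applied to the approximate solutions $u_k$ and then passing to the limit, exactly as in the proof of Proposition~\ref{compP2}), this will force $u$ to stay bounded with bounded gradient up to the boundary, hence classical, and will yield the regularization rate. Since $u(T,\cdot)$ is symmetric and nondecreasing on $(0,1/2)$, it suffices to build the barrier on $(0,1/2)$ and use symmetry; near $x=1/2$ the solution is smooth and $u_t(t,1/2)<0$ by \eqref{utcenter}, so the only real issue is the behavior near $x=0$.

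First I would look for a barrier of the self-similar type adapted to the singular profile $U_*$. A natural ansatz is
$$
\overline u(t,x) = U_*(x) - b(t)\, x^m + \text{(lower order)},
$$
or, more robustly, a rescaled shifted copy of $U_*$: set $\lambda=\lambda(t)>0$ with $\lambda(T)$ small, and try
$$
\overline u(t,x) := U_*\bigl(x+\lambda(t)\bigr) - U_*(\lambda(t)) \quad\text{(plus a correction)},
$$
which is smooth up to $x=0$ with $\overline u_x(t,0) = U_*'(\lambda(t)) = ((p-1)\lambda(t))^{-\alpha}$. One checks that $U_*(x+\lambda)-U_*(\lambda)$ is a stationary solution of the equation with a favorable sign in $\mathcal P$, so the time-dependence of $\lambda$ is what must be tuned: computing $\mathcal P\overline u = \overline u_t - \overline u_{xx} - |\overline u_x|^p$ produces a term $\lambda'(t) U_*'(x+\lambda)$ from the time derivative, which is negative when $\lambda$ is decreasing — the wrong sign for a supersolution. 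So instead I would let $\lambda$ \emph{increase} from $0$, or, equivalently, combine the shift with the power correction $-b x^m$: the exponent $m<3-\alpha$ is precisely the range in which the correction term $x^m$ is ``subcritical'' relative to the scaling of $U_*$ (since $U_*''(x)\sim x^{-(2-\alpha)}$ and $(x^m)'' \sim x^{m-2}$, and $m-2 > -(2-\alpha)$ iff $m > \alpha$, while $m<3-\alpha$ ensures the gradient-nonlinearity cross term $|U_*'|^{p-1}\cdot (x^m)' \sim x^{-1}\cdot x^{m-1} = x^{m-2}$ is controlled by the same diffusion term). Matching these scalings and integrating the resulting ODE for $b(t)$ (or $\lambda(t)$) gives $\lambda(t) \sim (t-T)^{1/(3-m-\alpha)}$, hence $\overline u_x(t,0)\sim \lambda(t)^{-\alpha} \sim (t-T)^{-\alpha/(3-m-\alpha)} = (t-T)^{-\gamma}$, which is the claimed rate. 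Together with the bound \eqref{controlnormux} from Lemma~\ref{basic-prop0}, controlling $\overline u_x(t,0)$ controls $\|u_x(t)\|_\infty$.

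The key steps, in order, are: (1) write the precise barrier $\overline u(t,x)$ on $[0,1/2]$, of the form (rescaled/shifted $U_*$) minus a power correction, depending on a function $\lambda(t)$ with $\lambda(T)=0$; (2) verify $\mathcal P\overline u \ge 0$ in $(T,T+\tau)\times(0,1/2)$ for $\tau$ small, using $2<m<3-\alpha$ and choosing $\lambda'$ appropriately — this is the computational heart; (3) verify the boundary/initial inequalities: $\overline u(T,x)\ge u(T,x)$ on $(0,1/2)$, which is where hypothesis \eqref{BeqabAAa} enters (here one needs $U_*(x)-bx^m \le U_*(x+\lambda(T)) - U_*(\lambda(T)) + \dots$ near $T$, i.e.\ the correction in \eqref{BeqabAAa} gives room to absorb the barrier's correction — possibly one takes $\overline u$ of the pure form $U_*(x) - b(t)x^m$ with $b(T)=b$ and $b(t)$ decreasing, so that step (3) is immediate), and $\overline u(t,1/2) \ge u(t,1/2)$ for $t>T$, using continuity and \eqref{utcenter}; (4) apply Proposition~\ref{compP} to $u_k$ versus $\overline u$ on $(T,T+\tau)\times(0,1/2)$ — legitimate since $\overline u \in C^{1,2}$ of the closed cylinder and $u_k$ is smooth — conclude $u_k \le \overline u$, let $k\to\infty$ to get $u\le \overline u$, hence $u_x(t,0^+) \le \overline u_x(t,0) < \infty$, so by Lemma~\ref{basic-prop0} and symmetry $\|u_x(t)\|_\infty < \infty$; (5) deduce $u$ is classical on $(T,T+\tau)$ by local parabolic regularity and the characterization in Theorem~\ref{prelimprop}(ii), then bootstrap: once $u$ is classical on a short interval and $\|u_x\|_\infty$ is finite, standard theory propagates it, and combined with the global exponential decay (property (d) in the introduction) gives the classical solution for all $t\in(T,\infty)$; (6) read off \eqref{BeqabAAb} from the rate of $\overline u_x(t,0)$, and check the stated values $\gamma=1/(p-2)$ at $m=2$ and $\gamma>1/(p-2)$ for $m>2$ by elementary algebra.

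The main obstacle I expect is step (2): getting the sign of $\mathcal P\overline u$ right uniformly down to $x=0$, where $U_*$ is singular. One must expand $|\overline u_x|^p = |U_*' - b'(t)\cdot\text{stuff} \dots|^p$ carefully — the $p$-th power magnifies errors near $x=0$ where $U_*'\to\infty$ — and show that the ``bad'' terms coming from the correction and from $\overline u_t$ are dominated by the ``good'' diffusion term $-\overline u_{xx}$ (which is large and of the right sign because $-U_*'' < 0$ means $U_*$ is concave, wait — $U_*'' = c_p\frac{p-2}{p-1}(\frac{p-2}{p-1}-1)x^{\frac{p-2}{p-1}-2} < 0$, so $-U_*'' > 0$, good). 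The constraint $m < 3-\alpha$ is exactly the threshold making this domination possible, and the upper limit $m<3-\alpha$ versus $m\ge 2$ means the argument only works for $p$ in a suitable range or requires $\alpha = 1/(p-1) < 1$, i.e.\ $p>2$, which is our standing assumption; one should double-check that $[2,3-\alpha)$ is nonempty, i.e.\ $3-\alpha>2$, i.e.\ $\alpha<1$, i.e.\ $p>2$ — fine. A secondary subtlety is handling the matching at $x=1/2$: one may need to add a small time-dependent constant to $\overline u$ or shrink the spatial interval, but since $u(t,1/2)$ is strictly decreasing (by \eqref{utcenter}) while $\overline u(t,1/2)$ can be kept roughly constant, this is a soft argument.
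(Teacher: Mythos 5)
Your plan is essentially the paper's own argument: the barrier used there is exactly $z(t,x)=U_*(x+a(t))-U_*(a(t))-bx^m$ with $a(T)=0$ and $a$ increasing, the constraint $m<3-\alpha$ arises from solvability of $a'(t)\le c\,a^{\alpha+m-2}$ with $a(T)=0$ and $a>0$ afterward, and one compares against the $u_k$ on $(T,T+\delta)$ and then iterates on successive intervals using $z_t\le 0$. One slip in your sign bookkeeping worth flagging before you write it up: taking $\lambda$ increasing does not by itself make the pure shifted profile a supersolution, since $\overline u_t=[U_*'(x+\lambda)-U_*'(\lambda)]\,\lambda'\le 0$ is still the \emph{wrong} sign; the favorable term is $+bm(m-1)x^{m-2}$ coming from $-\partial_x^2(-bx^m)$, which is what must dominate $|\overline u_t|=[U_*'(a)-U_*'(x+a)]\,a'(t)$, while the $p$-power cross term $p\,(U_*')^{p-1}\,mbx^{m-1}$ that worries you is in fact nonnegative and only helps.
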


\begin{rem}  \label{RemSepar}
  Lemma~\ref{barrier}  shows that the quadratic separation is sufficient in order to have the minimal smoothing rate (${1\over p-2}$ as in Theorem~\ref{minimal_rate}) and that no improvement on \rife{BeqabAAb} can be expected 
if we make the stronger assumption \rife{BeqabAAa} with some $m<2$ (for instance $m=1$).
This will also be made apparent in the following proof.
\end{rem}

\begin{proof}[Proof of Lemma~\ref{barrier}] 
In this proof we shall a priori consider the whole range of values $m\in [1,\infty)$,
in order to understand the phenomenon described in Remark~\ref{RemSepar}. 

Shifting the origin of time at $t=T$   and denoting $U=U_*$,  we look for a supersolution of the form 
$$z(t,x):=U(x+a(t))-U(a(t))-bx^m,\quad 0\le t\le\delta,\ 0\le x\le 1,$$
for some $\delta>0$ and some function $a\in C([0,\delta])\cap C^1((0,\delta])$
with $a(0)=0$ and $a(t)>0$.
Writing $a$ for $a(t)$, we compute
\begin{align*}
Pz
&:=z_t-z_{xx}-|z_x|^p \\
&=[U'(x+a)-U'(a)]a'(t)-U''(x+a)+bm(m-1)x^{m-2}-|U'(x+a)-mbx^{m-1}|^p \\
&=[U'(x+a)-U'(a)]a'(t)+bm(m-1)x^{m-2}+[{U'}^p(x+a)-|U'(x+a)-mbx^{m-1}|^p].
\end{align*}
Assuming $a\le 1$ and   (without loss of generality)  $b<b_0(p)$ with $b_0(p)>0$ small enough,
we have $mbx^{m-1}\alpha^{-\alpha}(x+a)^{\alpha}\le 1/2$ for all $x\in (0,1)$.
We then compute
\begin{align*}
{U'}^p(x+a)-|U'(x+a)-mbx^{m-1}|^p
&=\alpha^{p\alpha}(x+a)^{-p\alpha}-\bigl[\alpha^{\alpha}(x+a)^{-\alpha}-mbx^{m-1}\bigr]^p\\
&=\alpha^{p\alpha}(x+a)^{-p\alpha} \bigl[1- \bigl[1-mbx^{m-1}\alpha^{-\alpha}(x+a)^{\alpha}\bigr]^p\bigr] \\
&\ge\alpha^{p\alpha}(x+a)^{-p\alpha} 
\bigl[2^{1-p}pmbx^{m-1}\alpha^{-\alpha}(x+a)^{\alpha}\bigr]\\
&=2^{1-p}pmbx^{m-1}\alpha(x+a)^{-1},
\end{align*}
where we used $(p-1)\alpha=1$ and 
$(1-t)^p\leq 1-2^{1-p}pt$ for  $t\in (0,1/2)$.

The inequality $Pz\ge 0$ in $(0,\delta]\times (0,1)$ will thus be ensured provided we have
\be\label{BeqabAA}
[U'(a)-U'(x+a)]a'(t)\le bm(m-1)x^{m-2}+2^{1-p}pmbx^{m-1}\alpha(x+a)^{-1}.
\ee
We distinguish now two cases according to whether $m=1$ or $m>1$.

\begin{itemize}

\item For $m=1$, \rife{BeqabAA} reduces to
$$
a'(t)\le {2^{1-p}pb{\alpha}\over \overline D_a},
\quad\hbox{ where 
$\overline D_a=\displaystyle\sup_{x\in (0,1)}D_a(x)$, 
\quad
$D_a(x):=[U'(a)-U'(x+a)](x+a)>0$}.
$$
{Since $U'$ is a decreasing function, } 
we see that $D_a(x)$ is an increasing function of $x$, 
hence $\overline D_a=D_a(1)=[U'(a)-U'(1+a)](1+a)\le 2U'(a)$
and a sufficient  condition for $Pz\ge 0$ is given by
$$a'(t)\le {2^{-p}pb{\alpha}\over U'(a)}=c(p)b a^\alpha.$$
We thus choose 
\be\label{BeqabA}
a(t)=\eta\, t^{1/(1-\alpha)} 
\ee
with $\eta$ sufficiently small.

\item For $m>1$, we use only the first term on the RHS of 
\rife{BeqabAA}, since it dominates the second one.
Condition \rife{BeqabAA} then reduces to

\be\label{lookfora}
a'(t)\le {m(m-1)b\over \overline D_a},
\quad\hbox{ where $D_a(x):=[U'(a)-U'(x+a)]x^{2-m}>0$}.
\ee
Observe that we have $\overline D_a\ge D_a(1)=U'(a)-U'(1+a)\ge c(p)U'(a)$.
\medskip

$\bullet$ If $1<m\le 2$, then $\overline D_a\le U'(a)$
and we would make the same choice as in \rife{BeqabA}.

\medskip
$\bullet$ Next consider the case $m>2$.
If $0<x\le a$, then $D_a(x)=U''(a+\theta x)x^{3-m}$ for some $\theta(x)\in (0,1)$.
{In particular, notice that if $m>3$ we have $\overline D_a=\infty$ and the inequality \rife{lookfora} is impossible. Thus we already need to restrict to  $2<m\le 3$. Now, }for $0<x\le a$, we have $D_a(x)\le c\,U''(a)a^{3-m}=c\,a^{2-\alpha-m}$.
For $a<x<1$, we have $D_a(x)\le U'(a)a^{2-m}=c\,a^{2-\alpha-m}$.
Therefore we have $c_0a^{2-\alpha-m}\le \overline D_a\le c_1\, a^{2-\alpha-m}$ for different constants $c_0,c_1$. 
So we need a  function $a$ such that $a'(t)\le c\,a^{\alpha+m-2}$.
However, since $a(0)=0$ and $a(t)>0$, this induces the further restriction
$\alpha+m-2<1$ i.e., $m<3-\alpha$ and  leads to the choice
\be\label{BeqabB}
a(t)=c\,t^{1/(3-m-\alpha)}.
\ee 
{Incidentally, we notice how the above computations offer a motivation for  the restriction $m\in [2,3-\alpha)$ required in our assumptions.} 
\end{itemize}

{Now,}  for each $k\ge 1$, since $z(0,x)=U(x)-bx^m\ge u(T,x)\ge u_k(T,x)$
and $z\ge 0=u_k$ at $x\in\{0,1\}$   (decreasing $b>0$ if necessary),   the comparison principle yields $z(t,\cdot)\ge  u_k(T+t,\cdot)$,
hence $z(t,\cdot)\ge  u(T+t,\cdot)$, 
{for all $t\in (0,\delta]$.}
In particular, $u$ satisfies the boundary conditions in the classical sense for all {$t\in (T,T+\delta]$.} 
Moreover, we have
\be\label{BeqabA2}
u_x(T+t,0)\le z_x(t,0)=U'(a(t))=c\, a^{-\alpha}(t),\quad 0<t<\delta.
\ee
  Fix $0<t_0<\min(T^*,T)$ and let $M=M(t_0)$ be given by \rife{u_t-bounded}.  Combining \rife{BeqabA2} with \rife{BeqabA} or \rife{BeqabB},
{estimate  \rife{BeqabAAb}} 
 follows from symmetry and the fact that $u_x-Mx$ is nonincreasing in $x$ by Lemma~\ref{basic-prop0}.
 
{Finally, since $z_t\le 0$ hence $u(T+t,x)\le z(t,x)\le U(x)-bx^m$ on $(0,\tau]\times [0,1]$, we may repeat the comparison on each time interval
$[T+j\delta,T+(j+1)\delta]$, with $j$ integer, to deduce that $u(t,x)\le U(x)-bx^m$ on $[T,\infty)\times [0,1]$.
The argument in the preceding paragraph then guarantees that $u$ is a classical solution (including boundary conditions) on $(T,\infty)\times [0,1]$.}
\end{proof}

In the next elementary ODE lemma, we transform   an information on the behavior of $u_t$ near the boundary
(such information will be obtained in Section~\ref{Sec-nonmin}) 
into a separation property of $U_*-u$.

\begin{lem}\label{separation} Let $\phi\in X_1$, $T>0$ and $\ell\ge 0$.
\vskip0.4em
(i) Assume that
\be\label{Beqdd0}
u_t(T,x)\ge -bx^\ell,\quad 0<x\le   1/2 
\ee
for some $b>0$, along with
\be\label{Beqdd}
\lim_{x\to 0}u_x(T, x)=\infty.
\ee
Then there exist constants $c_1, c_2>0$ such that
$$
u(T,x)\ge u(T,0)+U_*(x)-c_1x^{\ell+2}
\quad\hbox{ and }\quad
u_x(T,x)\ge U'_*(x)-c_2x^{\ell+1},
\qquad 0<x\le 1/2.
$$
\vskip0.4em
(ii) Assume that 
\be\label{HypLemmeODE}
u_t(T,x)\le -bx^\ell,\quad 0<x\le   1/2  
\ee
for some $b\geq 0$. Then we have
$$
u(T,x)\le u(T,0)+U_*(x)-c_1\, x^{\ell+2}
\quad\hbox{ and }\quad
u_x(T,x)\le U'_*(x)-c_2\, x^{\ell+1},
\qquad 0<x\le 1/2,
$$
  for some constants $c_1, c_2>0$ if $b>0$, or with $c_1=c_2=0$ if $b=0$. 
\end{lem}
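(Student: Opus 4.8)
The plan is to reduce both assertions to an elementary first-order ODE comparison of $u_x(T,\cdot)$ with the singular profile $U_*'$, after the substitution that linearizes the gradient term. Fix $t_0\in(0,T)$ small and let $M=M(t_0)$ be as in \eqref{u_t-bounded}, so $|u_t(T,\cdot)|\le M$ on $(0,1)$; since $u$ is smooth in the interior, $h(x):=u_x(T,x)$ is $C^1$ on $(0,1)$ with $h'(x)=u_t(T,x)-|h(x)|^p$. Recalling from \eqref{defUstar} that $U_*''=-(U_*')^p$, equivalently $(U_*'(x))^{1-p}=(p-1)x$, on any subinterval of $(0,1)$ where $h>0$ the function $z:=h^{1-p}$ solves
$$z'(x)=(p-1)-(p-1)\,h(x)^{-p}\,u_t(T,x).$$
I also note for later use that $h(x)\le U_*'(x)+Mx$ on $(0,1)$ by \eqref{SGBUprofileUpperEst2} and $h(x)\ge -U_*'(1-x)-M(1-x)$ on $(0,1)$ by \eqref{SGBUprofileUpperEst3}; in particular $h$ is integrable near $0$, so $\int_0^x h(s)\,ds=u(T,x)-u(T,0)$ (using $u\in C([0,\infty)\times\overline\Omega)$), which justifies the $x$-integrations below.

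\emph{Proof of (i).} Assumption \eqref{Beqdd} forces $T^*<\infty$ and $T\ge T^*$, and together with Lemma~\ref{basic-prop0} gives $h>0$ on some $(0,x_1)$. Letting $y\to0$ in \eqref{SGBUprofileLowerEst} (legitimate by \eqref{Beqdd}) and combining with \eqref{SGBUprofileUpperEst2} yields $c\,x^{-1/(p-1)}\le h(x)\le C\,x^{-1/(p-1)}$ on some $(0,x_2)$, hence $h(x)^{-p}\le C'x^{p/(p-1)}$ there. Since \eqref{Beqdd} also gives $z(0^+)=0$, integrating the ODE for $z$ from $0$ and using \eqref{Beqdd0} in the form $\max(-u_t(T,s),0)\le bs^\ell$ gives $z(x)\le(p-1)x+C''x^{\ell+1+p/(p-1)}$ on $(0,x_2)$. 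Inverting with the convexity bound $(1+t)^{-1/(p-1)}\ge1-t/(p-1)$ ($t\ge0$) yields $h(x)\ge U_*'(x)-c_2x^{\ell+1}$ on $(0,x_2)$; on $[x_2,1/2]$ the same lower bound holds after enlarging $c_2$, since there $h$ is bounded below by \eqref{SGBUprofileUpperEst3} and $U_*'$ is bounded, so $U_*'(x)-h(x)\le C$ and $x^{\ell+1}\ge x_2^{\ell+1}$. A final integration over $(0,x)$ gives $u(T,x)\ge u(T,0)+U_*(x)-c_1x^{\ell+2}$ with $c_1=c_2/(\ell+2)$.

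\emph{Proof of (ii).} Here \eqref{HypLemmeODE} gives $h'(x)=u_t(T,x)-|h(x)|^p\le-bx^\ell\le0$ on $(0,1/2)$, so $h$ is nonincreasing there and $\{h>0\}\cap(0,1/2)=(0,x^*)$ for some $x^*\in[0,1/2]$. On $(0,x^*)$, since $z(0^+)\ge0$ and $-u_t(T,s)\ge bs^\ell\ge0$, integrating the ODE for $z$ and using $h(s)^{-p}\ge(U_*'(s)+Ms)^{-p}$ from \eqref{SGBUprofileUpperEst2} gives
$$z(x)\ge(p-1)x+(p-1)\,b\int_0^x s^\ell\,(U_*'(s)+Ms)^{-p}\,ds,$$
and the integral is $\ge c_5\,x^{\ell+1+p/(p-1)}$ on $(0,1/2]$ for some $c_5>0$. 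Inverting with $(1+t)^{-1/(p-1)}\le1-\kappa t$ on the bounded range of $t$ attained, we get $h(x)\le U_*'(x)-c_2x^{\ell+1}$ on $(0,x^*)$, with $c_2>0$ if $b>0$ and $c_2=0$ if $b=0$; after possibly shrinking $c_2$ we may also assume $U_*'(x)-c_2x^{\ell+1}\ge0$ on $[x^*,1/2]$ (using that $U_*'$ is decreasing), and since $h\le0$ there the bound $h(x)\le U_*'(x)-c_2x^{\ell+1}$ holds on all of $(0,1/2]$. Integrating over $(0,x)$ yields $u(T,x)\le u(T,0)+U_*(x)-c_1x^{\ell+2}$ with $c_1=c_2/(\ell+2)$.

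\emph{Main obstacle.} The delicate point is the bookkeeping near $x=0$. In case (i) one needs the precise two-sided bound $h(x)\asymp x^{-1/(p-1)}$ so that integrating $h^{-p}$ against $s^\ell$ and then inverting produces exactly the exponent $\ell+1$ in the gradient estimate (and $\ell+2$ after a further integration); this is furnished by \eqref{SGBUprofileLowerEst}--\eqref{SGBUprofileUpperEst2} (equivalently Lemma~\ref{basic-prop}). In case (ii) the analogous role is played by the monotonicity of $h$ forced by $u_t(T,\cdot)\le0$, which both makes the substitution $z=h^{1-p}$ available on an initial interval and rules out pathological behaviour on the remaining part of $(0,1/2]$. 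Everything else — the convexity inequalities for $(1+t)^{-1/(p-1)}$ on bounded intervals, and the adjustment of constants on the fixed interval $[x_2,1/2]$ (resp. $[x^*,1/2]$) — is routine. I would present (i) and (ii) in parallel as above.
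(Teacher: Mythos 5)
Your proof is correct and follows essentially the same route as the paper: both reduce the statement to an ODE comparison for $u_x(T,\cdot)$ via the substitution $V^{1-p}$ (your $z=h^{1-p}$), use Lemma~\ref{basic-bounds}/Lemma~\ref{basic-prop} for the two-sided bound $h\asymp x^{-1/(p-1)}$ near $0$, integrate, invert with the elementary convexity/concavity inequalities for $(1+t)^{-1/(p-1)}$, and then adjust constants on the remaining fixed interval. The only difference is presentational (you make the ODE for $z$ and the concavity bound $(1+t)^{-\alpha}\le 1-\kappa t$ explicit, where the paper just says ``taking a smaller constant if necessary'').
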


\vskip0.4em
\begin{rem} We will actually use assertion (ii) in the case $b=0$ only. 
 In particular this says that  whenever $u_t(t,x)\leq 0$ and $u$ satisfies the boundary condition $u(t,0)=0$, 
then $u$ lies below the singular profile $U_*(x)$.
However we present the general conclusion of part (ii) (with, possibly, $b>0$), as it shows the optimality of the estimate in part (i).
\end{rem}

\begin{proof}
Set $V(x)=u_x(T,x)$.
\smallskip

(i)   By  \eqref{Beqdd} and Lemma \ref{basic-prop} (see \rife{profile}), we have $V(x)\ge U_*'(x)-c\,x$ for all $x\in (0,1/2]$. {Here and below, $c$ will denote a generic positive constant. } 
In particular there exists $x_0\in (0,1/2]$ such that 
\be\label{Beqe}
V(x)\ge \textstyle\frac12 U_*'(x)>0,\quad 0<x\le x_0.
\ee
By assumption \eqref{Beqdd0}, we thus have 
\be\label{BeqdA}
-V'=V^p-u_t(T,x)\le V^p+bx^\ell,\quad 0<x\le x_0.
\ee
Next using \rife{BeqdA}, \rife{Beqe} and recalling $p\alpha=\alpha+1$, {we have}
$$
{1\over p-1}(V^{1-p})' =-V'V^{-p}  \le 1+bx^\ell V^{-p}
\le 1+c\,x^{\ell+\alpha+1},\quad 0<x<{x_0}.  
$$
Integrating and using assumption \rife{Beqdd} again, we get
$$
V^{1-p}(x)\le (p-1)x[1+c\,x^{\ell+\alpha+1}],\quad 0<x<{x_0},
$$
hence
$$
V(x) \ge U_*'(x)[1+c\,x^{\ell+\alpha+1}]^{-\alpha}
\ge U_*'(x)[1-c\,x^{\ell+\alpha+1}]=U_*'(x)-c\,x^{\ell+1},\quad 0<x<{x_0}.
$$
In view of {\rife{profile}}, 
  decreasing $c$ if necessary,   we deduce 
$$
V(x) \ge U_*'(x)-c\,x^{\ell+1},\quad 0<x\le 1/2,
$$
and assertion (i) follows by a further integration.

\vskip1em

(ii)   Since $-V'=|V|^p-u_t(T,x)\ge 0$ owing to \rife{HypLemmeODE}, {we have that $V$ is decreasing in $(0,1/2)$; hence, due to \rife{Beqdd}, 
there exists ${x_1}\in (0,1/2]$ such that
$V>0$ in $(0,{x_1})$ and $V\le 0$ in $(x_1,1/2]$.}

By \rife{HypLemmeODE} we now have $-V'\ge V^p+bx^\ell$   in $(0,x_1)$,  hence
\be\label{BeqdA3}
{1\over p-1}(V^{1-p})' =-V'V^{-p} \ge 1+bx^\ell V^{-p},\quad 0<x< x_1.
\ee
Since $\frac{(V^{1-p})'}{p-1}\geq 1$, we first deduce that $V\le U_*'$ in $(0,x_1)$ and so 
\be\label{BeqdA4}
V\le U_*',\quad 0<x\le 1/2,
\ee
since $V\le 0$ in $[x_1,1/2]$.
In the case $b=0$, this yields the desired conclusion.

Now assume $b>0$. Combining \eqref{BeqdA3} and \eqref{BeqdA4}, we obtain
$$
{1\over p-1}(V^{1-p})' \ge 1+c\,x^{\ell+\alpha+1},\quad 0<x<x_1.
$$
Consequently,
$$
V^{1-p}\ge (p-1)x[1+c\,x^{\ell+\alpha+1}],\quad 0<x<x_1.$$
Taking a smaller constant $c$ if necessary, we deduce
$$V(x) \le U_*'(x)[1+c\,x^{\ell+\alpha+1}]^{-\alpha}
\le U_*'(x)[1-c\,x^{\ell+\alpha+1}]=U_*'(x)-c\,x^{\ell+1},\quad 0<x<x_1,$$
  and this remains true on the remaining interval $[x_1,1/2]$ where $V\le 0$.  
{The estimate for $u(T,x)$} follows by a further integration. 
\end{proof}

\vskip0.4em

\begin{rem} \label{remb0}
As consequence of the above proof, we also have that if
$u_t(T,x)\ge 0$ in $(0,a]$ for some $T\ge T^*$ and $a\in (0,1/2)$, along with \rife{Beqdd}, then 
$$
u(T,x)\ge u(T,0)+U_*(x)
\quad\hbox{ and }\quad
u_x(T,x)\ge U'_*(x),
\qquad 0<x\le a.
$$
\end{rem}

We stress a {simple but} useful 
consequence of the comparison between $u$ and $U_*$ which occurs if $u_t\leq 0$ at some time $t_0$.

\begin{prop}\label{utneq}  Let $\phi\in X_1$ with $\phi$  
  symmetric and  nondecreasing  on $[0,1/2]$. 
Assume that there exists $t_0>0$ such that 
$$
u_t(t_0,\cdot) \leq 0\quad\hbox{ in $(0,1)$} \qquad\hbox{and}\qquad u(t_0,0)=0.
$$
Then we have
\be\label{compuUstar}
u(t,x)\leq U_*(x) \quad\hbox{ in $[t_0,\infty)\times[0,1]$,}
\ee
\be\label{compuxUstar}
u_x(t,x)\leq U_*'(x) \quad\hbox{ in $[t_0,\infty)\times (0,1)$,}
\ee
\be\label{computUstar}
u_t(t,x) \leq 0 \quad\hbox{ in $[t_0,\infty)\times (0,1)$.}
\ee
\end{prop}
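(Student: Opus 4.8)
The plan is to exploit the fact that $u_t\le 0$ at time $t_0$ is essentially preserved by the equation, and then combine this with Proposition~\ref{utneq}'s hypothesis $u(t_0,0)=0$ and the monotonicity assumption. First I would establish \eqref{computUstar}, i.e. that $u_t\le 0$ propagates forward in time. The natural way is to work with the approximating solutions $u_k$ of \eqref{app-1d}: the function $w_k:=\partial_t u_k$ solves the linear parabolic equation $w_{k,t}-w_{k,xx}=F_k'(u_{k,x})w_{k,x}$ in $(0,\infty)\times(0,1)$, with $w_k=0$ on the lateral boundary $\{x=0,1\}$ (since $u_k=0$ there for all $t$). By the maximum principle, if $w_k(t_0,\cdot)\le 0$ in $(0,1)$ then $w_k(t,\cdot)\le 0$ for all $t\ge t_0$. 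The issue is that we only know $u_t(t_0,\cdot)\le 0$ for the \emph{viscosity} solution $u$, not directly for each $u_k$; but using \eqref{eqabA0} (for $t_0<T^*$) or an approximation/limiting argument together with the monotone convergence $u_k\uparrow u$ in $C^{1,2}_{loc}$ from Theorem~\ref{prelimprop2}, one gets $w_k(t_0,\cdot)\to u_t(t_0,\cdot)\le 0$ in the interior, which combined with $w_k=0$ on the boundary and a small-$\eps$ perturbation (or working slightly past $t_0$) suffices to conclude $w_k\le 0$ on $[t_0,\infty)\times(0,1)$, hence $u_t\le 0$ there after passing to the limit.

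Next I would derive \eqref{compuUstar}. Since $u_t(t,\cdot)\le 0$ on $(0,1)$ for $t\ge t_0$, and since $u$ satisfies zero boundary conditions at $t=t_0$ with $u_t\le 0$, the boundary value $u(t,0)$ is nonincreasing in $t$; as $u\ge 0$ on the boundary by Theorem~\ref{prelimprop}(i), we get $u(t,0)=0$ for all $t\ge t_0$, i.e. the boundary condition is never lost after $t_0$. Then for each fixed $t\ge t_0$, the function $V(x):=u_x(t,x)$ is a classical solution (the boundary condition being satisfied rules out GBU at the boundary for $t>t_0$ via Lemma~\ref{bdl}, so $u$ is classical for $t>t_0$) of the ODE $-V'=|V|^p-u_t(t,\cdot)\ge |V|^p$ on $(0,1/2)$. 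This is exactly the situation of Lemma~\ref{separation}(ii) with $b=0$ (and $\ell$ irrelevant): the hypothesis \eqref{Beqdd} there, $\lim_{x\to 0}u_x(t,x)=\infty$, may fail here, but one checks the argument of Lemma~\ref{separation}(ii) only needs $V$ to be eventually $\le 0$ or to compare with $U_*'$ — more directly, one integrates $(V^{1-p})'/(p-1)=-V'V^{-p}\ge 1$ on any interval where $V>0$, which forces $V(x)\le ((p-1)x)^{-1/(p-1)}=U_*'(x)$, and on the complementary set $V\le 0\le U_*'$ anyway. By symmetry this gives \eqref{compuxUstar} on all of $(0,1)$, and integrating from $x=0$ using $u(t,0)=0$ yields \eqref{compuUstar}.

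The main obstacle I anticipate is the careful justification of the propagation $u_t\le 0$ for $t\ge t_0$ at the level of the viscosity solution, since $u_t$ is only an $L^\infty$ function a priori (Theorem~\ref{prelimprop}(iv)) and may have jump discontinuities at singular times (as Theorem~\ref{nonmin0} shows). One cannot simply apply the maximum principle to $u_t$ directly. The clean route is to pass through the $u_k$: apply the (classical, smooth) maximum principle to $w_k=\partial_t u_k$, which is legitimate because $u_k\in C^{1,2}$ globally and $F_k\in C^2$, getting $w_k\le 0$ on $[t_0,\infty)\times(0,1)$ from $w_k(t_0,\cdot)\le 0$ and $w_k=0$ on the lateral boundary; then use $u_k\uparrow u$ to transfer. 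Establishing $w_k(t_0,\cdot)\le 0$ requires knowing $u_{k}(t_0,\cdot)\le u(t_0,\cdot)$ with enough uniformity — if $t_0<T^*(\phi)$ this is immediate from \eqref{eqabA0}, and if $t_0\ge T^*$ one argues by a limiting procedure, replacing $t_0$ by $t_0+\tau$ with $\tau\downarrow 0$ and using that $u_t(t_0+\tau,\cdot)\le 0$ passes to the interior $C^{1,2}$ limit. The rest (monotonicity of $u(t,0)$, the ODE comparison with $U_*'$) is routine once $u_t\le 0$ is secured.
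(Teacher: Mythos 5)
Your plan inverts the order of the paper: you try to secure \eqref{computUstar} first, by applying the parabolic maximum principle to $w_k:=\partial_t u_k$ on $[t_0,\infty)$ with zero lateral boundary data, and then deduce \eqref{compuxUstar} and \eqref{compuUstar} by the ODE argument (your remark that the argument of Lemma~\ref{separation}(ii) with $b=0$ works without the hypothesis $\lim_{x\to0}u_x=\infty$ is correct and is in fact also implicitly used in the paper). The trouble is the very step you flag yourself: establishing $w_k(t_0,\cdot)\le 0$, or a uniform bound $w_k(t_0,\cdot)\le\eps_k$ with $\eps_k\to 0$, when $t_0\ge T^*$.

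This is not a technicality you can finesse, and it is exactly the case that matters: the proposition is applied in the paper at $t_0=T^*$ (in the proofs of Proposition~\ref{nonmin} and of Theorem~\ref{minimal0}(ii)), where $u_k(t_0,\cdot)\ne u(t_0,\cdot)$, and where $u_x(t_0,\cdot)$ blows up at $x=0$ so that $u(t_0,\cdot)\notin X$ and one cannot simply restart the approximation at $t_0$. The convergence $\partial_t u_k\to u_t$ is only in $C^{1,2}_{\mathrm{loc}}((0,\infty)\times(0,1))$ (interior, away from the corner), while you need a bound on $w_k(t_0,\cdot)$ up to $x=0,1$ to run the maximum principle. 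A priori nothing prevents $w_k(t_0,\cdot)$ from having a positive bump of fixed height near $x=\delta_k\to0$; your ``small-$\eps$ perturbation'' does not rule this out. The alternative you mention, replacing $t_0$ by $t_0+\tau$ and using $u_t(t_0+\tau,\cdot)\le 0$, is circular: that inequality is precisely the conclusion being sought. And replacing $t_0$ by $t_0-\tau$ fails too, since for $t<T^*$ one has $u_t(t,\cdot)>0$ near the boundary (cf.\ Lemma~\ref{defz}).

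The paper's route avoids touching $u_t$ at the level of the approximation. It first proves \eqref{compuUstar} \emph{at $t=t_0$ only}, via the ODE comparison of Lemma~\ref{separation}(ii) with $b=0$ (which uses exactly $u_t(t_0,\cdot)\le0$ and $u(t_0,0)=0$). This gives $u_k(t_0,\cdot)\le u(t_0,\cdot)\le U_*$. The key observation is then that $U_*$ is a stationary \emph{supersolution of the truncated problem} \rife{app-1d}, because $-U_*''=|U_*'|^p\ge F_k(U_*')$; so the classical comparison principle gives $u_k(t,\cdot)\le U_*$ for all $t\ge t_0$, and passing to the limit yields \eqref{compuUstar} for all $t\ge t_0$, hence in particular the preservation of boundary conditions. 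With that in hand, \eqref{computUstar} is obtained not via a maximum principle for $u_t$ but via time-translation comparison: one first compares the evolution $u(t_0+t,\cdot)$ with the stationary function $u(t_0,\cdot)$ (whose $P$-defect is $-u_t(t_0,\cdot)\ge0$), then compares $u(t_0+t+h,\cdot)$ with $u(t_0+t,\cdot)$, using Proposition~\ref{compP} on the open domain (which needs only interior $C^{1,2}$ regularity plus continuity up to the closure). Finally \eqref{compuxUstar} follows again from Lemma~\ref{separation}(ii) with $b=0$. This circumvents entirely the regularity and approximation difficulties for $u_t$ at the corner $(t_0,0)$ that your proposal runs into.
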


\begin{proof}   Property \eqref{compuUstar}  at $t=t_0$ is true as a consequence of Lemma~\ref{separation}(ii). Since $u_k\leq u$, we also have  $u_k(t_0,x)\leq U_*(x)$ for all $k\geq 1$. Since $U_*$ is  a supersolution 
of problem \rife{app-1d}, by comparison principle we deduce  that $u_k(t,x)\leq U_*(x)$ for all $t>t_0$. Passing to the limit as $k\to \infty$, we get property \eqref{compuUstar} (in particular, $u$ does not lose the boundary condition after $t_0$).
\smallskip

   In order to verify \eqref{computUstar}, fixing any $h>0$,
it suffices to show that 
$$u(t+h,x)\le u(t,x)\quad\hbox{ in $[t_0,\infty)\times [0,1]$.}$$
To this end, fix any $\tau>0$, and set $Q:=(0,\tau)\times (0,1)$ and $Pv:=v_t-v_{xx}-|v_x|^p$.
First applying the comparison principle in Proposition~\ref{compP} with $v_1(t,x)=u(t_0+t,x)$, $v_2(t,x)=u(t_0,x)$ and noting that 
$$Pv_2=-[u_{xx}+|u_x|^p](t_0,x)=-  u_t(t_0,x)\ge 0\quad\hbox{ in Q},$$
and that $v_1=0$ at $x\in \{0,1\}$ by \eqref{compuUstar},
we obtain $u(t_0+t,x)\le u(t_0,x)$ in $Q$.
Next apply the comparison principle with $v_1(t,x)=u(t_0+t+h,x)$, $v_2(t,x)=u(t_0+t,x)$. Since $v_1(0,x)=u(t_0+h,x)\le u(t_0,x)=v_2(0,x)$ by the previous step,
we reach the desired conclusion.  
\smallskip

Finally, \eqref{compuxUstar} follows as a consequence of \eqref{computUstar} and Lemma~\ref{separation}(ii) with $b=0$. 
\end{proof}

\section{Zero-number}
\label{SecZ}

In this section, we continue the study of the one-dimensional case, with $\Omega=(0,1)$,
   turning our attention to the zero-number properties of $u_t$.  
The function $w:=u_t$ is a classical solution of the homogeneous linear parabolic equation
\be\label{eqnut}
w_t-w_{xx}=b(t,x)w_x\quad\hbox{ in $Q:=(0,\infty)\times (0,1)$},
\ee
with coefficient given by $b(t,x):=p|u_x|^{p-2}u_x$.
This allows for using the powerful tool of zero-number.

To this purpose, we will need to consider initial data $\phi \in C^2([0,1])$ which are compatible at order two, that~is:
\be\label{compat2}
\phi=\phi''+|\phi'|^p=0\qquad \hbox{at $x=0$ and $x=1$.}
\ee
This guarantees that $u_t$ is continuous up to $t=0$ and up to the boundary,
i.e. $u_t\in C([0,T^*)\times [0,1])$, with 
\be\label{ut0}
u_t(0,\cdot)=\phi''+|\phi'|^p.
\ee

\begin{defn}
For any function $\psi\in C(0,1)$, we denote by $Z(\psi)$ the number of sign changes of $\psi$ in the interval $(0,1)$. Precisely, we have 
$$Z(\psi)=\sup\bigl\{m\in \N;\ \hbox{there exist $0<x_0<\dots<x_m<1,\ \psi(x_{i-1}) \psi(x_i)<0$, $i=1,\dots,m$}\bigr\}$$
(with the convention $Z(\psi)=0$ if $ \psi$ does not change sign).
In particular, 
if $\phi \in C^2([0,1])$ is compatible at order two, 
we set 
$$N(t)=Z(u_t(t,\cdot)),\quad t\ge 0,$$
where $u$ is the unique viscosity solution of \rife{vhj1}.
\end{defn}

Let us first consider the time range $t\in (0,T^*)$. There, the function $w=u_t$ is a {\it classical solution of \eqref{eqnut} up to the boundary}, and it satisfies the
Dirichlet condition $w(t,0)=w(t,1)=0$. Consequently, the fundamental properties of the zero-number (cf.~\cite{Matano}, \cite{An88}) will be valid.
More generally, for the problem 
\be\label{genapproxpbm}
\begin{cases}
v_t-v_{xx} =F(v_x), & \quad t>0,\ x\in (0,1),\\
v(t,x)=0, & \quad t>0,\ x\in \{0,1\},\\
v(0,x)=\phi(x), & \quad x\in (0,1),
\end{cases}
\ee
with $F\in W^{2,\infty}_{loc}(\R,\R)$, we have the following result as a consequence of \cite{An88}:

\begin{prop}\label{zeronumberut} 
Let $F\in W^{2,\infty}_{loc}(\R,\R)$ and  assume that $\phi\in W^{3,\infty}(0,1)$ satisfies $\phi=\phi''+F(\phi')=0$ for $x\in\{0,1\}$ and $Z(\phi''+F(\phi'))<\infty$.
Let $0<T\le \infty$ and let $v\in C^{1,2}([0,T)\times[0,1])$ be a classical solution of \eqref{genapproxpbm} on $(0,T)$. Then:

\begin{itemize}

\item[(i)] The function $Z(v_t(t))$ is nonincreasing on $[0,T)$.

\item[(ii)] For each $t\in (0,T)$, the zero set $\{x\in [0,1];\ v_t(t,x)=0\}$ is finite.

\item[(iii)] If $v_t$ has a degenerate zero, i.e.~$v_t(t_0,x_0)=v_{tx}(t_0,x_0)=0$, for some $t_0\in (0,T)$ and $x_0\in [0,1]$,
then $Z(v_t(t))$ drops at $t=t_0$, namely:
$$Z(v_t(s))<Z(v_t(t)),\quad 0<s<t_0<t<T.$$

\item[(iv)] For any $\eps\in\R^*$, properties (i)-(iii) remain valid if the function $v_t$ is replaced with $v_t+\eps$.
\end{itemize}
\end{prop}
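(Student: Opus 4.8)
The plan is to view $w:=v_t$ (and, for assertion (iv), $w+\eps$) as a solution of a linear parabolic equation with bounded coefficients and homogeneous lateral boundary data, and then to invoke the zero-number results of \cite{An88} (see also \cite{Matano}). We may assume that $v$ is not a stationary solution, i.e. $v_t\not\equiv 0$, since otherwise the assertions reduce to trivial statements about the identically zero (resp.\ constant) function. Differentiating the equation in \eqref{genapproxpbm} with respect to $t$ and using $v\in C^{1,2}([0,T)\times[0,1])$, we find that $w$ solves
\be\label{eqwlin}
w_t-w_{xx}=b(t,x)\,w_x\quad\hbox{ in }(0,T)\times(0,1),\qquad b(t,x):=F'(v_x(t,x)).
\ee
For every $T'<T$ the coefficient $b$ is bounded on $(0,T')\times(0,1)$, since $v_x$ is continuous on $[0,T)\times[0,1]$ and $F'\in W^{1,\infty}_{loc}(\R)$; moreover $b$ is H\"older continuous in $x$, up to $x\in\{0,1\}$, being the composition of the Lipschitz function $F'$ with the $C^1$ function $v_x$. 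Interior and boundary parabolic Schauder theory then apply to \eqref{eqwlin}, showing that $w$ is a genuine classical solution, $C^{1,2}$ in $(0,T)\times[0,1]$.

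Next I would record the boundary and initial behavior of $w$. Since $v(t,0)=v(t,1)=0$ for all $t\in[0,T)$ and $v$ is $C^1$ up to the lateral boundary, we have $w=v_t=0$ on $[0,T)\times\{0,1\}$, so that $w$ satisfies homogeneous Dirichlet conditions. The assumption that $\phi\in W^{3,\infty}(0,1)$ is compatible at order two, $\phi=\phi''+F(\phi')=0$ at $x\in\{0,1\}$, together with $Z(\phi''+F(\phi'))<\infty$, guarantees via standard parabolic theory that $w\in C([0,T)\times[0,1])$ with $w(0,\cdot)=\phi''+F(\phi')$, hence $Z(w(0,\cdot))<\infty$.

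The key step is then a direct application of the zero-number theory. Equation \eqref{eqwlin} is of the form treated in \cite{An88}: second-order coefficient equal to $1$, first-order coefficient $b\in L^\infty((0,T')\times(0,1))$ for every $T'<T$, no zeroth-order term, homogeneous Dirichlet conditions on the lateral boundary, and $w(t,\cdot)\not\equiv 0$ for each $t$. The results of \cite{An88} then yield at once: the monotonicity of $t\mapsto Z(w(t,\cdot))$ on $[0,T)$, assertion (i); the finiteness of the zero set of $w(t,\cdot)$ for each $t\in(0,T)$, assertion (ii); and the strict drop of $Z(w(t,\cdot))$ whenever $w$ has a degenerate zero, whether at an interior point or at an endpoint, assertion (iii) --- for the endpoint case one uses the form of the theorem adapted to separated (here Dirichlet) boundary conditions, or equivalently an odd reflection across $x=0$ and $x=1$, which keeps the first-order coefficient bounded. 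Finally, for assertion (iv), observe that for any $\eps\in\R^*$ the function $\tilde w:=w+\eps$ solves the same homogeneous equation $\tilde w_t-\tilde w_{xx}=b(t,x)\,\tilde w_x$ (a constant shift is harmless since \eqref{eqwlin} has no zeroth-order term), and $\tilde w\equiv\eps\neq 0$ on $[0,T)\times\{0,1\}$; thus $\tilde w$ has no boundary zero, counting the sign changes of $\tilde w(t,\cdot)$ in $(0,1)$ amounts to counting its interior zeros, and the interior version of \cite{An88} applies directly, all the relevant degenerate zeros now being interior. This gives (i)--(iii) for $\tilde w=v_t+\eps$.

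The main obstacle is the first step, namely establishing rigorously that $w=v_t$ is a classical solution of \eqref{eqwlin} up to $t=0$ and up to the lateral boundary, given only that $F\in W^{2,\infty}_{loc}$ (so that $b$ is merely H\"older in $x$) and that $\phi\in W^{3,\infty}$ is compatible at order two; this is where parabolic Schauder estimates and the compatibility conditions enter. Once this regularity, the Dirichlet condition $w=0$ on the lateral boundary, and $Z(w(0,\cdot))<\infty$ are in hand, assertions (i)--(iv) follow from the zero-number results of \cite{An88} and \cite{Matano} essentially verbatim.
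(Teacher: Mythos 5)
Your strategy (write $w=v_t$ as a solution of a linear parabolic equation with Dirichlet conditions and invoke Angenent's zero-number theorems) is exactly the one used in the paper. However, there is a concrete gap in the verification of the hypotheses of \cite{An88}. Angenent's Theorems C and D require not only that the drift coefficient $b$ be bounded, but that $b$, $b_x$ \emph{and} $b_t$ all lie in $L^\infty$ locally in time; you check only $b\in L^\infty$ (plus some H\"older continuity in $x$, which is neither needed nor sufficient). The bound on $b_x=F''(v_x)\,v_{xx}$ is harmless given $v\in C^{1,2}$ up to the boundary, but the bound on $b_t=F''(v_x)\,v_{tx}$ is the genuinely delicate point, because $v\in C^{1,2}([0,T)\times[0,1])$ gives no a priori control of the mixed derivative $v_{tx}$ near $t=0$.

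This is precisely where the hypothesis $\phi\in W^{3,\infty}$ enters in an essential way: it guarantees that $w(0,\cdot)=\phi''+F(\phi')\in W^{1,\infty}(0,1)$, and then a short fixed-point argument via the variation-of-constants formula and heat-semigroup estimates shows that $w_x$ (hence $b_t$) stays bounded for small $t>0$, and therefore on $(0,\tau)$ for each $\tau<T$ by interior regularity. Your proposal uses $\phi\in W^{3,\infty}$ only to conclude $w\in C([0,T)\times[0,1])$ and $Z(w(0,\cdot))<\infty$, which is weaker than what is actually needed, and the Schauder argument you sketch concerns the regularity of $w$ rather than that of the coefficient $b$. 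The treatment of assertion (iv) is otherwise fine: shifting by a nonzero constant indeed eliminates boundary zeros and reduces to the interior case (the paper, equivalently, invokes Theorem~D of \cite{An88} directly), but the same verification of $b,b_x,b_t\in L^\infty$ is still required there.
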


\begin{rem}
(a) In particular, for $\phi$ as above, properties (i)-(iii) are true for 
$v=u$, $T=T^*$ and $N(t)=Z(u_t(t))$.

(b) Assertion (iv) of Proposition~\ref{zeronumberut} is motivated by the study of the behavior of the solution for $t>T^*$.
Indeed, to our purposes it will be useful to consider the zero-number of the perturbed function $u_t+\eps$ with small $\eps$.
 \end{rem}

\begin{proof}
The function $w=v_t\in C([0,T)\times [0,1])$  
 is a classical solution of
\be\label{genapproxpbm2}
\begin{cases}
w_t-w_{xx} =b(t,x)w_x, & \quad 0<t<T,\ x\in (0,1),\\
w(t,x)=0, & \quad 0\le t<T,\ x\in \{0,1\},\\
\end{cases}
\ee
with coefficient $b(t,x)=F'(v_x)$.
Assertions (i)-(iii) follow from \cite{An88} provided we show that $b, b_x, b_t\in L^\infty((0,\tau)\times (0,1))$ for each $\tau\in (0,T)$.
This is clear for $b$ and, since $b_x=F''(v_x)v_{xx}$ a.e.,  
this is also true for $b_x$. 
Let us consider $b_t=F''(v_x)v_{tx}$ a.e.
Since $w(0,\cdot)\in W^{1,\infty}(0,1)$, 
a standard fixed-point argument, based on the variation of constants formula and heat semigroup estimates,
guarantees that $w_x$, hence $b_t$, is bounded for small $t>0$, hence on $(0,\tau)$.

As for assertion (iv), since $w=v_t+\eps$ is a classical solution of 
$$
\begin{cases}
w_t-w_{xx} =b(t,x)w_x, & \quad 0<t<T,\ x\in (0,1),\\
w(t,x)=\eps, & \quad 0\le t<T,\ x\in \{0,1\},\\
\end{cases}
$$
in view of the above properties of $b$, it follows from Theorem D in \cite{An88}.
\end{proof}

Zero-number properties of $u_t$ are not a priori valid after $T^*$,
especially in case of loss of boundary conditions of $u$, since then $u_t$ is no longer controlled (it does not even need  to exist) on the boundary.
In order to look for some possible zero-number properties of $u_t$ after $T^*$, it is natural to take advantage of the approximation
of $u$ by the global classical solutions $u_k$ of the truncated problems {\rife{app-1d}}.  
Let us thus denote by
$$N_k(t)=Z(u_{k,t}(t,\cdot)),\quad t\ge 0,$$
the number of sign changes of $u_{k,t}(t,.)$.

It is not known whether the monotonicity property of $N(t)$ in Proposition~\ref{zeronumberut} remains true in general for $t>T^*$.
  It will turn out that this is indeed the case when $N(0)\le 4$ (see Proposition~\ref{zeronumbermonotone}),
but this will follow a posteriori as a consequence of the full analysis of the behavior of the global viscosity solution.
In order to carry out this analysis, the following weaker zero-number property will be sufficient for our needs.
It is valid independently of the value of $N(0)$ and can be easily deduced from Proposition~\ref{zeronumberut}.

\begin{prop}\label{zeronumberut2} 
Assume that $\phi\in W^{3,\infty}(0,1)$ is compatible at order two and satisfies $N(0)\equiv Z(\phi''+|\phi'|^p)<\infty$.
Then, for all $t_0\in [0,T^*)$, we have
\be\label{utalternate0A}
N(t)\le N(t_0),\quad t_0<t<\infty.
\ee
In addition, for any $\eps\in\R^*$, we have
\be\label{utalternate0B}
Z(u_t(t)+\eps)\le Z(u_t(t_0)+\eps),\quad t_0<t<\infty.  
\ee
\end{prop}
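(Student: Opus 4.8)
The plan is to deduce the weak zero-number property for the viscosity solution $u$ from the exact zero-number properties of the truncated approximations $u_k$, using the monotone convergence $u_k \uparrow u$ of Theorem~\ref{prelimprop2} together with the interior convergence in $C^{1,2}_{loc}$. First I would fix $t_0\in[0,T^*)$. By \rife{eqabA0}, for $k$ large enough (say $k\ge k_0(t_0)$) we have $u_k=u$ on $[0,t_0]\times\overline\Omega$; in particular $u_{k,t}(t_0,\cdot)=u_t(t_0,\cdot)$, so $N_k(t_0)=N(t_0)$ for such $k$. Moreover, since $\phi\in W^{3,\infty}(0,1)$ is compatible at order two for the original problem and $F_k$ agrees with $|s|^p$ near $s=0$ — in particular $\phi''+F_k(\phi')=\phi''+|\phi'|^p$ near the endpoints provided $k$ is large, so $\phi$ is compatible at order two for \rife{app-1d} — and $Z(\phi''+F_k(\phi'))=Z(\phi''+|\phi'|^p)=N(0)<\infty$ for $k$ large, Proposition~\ref{zeronumberut} applies to $v=u_k$ with $T=\infty$: the function $t\mapsto N_k(t)$ is nonincreasing on $[0,\infty)$. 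Hence, for $k\ge k_0(t_0)$ and all $t>t_0$,
$$
N_k(t)\le N_k(t_0)=N(t_0).
$$

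The remaining point is to pass to the limit $k\to\infty$ in the inequality $N_k(t)\le N(t_0)$, i.e.\ to show that $N(t)\le \liminf_{k\to\infty} N_k(t)$ for each fixed $t>t_0$. This is the semicontinuity of the sign-change count under pointwise convergence of the functions on the \emph{open} interval, and it is the only genuinely non-formal step. Suppose $N(t)\ge m$; then by definition there exist points $0<x_1<\cdots<x_{m}<1$ with $u_t(t,x_{i-1})u_t(t,x_i)<0$ for $i=1,\dots,m$, so in particular $u_t(t,\cdot)$ takes strictly positive and strictly negative values at consecutive points $x_0,\dots,x_m$ in $(0,1)$ (relabelling $x_0:=x_1$, etc.). Since all these points lie in the open interval $(0,1)$ where $u_k\to u$ in $C^{1,2}_{loc}$ (hence $u_{k,t}\to u_t$ locally uniformly in $(0,\infty)\times(0,1)$ — note $u_{k,t}$ solves a linear parabolic equation with locally bounded coefficients and the convergence of $u_k$ upgrades to $u_{k,t}$ by interior parabolic estimates), we get $u_{k,t}(t,x_j)\to u_t(t,x_j)$ for each $j$; therefore the strict sign alternation at $x_0<\cdots<x_m$ persists for all large $k$, giving $N_k(t)\ge m$. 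Letting $m\to N(t)$ (if finite; if $N(t)=\infty$ the same argument with arbitrarily large $m$ forces $N_k(t)\to\infty$, contradicting $N_k(t)\le N(t_0)<\infty$) yields $N(t)\le \liminf_k N_k(t)\le N(t_0)$, which is \rife{utalternate0A}.

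For the perturbed statement \rife{utalternate0B}, the argument is identical but one invokes Proposition~\ref{zeronumberut}(iv) instead: for each fixed $\eps\in\R^*$ and $k\ge k_0(t_0)$, the function $t\mapsto Z(u_{k,t}(t)+\eps)$ is nonincreasing on $[0,\infty)$ and coincides with $Z(u_t(t)+\eps)$ at $t=t_0$; then the same lower-semicontinuity argument — finitely many points $x_j\in(0,1)$ realizing the sign alternations of $u_t(t,\cdot)+\eps$, at which $u_{k,t}(t,x_j)+\eps\to u_t(t,x_j)+\eps$ with strict sign — gives $Z(u_t(t)+\eps)\le \liminf_k Z(u_{k,t}(t)+\eps)\le Z(u_t(t_0)+\eps)$ for all $t>t_0$. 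The main obstacle, to the extent there is one, is precisely this passage to the limit: because the sign changes are counted on the open interval and the convergence is only \emph{interior}, one must be careful to choose the separating points strictly inside $(0,1)$ — which is automatic from the definition of $Z$ — so that no information near the boundary (where $u_k$ may fail to converge uniformly, due to the boundary layer / LBC phenomenon) is needed.
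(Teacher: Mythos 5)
Your proof is correct and takes essentially the same route as the paper: apply Proposition~\ref{zeronumberut}(i) (resp.~(iv)) to the global classical approximations $u_k$, identify $N_k(t_0)=N(t_0)$ via the agreement $u_k=u$ on $[0,t_0]$ from~\eqref{eqabA0}, and pass to the limit using the interior $C^{1,2}_{loc}$ convergence of $u_k$ to $u$ to keep the strict sign alternations at finitely many interior points. The paper phrases the last step as a contradiction argument while you state it as lower semicontinuity of $Z$ under pointwise convergence, but these are the same idea; your extra remark that the interior nature of the convergence suffices because $Z$ only counts sign changes in the open interval is exactly the point the paper implicitly relies on.
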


\begin{proof}
By property (i) of Proposition~\ref{zeronumberut} applied to problem {\rife{app-1d}}, 
we have
$N_k(t)\le N_k(t_0)$ for all $k>\|\phi'\|_\infty$.
By property \eqref{eqabA0}, it follows that 
$$ 
N_k(t)\le N(t_0)\quad\hbox{ for all $k\ge k_0(t_0)$.}
$$ 
Assume for contradiction that $N(t)\ge m:=N(t_0)+1$. Then there exist 
$0<x_0<\dots<x_m<1$ such that 
\be\label{utalternate}
u_t(t,x_{i-1}) u_t(t,x_i)<0,\quad i=1,\dots,m.
\ee
By the approximation property \rife{approxpbm2}, for $k\ge k_0$ large enough, \eqref{utalternate} remains true with $u_t$ replaced with $u_{k,t}$,
hence $N_k(t)\ge m= N(t_0)+1$, which is a contradiction.
This proves \eqref{utalternate0A}.

The proof of \eqref{utalternate0B} is exactly the same, using property (iv) of Proposition~\ref{zeronumberut}.
\end{proof}

In view of Theorems~\ref{nonmin0} and \ref{minimal0}, we now turn to the special case $N(0)=2$, 
for which we will be able to obtain a fairly complete picture of the behavior of solutions.
The condition $N(0)=2$ can be verified to hold for a large class of initial data
satisfying all our requirements, 
  and producing both minimal and nonminimal GBU solutions.
As an example of generic construction of such initial data,   we have the following lemma:

\begin{lem}\label{constuctID}
Let $\vfi\in W^{1,\infty}(0,1)$ be symmetric on $[0,1]$ and satisfy, for some $x_0\in (0,1/2)$:
$$\vfi(0)=0,\quad \textstyle\int_0^{1/2} \vfi(t)dt = 0,$$
$$\hbox{$\vfi(x)>0$ on $(0,x_0)$, \quad $\vfi(x) <0$ and nonincreasing on $(x_0,1/2]$.}$$
(i) Then the function $\phi$ defined by
$$
\phi(x)=
\begin{cases}
\int_0^x \int_0^y \vfi(t)dtdy, &\quad\hbox{$0\le x\le 1/2$,} \\
\noalign{\vskip 2mm}
\phi(1-x),&\quad\hbox{$1/2< x\le 1$,}  
\end{cases}
$$
has the following properties:
\vskip0.3em

\begin{itemize}

\item[(a)] $\phi\in W^{3,\infty}(0,1)$, $\phi$ is symmetric;
\vskip0.1em

\item[(b)] $\phi(0)=\phi'(0)=\phi''(0)=0$ and $\phi$ is in particular compatible at order two;
\vskip0.1em

\item[(c)]  $\phi'(x)>0$ on $(0,1/2)$;
\vskip0.1em

\item[(d)] The function $V:=  \phi''+( \phi')^p$ has a unique zero $z_0$ on $(0,1/2]$ with $V>0$ on $(0,z_0)$ and $V<0$ on $(z_0,1/2]$.

\end{itemize}
\vskip0.3em
\noindent (ii) For any $\lambda >0$, the function $\lambda \phi$ satisfies the same conditions (a)-(d). 
Furthermore, there exists $\lambda^*>0$ such that 
the corresponding solution $u^\lambda$ of \eqref{vhj1} is:

\begin{itemize}

\item[-] a nonminimal GBU solution for $\lambda>\lambda^*$,

\item[-] a minimal GBU solution for $\lambda=\lambda^*$,

\item[-] a global classical solution for $0\le\lambda<\lambda^*$.
\end{itemize}
\end{lem}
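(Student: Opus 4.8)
The plan is to verify properties (a)--(d) directly from the explicit double-primitive formula for $\phi$, and then to construct the bifurcation parameter $\lambda^*$ using the threshold-solution machinery established in \cite{PS2} (recalled just before Definition~\ref{defmin}). For part (i): property (b) is immediate since $\phi(x)=\int_0^x\int_0^y\vfi(t)\,dt\,dy$ gives $\phi(0)=\phi'(0)=0$ and $\phi''(0)=\vfi(0)=0$, so $\phi''(0)+|\phi'(0)|^p=0$, i.e.\ compatibility at order two; the analogous identities at $x=1$ follow from the symmetry $\phi(x)=\phi(1-x)$, which also gives $\phi'(1/2)=0$ hence $\phi\in W^{3,\infty}(0,1)$ and (a). For (c), note $\phi'(x)=\int_0^x\vfi(t)\,dt$ on $[0,1/2]$; the sign hypotheses on $\vfi$ (positive on $(0,x_0)$, negative and decreasing on $(x_0,1/2]$) together with $\int_0^{1/2}\vfi=0$ force $\phi'$ to increase from $0$ up to its maximum at $x_0$ and then decrease back to $\phi'(1/2)=0$, so $\phi'>0$ on $(0,1/2)$, which is (c).

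For (d), set $V:=\phi''+(\phi')^p$ on $[0,1/2]$; thus $V=\vfi+(\phi')^p$. At $x=0$ we have $V(0)=0$, and for $x\to 0^+$, $\vfi(x)>0$ dominates the higher-order term $(\phi')^p=O(x^p)$, so $V>0$ near $0$. On $(x_0,1/2]$ we have $\vfi<0$ and nonincreasing, while $(\phi')^p>0$ is positive; the claim is that $V$ has exactly one sign change on $(0,1/2]$. I would argue monotonicity of $V$ is not needed in full: it suffices to show $V$ cannot return to positive values once it becomes negative. On $(x_0,1/2]$, $\vfi$ is nonincreasing and $\phi'$ is decreasing (from the analysis in (c)), so $(\phi')^p$ is decreasing there; hence $V=\vfi+(\phi')^p$ is nonincreasing on $(x_0,1/2]$, so it has at most one zero in that subinterval, and none in $(0,x_0]$ beyond possible ones near where $\vfi$ is still large — more carefully, on $(0,x_0)$, $\vfi>0$ and $(\phi')^p\ge 0$ give $V>0$ throughout. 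Therefore any zero of $V$ in $(0,1/2]$ lies in $(x_0,1/2]$, where $V$ is nonincreasing; combined with $V>0$ on $(0,x_0]$, this yields a unique zero $z_0\in(x_0,1/2]$ with $V>0$ on $(0,z_0)$ and $V<0$ on $(z_0,1/2]$. (If $V(1/2)>0$ one rules this out by a direct sign check, or one slightly strengthens the hypotheses — but the stated hypotheses on $\vfi$ being $<0$ on $(x_0,1/2]$ and the genericity of the construction make $V(1/2)<0$ the generic case; I would simply note $V(1/2)=\vfi(1/2)<0$ since $\phi'(1/2)=0$.) This settles (d).

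For part (ii): replacing $\phi$ by $\lambda\phi$ with $\lambda>0$ scales $\phi'\mapsto\lambda\phi'$, $\phi''\mapsto\lambda\phi''$, so properties (a)--(c) are manifestly preserved, and $V$ becomes $\lambda\vfi+\lambda^p(\phi')^p=\lambda(\vfi+\lambda^{p-1}(\phi')^p)$, which by the same argument as above (the perturbation $\lambda^{p-1}(\phi')^p$ is still positive, vanishing to order $p$ at $0$, and the monotonicity argument on $(x_0,1/2]$ is unaffected since $\lambda^{p-1}(\phi')^p$ is still decreasing there) again has a unique sign change, giving (d) for $\lambda\phi$. Finally, since $\phi\ge 0$ (it is a double primitive that one checks is nonnegative by the same increase/decrease analysis, using $\phi(1/2)=\int_0^{1/2}(1/2-y)\vfi(y)\,dy$, or more simply because $\phi'\ge 0$ on $[0,1/2]$ and $\phi(0)=0$) and $\phi\not\equiv 0$, the threshold construction of \cite{PS2} applies with $\psi=\phi$: setting $\lambda^*=\sup\{\lambda>0:\ T^*(\lambda\phi)=\infty\}$, that paper shows $\lambda^*\in(0,\infty)$, that $T^*(\lambda\phi)=\infty$ for $0\le\lambda<\lambda^*$ (global classical solution), and that $T^*(\lambda^*\phi)<\infty$ with $u^{\lambda^*}$ a minimal GBU solution (in the sense of Definition~\ref{defmin}), while $T^*(\lambda\phi)<\infty$ for $\lambda>\lambda^*$; moreover by monotonicity of $T^*$ and the remark following Definition~\ref{defmin}, such $u^\lambda$ with $\lambda>\lambda^*$ are nonminimal. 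The main obstacle in this plan is the careful sign/monotonicity bookkeeping for $V$ in property (d), ensuring the higher-order term $(\phi')^p$ cannot create a spurious extra zero; this is where one must use that $\phi'$ is itself monotone on each of $(0,x_0)$ and $(x_0,1/2)$, which is not part of the hypotheses on $\vfi$ directly but follows from them.
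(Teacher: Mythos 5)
Your proof is correct and follows essentially the same path as the paper's: direct verification of (a)--(d) from the explicit double-primitive formula and the sign/monotonicity structure of $\vfi$, followed by invoking the threshold construction of \cite{PS2} (together with Proposition~\ref{prop-min} and the remarks after Definition~\ref{defmin}) to obtain $\lambda^*$ and classify $u^\lambda$. Two small cosmetic remarks: the initial ``$\vfi$ dominates $O(x^p)$'' claim is both imprecise (the decay rate of $\vfi$ near $0$ is unspecified) and unnecessary, since you immediately give the clean argument that both terms are nonnegative on $(0,x_0)$ with $\vfi>0$; and for part~(ii) the paper avoids redoing the computation by simply noting that $\lambda\vfi$ itself satisfies the hypotheses of the lemma, so part~(i) applies verbatim to $\lambda\phi$ -- your direct recomputation of $V_\lambda=\lambda\vfi+\lambda^p(\phi')^p$ is also fine, just longer.
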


\begin{proof}
The function $\phi$ is symmetric by construction and we have
$\phi\in C^2([0,1])$ (the $C^2$ regularity at $x=1/2$ being guaranteed by $\phi'(1/2)=\int_0^{1/2} \vfi(t)dt= 0$).
Since $\phi''=\vfi$ on $[0,1/2]$, properties~(a) and (b) follow.

Property (c) is due to the fact that $\phi'(x)=\int_0^x \vfi(t)dt$ is positive on $(0,x_0)$ and decreasing on $[x_0,1/2)$ with $\phi'(1/2)=0$);

To check property (d) we note that $\phi''=\vfi$, hence
$ \phi''+( \phi')^p$ is positive on $(0,x_0]$ and is  decreasing on $(x_0,1/2]$, with negative value at $x=1/2$).

{For any $\lambda>0$, since $\lambda\varphi$ verifies the same assumptions as $\varphi$,
we deduce that $\lambda \phi$ also satisfies conditions (i)-(iv).}

As for the existence of $\lambda^*>0$ with the stated properties, it follows from \cite{PS2} (see Theorems 2 and 3 and the proof of Theorem 2).
\end{proof}

\vskip0.5em

Our first basic result for the case $N(0)=2$ states that if $T^*<\infty$,
then $N(t)$ remains equal to~$2$ at least until the blow-up time $T^*$.
  We also have a control on the zero-number of the perturbed function $u_t+\eps$,
as well as of time-translates of $u$ itself,
both properties that will be useful in what follows.

\begin{lem}\label{defz}
Let $\phi\in W^{3,\infty}(0,1)$ be compatible at order two,
with $\phi$ symmetric and  nondecreasing on $[0,1/2]$.
Assume that $N(0)=2$ and that $T^*<\infty$.  Then we have:

  (i) $N(t)=2$ for all $t\in (0,T^*)$.
Moreover, $u_t(t,\cdot)$ has  a unique zero $z(t)\in (0,1/2)$ and we have 
\be\label{prelimz0}
u_t(t,\cdot)>0 \ \hbox{ in $(0,z(t))\cup(1-z(t),1)$},\qquad u_t(t,\cdot)<0 \ \hbox{ in $(z(t),1-z(t))$}
\ee
  and
\be\label{prelimz0B}
u_{tx}(t,z(t))<0.
\ee

(ii) Denote $Z_\eps(t)=Z(u_t(t)+\eps)$ for $\eps>0$.
Then for each $t_0\in (0,T^*)$, there exists $\eps_0(t_0)>0$ such that
\be\label{prelimz0C}
Z_\eps(t)\le 2\quad\hbox{  for all $\eps\in (0,\eps_0)$ and all $t\ge t_0$.}
\ee

(iii) For each $t_0\in (0,T^*)$, there exists $\tau_0(t_0)>0$ such that
\be\label{prelimz0D}
Z(u(t+\tau)-u(t))\le 2\quad\hbox{ for all $\tau\in (0, \tau_0)$ and $t\ge t_0$.}
\ee

\end{lem}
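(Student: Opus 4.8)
The plan is to exploit the zero-number machinery of Proposition~\ref{zeronumberut}, applied to $w=u_t$ on the time interval $(0,T^*)$, where $u$ is a classical solution up to the boundary, together with the symmetry and monotonicity of $u(t,\cdot)$ and the value $u_t(t,1/2)<0$ from Lemma~\ref{basic-prop0}(ii). First I would establish (i). By Proposition~\ref{zeronumberut}(i), $N(t)=Z(u_t(t,\cdot))$ is nonincreasing on $[0,T^*)$, so $N(t)\le 2$ for $t\in(0,T^*)$. For the reverse inequality I would argue that $N(t)$ cannot be $0$ or $1$. Since $\phi$ is symmetric and nondecreasing on $[0,1/2]$, so is $u(t,\cdot)$, and the equation $u_t=u_{xx}+|u_x|^p$ forces $u_t$ to be symmetric; hence the number of sign changes of $u_t(t,\cdot)$ on $(0,1)$ is necessarily even, which already rules out $N(t)=1$. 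To rule out $N(t)=0$: if $u_t(t,\cdot)$ did not change sign, then since $u_t(t,1/2)<0$ by \eqref{utcenter}, we would have $u_t(t,\cdot)\le 0$ throughout $(0,1)$, with $u(t,0)=0$; then Proposition~\ref{utneq} would give $u_t\le 0$ and $u_x\le U_*'$ for all later times, in particular $u$ would be a global classical solution by Lemma~\ref{barrier} (with $b=0$, $m=2$, using $u(t,x)\le U_*(x)$), contradicting $T^*<\infty$. Hence $N(t)=2$ for all $t\in(0,T^*)$. By Proposition~\ref{zeronumberut}(ii) the zero set of $u_t(t,\cdot)$ is finite; combining the two sign changes with symmetry and $u_t(t,1/2)<0$, the only possible configuration is a single zero $z(t)\in(0,1/2)$ and its mirror image $1-z(t)$, with $u_t>0$ on $(0,z(t))\cup(1-z(t),1)$ and $u_t<0$ on $(z(t),1-z(t))$, which is \eqref{prelimz0}. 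Finally \eqref{prelimz0B}: at $z(t)$ the function $u_t(t,\cdot)$ changes from positive to negative, so $u_{tx}(t,z(t))\le 0$; if equality held, $z(t)$ would be a degenerate zero and Proposition~\ref{zeronumberut}(iii) would force $N$ to drop strictly below $2$ afterwards, contradicting $N\equiv 2$ on $(0,T^*)$. Hence $u_{tx}(t,z(t))<0$.

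For (ii) I would apply Proposition~\ref{zeronumberut}(iv), i.e.\ the zero-number properties for the shifted function $u_t+\eps$, together with a continuity-in-$\eps$ argument. Fix $t_0\in(0,T^*)$. Since $u_t(t_0,\cdot)\in C([0,1])$ has exactly two sign changes and $u_t(t_0,\cdot)=0$ on $\{0,1\}$ with $u_t(t_0,1/2)<0$, for $\eps>0$ small the function $u_t(t_0,\cdot)+\eps$ is positive in a neighborhood of $x=0$ (and of $x=1$), still negative at $x=1/2$, and close to $u_t(t_0,\cdot)$ away from a small interval near $z(t_0)$ and $1-z(t_0)$; an elementary analysis shows $Z(u_t(t_0)+\eps)\le 2$ provided $\eps<\eps_0(t_0)$, where $\eps_0(t_0)$ can be taken e.g.\ smaller than $\min_{[\delta,1-\delta]}(-u_t(t_0,\cdot))$ on a suitable compact interval avoiding the zeros of $u_t(t_0,\cdot)$ together with a bound controlling the behavior near the two zeros using $u_{tx}(t_0,z(t_0))<0$. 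Then, since $u_k$ converges to $u$ in $C^{1,2}_{loc}$ by \eqref{approxpbm2} and $u_{k,t}$ satisfies the analog of \eqref{eqnut} with smooth bounded coefficients, I would invoke Proposition~\ref{zeronumberut2}, specifically \eqref{utalternate0B}, to get $Z(u_t(t)+\eps)\le Z(u_t(t_0)+\eps)\le 2$ for all $t\ge t_0$ and all $\eps\in(0,\eps_0)$, which is \eqref{prelimz0C}. (One must note $\eps>0$ is fixed before letting $t\to\infty$; the bound \eqref{utalternate0B} holds for each fixed $\eps$.)

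For (iii) I would reduce the zero-number of a time-difference of $u$ to that of $u_t+\eps$ via the mean value theorem in $t$, which is the standard device. Fix $t_0\in(0,T^*)$ and $\tau>0$ small; write, for $t\ge t_0$,
$$
u(t+\tau,x)-u(t,x)=\int_0^\tau u_t(t+s,x)\,ds.
$$
If $u(t+\tau,x)-u(t,x)$ had at least three sign changes in $x$, then by continuity and an averaging argument (or simply because the integrand has, for some $s\in(0,\tau)$, at least as many sign changes as the average, up to an $\eps$-perturbation controlled by the modulus of continuity of $u_t$ in time on compact subsets of $(0,\infty)\times(0,1)$) one produces, for suitable small $\eps>0$ and some time $t'\ge t_0$, a function $u_t(t')+\eps$ with at least three sign changes, contradicting \eqref{prelimz0C} once $\tau<\tau_0(t_0)$ is chosen so that the time-oscillation of $u_t$ over windows of length $\tau$ is below $\eps_0(t_0)$. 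Here one uses that $u_t$ is smooth and bounded on $[t_0,\infty)\times K$ for compact $K\subset(0,1)$ (Theorem~\ref{prelimprop}(iii)-(iv)), so the required uniform time-continuity estimate holds on the relevant interior region, and the boundary contributes no sign changes since $u(t+\tau,0)-u(t,0)$ and $u(t+\tau,1)-u(t,1)$ need not vanish but the count $Z$ is taken on the open interval. This yields \eqref{prelimz0D}.

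The main obstacle I expect is making the passage from time-differences (and from $\eps$-perturbations) back to genuine zero-number bounds fully rigorous after $T^*$, where $u_t$ may fail to be controlled at the boundary: one cannot directly apply Proposition~\ref{zeronumberut} past $T^*$, so everything must be routed through the approximations $u_k$ and Proposition~\ref{zeronumberut2}, carefully tracking the order of quantifiers ($\eps$ fixed, then $k\to\infty$, then $t\to\infty$) and the fact that $Z$ is counted on the \emph{open} interval $(0,1)$, so possible nonzero boundary values of $u_t+\eps$ or of the time-difference do not spoil the argument. The delicate point in (ii)-(iii) is the quantitative choice of $\eps_0(t_0)$ and $\tau_0(t_0)$, which must be uniform in $t\ge t_0$; this relies on \eqref{prelimz0B} (the nondegeneracy $u_{tx}(t_0,z(t_0))<0$) to control $u_t+\eps$ near its zeros, and on interior regularity to control time-oscillations.
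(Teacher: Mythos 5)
Your overall strategy for parts (i) and (ii) tracks the paper closely, but there are two substantive issues.

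In part (i), to rule out $N(t)=0$ you invoke Lemma~\ref{barrier} ``with $b=0$, $m=2$''. That lemma explicitly requires $b>0$: the barrier $z=U_*(x+a(t))-U_*(a(t))-bx^m$ collapses to nothing useful when $b=0$, and the conclusion that $u$ is a global classical solution certainly does not follow from $u\le U_*$ alone (indeed $U_*$ itself has unbounded derivative at $0$). The paper's argument here is simpler and correct: once $u_t(t,\cdot)\le 0$ on $(0,1)$ for $t\in[t_0,T^*)$, the boundary value $u_t(t,0)=0$ forces $u_{tx}(t,0)\le 0$, so $u_x(t,0)\le u_x(t_0,0)$; then \eqref{controlnormux} bounds $\|u_x(t)\|_\infty$ uniformly on $[t_0,T^*)$, contradicting $T^*<\infty$. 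You should replace your barrier invocation by this. You should also, as in the paper, explicitly rule out ``touching'' zeros of $u_t(t,\cdot)$ (zeros without a sign change) at points $x\ne z(t)$ via Proposition~\ref{zeronumberut}(iii), since finiteness of the zero set plus two sign changes alone does not give uniqueness. Part (ii) is fine and matches the paper.

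In part (iii) your route is genuinely different from the paper's, and I do not see how to make it rigorous. You try to deduce (iii) from (ii) by an ``averaging argument'': if $u(t+\tau,\cdot)-u(t,\cdot)=\int_0^\tau u_t(t+s,\cdot)\,ds$ has $\ge3$ sign changes, then allegedly $u_t(t',\cdot)+\eps$ does for some single $t'$. But $Z$ is not monotone under time-averaging: a mean of functions each with $Z\le2$ can have larger $Z$, and the alternating-sign points $x_i$ of the integral only produce, via the intermediate value theorem, alternating-sign values $u_t(t+s_i,x_i)$ at \emph{distinct} times $s_i$, not a single $t'$. The modulus-of-continuity estimate you invoke to force a common $t'$ is only uniform on compacts of $(0,\infty)\times(0,1)$, and precisely when $t>T^*$ the sign changes may accumulate at the boundary where no such control is available. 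The paper avoids this entirely: it observes that $w=u_k(t+\tau,\cdot)-u_k(t,\cdot)$ \emph{itself} satisfies a linear parabolic problem of the form \eqref{genapproxpbm2}, with coefficient $b(t,x)=\int_0^1 F_k'\bigl[u_{k,x}(t,x)+s\,(u_{k,x}(t+\tau,x)-u_{k,x}(t,x))\bigr]\,ds$, so Angenent's theory gives $Z(u_k(t+\tau)-u_k(t))\le Z(u_k(t_0+\tau)-u_k(t_0))$ for $t\ge t_0$; the initialization $Z(u(t_0+\tau)-u(t_0))\le2$ at the fixed classical time $t_0<T^*$ is proved by a separate compactness/contradiction argument taking $\tau_j\to 0$, extracting sequences $y_{i,j}$ and $t_{i,j}\to0$ with $u_t(t_0+t_{i,j},y_{i,j})=0$, and contradicting the rigid structure established in part (i); one then lets $k\to\infty$ as in Proposition~\ref{zeronumberut2}. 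You should adopt this scheme: apply zero-number theory to the time-differences of the approximating solutions directly, rather than trying to route through $u_t+\eps$.
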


\begin{proof}
  (i) Assume for contradiction that $N(t_0)\ne N(0)=2$ for some $t_0\in [0,T^*)$. 
Then $N(t)\le N(t_0)=0$ for all $t\in(t_0,T^*)$ by Proposition~\ref{zeronumberut} and symmetry.
In view of \eqref{utcenter}, it follows that $u_t(t,.)\le 0$ in $[t_0,T^*)\times [0,1]$, hence $u_{tx}(t,0)\leq 0$ for all $t\in[t_0,T^*)$, due to $u_t(t,0)=0$.
Consequently $u_x(t,0)\le u_x(t_0,0)$ for all $t\in[t_0,T^*)$, hence $\sup_{t\in(t_0,T^*)} \|u_x\|_\infty<\infty$ by~\eqref{controlnormux},
contradicting $T^*<\infty$. 

Now, for each $t\in (0,T^*)$, since $N(t)=2$, there exists $z(t)\in(0,1/2)$ such that
\be\label{prelimz}
\hbox{$u_t(t,\cdot)\ge 0$ in $(0,z(t))\cup(1-z(t),1)$ and $u_t(t,\cdot)\le 0$ in $(z(t),1-z(t))$}
\ee
Assume that $u_t(t,x)=0$ for some $t\in (0,T^*)$ and some $x\in(0,1/2)\setminus\{z(t)\}$.
Then $u_{tx}(t,x)=0$ by \eqref{prelimz}, so that $N(t)$ drops at $t=t_0$ by property (iii) in Proposition~\ref{zeronumberut}.
This drop will also occur in case $u_{tx}(t,z(t))=0$.
But this contradicts the fact that $N(t)=2$ for all $t\in (0,T^*)$.
 \smallskip
 
(ii) As a consequence of \eqref{prelimz0} and \eqref{prelimz0B}, there exists $\eps_0>0$ such that 
$Z_\eps(t_0)=2$ for all $\eps\in (0,\eps_0)$.
Applying \eqref{utalternate0B}   
in Proposition~\ref{zeronumberut2}, we deduce \eqref{prelimz0C}.

 \smallskip
(iii) We first claim that there exists $\tau_0>0$ small, such that
\be\label{utalternate0}
Z(u(t_0+\tau)-u(t_0))\le 2\quad\hbox{ for all $\tau\in (0, \tau_0)$}.
\ee
Assume the contrary. Then there exist sequences $\tau_j\to 0$ and
$0<x_{0,j}<\dots<x_{4,j}<1$ such that 
$$
w(\tau_j,x _{i-1,j}) w(\tau_j,x_{i,j})<0,\quad i=1,\dots,4,
$$
where $w(\tau,x)=u(t_0+\tau,x)-u(t_0,x)$. By applying the mean value theorem, we deduce the existence of sequences
$0<y_{1,j}<\dots<y_{4,j}<1$ such that $w(\tau_j,y _{i,j})=0$, and then of sequences $t_{i,j}\to 0$ such that
$$u_t(t_0+t_{i,j},y_{i,j})=0,\quad i=1,\dots,4.
$$
Passing to the limit, up to subsequences, we may assume $y_{i,j}\to y_i\in [0,1]$.
Consequently, we have $u(t_0,y_i)=0$, for $i=1,\dots,4$, with either $0<y_1<\dots<y_4<1$ or $u_{tx}(t_0,y_i)=0$ for some $i$.
In each case, this is a contradiction with assertion (i).

Consequently, \eqref{utalternate0} is true. Since $w(t,x)=u_k(t+\tau,x)-u_k(t,x)$ is a classical solution of \eqref{genapproxpbm2} with
$$b(t,x) 
=\int_0^1 F'_{k}\bigl[u_{k,x}(t,x)+s(u_{k,x}(t+\tau,x)-u_{k,x}(t,x))\bigr]\, ds,$$
it follows from \cite{An88}, similarly as in the proof of Proposition~\ref{zeronumberut}, that $N(u_k(t+\tau)-u_k(t))\le N(u_k(t_0+\tau)-u_k(t_0))\le 2$ for $t\ge t_0$.
We finally deduce property \eqref{prelimz0D} by passing to the limit $k\to\infty$ exactly as in the proof of \eqref{utalternate0A}.
\end{proof}

The following simple property enables one to reduce the case $N(0)=4$ to the case $N(0)=2$, after a time shift. 

\begin{lem}\label{higherN}
Let $\phi\in W^{3,\infty}(0,1)$ be compatible at order two,
with $\phi$ symmetric and  nondecreasing on $[0,1/2]$. 
Assume $N(0)\equiv Z(\phi''+|\phi'|^p)<\infty$ and $T^*(\phi)<\infty$. Then:

(i) There exist $t_0\in (0,T^*)$ and an integer $N_*\le N(0)$, with $N_*\equiv 2$ \hbox{\rm [mod. $4$]}, such that 
\be\label{propNstar}
N(t)=N_*\quad\hbox{ for all $t\in [t_0,T^*)$.}
\ee

(ii) In particular, if $N(0)=4$, then $N_*=2$.
\end{lem}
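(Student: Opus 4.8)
The plan is to prove Lemma~\ref{higherN} by combining the monotonicity of $N(t)$ on $(0,T^*)$ (Proposition~\ref{zeronumberut}(i), valid for $v=u$ up to $t=T^*$ because $\phi$ is compatible at order two), the finiteness of the zero set, and the symmetry of $u_t(t,\cdot)$. First I would record two elementary facts about the parity of $N(t)$ forced by symmetry: since $u_t(t,\cdot)$ is symmetric with respect to $x=\tfrac12$ and $u_t(t,\tfrac12)<0$ by \eqref{utcenter}, while $u_t(t,0)=0$, the value $N(t)$ is always even, and moreover $N(t)\equiv 2$ \hbox{[mod. $4$]}: indeed a symmetric function that is negative at the center has an equal number $k$ of sign changes on $(0,\tfrac12)$ and on $(\tfrac12,1)$, so $N(t)=2k$ if $k$ is odd (the two ``inner'' sign changes merge into none) --- wait, one must be careful. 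Let me instead argue: the sign changes on $(\tfrac12,1)$ are the mirror images of those on $(0,\tfrac12)$, and a sign change at the center occurs iff $u_t$ is positive just left of $\tfrac12$, which by symmetry would force $u_t(\tfrac12)\ge 0$, contradicting \eqref{utcenter}; hence there is no sign change at $\tfrac12$ and $u_t$ is negative on a neighborhood of $\tfrac12$. Thus if $j$ denotes the number of sign changes of $u_t(t,\cdot)$ on $(0,\tfrac12)$ (necessarily finite by Proposition~\ref{zeronumberut}(ii)), the rightmost sign of $u_t$ on $(0,\tfrac12)$ must be negative, so $j$ is odd, and $N(t)=2j\equiv 2$ \hbox{[mod. $4$]}.

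Next, I would use Proposition~\ref{zeronumberut}(i): $N(t)$ is nonincreasing on $[0,T^*)$ and takes integer values, so it is eventually constant; that is, there exists $t_0\in(0,T^*)$ and an integer $N_*$ with $N(t)=N_*$ for all $t\in[t_0,T^*)$, and $N_*\le N(0)$. By the previous paragraph applied at any $t\in[t_0,T^*)$, we get $N_*\equiv 2$ \hbox{[mod. $4$]}, which is exactly \eqref{propNstar}. For assertion (ii): when $N(0)=4$, the only integer $\le 4$ that is $\equiv 2$ \hbox{[mod. $4$]} and positive is $2$; and $N_*$ cannot be $0$ because $N_*\equiv 2$ \hbox{[mod. $4$]}. (Note $N_*\ge 2$ also follows directly from \eqref{utcenter} together with $u_t(t,0)=0$: since $u_t(t,\cdot)$ is continuous, symmetric, vanishes at $0$ and is negative at $\tfrac12$, and $T^*<\infty$ forces --- as in the proof of Lemma~\ref{defz}(i) --- that $u_t$ cannot be $\le 0$ on all of $(0,1)$ for $t$ near $T^*$, lest $u_x(t,0)$ stay bounded; hence $u_t$ changes sign and $N(t)\ge 2$ near $T^*$.) Either way, $N_*=2$.

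The main obstacle, such as it is, is the parity/congruence bookkeeping: one must carefully justify that symmetry combined with $u_t(t,\tfrac12)<0$ forces the number of sign changes on the half-interval to be odd, and hence $N(t)$ to be $\equiv 2$ \hbox{[mod. $4$]} rather than merely even. The key point is the absence of a sign change exactly at $x=\tfrac12$, which is why \eqref{utcenter} (strict negativity, via the Hopf lemma in Lemma~\ref{basic-prop0}) is used rather than just $u_t(t,\tfrac12)\le 0$; a vanishing or sign-changing center would spoil the count. Everything else --- eventual constancy of $N$ from monotonicity on the closed-on-the-right interval $[0,T^*)$, and the ruling out of $N_*=0$ --- is a direct transcription of the arguments already given for Lemma~\ref{defz}(i), so no new machinery is needed.
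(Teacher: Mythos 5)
Your proof contains a genuine gap in the parity bookkeeping, which is precisely where you claim the difficulty lies. You assert that symmetry together with $u_t(t,\tfrac12)<0$ forces the number $j$ of sign changes of $u_t(t,\cdot)$ on $(0,\tfrac12)$ to be odd, and hence $N(t)\equiv 2$ [mod $4$] \emph{for all} $t$. This is a non sequitur: knowing that $u_t$ has its rightmost sign on $(0,\tfrac12)$ negative tells you nothing about the parity of $j$ unless you also control the leftmost sign, i.e.\ the sign of $u_t(t,x)$ for $x$ near $0$. That sign is not determined a priori. Indeed, one can have $\phi$ satisfying all the hypotheses with $\phi''+(\phi')^p$ having sign pattern $-,+,-$ on $(0,\tfrac12)$ (so $j=2$, $N(0)=4\equiv 0$ [mod $4$]): compatibility fixes $\phi''(0)+(\phi'(0))^p=0$ but the sign of $(\phi''+(\phi')^p)'(0)=\phi'''(0)-p(\phi'(0))^{2p-1}$ is free. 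So your claimed congruence cannot hold pointwise in $t$ from symmetry alone; it is specifically a statement about the eventual value $N_*$, and its proof must use $T^*<\infty$.

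The correct mechanism is exactly the one you gesture at only in your final parenthetical, and it has to carry the full load. In the paper's proof one argues by contradiction: if $N_*\equiv 0$ [mod $4$], then for every $t\in[t_0,T^*)$ the number of sign changes on $(0,\tfrac12)$ is even, and since the sign near $x=\tfrac12$ is negative, the sign near $x=0$ is also negative, i.e.\ $u_t(t,x)\le 0$ for $x>0$ small. Since $u_t(t,0)=0$, this gives $u_{tx}(t,0)\le 0$ for all $t\in[t_0,T^*)$, so $u_x(t,0)$ is nonincreasing and hence bounded on $[t_0,T^*)$, and then \eqref{controlnormux} bounds $\|u_x(t)\|_\infty$, contradicting $T^*<\infty$. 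You do carry out this last chain, but only to conclude $N_*\ge 2$; that rules out $N_*=0$ but does not rule out $N_*=4$ when $N(0)=4$, so it does not close assertion (ii). You need the ``$u_t\le 0$ \emph{near $x=0$}'' version (not ``$u_t\le 0$ everywhere'') applied to the hypothesis $N_*\equiv 0$ [mod $4$], which is exactly the first paragraph of the proof of Lemma~\ref{defz}(i) adapted as the paper indicates. The eventual constancy of $N$ via Proposition~\ref{zeronumberut}(i), the evenness of $N_*$ by symmetry, and the conclusion of (ii) once the congruence is in hand are all fine in your write-up.
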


\begin{proof}
Since $N(t)$ is integer-valued and nonincreasing for $t\in (0,T^*)$, by Proposition~\ref{zeronumberut}(i), the existence of $t_0$ and 
$N_*\le N(0)$ satisfying \rife{propNstar} follows
($N_*$ is given by $\lim_{t\to T^*}N(t)$).

By symmetry, $N_*$ is even. Assume for contradiction that $N_*\equiv 0$ \hbox{\rm [mod. $4$].}
Then, for all $t\in [t_0,T^*)$, since $u_t(t,1/2)<0$ by \eqref{utcenter}, we have $u_t(t,x)\le 0$ for $x>0$ small.
Arguing as in the first paragraph of the proof of Lemma~\ref{defz}, we reach a contradiction with $T^*<\infty$.
\end{proof}

\begin{rem} \label{remhigherN}
All the conclusions of Theorems~\ref{nonmin0} and \ref{minimal0} remain valid if assumption \eqref{hypID2}
 is replaced with $N(0)\equiv Z(\phi''+|\phi'|^p)=4$.
Indeed, by Lemma~\ref{higherN}, there exists $t_0\in (0,T^*)$ such that $u(t_0,\cdot)$, considered as new initial data (at the new time origin $t_0$),
satisfies all the assumptions of Theorems~\ref{nonmin0} and \ref{minimal0}.
\end{rem}

Let us go back to the case $N(0)=2$ (under the assumptions of Lemma~\ref{defz}). 
The limiting behavior of the (unique) zero point $z(t)$ of the time derivative will play a crucial role in the behavior of the solution at $T^*$. 
  Indeed, the key quantity to consider will turn out to be 
\be\label{defL}
L:=\liminf_{t\to T^*_-}z(t).
\ee
This kind of analysis will be carried out hereafter. For convenience, we split the analysis between the case of  minimal and nonminimal blow-up solutions.

\section{Behavior of nonminimal solutions.    Proofs of Theorems~\ref{nonmin0} and \ref{proppersist}} \label{behavenonmin}
\label{Sec-nonmin}

\subsection{Behavior near the blow-up time $T^*$.}
\label{Sec-nonmin1}

We shall here prove the following result.
It shows  that blow-up is nonminimal if and only if $L>0$, where $L$ is defined in \eqref{defL} 
and $z(t)$ is the unique zero point of $u_t$ in $(0,1/2]$ (cf.~Lemma~\ref{defz}).
For nonminimal blow-up solutions, it also implies part of assertion (i) of Theorem~\ref{nonmin0} 
and the GBU rate estimate \eqref{estA0} therein.
{In addition, it gives some first linear bounds on the rate of loss of boundary conditions, that will be later improved to the precise asymptotics \eqref{estB0}. }

\begin{prop}\label{nonmin} 
Let $\phi\in W^{3,\infty}(0,1)$ be compatible at order two,
with $\phi$ symmetric and  nondecreasing on $[0,1/2]$.
Assume that $N(0)=2$ and $T^*<\infty$.
Then we have 

(i) $u$ is a nonminimal blowup solution if and only if $L>0$.
Moreover, if $u$ is a minimal solution, then
\be\label{utneg}
u_t(T^*,x)\le 0,\quad x\in (0,1).
\ee

  (ii) If $L>0$, then  there is immediate loss of boundary conditions (LBC) for $t>T^*$
and $u$ satisfies the following estimates
\be\label{estA}
c_1(T^*-t)^{-1/(p-2)}\le \|u_x(t)\|_\infty \le c_2(T^*-t)^{-1/(p-2)},\quad\hbox{as $t\to T^*_-$,}
\ee
and
\be\label{estB}
c_3(t-T^*)\le u(t,0) \le c_4(t-T^*),\quad\hbox{as $t\to T^*_+$,}
\ee
for some constants $c_i>0$.   Moreover, there exist $c,a,\delta>0$ such that
\be\label{estB2}
u_t(t,x) \ge c \quad\hbox{ for all $t\in (T^*,T^* +\delta)$ and $x\in (0,a]$.}
\ee
\end{prop}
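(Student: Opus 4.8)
The overall strategy is to combine the zero‑number analysis of Lemma~\ref{defz}, the dichotomy governed by the quantity $L=\liminf_{t\to T^*_-}z(t)$ from \eqref{defL} (where $z(t)$ is the unique zero of $u_t(t,\cdot)$ in $(0,1/2)$), comparison with shifted copies of the singular profile $U_*$, and the monotone approximation $u_k\uparrow u$. I would first dispose of assertion (i). If $L=0$, then along a sequence $t_j\to T^*_-$ we have $z(t_j)\to 0$, so by \eqref{prelimz0} $u_t(t_j,\cdot)\le 0$ on $(z(t_j),1-z(t_j))$; letting $j\to\infty$ gives $u_t(T^*,\cdot)\le 0$ on $(0,1)$, which is \eqref{utneg}. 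Since $u(T^*,0)=0$ by continuity, Proposition~\ref{utneq} applies with $t_0=T^*$ and yields $u\le U_*$ on $[T^*,\infty)\times[0,1]$, hence no loss of boundary conditions; by Proposition~\ref{prop-min} the solution is then minimal. Conversely, if $L>0$, assertion (ii) below produces LBC, and Proposition~\ref{prop-min} again shows $u$ is nonminimal. This proves the equivalence in (i).

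For (ii) assume $L>0$; then there are $a_0>0$ and $t_1\in(0,T^*)$ with $z(t)\ge a_0$ on $[t_1,T^*)$, so that $u_t(t,x)\ge 0$ on $(t_1,T^*)\times(0,a_0)$ by \eqref{prelimz0}, i.e.\ $u$ is time‑nondecreasing near the boundary before $T^*$. The lower bound in \eqref{estA} is exactly Theorem~\ref{minimal_rate}. For the upper bound I would run a supersolution argument: set $\sigma(t)=\beta(T^*-t)^{(p-1)/(p-2)}$ and compare $u$ (through the $u_k$, using $u_k\le u$ and $u_k=0$ on $\partial\Omega$) with $\overline u(t,x)=U_*(x+\sigma(t))-U_*(\sigma(t))+Nx^2$ on $(t_1,T^*)\times(0,a)$. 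Since $\mathcal P U_*=0$, a direct computation gives $\mathcal P\overline u=\sigma'(t)\big[U_*'(x+\sigma)-U_*'(\sigma)\big]+O(N)$; using that $U_*'$ is decreasing, that $U_*$ is concave, and the elementary a priori bound $u_x(t,x)\le U_*'(x)+Mx$ of Lemma~\ref{basic-bounds} to control $u$ at $x=a$, one checks that $\overline u$ is a supersolution dominating $u$ on the parabolic boundary for a fixed large $N$ and a suitable $\beta$, whence $u_x(t,0)\le \overline u_x(t,0)=U_*'(\sigma(t))\asymp(T^*-t)^{-1/(p-2)}$, and \eqref{controlnormux} together with symmetry finishes \eqref{estA}. (Equivalently one may transplant the time‑monotonicity argument of \cite{GH08},\cite{QS07}, which applies precisely because $u_t\ge0$ near $\partial\Omega$ for $t<T^*$.)

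The heart of (ii) is \eqref{estB2}, whence immediate LBC and \eqref{estB}. I would first propagate the sign of $u_t$ across $T^*$: since $u\in C^{1,2}$ in the interior across $t=T^*$, the strong maximum principle on a rectangle $(t_1,T^*]\times[\rho,a_0/2]$ gives $u_t(T^*,\cdot)>0$ on $(0,a_0/2)$, hence $u_t\ge c^\sharp>0$ on $[T^*-\delta,T^*+\delta]\times\{a_1\}$ for some $a_1\in(0,a_0/2)$; combining this with $u_t(t,1/2)\le -c_1<0$ and the zero‑number bound $Z(u_t(t)+\eps)\le2$ of Lemma~\ref{defz}(ii) (valid for all $t\ge t_0$, in particular for $t>T^*$), one gets $u_t\ge 0$ on $(T^*,T^*+\delta)\times(0,a_1)$. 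Then $u(t,\cdot)\ge u(T^*,\cdot)\ge U_*$ on $(0,a_1)$ for $t\in[T^*,T^*+\delta]$ by Remark~\ref{remb0}, so $u_x(t,0^+)=\infty$ there and the profile estimates of Lemma~\ref{basic-prop} hold; consequently $V:=u-U_*$ satisfies, near $x=0$ and for $t\ge T^*$, a nondegenerate equation $V_t-V_{xx}-b(t,x)V_x=E(t,x)$ with $E\ge0$ and an inward‑pointing Bessel‑type drift $b(t,x)=\frac{p}{p-1}\,x^{-1}(1+o(1))$, for which $x=0$ is a regular point carrying no boundary condition and $V(t,0)=u(t,0)\ge0$. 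A strong maximum principle for this equation (the origin being inaccessible since $\frac{p}{p-1}\ge1$) forces $u_t(t,0^+)>0$ for every $t\in(T^*,T^*+\delta)$, hence $u(t,0)=\int_{T^*}^t u_t(s,0^+)\,ds>0$, i.e.\ immediate LBC; a quantitative barrier of the form $\underline V(t,x)=c(t-t_1)+\frac{c(p-1)x^2}{2(2p-1)}-\eta$ compared with $V$ on $[t_1,T^*+\delta]\times[0,a_1]$ (using $V_t\ge c^\sharp$ at $x=a_1$ and the absence of a condition at $x=0$), improved by iteration, then upgrades this to the uniform bound $u_t\ge c$ of \eqref{estB2}; finally \eqref{estB} follows, its lower bound by integrating \eqref{estB2} down to $x=0$ and its upper bound from $|u_t|\le M$ in \eqref{u_t-bounded}.

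I expect the main obstacle to be the last step: obtaining the \emph{uniform} lower bound $u_t\ge c$ (equivalently the \emph{linear} detachment rate) right down to $t=T^*$. The strong maximum principle only yields $u_t(t,0^+)>0$ for each fixed $t>T^*$, and since $u_t$ need not be continuous at the corner $(T^*,0)$, ruling out $u_t(t,0^+)\to0$ as $t\to T^*_+$ genuinely requires the quantitative Bessel‑type barrier argument — the delicate point is that at $t=T^*$ one only knows $V(T^*,\cdot)>0$ on $(0,a_1)$ with $V(T^*,x)=O(x^{(p-2)/(p-1)})\to0$, so the barrier must exploit the positive nonlinear forcing $E$ together with the cap $V(t,x)\le M(t-T^*)+Mx^2/2$. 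A secondary, routine technical point is the passage to the limit in the $u_k$: one makes the convergence uniform on the relevant pieces of the parabolic boundary (at $\{t=T^*\}$ and at $\{x=a_1\}$) by Dini's theorem, since the $u_k$ are continuous and increase monotonically to the continuous limit $u$ on compacts.
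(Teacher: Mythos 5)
Assertion (i) is handled exactly as in the paper, and the first steps of your treatment of (ii) (propagating $u_t>0$ across $T^*$ via the strong maximum principle, and exploiting the zero-number bound $Z(u_t+\eps)\le 2$ to conclude $u_t\ge 0$ on $(T^*,T^*+\delta)\times(0,a_1)$) are also along the paper's lines. However, the crux of (ii) — the \emph{uniform} lower bound $u_t\ge c>0$ in \eqref{estB2}, which is what produces immediate LBC and the linear detachment rate — is handled by a genuinely different, and incomplete, device. You propose a Bessel-type barrier $\underline V(t,x)=c(t-t_1)+\frac{c(p-1)}{2(2p-1)}x^2-\eta$ compared with $V=u-U_*$ on $[t_1,T^*+\delta]\times[0,a_1]$, but this only gives $V(t,0)\ge c(t-t_1)-\eta$, which is positive only for $t>t_1+\eta/c$; the constant $\eta$ is forced by the need to dominate the (unknown, possibly $O(x^2)$-small) data $V(t_1,\cdot)$, and you cannot send $\eta\to 0$ without losing the comparison at $t=t_1$. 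You yourself flag this as the ``main obstacle,'' and indeed the barrier as written does not prove \eqref{estB2}. (Two smaller inaccuracies in that paragraph: $V(T^*,x)$ is $O(x^2)$ by Lemma~\ref{basic-prop}, not $O(x^{(p-2)/(p-1)})$; and the integration $u(t,0)=\int_{T^*}^t u_t(s,0^+)\,ds$ presupposes that $u_t(s,0^+)$ exists, which is not established at this stage of the argument.)

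The paper closes precisely this gap with a quite different and more unified idea: Lemma~\ref{lem73} applies a maximum-principle argument to the auxiliary function
$$w(t,x)=\Bigl(1+\frac{1}{\mu^\sigma(t)}\Bigr)\Bigl(1-\frac{v_x}{\mu(t)}\Bigr),\qquad v:=u_k,\ \mu:=m_k(t)=\max|u_{k,x}|=u_{k,x}(t,0),$$
a modification of the device from \cite{GH08}, run at the level of the smooth approximations $u_k$ on $[t_0,T^*+\delta]\times[0,a]$. This gives the single inequality $u_{k,t}\ge C\bigl(1-u_{k,x}/m_k(t)\bigr)$, \emph{with $C$ independent of $k$}. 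Passing $k\to\infty$ and then sending the evaluation time $s\to T^*_-$ (so $m(s)\to\infty$) yields $u_t\ge C$ for $t\in(T^*,T^*+\delta)$ and $x\in(0,a]$ — the uniform bound \eqref{estB2} — while the same inequality, evaluated at $t<T^*$ and combined with $m(t)=u_x(t,0)$, gives the differential inequality $u_{xt}(t,0)\ge C u_x^{p-1}(t,0)$, hence the upper GBU rate in \eqref{estA} by integration. Your proposed supersolution $\overline u(t,x)=U_*(x+\sigma(t))-U_*(\sigma(t))+Nx^2$ for the upper GBU rate also has a boundary-matching difficulty at $x=a$: by concavity $U_*(a+\sigma)-U_*(\sigma)\le U_*(a)$, so increasing $N$ alone does not restore $u(t,a)\le\overline u(t,a)$ in general. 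In short, the paper's single auxiliary-function lemma delivers both the detachment rate and the upper GBU rate in one stroke, and this is the ingredient missing from your proposal.
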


\begin{rem} Property \eqref{estB2} shows that, interestingly, $u_t$ is uniformly {positive} near the corner $(T^*_+,0)$ and undergoes a jump discontinuity there,
while remaining bounded. A similar property will be observed near the regularization time (see~\eqref{estB2r}).
\end{rem}

\begin{proof}[Proof of Proposition~\ref{nonmin}]
  First assume $L=0$. Since $u_t$ is continuous in $(0,T^*]\times (0,1)$, property \eqref{utneg} follows.
Proposition~\ref{utneq} then guarantees that the boundary conditions are preserved forever. So $u$ is minimal on account of Proposition~\ref{prop-min}. 
\smallskip

We assume henceforth that $L>0$. We aim at showing that 
  $u$ loses the boundary conditions immediately after $T^*$
(hence in particular $u$ is nonminimal) and satisfies estimates \eqref{estA}, \eqref{estB}. 
We divide the proof in several steps. {We consider below the approximating sequence of solutions $u_k$ of \rife{app-1d}, with   $F_k$ given by \rife{eqaaa}}.

\smallskip

{\bf Step 1.} {\it Positivity of $u_{k,t}$ in a   uniform  neighborhood of $t=T^*$ and $x=0$.} 
  We first claim that
\be\label{eqposut}
u_t(T^*,x)>0\quad\hbox{ in $(0,L)$.}
\ee
Fix $0<L'<L$. By definition of $L$, there exists $t_0=t_0(L')$ such that $u_t(t,x)>0$ in $[t_0,T^*)\times (0,L')$.
For given $a\in (0,L')$, $v:=u_t$ is a classical solution of $v_t-v_{xx}=b(t,x)v_x$ in $Q_a:=[t_0,T^*]\times[a,L']$,
where the coefficient $b(t,x):=pu_x^{p-1}$ is bounded in $Q_a$.
The strong maximum principle then guarantees that $v(T^*,x)>0$ in $(a,L')$ and \eqref{eqposut} follows.

\smallskip

Set $a=L/2$. By \eqref{eqposut} and the local convergence of the $u_k$, there exist $k_0\ge 1$ and $\eta,\delta>0$ such that,
 for all $k\ge k_0$, 
\be\label{eqba}
u_{k,t}(t,a)\ge \eta \quad\hbox{ for all $t\in [T^* -\delta,T^* +\delta]$.}
\ee
By property (i) of Proposition~\ref{zeronumberut} applied to problem \rife{app-1d},
we have
$N_k(t)\le N_k(0)=N(0)=2$ for all $k>\|\phi'\|_\infty$.
Therefore, \rife{eqba} and \eqref{uktcenter} guarantee that  
\be\label{Nkt2}
N_k(t)=2\quad\hbox{ for all $t\in [0,T^* +\delta]$,}
\ee
and we thus have
\be\label{eqb}
u_{k,t}(t,x)>0 \quad\hbox{   in $[T^* -\delta,T^* +\delta]\times(0,a]$ for all $k\ge k_0$. }
\ee

{\bf Step 2.} {\it 
Maximum principle   for $u_{k,x}$.}  {Let us set
$$
m_k(t):=\max_{\overline Q_t} |u_{k,x}|\,.
$$}
We claim that there exists $t_0\in (T^*-\delta,T^*)$ such that, for all sufficiently large $k$,
\be\label{eqbak}
m_k(t)=u_{k,x}(t,0) \ge 1,\quad t_0\le t\le T^* +\delta.
\ee
Since $w=u_{k,x}$ satisfies the equation 
$$w_t-w_{xx}=F'_k(u_{k,x})w_x$$
we have
$$
m_k(t)=\max\Bigl[\|\phi_x\|_\infty,\max_{\tau\in [0,t]} u_{k,x}(\tau,0)\Bigr].
$$
By \rife{Nkt2} and \eqref{uktcenter}, we see that  the function $t\mapsto u_{k,x}(t,0)$ is nondecreasing on $[0,T^* +\delta]$, hence
$$
m_k(t)=\max\Bigl[\|\phi_x\|_\infty,u_{k,x}(t,0)\Bigr],\quad t\in [0,T^* +\delta].
$$
Since $\|u_x(t)\|_\infty\to\infty$ as $t\to T^*$, there exists $t_0\in (T^*-\delta,T^*)$ such that $\|u_x(t_0)\|_\infty>1+\|\phi_x\|_\infty$,
so that $\|u_{k,x}(t_0)\|_\infty\ge 1+\|\phi_x\|_\infty$ for all $k\ge k_0$ large enough, hence \eqref{eqbak}.
\medskip

We proceed by deriving an upper bound for $m_k'$. 

\begin{lem}   Under the assumptions of Proposition~\ref{nonmin},
there exists a constant $C>0$ independent of $k$ such that, for all large enough $k$,  
\be\label{eqca}
0\le  m'_k(t)\le CF'_k(m_k(t)),\quad t_0\le t\le T^* +\delta
\ee
and
\be\label{eqcaa}
0\le  m'(t)\le CF'(m(t)),\quad t_0\le t<T^*.
\ee
\end{lem}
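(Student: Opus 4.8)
The plan is to prove the two inequalities separately. The bound $m_k'(t)\ge 0$ is immediate from what has just been established: on $[t_0,T^*+\delta]$ we have $m_k(t)=u_{k,x}(t,0)$ by \rife{eqbak}, which is a $C^1$ function of $t$ since $u_k$ is a smooth classical solution of \rife{app-1d}, so $m_k\in C^1([t_0,T^*+\delta])$ with $m_k'(t)=u_{k,tx}(t,0)$, and it is nondecreasing there because $t\mapsto u_{k,x}(t,0)$ is (as noted above, from \rife{Nkt2} and \rife{uktcenter}). The same argument gives $m'(t)\ge 0$. So the real content is the upper bound.

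For the upper bound I would argue as in the proofs of Lemma~\ref{unbddT} and Theorem~\ref{minimal_rateG}, applied to the approximating solutions $u_k$, the one subtlety being uniformity in $k$. Set $v:=u_{k,t}$; it solves the linear homogeneous Dirichlet problem $v_t-v_{xx}=F_k'(u_{k,x})\,v_x$ in $(0,\infty)\times(0,1)$. Two uniform facts drive the estimate: $\|v(s)\|_\infty\le M$ for all $s\ge t_0'$ and all large $k$, where $t_0'\in(0,T^*)$ is fixed with $t_0'<t_0$ and $M=M(t_0')$ comes from \rife{u_t-bounded} via \rife{eqabA}; and, since $F_k$ is even and convex and $|u_{k,x}(t,x)|\le m_k(t)$, the drift obeys $|F_k'(u_{k,x}(t,x))|\le F_k'(m_k(t))$, a quantity which is \emph{nondecreasing} in $t$ on $[t_0,T^*+\delta]$ (because $m_k(t)=u_{k,x}(t,0)$ is, and $F_k'$ is nondecreasing). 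Now fix $t\in[t_0,T^*+\delta]$, write $A:=F_k'(m_k(t))$, and apply $\partial_x$ to the variation-of-constants formula for $v$ over a window $(s,t)$; using $\|\partial_x e^{\tau\Delta}f\|_\infty\le C\tau^{-1/2}\|f\|_\infty$, the bound $\|v(s)\|_\infty\le M$, the monotonicity of $m_k$ to estimate the drift on $(s,t)$ by $A$, and $\int_0^1(1-z)^{-1/2}z^{-1/2}\,dz=\pi$, one obtains for $K:=\max_{0\le\sigma\le t-s}\sigma^{1/2}\|v_x(s+\sigma)\|_\infty$ an inequality of the form $K\le CM+C\pi A(t-s)^{1/2}K$. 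The key point is that a \emph{fixed} window length would let this blow up like $e^{cA^2(t-s)}$ as $k\to\infty$; instead I would take $t-s=\varepsilon_0 A^{-2}$ with $\varepsilon_0\in(0,1)$ a universal constant chosen so that $C\pi\sqrt{\varepsilon_0}\le\frac12$. Then $K\le 2CM$, whence $m_k'(t)=v_x(t,0)\le\|v_x(t)\|_\infty\le(t-s)^{-1/2}K\le 2CM\varepsilon_0^{-1/2}F_k'(m_k(t))$, which is \rife{eqca} with a $k$-independent constant.

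The remaining care is that the window $(s,t)=(t-\varepsilon_0 A^{-2},t)$ must lie in $[t_0',\infty)\times(0,1)$, where $\|u_{k,t}\|_\infty\le M$ holds, i.e.\ that $s\ge t_0'$ even when $t$ is close to $t_0$. Since $A=F_k'(m_k(t))\ge F_k'(m_k(t_0))$ by monotonicity, it suffices that $t_0-t_0'\ge\varepsilon_0\bigl(F_k'(m_k(t_0))\bigr)^{-2}$; as $F_k'(m_k(t_0))=p\,m(t_0)^{p-1}$ for $k$ large (by \rife{eqabA0}) and $m(t_0)\to\infty$ as $t_0\to T^*_-$, while $t_0'$ is fixed beforehand, this holds once $t_0$ — which in \rife{eqbak} is only required to be close enough to $T^*$ — is taken close enough to $T^*$. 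Finally, \rife{eqcaa} follows by letting $k\to\infty$: for fixed $t\in[t_0,T^*)$ and $k$ large, \rife{eqabA0} gives $u_k=u$ on a neighborhood of $t$, hence $m_k\equiv m$ there and $F_k'(m_k(t))=F'(m(t))$, so \rife{eqca} passes to the limit. Thus the main obstacle is not the computation but the uniformity in $k$: it forces both the $k$-dependent window length $\sim F_k'(m_k(t))^{-2}$ — legitimate precisely because $m_k$ is monotone, so the drift can be controlled by its terminal value — and the placement of $t_0$ close enough to $T^*$ that these shrinking windows stay within the range of validity of the uniform bound on $u_{k,t}$.
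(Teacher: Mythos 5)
Your argument is essentially the paper's own proof: both establish $m_k'(t)=u_{k,xt}(t,0)\ge 0$ from the monotonicity of $u_{k,x}(t,0)$, then bound $\|u_{k,tx}(t)\|_\infty$ by applying $\partial_x$ to the variation-of-constants formula for $u_{k,t}$ over a window of length $\sim [F_k'(m_k(t))]^{-2}$, using the $k$-uniform bound \eqref{eqabA} on $\|u_{k,t}\|_\infty$ and the convexity of $F_k$ to control the drift, and finally pass to the limit $k\to\infty$ on $[t_0,T^*)$ via \eqref{eqabA0}. The only cosmetic difference lies in how the window is kept inside the region where \eqref{eqabA} holds: the paper uses a window length $\tfrac14\min\bigl(t_0,[CF_k'(m_k(t))]^{-2}\bigr)$ and absorbs the resulting $t_0^{-1/2}$ into the constant via $m_k(t)\ge 1$, whereas you take $t_0$ close enough to $T^*$; both devices are valid.
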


\begin{proof}
We use (a simpler version of) arguments from the proofs of Theorem~\ref{minimal_rateG}
and Lemma~\ref{unbddT}. 
 Set $w:=u_{k,t}$, $F=F_k$.
Let $t\in (t_0,T^* +\delta]$, $s\in (0,t)$, and put
$K=\sup_{\sigma\in [0,t-s]} \sigma^{1/2}\|w_x(s+\sigma)\|_\infty$.
For $\tau\in (0,t-s)$, in view of  the variation-of-constants formula, we have
$$w(s+\tau)=e^{\tau \Delta}w(s)+
\int_0^\tau e^{(\tau-\sigma)\Delta}(F'(u_{k,x})w_x)(s+\sigma)\, d\sigma.$$
Since
$\int_0^\tau (\tau-\sigma)^{-1/2}\sigma^{-1/2}\, d\sigma
=\int_0^1 (1-z)^{-1/2}z^{-1/2}\, dz$,
it follows that
\begin{align*}
\|w_x(s+\tau)\|_\infty
&\leq C\tau^{-1/2}\|w(s)\|_\infty+
C\int_0^\tau (\tau-\sigma)^{-1/2}\|[F'(u_{k,x})w_x](s+\sigma)\|_\infty\, d\sigma\\
&\leq C\tau^{-1/2}+CF'(m_k(t))K,
\end{align*}
where we also used the convexity of $F_k$. {Here and below $C$ denotes possibly different constants independent of $k$.}
Multiplying by $\tau^{1/2}$ and taking the supremum for $\tau\in [0,t-s]$, we obtain
$$K\leq C+C(t-s)^{1/2}F'(m_k(t))K.
$$
Now choosing $s=t-(1/4)\min\bigl(t_0,[CF'(m_k(t))]^{-2}\bigr)\in (0,t)$, we obtain
$K\leq 2C$, hence
$$\|w_x(t)\|_\infty\leq 2C(t-s)^{-1/2}\leq 4C\max(t_0^{-1/2},CF'(m_k(t)))\leq CF'(m_k(t))$$
(recalling $m_k(t)\ge 1$).
Since, by \rife{eqb} and  \eqref{eqbak},  $0\le  m'_k(t)=u_{k,xt}(t,0)\le \|w_x(t)\|_\infty$ for $t_0\le t\le T^* +\delta$,
estimate \rife{eqca} follows.

Finally, for each $t\in [t_0,T^*)$, we have 
\be\label{eqcaamt}
m(t)=\lim_{k\to\infty}m_k(t)=\lim_{k\to\infty}u_{k,x}(t,0)=u_{x}(t,0)
\ee
 (passing to the limit in \eqref{eqbak}),
and $m'(t)=u_{xt}(t,0)=\lim_{k\to\infty}u_{k,xt}(t,0)=\lim_{k\to\infty}m'_k(t)$.
Estimate \rife{eqcaa} then follows by letting $k\to\infty$ in~\rife{eqca}.
\end{proof}

{\bf Step 3.} {\it    Auxiliary function argument  applied to the $u_k$.}

\begin{lem} \label{lem73}
{  Under the assumptions of Proposition~\ref{nonmin}, } we have
$$u_{k,t} \ge C\Bigl(1-{u_{k,x} \over m_k(t)}\Bigr) \quad\hbox{ for all $t_0\le t\le T^* +\delta$ and $0<x\le a,$}$$
with $C$ independent of $k$.
\end{lem}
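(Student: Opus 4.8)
The natural approach is to build an auxiliary function comparison for $u_k$ on the space-time box $Q_\delta := [t_0, T^*+\delta]\times[0,a]$, exploiting three facts already at our disposal: (1) $u_{k,t}>0$ on $Q_\delta$ by \eqref{eqb}; (2) $u_{k,x}(t,0)=m_k(t)\ge 1$ and $m_k$ is nondecreasing with $0\le m_k'(t)\le CF_k'(m_k(t))$ by \eqref{eqbak}--\eqref{eqca}; and (3) $w := u_{k,t}$ solves the linear equation $w_t - w_{xx} = F_k'(u_{k,x})w_x$ while $v := u_{k,x}$ solves $v_t - v_{xx} = F_k'(u_{k,x})v_x$. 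The idea is to compare $w$ with a suitable multiple of $\bigl(1 - v/m_k(t)\bigr)$, i.e.~to introduce $J := w - C(1 - v/m_k(t))$ on $Q_\delta$ and show $J\ge 0$ via the maximum principle applied to the parabolic operator $\mathcal{L}\psi := \psi_t - \psi_{xx} - F_k'(u_{k,x})\psi_x$.

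**Key steps.** First I would compute $\mathcal{L}$ applied to the test function $\eta(t,x) := 1 - v(t,x)/m_k(t)$. Since $v$ solves the linear equation, $\mathcal{L}(1) = 0$ and $\mathcal{L}(v/m_k) = -v\,\bigl(m_k^{-1}\bigr)' \cdot (\text{no}) $ — more carefully, $\partial_t(v/m_k) = v_t/m_k - v m_k'/m_k^2$ and $\partial_{xx}(v/m_k) = v_{xx}/m_k$, so $\mathcal{L}(v/m_k) = (\mathcal{L}v)/m_k - v m_k'/m_k^2 = -v m_k'/m_k^2 \le 0$ because $v = u_{k,x}\ge 0$ on $Q_\delta$ (by monotonicity of $u_k$ on $[0,1/2]$, valid for small $a$) and $m_k'\ge 0$. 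Hence $\mathcal{L}\eta = -\mathcal{L}(v/m_k) = v m_k'/m_k^2 \ge 0$, so $C\eta$ is a subsolution for any $C\ge 0$; therefore $\mathcal{L}J = \mathcal{L}w - C\mathcal{L}\eta = -C\mathcal{L}\eta \le 0$, i.e.~$J$ is a supersolution of $\mathcal{L}$. Next I would check the parabolic boundary $\partial_P Q_\delta$, which has three pieces: on $\{x=0\}$, $v = m_k(t)$ so $\eta = 0$ and $J = w = u_{k,t}(t,0) \ge 0$; on $\{x=a\}$, we have $u_{k,t}(t,a)\ge \eta_0$ uniformly (from \eqref{eqba}, with $\eta_0$ the constant called $\eta$ there) and $0\le\eta\le 1$, so $J \ge \eta_0 - C \ge 0$ provided $C \le \eta_0$; on $\{t=t_0\}$, $u_{k,t}(t_0,x)\ge 0$ while $\eta$ could be positive, so we need $u_{k,t}(t_0,x)\ge C\eta(t_0,x)$ there. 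This last bottom-edge condition is handled by noting that, by \eqref{eqb} again, $u_{k,t}(t_0,\cdot)$ is continuous and strictly positive on the compact $[0,a]$ — but it may vanish at $x=0$; however, at $x=0$ we have $\eta(t_0,0)=0$ too, so by a Hopf-lemma / boundary-gradient argument (or simply because both $u_{k,t}(t_0,x)$ and $\eta(t_0,x)$ vanish linearly at $x=0$ with $u_{k,t}$ having positive $x$-derivative there while $\eta$ does not exceed $x\cdot\sup|v_x|/m_k$), one gets $u_{k,t}(t_0,x)\ge c_0 x \ge C\eta(t_0,x)$ for $C$ small enough depending only on the solution at the fixed time $t_0$; uniformity in $k$ follows since $u_k(t_0,\cdot)=u(t_0,\cdot)$ for $k\ge k_0$ by \eqref{eqabA0}. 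Choosing $C := \min(\eta_0, c_0)$ (or the appropriate explicit combination) then makes $J\ge 0$ on all of $\partial_P Q_\delta$, and the maximum principle yields $J\ge 0$ on $Q_\delta$, which is exactly the claimed inequality with this $C$.

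**Main obstacle.** The delicate point is the bottom edge $\{t=t_0\}\times[0,a]$: one must control $u_{k,t}(t_0,x)$ from below by a multiple of $1 - u_{k,x}(t_0,x)/m_k(t_0)$ uniformly in $k$. Near $x=a$ this is the content of \eqref{eqba}; near $x=0$ both sides vanish and one needs a matching of linear rates, which should follow from the strict positivity of $u_{k,xt}(t_0,0)$ (i.e.~$m_k'(t_0)>0$, available from the zero-number structure, cf.~\eqref{eqbak} and \eqref{eqb}) together with the uniform bound $\|u_{k,xx}(t_0,\cdot)\|_\infty\le C$ on $[0,a]$; since $u_k(t_0,\cdot)=u(t_0,\cdot)$ is $k$-independent for large $k$, all these estimates are in fact uniform. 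Once the box $Q_\delta$ is set up and this boundary comparison is in place, the interior part of the argument is the short maximum-principle computation sketched above. I would present the proof by first fixing $t_0$, $\delta$, $a$, $\eta_0$ from the earlier steps, then stating the auxiliary function $J$, verifying $\mathcal{L}J\le 0$, checking the three boundary pieces, and concluding.
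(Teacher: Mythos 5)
Your proposal has a fatal sign error at the very step where the maximum principle is invoked, and the error is not cosmetic — it is precisely the obstruction that forces the paper to use a much more elaborate auxiliary function.

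You correctly compute $\mathcal{L}(v/m_k) = -v\,m_k'/m_k^2 \le 0$, hence $\mathcal{L}\eta = +v\,m_k'/m_k^2 \ge 0$ (with $v=u_{k,x}\ge 0$ and $m_k'\ge 0$). But then $\mathcal{L}J = -C\,\mathcal{L}\eta \le 0$, which means $J$ is a \emph{subsolution} of $\mathcal{L}h=0$ (your labels of sub/supersolution are reversed, but that is only terminology). For a subsolution, the parabolic maximum principle controls $\max J$ by its value on $\partial_P Q_\delta$; it says nothing about $\min J$, and in particular $J\ge 0$ on $\partial_P Q_\delta$ does \emph{not} imply $J\ge 0$ inside. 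To deduce $J\ge 0$ in the interior you would need $\mathcal{L}J\ge 0$, i.e.~$\mathcal{L}\eta\le 0$, which is exactly the opposite of what you have. Equivalently, rewriting $C\eta - u_{k,t}\le 0$ and trying the maximum principle, you would need $\mathcal{L}(C\eta - u_{k,t}) = C\mathcal{L}\eta \le 0$, again the wrong sign.

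This is the real difficulty of the lemma: the natural comparison function $\eta=1-u_{k,x}/m_k(t)$ has $\mathcal{L}\eta\ge 0$ because the time-dependence of $m_k$ produces the \emph{wrong-sign} term $u_{k,x}\,m_k'/m_k^2$. The paper's proof overcomes this by replacing $\eta$ with the weighted function $w=(1+m_k^{-\sigma})(1-u_{k,x}/m_k)$, whose extra factor generates a competing negative term $-\sigma m_k'/m_k^{\sigma+1}(1-u_{k,x}/m_k)$, and by adding $Cu_k$ (using $\mathcal{L}u_k\le -F_k(u_{k,x})$ via \eqref{eqaa}). A case analysis depending on the size of $u_{k,x}$ relative to $m_k^{1-\sigma}$, together with the crucial quantitative bound $m_k'\le CF_k'(m_k)$ from \eqref{eqca} (which your proposal does not use at all), then yields $\mathcal{L}(w+Cu_k)\le 0$ — the \emph{correct} sign for the maximum principle. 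Your boundary checks are broadly in the right spirit, but without this more refined auxiliary function the interior comparison cannot close.
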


\begin{proof}
  The proof is based on a modification of a device from \cite{GH08},
based on the auxiliary function in \rife{eqc2}. 
Fix $k$ and denote $v:=u_k$, $F=F_k$,   $\mu=m_k$ for simplicity. {We denote by $C$ possibly different constants independent of $k$. }
Consider the parabolic operator
$${\mathcal L}h:=h_t-h_{xx}-F'(v_x)h_x$$
in $Q:=[t_0,T^* +\delta]\times[0,a]$.  In view of \rife{eqaa} and $v_x\ge 0$ in $Q$, we note that
\be\label{eqc}
{\mathcal L}v=F(v_x)-F'(v_x)v_x\le -F(v_x).
\ee
For $\sigma\in(0,1)$ to be chosen later, 
we introduce the auxiliary function
\be\label{eqc2}
w(t,x):=\Bigl(1+{1\over \mu^\sigma(t)}\Bigr)\Bigl(1-{v_x\over \mu(t)}\Bigr).
\ee
A direct computation using ${\mathcal L}v_x=0$ shows that
$$
{\mathcal L}w=-{\sigma \mu'\over \mu^{\sigma+1}}\Bigl(1-{v_x\over \mu}\Bigr)
+\Bigl(1+{1\over \mu^\sigma}\Bigr){v_x \mu'\over \mu^2}.
$$

Now consider the following cases separately:
\medskip

\noindent {\it Case 1.} If $v_x(t,x)\le{\sigma\over \sigma +2}\mu^{1-\sigma}(t)$, then 
since $\mu'(t), v_x\geq 0$ and $\mu(t)\ge 1$, we have
$${\mathcal L}w
={\mu'\over \mu^{\sigma+1}} \Bigl(-\sigma+(\sigma+1){v_x\over \mu} 
+{v_x \over \mu^{1-\sigma}}\Bigr)
\leq{\mu'\over \mu^{\sigma+1}} \Bigl(-\sigma+(\sigma+2)
{v_x \over \mu^{1-\sigma}} \Bigr)\leq 0.$$

\medskip
\noindent {\it Case 2.} If $v_x(t,x)>{\sigma\over \sigma +2}\mu^{1-\sigma}(t)$, then, 
by \rife{eqca} and \rife{eqaa}, we have
\be\label{eqd}{\mathcal L}w
\leq\Bigl(1+{1\over \mu^\sigma}\Bigr){v_x \mu'\over \mu^2}
\leq Cv_x{F'(\mu)\over \mu^2}
\leq Cv_x{F(\mu)\over \mu^3},
\ee
and we have the following two subcases.

\vskip0.3em
\  {\it Case 2a.} If $v_x(t,x)\le k$, then, choosing $\sigma\in (0,1)$ such that $\sigma\le 2/(p-1)$, 
hence $(1-\sigma)(p-1)\ge p-3$, we have
$${F(v_x)\over v_x}=v_x^{p-1}\ge C\mu^{(1-\sigma)(p-1)}\ge C\mu^{p-3}\ge C{F(\mu)\over \mu^3},$$
due to \rife{eqaaa} and $\mu(t)\ge 1$. This along with \rife{eqd} yields
\be\label{eqe}{\mathcal L}w\le CF(v_x).
\ee

\medskip
\  {\it Case 2b.} If $v_x(t,x)>k$, hence $\mu(t)>k$, then 
$${F(v_x)\over v_x}\ge {C\, k^{p-1} \ge C\, {F(\mu)\over \mu^2}}\ge C{F(\mu)\over \mu^3}$$
by \rife{eqaaa} and $\mu(t)\ge 1$, hence again \rife{eqe}.
\medskip

In all cases we thus have \rife{eqe}, hence
$${\mathcal L}(w+Cv)
\leq 0={\mathcal L}v_t \quad\hbox{ in $Q$},$$
due to \rife{eqc}.
Moreover, we have
$$[w+Cv](t,0)=0=v_t(t,0),\quad t_0\leq t\le T^* +\delta, 
$$
{where we used that $v_x(t,0)=\mu(t)$, see \rife{eqbak}.}
On the other hand, by \eqref{eqabA0},  we have $u_k\equiv u$ on $Q_{t_0}$ for all $k$ large enough.
Also, as a consequence of \rife{eqb}, we have $u_t(t_0,x)>0$ in $(0,a]$, as well as $u_{tx}(t_0,0)>0$, in view of the Hopf lemma. 
Consequently there exists $K>0$ (independent of $k$) such that
$$\bigl[(w+Cv)-Kv_t\bigr](t_0,x)\leq 0,\quad  0\le x\le a. $$
Since $v\le \|\phi\|_\infty$ and $w\le 2$, by increasing the constant $K$ if necessary, \rife{eqba} implies
$w+Cv\le Kv_t$ for $x=a$ and $t\in (t_0,T^* +\delta]$.
By the maximum principle, we deduce $w+Cv\leq K v_t$ 
in $[t_0,T^* +\delta]\times(0,a)$, hence the conclusion of Lemma~\ref{lem73}.
\end{proof}

\vskip0.5em

{\bf Step 4.} {\it  LBC and detachment estimates at $t=T^*_+$.}
For $x\in (0,a]$ and $t_0<s<T^*<t<T^* +\delta$ fixed, 
we deduce from Lemma~\ref{lem73} and $m'_k\ge 0$ (cf.~\rife{eqca}) that 
$$u_{k,t}(t,x) \ge C\Bigl(1-{u_{k,x}(t,x) \over m_k(s)}\Bigr).$$
Letting $k\to\infty$, we obtain
$$u_t(t,x) \ge C\Bigl(1-{u_x(t,x) \over m(s)}\Bigr)$$
and then, letting $s\to T^*$, hence $m(s)\to\infty$, we get
$$u_t(t,x) \ge C \quad\hbox{ for all $x\in (0,a]$ and $T^*<t<T^* +\delta,$}$$
  that is, \eqref{estB2}. 
Consequently,
$$u(t,x) \ge C(t-T^*) \quad\hbox{ for all $x\in (0,a]$ and $T^*<t<T^* +\delta.$}$$
Finally passing to the limit $x\to 0$, using the fact that $u\in C([0,\infty)\times [0,1])$, we obtain 
$$u(t,0) \ge C(t-T^*)  \quad\hbox{ for all $T^*<t<T^* +\delta$.}$$

The upper estimate in \rife{estB} is an immediate consequence of the bound on $u_t$.
\bigskip

{\bf Step 5.} {\it  Completion of proof   of Proposition~\ref{nonmin}:  GBU estimates at $t=T^*_-$.}
{As a consequence of Lemma \ref{lem73} and the convergence of $u_k$ to $u$ for $t<T^*$, } we have
$$u_t \ge C\Bigl(1-{u_x \over m(t)}\Bigr) \quad\hbox{ for all $t_0\le t<T^*$ and $0<x\le a.$}$$
Then, following \cite{GH08}, we write, using $m(t)=u_x(t,0)$ (cf.~\rife{eqcaamt}): 
$$
u_{xt}(t,0)
=\lim_{x\to 0+}{u_t(t,x)\over x}
\geq C\lim_{x\to 0+} \frac 1x \Bigl(1-{u_x(t,x) \over m(t)}\Bigr)
= -{Cu_{xx}(t,0)\over m(t)}  \geq {Cu_x^p(t,0)\over m(t)}=Cu_x^{p-1}(t,0),
$$
and the upper estimate in \rife{estA} follows by integration.
As for the lower estimate, it was proved in Theorem~\ref{minimal_rate} (alternatively, in the current simpler situation, it also follows by integrating \rife{eqcaa}).  
\end{proof}

\subsection{Behavior near the reconnection time.}
\label{Sec-nonmin2}

We can now investigate what happens after the blow-up time of nonminimal blow-up solutions. To this extent, we define
$$
T_0:=\inf\{t>T^*;\, u(t,0)=0\} 
$$
the reconnection time, or (first) time of recovery of boundary conditions. 
We already know that $T^*<T_0\le T^r<\infty$.

We shall prove the following result, which
shows that $u$ is regularized forever immediately after~$T_0$, i.e., $T^r=T_0$.
As a consequence, along with Proposition~\ref{nonmin}, this will imply assertion (i) of Theorem~\ref{nonmin0}.
In addition, it establishes the  regularization rate \eqref{BeqConclA0} in Theorem~\ref{nonmin0}
and gives a first linear bound on the rate of recovery of boundary conditions, that will be later improved {to the precise asymptotics} \eqref{BeqConclAa0}.

\begin{prop}\label{recon} 
Let $\phi\in W^{3,\infty}(0,1)$ be compatible at order two,
with $\phi$ symmetric and  nondecreasing on $[0,1/2]$.
Assume that $N(0)=2$, 
that $T^*<\infty$ and that $u$ is a nonminimal blowup solution.
Then $u$ is a classical solution of problem \rife{vhj1} (including boundary conditions)
for all $t\in(T_0,\infty)$.    In other words, $T^r=T_0$. 

Moreover $u$ satisfies the following reconnection and regularization estimates:
\be\label{BeqConclAa}
c_1(T^r-t)\le u(t,0) \le c_2(T^r-t),
\quad\hbox{as $t\to T^r_-$,}
\ee
\be\label{BeqConclA}
c_3(t-T^r)^{-1/(p-2)} \le \|u_x(t)\|_\infty\le c_4(t-T^r)^{-1/(p-2)},
\quad\hbox{as $t\to T^r_+$,}
\ee
with some constants $c_i>0$.
    Furthermore there exist $c>0$ and $t_1\in (T^*,T^r)$ such that
\be\label{estB2r}
u_t\le -c\quad\hbox{ in $[t_1,T^r)\times (0,1)$.}
\ee
\end{prop}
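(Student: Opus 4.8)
The plan is to follow the pattern of the proof of Proposition~\ref{nonmin}, now working near the reconnection time $T_0$ instead of near $T^*$; the irreversibility of the equation makes this direction more delicate. The main tools will be the zero-number structure of the approximating solutions $u_k$ of \eqref{app-1d} (with $F_k$ as in \eqref{eqaaa}), the regularizing barrier of Lemma~\ref{barrier}, the ODE Lemma~\ref{separation}, and the minimal regularization rate of Theorem~\ref{minimal_rateG}(ii). Write $\mu_k(t):=u_{k,x}(t,0)$. Since $N_k(0)=N(0)=2$, Proposition~\ref{zeronumberut}(i) applied to \eqref{app-1d} gives $N_k(t)\le 2$ for all $t\ge 0$; with symmetry and \eqref{uktcenter} this forces that, at each $t$, either $u_{k,t}(t,\cdot)\le 0$ on $(0,1)$, or $u_{k,t}$ has a single zero $z_k(t)\in(0,1/2)$ with $u_{k,t}>0$ on $(0,z_k(t))$ and $u_{k,t}<0$ on $(z_k(t),1/2)$. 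Hence $\mu_k$ is nondecreasing up to the first time $\tau_k$ at which $u_{k,t}(t,\cdot)\le 0$ in $(0,1)$; by the maximum principle for the homogeneous Dirichlet linear equation satisfied by $u_{k,t}$, and since $N_k$ is nonincreasing, $u_{k,t}(t,\cdot)\le 0$ in $(0,1)$ for all $t\ge\tau_k$, so that $u_k(t,\cdot)$ is then nonincreasing in $t$.

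\textbf{Step 1 (zero-number; the main obstacle).} I would first establish that $u_t\le 0$ in $(0,1)$ from some time $T_m<T_0$ on, and that the boundary condition is never lost again after $T_0$. The crucial step is to show $\limsup_k\tau_k<T_0$. Arguing by contradiction, fix $t^*\in(T^*,T_0)$ close to $T^*$. If $\tau_{k_j}\ge T_0$ along a subsequence and the sign-change points $z_{k_j}$ stay bounded below by some $\zeta_0>0$ on $[t^*,T_0]$, then for $0<x<\zeta_0$ the map $t\mapsto u_{k_j}(t,x)$ is nondecreasing on $[t^*,T_0]$, so $u_{k_j}(t,x)\ge u_{k_j}(t^*,x)$ there; passing to the limit in $k$, then letting $t\to T_0^-$ and $x\to 0^+$, and using $u(t^*,0)>0$ (loss of boundary condition at $t^*$) together with the continuity of $u$ up to the boundary, one gets $u(T_0,0)\ge u(t^*,0)>0$, contradicting $u(T_0,0)=0$. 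The complementary situation, where $z_{k_j}$ dips to $0$ somewhere on $[t^*,T_0]$, is disposed of by passing to the limit in $u_{k_j,t}(t,\cdot)\le 0$ for $t\ge\tau_{k_j}$, which yields $u_t\le 0$ and hence monotonicity in time of $u$ past $T_0$. Either way one obtains $T_m\in(T^*,T_0)$ such that $u_t(t,\cdot)\le 0$ in $(0,1)$ and $u(t,\cdot)$ is nonincreasing in $t$ for all $t\ge T_m$; being nonnegative and vanishing at $T_0$, the boundary trace then satisfies $u(t,0)=0$ for all $t\ge T_0$. Applying Proposition~\ref{utneq} at $t_0=T_0$ gives in addition $u(t,\cdot)\le U_*$ and $u_x(t,\cdot)\le U_*'$ on $[T_0,\infty)\times(0,1)$.

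\textbf{Step 2 (strict negativity near $T_0$, and conclusion).} Next I would prove the estimate \eqref{estB2r}, $u_t\le -c$ on $[t_1,T_0)\times(0,1)$ for some $c>0$ and $t_1\in(T_m,T_0)$, by a mirror image of the auxiliary-function argument of Lemma~\ref{lem73}. On a box $[t_1,T_0]\times[0,a]$ one considers the operator $\mathcal L h:=h_t-h_{xx}-F_k'(u_{k,x})h_x$ and an auxiliary function of the form $w=(1+\mu_k^{-\sigma})(1-u_{k,x}/\mu_k)$; using that $\mu_k(s)\to\infty$ as $k\to\infty$ for fixed $s\in(T^*,T_0)$ (loss of boundary condition and Lemma~\ref{bdl}, together with the uniform bound $u_{k,xx}\le M$ from \eqref{eqabA}), and that $u_{k,t}(t_1,\cdot)\le 0$ with $u_{k,t}(t_1,a)<0$ by Step~1 and the strong maximum principle, one is led to $u_{k,t}(t,x)\le -C\bigl(1-u_{k,x}(t,x)/\mu_k(s)\bigr)$ and, letting $k\to\infty$ and then $s$ approach a suitable value, to $u_t\le -c$ near the corner $(T_0,0)$; away from the boundary the bound $u_t<0$ is immediate from the strong maximum principle and \eqref{uktcenter}, so \eqref{estB2r} follows (and, by interior continuity, $u_t(T_0,\cdot)\le -c$ in $(0,1)$). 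Then Lemma~\ref{separation}(ii) with $\ell=0$, together with $u(T_0,0)=0$, symmetry and an elementary comparison of profiles near $x=1$, gives $u(T_0,x)\le U_*(x)-bx^2$ on $(0,1)$ for some $b>0$. Lemma~\ref{barrier} applied with $T=T_0$ and $m=2$ (hence $\gamma=\frac1{p-2}$) then shows that $u$ is a classical solution of \eqref{vhj1}, including boundary conditions, for all $t>T_0$; since $u(t,0)>0$ on $(T^*,T_0)$ this yields $T^r=T_0$ and the upper bound in \eqref{BeqConclA}, while the lower bound in \eqref{BeqConclA} is exactly Theorem~\ref{minimal_rateG}(ii) applied with $T=T^r=T_0$. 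Finally, \eqref{BeqConclAa} follows from \eqref{u_t-bounded}: the upper bound since $t\mapsto u(t,0)$ is Lipschitz with $u(T_0,0)=0$, and the lower bound by integrating \eqref{estB2r} in time over $[t,T_0]$ and letting $x\to 0^+$.

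The hard part is Step~1 (and, to a lesser extent, the sign bookkeeping in Step~2): it is the delicate ``mirror'' of the blow-up analysis near $T^*$, which cannot simply be obtained by time-reversal, and it requires combining the zero-number information on the approximations $u_k$ with the fact that the boundary trace $u(t,0)$ must return to $0$ at $T_0$ while staying positive before. Once the strict lower bound $u_t\le -c$ near $T_0$ is in hand, the rest is an assembly of Lemmas~\ref{separation} and~\ref{barrier} and Theorem~\ref{minimal_rateG}(ii).
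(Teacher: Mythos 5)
Your high-level plan (establish strict negativity $u_t\le -c$ at some time before $T_0$, then feed this into Lemma~\ref{separation}(ii), Lemma~\ref{barrier} and Theorem~\ref{minimal_rateG}(ii)) is the same as the paper's. However, the two central steps differ from the paper and each has a genuine gap.

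\textbf{Step 1.} Your dichotomy on the approximating sign-change points $z_{k_j}$ does not close. If $\tau_{k_j}\ge T_0$ and the $z_{k_j}(t)$ stay uniformly away from $0$ on $[t^*,T_0]$, your monotonicity-plus-continuity argument gives a contradiction — fine. But the complementary case produces only a sequence $(t_j,z_{k_j}(t_j))\to(\bar t,0)$ with $\bar t\in[t^*,T_0]$; passing to the limit yields $u_t(\bar t,\cdot)\le 0$, and you cannot rule out $\bar t=T_0$. In that case you have no time $T_m<T_0$ with $u_t(T_m,\cdot)\le 0$, which is precisely what Step~2 and the barrier argument require. Moreover, even where your argument does succeed it only gives the weak inequality $u_t\le 0$, not the strict bound $u_t\le-\eps$. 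The paper (Lemma~\ref{neg1}) circumvents both issues at once by working directly with the viscosity solution $u$ and the \emph{perturbed} zero-number $Z_\eps(t)=Z(u_t(t)+\eps)$: by Lemma~\ref{defz}(ii), $Z_\eps\le 2$ on $[t_0,\infty)$ for $\eps$ small; if $Z_\eps$ stayed equal to $2$ on $[t_0,T_0)$, the positivity of $u_t+\eps$ near $x=0$ would by integration force $u(t_0,0)\le\eps(T_0-t_0)$, a contradiction for $\eps$ small; hence $Z_\eps(t_1)=0$ for some $t_1<T_0$, which automatically gives the \emph{strict} bound $u_t(t_1,\cdot)\le-\eps$.

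\textbf{Step 2.} The ``mirror'' of the auxiliary-function argument of Lemma~\ref{lem73} does not survive the sign reversal. In Lemma~\ref{lem73} the identity $m_k(t)=u_{k,x}(t,0)$ and the inequality $m'_k\ge 0$ (estimate~\eqref{eqca}) are crucial: they make the auxiliary function $w=(1+m_k^{-\sigma})(1-u_{k,x}/m_k)$ vanish at $x=0$ and give ${\mathcal L}w\le 0$ in Case~1. In the reconnection regime $\mu_k(t):=u_{k,x}(t,0)$ is nonincreasing, so $\mu_k(t)<m_k(t)=\max_{\overline Q_t}|u_{k,x}|$ and the boundary identity fails; if one replaces $m_k$ by $\mu_k$, then $\mu'_k\le 0$ flips the sign in Case~1 of the computation, and it is no longer clear how to absorb ${\mathcal L}w$ into $CF_k(u_{k,x})$ with the constraint $\sigma\le 2/(p-1)$. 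The paper (Lemma~\ref{neg2}) sidesteps the approximating solutions and the nonlinearity entirely: on $[t_1,T_0)\times[a,1/2]$ it considers $J(t,x)=u_t-\eta[La^\alpha(u_x+K)-1]$, which satisfies the \emph{exact} identity ${\mathcal L}J=0$ since both $u_t$ and $u_x$ solve the linearized equation; the boundary control at $x=a$ comes from the lower profile bound $u_x(t,a)\gtrsim a^{-\alpha}$ (via Lemmas~\ref{bdl} and~\ref{basic-prop}), and the maximum principle plus $a\to 0$ delivers $u_t\le-\eta$ in $(0,1/2]$. This is qualitatively different from, and substantially more robust than, the argument you propose.

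Once the strict bound $u_t\le-\eta$ on $[t_1,T_0)$ is available, your assembly (Lemma~\ref{separation}(ii) with $\ell=0$, Lemma~\ref{barrier} with $m=2$, and Theorem~\ref{minimal_rateG}(ii)) matches the paper's and is correct.
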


Proposition~\ref{recon} will be shown through a series of lemmas.
\medskip

The first step is to initialize the uniform negativity of $u_t$.
Intuitively, since $u(t,0)$ goes from positive values for $t<T_0$ to $0$ for $t=T_0$, 
the time derivative $u_t$   (although not necessarily defined at $x=0$)  must be negative near $x=0$ for ''many'' values of~$t$.
However, this has to be extended to the whole interval.
The proof relies on a key zero-number argument
  applied to the perturbed function $u_t+\eps$ with $\eps>0$ small. 

\begin{lem}\label{neg1}
Under the assumptions of Proposition~\ref{recon}, 
for any $t_0\in [T^*,T_0)$, there exist $t_1\in (t_0,T_0)$ and $\eps>0$ such that 
$$
u_t(t_1,x)\le -\eps,\quad 0<x<1.
$$
\end{lem}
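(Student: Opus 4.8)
\medskip

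The plan is to argue by contradiction, exploiting the zero-number of the perturbed function $u_t+\eps$ (as announced just before the lemma) together with the fact that $u(\cdot,0)$ vanishes at $T_0$. It suffices to treat the case $t_0\in(T^*,T_0)$ — so that $g(t_0):=u(t_0,0)>0$ — since proving the statement for such $t_0$ yields it for $t_0=T^*$ as well. First I would fix $\bar t\in(0,T^*)$ and invoke Lemma~\ref{defz}(ii) to get $\eps_0>0$ with $Z(u_t(t,\cdot)+\eps)\le 2$ for all $\eps\in(0,\eps_0)$ and $t\ge\bar t$. Since $u$ is smooth inside the domain and $u_t(t,1/2)=u_{xx}(t,1/2)<0$ for all $t>0$ (cf.~\eqref{utcenter}), continuity on the compact interval $[t_0,T_0]$ gives $\eta>0$ with $u_t(t,1/2)\le-2\eta$ there. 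Then I would fix $\eps\in(0,\eps_0)$ with $\eps<\eta$ and $\eps<g(t_0)/(T_0-t_0)$, and suppose for contradiction that the conclusion fails for every $t_1\in(t_0,T_0)$ and every positive constant, that is, $\sup_{(0,1)}u_t(t,\cdot)\ge 0$ for all $t\in(t_0,T_0)$.

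Under this assumption, for each $t\in(t_0,T_0)$ the function $f_t:=u_t(t,\cdot)+\eps$ is positive somewhere and satisfies $f_t(1/2)\le-\eta<0$, so it changes sign on $(0,1)$; as $Z(f_t)\le 2$ and $f_t$ is symmetric about $x=1/2$ (hence $Z(f_t)$ is even), we get $Z(f_t)=2$. The two sign changes form a symmetric pair $\{z(t),1-z(t)\}$ with $z(t)\in(0,1/2)$, and since $f_t(1/2)<0$ the sign pattern is forced to be $f_t\ge 0$ on $(0,z(t))$ and $f_t\le 0$ on $(z(t),1-z(t))$ — the opposite pattern would produce at least four sign changes. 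The key point is then to show that $z(t)$ is bounded below by a positive constant on every interval $[t_0+\sigma,t']$ with $\sigma>0$ and $t'<T_0$. Indeed, otherwise there is a sequence $s_j\to t^\dagger\in[t_0+\sigma,t']\subset(t_0,T_0)$ with $z(s_j)\to 0$; then for each fixed $x\in(0,1)$ one has $x\in(z(s_j),1-z(s_j))$ for $j$ large, whence $u_t(s_j,x)\le-\eps$, and passing to the limit (using continuity of $u_t$ on $(0,\infty)\times(0,1)$, together with $u_t(t^\dagger,1/2)\le-2\eta\le-\eps$) gives $u_t(t^\dagger,\cdot)\le-\eps$ on $(0,1)$, contradicting the assumption.

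Finally, for $\sigma,t'$ as above I would set $\rho:=\inf_{[t_0+\sigma,t']}z>0$, so that $u_t(s,x)\ge-\eps$ for $s\in[t_0+\sigma,t']$, $x\in(0,\rho)$, and hence
$$u(t',x)-u(t_0+\sigma,x)=\int_{t_0+\sigma}^{t'}u_t(s,x)\,ds\ge-\eps(t'-t_0-\sigma),\qquad 0<x<\rho.$$
Since $u(s,0)>0$ on $(T^*,T_0)$, Lemma~\ref{bdl} gives $u_x(s,\cdot)\to\infty$ at $x=0$ there, so by Lemma~\ref{basic-prop} one has the uniform bound $|u(s,x)-u(s,0)-U_*(x)|\le Kx^2$ on $(0,1/2]$ for all $s\in(T^*,T_0)$; letting $x\to0$ above yields $u(t',0)-u(t_0+\sigma,0)\ge-\eps(t'-t_0-\sigma)$. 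Passing to the limit $t'\to T_0^-$ (recalling $u(T_0,0)=0$) and then $\sigma\to0^+$ (using continuity of $t\mapsto u(t,0)$) gives $g(t_0)\le\eps(T_0-t_0)$, contradicting the choice of $\eps$. The main obstacle is exactly the boundedness-below of $z(t)$: one must transfer the sign information of $f_t$ near $x=0$ across a limit in $t$ although $u_t$ need not be continuous up to $x=0$, which is why the argument is arranged so that a hypothetical collapse $z(s_j)\to 0$ \emph{directly} produces the desired time $t_1$.
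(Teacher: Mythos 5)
Your proof is correct and follows the same overall strategy as the paper's: perturb $u_t$ by $\eps$, invoke Lemma~\ref{defz}(ii) to bound $Z(u_t(t)+\eps)$ by $2$, use symmetry and $u_t(\cdot,1/2)<0$ to pin down the sign pattern and define a threshold $z(t)$ below which $u_t+\eps\ge 0$, establish a positive lower bound for $z$ on compact time intervals, and then integrate $u_t$ in time on a thin strip near $x=0$, letting $x\to 0$ and $t\to T_0$ to obtain $u(t_0,0)\le\eps(T_0-t_0)$, which contradicts the choice of $\eps$. The one place where you genuinely diverge from the paper is in justifying $\inf z>0$: the paper defines the analogous threshold $\xi_\eps(t)$ explicitly, proves it is lower semicontinuous via the interior continuity of $u_t$, and concludes by compactness; you instead argue by contradiction, observing that if $z(s_j)\to 0$ along a subsequence $s_j\to t^\dagger$, then passing to the limit gives $u_t(t^\dagger,\cdot)\le-\eps$ on all of $(0,1)$, which already produces the sought time $t_1$. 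This is a slightly slicker way to get the same dichotomy. Two small remarks: the $x\to0$ passage only needs $u\in C([0,\infty)\times\overline\Omega)$, so invoking Lemmas~\ref{bdl} and \ref{basic-prop} there is heavier machinery than necessary; and your extra parameter $\sigma$ (and the double limit $t'\to T_0$, $\sigma\to0$) is slightly more bookkeeping than the paper's single $t\to T_0$, though both are correct.
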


\begin{proof} 
  Fix any $\tau, t_0$ such that $0<\tau<T^*<t_0<T_0$. 
By \eqref{utcenter}, we have  
\be\label{BoundSigma}
\sigma:=-\max_{t\in [\tau,T_0]} u_t(t,1/2)>0
\ee
and, by Lemma~\ref{defz}(ii),  there exists $\eps_0(\tau)\in (0,\sigma)$ such that
\be\label{prelimz0C2}
Z_\eps(t):=Z(u_t(t)+\eps)\le 2\quad\hbox{  for all $\eps\in (0,\eps_0)$ and all $t\ge \tau$.}
\ee

We claim that, for $\eps\in (0,\eps_0)$ to be determined, 
\be\label{ConclEpsB}
\hbox{there exists $t_1\in (t_0,T_0)$ such that $Z_\eps(t_1)=0$.}
\ee
Assume for contradiction that this is not the case. 
Then $Z_\eps(t)=2$ for all $t\in [t_0,T_0)$, due to \eqref{prelimz0C2}.  
Since $u_t(t,x)+\eps<0$ near $x=1/2$ by \rife{BoundSigma}, we may set 
$$
\xi_\eps(t)=\min\bigl\{x<1/2;\, u_t(t,x)+\eps\le 0\ \hbox{ on $[x,1/2]$} \bigr\}\in (0,1/2).
$$
We note that the function $\xi_\eps(t)$ is l.s.c. on $[t_0,T_0)$.
Indeed for every $t\in[t_0,T_0)$ and every $\eta>0$ there exists 
$x_\eta\in (\xi_\eps(t)-\eta,\xi_\eps(t))$ such that 
$u_t(t,x_\eta)+\eps>0$, hence $u_t(s,x_\eta)+\eps>0$ for all $s$ close enough to $t$,
due to $u_t\in C((0,\infty)\times(0,1))$, so that $\xi_\eps(s)\ge \xi_\eps(t)-\eta$.
It follows that, for any $t\in (t_0,T_0)$,
$$
x_0(t):=\inf_{s\in [t_0,t]}\xi_\eps(s)>0.
$$
Then,   fixing any $t\in (t_0,T_0)$ and $x\in (0,x_0(t))$,  we have $u_t(s,x)+\eps\ge 0$ for all $s\in [t_0,t]$ due to 
$Z_\eps(t)=2$. By integration, we obtain
$$
u(t_0,x)-u(t,x)=-\int_{t_0}^t u_t(s,x)\, ds\le \eps(t-t_0).
$$
Letting $x\to 0$, we get $u(t_0,0)-u(t,0)\le \eps(t-t_0)$, and then, letting $t\to T_0$, 
$$
0<u(t_0,0)\le \eps(T_0-t_0).
$$
Therefore, for any choice $\eps<\min(\eps_0,(T_0-t_0)^{-1}u(t_0,0))$,
property \rife{ConclEpsB} is necessarily true.
Since $u_t(t_1,x)+\eps<0$ near $x=1/2$, we conclude that $u_t(t_1,x)+\eps\leq 0$ in $(0,1)$.
\end{proof}

The second step is to propagate this uniform negativity.
This is done by the maximum principle applied to a suitable family of auxiliary functions
on subdomains (where the $u_x$ is bounded).

\begin{lem}\label{neg2} Under the assumptions of Proposition~\ref{recon}, 
there exist $\eta>0$ and $t_1\in (T^*,T_0)$ such that
\be\label{ConclEta}
u_t\le -\eta<0\quad\hbox{ in $[t_1,T_0)\times (0,1)$}.
\ee
\end{lem}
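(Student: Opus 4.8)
The plan is to initialize the uniform negativity of $u_t$ at a single time via Lemma~\ref{neg1}, and then to propagate it up to $T_0$ by a maximum principle applied to the linear equation satisfied by $u_t$. The delicate point is the absence of any control of $u_t$ on the boundary $x=0$, where the condition has been lost; I will bypass this by using that $u_x$ solves the \emph{same} linearized equation and blows up there, so that subtracting a small multiple of it turns $x=0$ into an outflow boundary.

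First, apply Lemma~\ref{neg1} with $t_0=T^*$: there exist $t_1\in(T^*,T_0)$ and $\eps>0$ with $u_t(t_1,\cdot)\le-\eps$ on $(0,1)$. Fix $t_0'\in(0,T^*)$, let $M=M(t_0')$ be the constant in \eqref{u_t-bounded} and $K=K(p,M)$ that of Lemma~\ref{basic-prop}, so $|u_t|\le M$ on $[t_1,T_0)\times(0,1)$. Since $u_t(t,\tfrac{1}{2})<0$ for every $t>0$ by \eqref{utcenter} and $u_t$ is continuous on the compact set $[t_1,T_0]\times\{\tfrac{1}{2}\}$, the number $\sigma:=-\max_{t\in[t_1,T_0]}u_t(t,\tfrac{1}{2})$ is positive. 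Set $\eta:=\min(\eps,\sigma)$. By the symmetry of $u(t,\cdot)$ it suffices to establish $u_t\le-\eta$ on $Q:=(t_1,T_0)\times(0,\tfrac{1}{2})$; the bound is already known at $t=t_1$ and at $x=\tfrac{1}{2}$ by the choices of $\eps$ and $\sigma$.

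On $Q$ the solution $u$ is smooth and $u_x\ge0$ (Lemma~\ref{basic-prop0}(ii)); differentiating the equation in $t$ and in $x$ shows that \emph{both} $u_t$ and $u_x$ solve $\varphi_t-\varphi_{xx}-b\,\varphi_x=0$ in $Q$, with $b:=p\,u_x^{p-1}\ge0$. Hence, for each $\lambda>0$, the auxiliary function $h_\lambda:=u_t-\lambda u_x$ solves the same equation and satisfies $h_\lambda\le M$ in $Q$. Moreover $u(t,0)>0$ for $t\in[t_1,T_0)$ (by definition of $T_0$), so Lemma~\ref{bdl} gives $u_x(t,x)\to\infty$ as $x\to0$, and then Lemma~\ref{basic-prop} yields $u_x(t,x)\ge U_*'(x)-Kx\ge\tfrac{1}{2}U_*'(x)$ for $x\in(0,x_*]$, with $x_*>0$ depending only on $p,M$; consequently $h_\lambda(t,x)\le M-\tfrac{\lambda}{2}U_*'(x)\to-\infty$ as $x\to0$, uniformly in $t\in[t_1,T_0)$. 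Now fix $\lambda>0$ and apply the classical parabolic maximum principle to $h_\lambda$ on the cylinders $(t_1,T_0)\times(\rho,\tfrac{1}{2})$, where $b$ is bounded: on $\{t=t_1\}$ one has $h_\lambda\le u_t(t_1,\cdot)\le-\eps$, on $\{x=\tfrac{1}{2}\}$ one has $h_\lambda=u_t(\cdot,\tfrac{1}{2})\le-\sigma$, and on $\{x=\rho\}$ one has $h_\lambda\le M-\tfrac{\lambda}{2}U_*'(\rho)\le-\eta$ once $\rho$ is small enough. Thus $h_\lambda\le-\eta$ on these cylinders for all small $\rho$, hence on all of $Q$, i.e.\ $u_t\le-\eta+\lambda u_x$ in $Q$ for every $\lambda>0$; letting $\lambda\to0^+$ at each fixed point gives $u_t\le-\eta$ in $Q$, which together with the symmetry and the bound at $x=\tfrac{1}{2}$ proves \eqref{ConclEta}.

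I expect the main obstacle to be exactly the behavior at the lost boundary $x=0$: there $u_t$ is a priori not even defined, so no boundary datum is available for a direct maximum principle on $(0,\tfrac{1}{2})$. The point of the family $\{h_\lambda\}_{\lambda>0}$ is that $u_x$ solves the same equation as $u_t$ and, by Lemmas~\ref{bdl} and \ref{basic-prop}, blows up at $x=0$ \emph{uniformly in time} like the singular profile $U_*'$; subtracting $\lambda u_x$ therefore forces $h_\lambda\to-\infty$ there, so the information genuinely propagates only from the slice $t=t_1$ and the central line $x=\tfrac{1}{2}$. The routine checks to carry out carefully are: that $u_t$ and $u_x$ are continuous up to $t=T_0$ on compact subsets of $(0,1)$ (which holds by interior smoothness since $T_0<\infty$), so that the maximum principle applies on the truncated cylinders; and the uniformity in $t$ of the lower bound on $u_x$ near $x=0$, which is precisely what Lemma~\ref{basic-prop} supplies.
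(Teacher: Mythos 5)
Your proof is correct and is essentially the same as the paper's: both initialize the negativity via Lemma~\ref{neg1} and \eqref{utcenter}, and then propagate it by a maximum principle applied to $u_t$ minus a small multiple of $u_x$ (plus a constant), exploiting that $u_x$ solves the same linearized equation as $u_t$ and blows up like $U_*'$ near $x=0$ uniformly in $t$ (via Lemmas~\ref{bdl} and \ref{basic-prop}), which compensates for the missing boundary datum on $u_t$ at $x=0$. The only organizational difference is that the paper takes $J=u_t-\eta\bigl[La^\alpha(u_x+K)-1\bigr]$ on $(t_1,T_0)\times(a,1/2)$ with the coefficient of $u_x$ tuned to the cutoff $a$ and passes to the limit $a\to0^+$ in a single step, whereas you keep the coefficient $\lambda$ fixed, first exhaust the cylinder by sending the cutoff $\rho\to0^+$, and then let $\lambda\to0^+$.
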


\begin{proof}
Fix $a\in (0,1/2)$ and consider the auxiliary function
\be\label{defJa}
J(t,x)=u_t-\eta\bigl[La^\alpha (u_x+K)-1\bigr],\quad (t,x)\in Q:=[t_1,T_0)\times [a,1/2],
\ee
where $\eta,   K,  L>0$ and $t_1\in (T^*,T_0)$ are to be chosen.
An immediate computation shows that
$$
{\mathcal L}J:=J_t-J_{xx}-p(u_x)^{p-1}J_x=0\quad\hbox{ in $Q$}.
$$
  Since $u(t,0)>0$ for all $t\in (T^*,T_0)$ (by Proposition~\ref{nonmin} and the definition of $T_0$)
it follows from Lemmas~\ref{bdl} and \ref{basic-prop} that
$u_x(t,a)+K\ge d_pa^{-\alpha}$, where $d_p=(p-1)^{-1/(p-1)}$ and $K$ is independent of $t$.
We choose this $K$ in \eqref{defJa}. 
Since also $u_t\le M$ by \rife{u_t-bounded}, we have
$$
J(t,a)\le M-\eta\bigl(Ld_p-1\bigr).
$$
We also have
$$
J(t,1/2)\le 
-\sigma+\eta,
$$
by \rife{BoundSigma}.
Moreover, with $t_1,\eps$ provided by Lemma~\ref{neg1} (applied with $t_0=T^*$), we have
$$
J(t_1,x)\le -\eps+\eta.
$$
We then make the following choice:
$$
\eta=\min(\sigma, \eps),\ \quad L=d_p^{-1}\bigl(1+M\eta^{-1}\bigr)
$$
(which is independent of $a$).
In view of the above, it then follows from the maximum principle that
$J(t,x)\le 0$ in $(t_1,T_0)\times (a,1/2)$. Consequently, we have
$$
u_t(t,x)\le \eta\bigl[La^\alpha (u_x(t,x)+K)-1\bigr],\quad t_1\le t<T_0,\ 0<a<x\le 1/2.
$$
For any given $(t,x)\in [t_1,T_0)\times (0,1/2)$, we may let $a\to 0^+$ in the
last inequality and the conclusion follows in $(0,1/2]$, 
hence in $(0,1)$ by symmetry.
\end{proof}

We now have all the ingredients for the proof of Proposition~\ref{recon}. 
\medskip

\begin{proof}[Proof of Proposition~\ref{recon}.]
Since  $u\in C^{1,2}((0,\infty)\times(-1,1))$, it follows from \rife{ConclEta}
that \rife{HypLemmeODE} is true at $T=T_0$ with $\ell=0$
and $b=\eta>0$. 
By Lemma~\ref{separation}(ii), we deduce that \rife{BeqabAAa} is true with $m=2$.
Lemma~\ref{barrier} then guarantees that $T^r=T_0$ and 
that the upper estimate in \rife{BeqConclA} is satisfied.
As for the lower estimate in \rife{BeqConclA}, it  was proved in Theorem~\ref{minimal_rate2}.
\smallskip

On the other hand, \eqref{estB2r} and the lower part of estimate \rife{BeqConclAa} are direct consequences of Lemma~\ref{neg2}, 
whereas  the {upper} part is just due to the bound $|u_t|\le M$.
\end{proof}

\subsection{On the life of blow-up solutions between $T^*$ and $T^r$.}
\label{Sec-nonmin3}

This subsection investigates what happens to the nonminimal blow-up solution after the loss of boundary conditions 
and before they are recovered, namely in the time interval between $T^*$ and $T^r$. 
\smallskip

  We shall first prove Theorem~\ref{proppersist}, which is
valid for any LBC solution in one space dimension.
Assertion (i) is a direct consequence of results from Section~\ref{prelim1d}.

\begin{proof}[Proof of Theorem~\ref{proppersist}(i)]
For all $t\in (T_1,T_2)$, since $u(t,0)>0$,
we know from Lemma~\ref{bdl} that $\lim_{x \to 0_+} u_x(t,x) =\infty$
 and estimates \rife{shiftedcopy0}-\rife{shiftedcopy} follow from Lemma~\ref{basic-prop}.
 The estimates at $t=T_1$ and $T_2$ follow by continuity since $u\in C^{1,2}((0,\infty)\times (0,1))\cap C([0,\infty)\times [0,1])$.
\end{proof}

In view of the proof of assertion (ii), which is a bit delicate,
we establish the following bounds on $u_{xt}$ and $u_{tt}$ 
which are the key to the boundary regularity of $u_t$.

\begin{lem} \label{bounduxt}
Under the assumptions of Theorem~\ref{proppersist}, for each $\eta>0$, we have 
\be\label{bounduxt1}
\sup_{(T_1+\eta,T_2)\times (0,1/2)} |u_{xt}|<\infty. 
\ee
\end{lem}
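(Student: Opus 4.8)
The plan is to bound $u_{xt}$ by combining the a priori information already available on the solution with a variation-of-constants estimate for $w:=u_t$, following the scheme used in Lemma~\ref{unbddT} and Theorem~\ref{minimal_rateG}, but now taking advantage of the fact that on the interval $(T_1,T_2)$ the solution behaves near $x=0$ like a shifted copy of $U_*$. Fix $\eta>0$. First I would record the uniform gradient bound coming from Theorem~\ref{proppersist}(i): by \rife{shiftedcopy}, on $[T_1,T_2]\times(0,1/2]$ we have $u_x(t,x)\le U_*'(x)+Kx$, and by symmetry together with Lemma~\ref{basic-prop0} (the concavity of $x\mapsto u-\frac M2 x^2$) the same type of bound holds on the whole of $(0,1)$. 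Hence, away from the two boundary corners, $\|u_x(t)\|_\infty$ is controlled on compact subintervals of time; more precisely, for any $\delta\in(0,1/2)$ fixed, $\sup_{[T_1,T_2]\times[\delta,1-\delta]}|u_x|<\infty$ and, since $u$ is smooth in the interior, $\sup_{[T_1+\eta,T_2]\times[\delta,1-\delta]}|u_{xt}|<\infty$ as well by interior parabolic estimates. So the only issue is to bound $u_{xt}$ in a neighborhood of the corners $x=0$ and $x=1$, say on $(T_1+\eta,T_2)\times(0,\delta)$.

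Near $x=0$, I would exploit that $u_t$ satisfies the linear equation $w_t-w_{xx}=p|u_x|^{p-2}u_x\,w_x=b(t,x)w_x$ and that $u_t$ is bounded: $\|u_t\|_\infty\le A$ on $[t_0,\infty)$ with $t_0<T_1$, by \rife{u_t-bounded}. Apply the variation-of-constants formula to $w$ starting from a time $s\in(T_1,T_1+\eta)$: for $\tau\in(0,t-s)$,
$$
w(s+\tau)=e^{\tau\Delta}w(s)+p\int_0^\tau e^{(\tau-\sigma)\Delta}\bigl(|\nabla u|^{p-1}\nabla u\cdot\nabla w\bigr)(s+\sigma)\,d\sigma,
$$
and set $K(s,t)=\max_{\sigma\in[0,t-s]}\sigma^{1/2}\|\nabla w(s+\sigma)\|_\infty$. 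Using $\|w(s)\|_\infty\le A$ for the first term and the bound $\|\nabla u(t)\|_\infty\le M_1$ on $[T_1,T_2]$ for the nonlinear term (here $M_1$ depends on $\eta$ through the distance to the corners, but is finite because $\|u_x(t)\|_\infty$ is bounded on $[T_1,T_2-\cdot]$ away from the corners; more carefully, since $U_*'\in L^{p-1}(0,1/2)$ when $(p-1)\cdot\frac1{p-1}=1$, the integral $\int_0^{1/2}|u_x|^{p-1}\,dx$ is finite and can be used in place of the sup through a weighted heat-kernel estimate), one gets $K(s,t)\le CA+C(t-s)^{1/2}M_1^{p-1}K(s,t)$, hence $K(s,t)\le 2CA$ provided $t-s$ is small enough, uniformly. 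Taking $\sigma=t-s\sim\eta$ yields $\|\nabla w(t)\|_\infty\le CA\eta^{-1/2}$ for all $t\in(T_1+\eta,T_2)$, which is the desired bound \rife{bounduxt1}.

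The main obstacle is the integrability issue in the nonlinear term: the coefficient $|u_x|^{p-1}$ is \emph{not} bounded near the corner (it behaves like $U_*'^{p-1}(x)\sim((p-1)x)^{-1}$, which is exactly borderline non-integrable), so a naive sup bound is not available and a plain $L^1$ bound is also not quite enough. The fix I would use is that the \emph{bad part} of the gradient is one-sided and precisely cancelled by the structure: on $(0,1/2)$ we have the sharp upper bound $u_x(t,x)\le U_*'(x)+Kx$ from \rife{shiftedcopy}, so $|u_x|^{p-1}\le (U_*'(x)+Kx)^{p-1}\le C(x^{-1}+1)$ which is still only borderline. The clean way around this is to not estimate $\|\nabla w\|_\infty$ globally but to localize: introduce a cutoff $\chi(x)$ vanishing near $x=0$ and $x=1$, apply the variation-of-constants / energy argument to $\chi w$ (or equivalently estimate $w$ on a slightly larger subinterval bounded away from the corners and then bootstrap toward the corner using the Hopf-type one-sided control $u_t\le M$ together with $u_t\ge$ (something, from \rife{estB2} near $T^*$ or $\ge -M$ in general)); since the statement only claims boundedness on $(T_1+\eta,T_2)\times(0,1/2)$, and since $u_t$ itself is bounded, it suffices to show $u_{xt}$ does not blow up, which follows from the local smoothing estimate applied on each interval $[\delta/2,1-\delta/2]$ combined with the fact that, by \rife{shiftedcopy} and its $t$-differentiated analogue (to be obtained as part of Theorem~\ref{proppersist}(ii)), the difference $u-U_*$ is smooth up to $x=0$. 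In fact, the cleanest route, consistent with the paper's later claim that $u-U_*$ is $C^1$ up to the corner, is: write $V:=u-U_*$, note $V$ solves $V_t-V_{xx}=|V_x+U_*'|^p-|U_*'|^p=p|U_*'|^{p-2}U_*'\,V_x+O(V_x^2(|U_*'|^{p-2}+|V_x|^{p-2}))$; near $x=0$, $V_x$ is \emph{bounded} by \rife{shiftedcopy}, while the singular coefficient $|U_*'|^{p-2}U_*'(x)\sim x^{-1}$ is integrable against the Dirichlet heat kernel in one dimension after one spatial integration. Applying the variation-of-constants formula to $V_t$ with this integrable-singularity coefficient then gives $\sup_{(T_1+\eta,T_2)\times(0,1/2)}|V_{xt}|=\sup|u_{xt}|<\infty$, completing the proof.
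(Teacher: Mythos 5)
Your proposal does not reach the target, and it is built around a different (and, as written, non-closing) strategy than the paper's. Two concrete problems.

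\textbf{The variation-of-constants route does not close.} You correctly identify the obstacle: the coefficient $b(t,x)=p|u_x|^{p-1}\sim U_*'^{p-1}(x)=((p-1)x)^{-1}$ is \emph{not} integrable near $x=0$ (it is exactly borderline, $\int_0 x^{-1}dx=\infty$), so the standard $L^\infty$ Gronwall-type closure from Theorem~\ref{minimal_rateG} and Lemma~\ref{unbddT} -- which requires $\|\nabla u(t)\|_\infty<\infty$ -- is unavailable; on $(T_1,T_2)$ we precisely have $\|u_x(t)\|_\infty=\infty$. Your proposed fixes do not repair this. The cutoff localization gives control \emph{away} from the corner, which is not in question. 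Writing $V=u-U_*$ does not help either: since $U_*$ is time-independent, $V_t=u_t$ satisfies the very same equation $w_t-w_{xx}=p|u_x|^{p-2}u_x\,w_x$ with the very same non-integrable coefficient, so nothing changes. The claim that the coefficient "is integrable against the Dirichlet heat kernel after one spatial integration" is not made precise, and it is not clear how it would yield a pointwise bound on $w_x=u_{xt}$ uniformly up to $x=0$.

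\textbf{The fallback is circular.} Your last alternative appeals to the $t$-differentiated analogue of \rife{shiftedcopy} "to be obtained as part of Theorem~\ref{proppersist}(ii)", i.e.\ the $C^1$-up-to-the-corner regularity of $u-U_*$. But in the paper that regularity is a \emph{consequence} of the present Lemma~\ref{bounduxt} (together with Lemma~\ref{boundutt}); you cannot invoke it here.

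\textbf{What the paper actually does.} The paper's proof works with the finite differences in time $w=\tau_h u_x$ rather than with $u_t$, and the key observation is a sign structure you do not exploit: differentiating $|u_x|^p$ once more produces the factor $p(p-1)|u_x|^{p-2}u_{xx}$, and near the corner \rife{shiftedcopy} gives $u_x\ge c x^{-\alpha}$ together with $u_{xx}=-|u_x|^p+u_t\le -cx^{-\alpha-1}$. Hence $w$ satisfies a parabolic inequality
\[
w_t-w_{xx}+A(t,x)\,w = B(t,x)\,w_x,\qquad A(t,x)\ge c(p)\,x^{-2},
\]
i.e.\ a \emph{strong absorption} term with a quantitatively singular coefficient. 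After an interpolation step showing $|w|\le C x^{-3-\alpha}$ away from the corner, this absorption forces a universal, $h$-independent bound on $w$ by comparison with the spatially constant ODE supersolution $\overline w(t)=[C_1\gamma(t-t_0)]^{-1/\gamma}$, using also that $|w(t,x)|\le 2Kh^{-1}x\to 0$ as $x\to 0$ from \rife{shiftedcopy}. Letting $h\to0$ then yields \rife{bounduxt1}. This absorption mechanism, not a variation-of-constants estimate, is the missing idea in your proposal.
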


\begin{lem} \label{boundutt}
Let $\phi\in X_1$ with $T^*(\phi)<\infty$. Then, for all $t_0>0$, we have
$$\inf_{(t_0,\infty)\times (0,1)} u_{tt}>-\infty.$$ 
\end{lem}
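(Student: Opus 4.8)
The plan is to obtain the lower bound on $u_{tt}$ from a weak minimum principle applied to the second time-derivative of the approximating solutions $u_k$ of \rife{app-1d}, exploiting two structural facts: the \emph{convexity} of $F_k$, and the \emph{genuine} homogeneous lateral boundary condition $u_k(\cdot,0)\equiv u_k(\cdot,1)\equiv 0$ (which fails for $u$ itself after LBC, and is precisely why one passes through the $u_k$). Fix $t_0>0$ and set $s:=\tfrac12\min(t_0,T^*)\in(0,T^*)$; since $(t_0,\infty)\times(0,1)\subset(s,\infty)\times(0,1)$, it suffices to bound $u_{tt}$ from below on $(s,\infty)\times(0,1)$.

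First I would write down the equation for $w_k:=u_{k,tt}$. Differentiating \rife{app-1d} twice in time (legitimate for $t>0$ by standard parabolic regularity, since $F_k\in C^2$) one finds, with $v_k:=u_{k,t}$,
\be\label{planutt1}
w_{k,t}-w_{k,xx}-F_k'(u_{k,x})\,w_{k,x}=F_k''(u_{k,x})\,(u_{k,xt})^2\quad\hbox{in }(0,\infty)\times(0,1),
\ee
whose right-hand side is $\ge0$ because $F_k$ is convex. Moreover $w_k$ vanishes on the lateral boundary (as $t\mapsto u_k(t,0)$ and $t\mapsto u_k(t,1)$ are identically zero) and is continuous up to it. To initialize, note that since $s<T^*$, property \rife{eqabA0} gives $u_k\equiv u$ on $[0,s']\times[0,1]$ for some $s'\in(s,T^*)$ and all large $k$, so $w_k(s,\cdot)=u_{tt}(s,\cdot)$; and $u_{tt}(s,\cdot)$ is a fixed function, continuous on $[0,1]$ (for $t<T^*$ the problem is uniformly parabolic up to the lateral boundary, with a $C^2$ nonlinearity and zero boundary data, so $u$ is $C^{2,1}$ there), hence bounded below by some $-C_1$ with $C_1=C_1(t_0)\ge0$.

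Then, for each $T>s$, $w_k$ is a supersolution of a uniformly parabolic linear equation with bounded coefficients on $[s,T]\times[0,1]$, so the weak minimum principle — argued by perturbing $w_k$ with $-\eps t$ exactly as in the proof of Proposition~\ref{compP} — gives $w_k\ge\min\bigl(0,\,\min_{[0,1]}w_k(s,\cdot)\bigr)\ge-C_1$ on $[s,T]\times[0,1]$. Letting $T\to\infty$, we get $u_{k,tt}\ge-C_1$ on $[s,\infty)\times[0,1]$ for all large $k$, uniformly in $k$. Finally, by \rife{approxpbm2} together with standard interior parabolic estimates applied to the equation $v_{k,t}-v_{k,xx}=F_k'(u_{k,x})v_{k,x}$ satisfied by $v_k=u_{k,t}$, we have $u_{k,tt}\to u_{tt}$ locally uniformly in $(0,\infty)\times(0,1)$; passing to the limit yields $u_{tt}\ge-C_1$ on $(s,\infty)\times(0,1)$, hence on $(t_0,\infty)\times(0,1)$, as claimed.

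The one point requiring a little care — rather than a genuine obstacle — is the boundary regularity of $w_k=u_{k,tt}$ up to the lateral boundary, and the continuity of $u_{tt}(s,\cdot)$ on $[0,1]$: both follow from Schauder estimates (using $F_k\in C^2$, resp. $|\cdot|^p\in C^2$ for $p>2$, and the identically zero lateral data), but one should handle the corners $(s,0)$ and $(s,1)$ by first working on $(s',T)\times(0,1)$ with $s<s'<T$ and then letting $s'\downarrow s$, or equivalently by noting that $w_k$ is continuous on $(0,\infty)\times\{0,1\}$ and on $\{s\}\times[0,1]$ separately. The substantive content of the proof is the sign $F_k''(u_{k,x})(u_{k,xt})^2\ge0$ in \rife{planutt1} and the fact that the $u_k$ satisfy exactly zero boundary conditions, which together make the minimum principle deliver a lower bound uniform in $k$.
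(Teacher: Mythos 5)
Your proof is correct and follows essentially the same route as the paper's: differentiate the approximating problem \rife{app-1d} twice in time, use the convexity of $F_k$ to recognize $u_{k,tt}$ as a supersolution of a linear parabolic equation, exploit the exact homogeneous lateral boundary condition of $u_k$ together with \rife{eqabA0} at the initial time $t_0<T^*$, apply the minimum principle uniformly in $k$, and pass to the limit. The paper's argument is stated more tersely but contains precisely the same ingredients; your additional remarks on boundary regularity and on reducing to $t_0<T^*$ are implicit in the paper's phrase ``Assume $t_0\in(0,T^*)$ without loss of generality.''
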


The key idea for the proof of Lemma~\ref{bounduxt} is to consider the finite differences of $u_x$ in time in the interval $(T_1,T_2)$ and to:

-{\hskip 3pt}observe that they vanish at $x=0$ due to the proximity with the reference profile $U_*'$ (cf.~\rife{shiftedcopy});

-{\hskip 3pt}show that they satisfy a parabolic inequality with {\it strong nonlinear absorption},
which forces their uniform boundedess for any $t>T_1$.

\begin{proof}[Proof of Lemma~\ref{bounduxt}]
{\bf Step 1.} {\it Parabolic equation for finite differences of $u_x$ in time.}
By \rife{shiftedcopy}, 
there exists $a\in (0,1/2)$ such that 
\be\label{bounduxt1aux1}
u_x(t,x)\ge cx^{-\alpha}\quad\hbox{ in $[T_1,T_2]\times (0,a)$,} 
\ee
and
\be\label{bounduxt1aux2}
u_{xx}=-(u_x)^p+u_t\le -cx^{-\alpha-1}\quad\hbox{ in $[T_1,T_2]\times (0,a]$,}
\ee
with $\alpha=1/(p-1)$ and $c=c(p)>0$.

Next fix $h>0$ and let $w=\tau_hu_x$
where $\tau_h$ denotes the finite difference (in time) operator i.e.,
$$[\tau_h\phi](t,x):=h^{-1}(\phi(t+h,x)-\phi(t,x)).$$
In $Q:=[T_1,T_2-h]\times (0,a]$, the function $w$ satisfies 
$$
\begin{aligned}
w_t-w_{xx}
&=p\tau_h[(u_x)^{p-1}u_{xx}]\\
&=p(u_x)^{p-1}(t+h,x)\,\tau_h u_{xx}+pu_{xx}(t,x)\,\tau_h [(u_x)^{p-1}].
\end{aligned}
$$
By the mean value theorem, there exists $\theta=\theta(t,x,h)\in (0,1)$ such that
$$
\tau_h [(u_x)^{p-1}]=(u_x)^{p-1}(t+h,x)-(u_x)^{p-1}(t,x)=(p-1)\bigl[\theta u_x(t+h,x)+(1-\theta)u_x(t,x)\bigr]^{p-2}w(t,x).
$$
It then follows from \rife{bounduxt1aux1}, \rife{bounduxt1aux2} that $w$ satisfies in $Q$ an equation of the form
\be\label{bounduxt1aux2a}
w_t-w_{xx}=-A(t,x)w+B(t,x)w_x,
\ee
with
\be\label{bounduxt1aux2b}
A(t,x)\ge c(p)x^{-2}.
\ee

{\bf Step 2.} {\it Control of $u_{xt}$ away from the boundary.}
We next claim that 
\be\label{bounduxt1claim}
|u_{xt}|\le Cx^{-3-\alpha}\quad\hbox{ in $[T_1,T_2]\times (0,1/2]$.}
\ee

Fix some $t_1\in (0,T^*)$ and, for any $\eps\in (0,1/2)$, set $Q_\eps=(t_1,T_2]\times (\eps,1/2)$.
Denote $H=\partial_t-\partial_x^2$. 
By \eqref{SGBUprofileUpperEst2}, \eqref{SGBUprofileUpperEst3}, \rife{vhj1} and \rife{u_t-bounded},  
we have $|u_x|\le C\eps^{-\alpha}$ and $|u_{xx}|\le C\eps^{-\alpha-1}$ in $Q_\eps$, hence 
$|H(u_x)|=p|u_{xx}||u_x|^{p-1}\le C\eps^{-2-\alpha}$ in $Q_\eps$.
By standard 
parabolic estimates, for each $q\in (1,\infty)$, it follows that 
\be\label{bounduxt1aux3}
\|u_{xt}\|_{L^q(Q_\eps)}\le C_q\eps^{-2-\alpha}.
\ee
(Here and below, $C_q$ denotes a generic positive constant depending on $q$ but independent of $\eps$).
Next, since $|H(u_t)|=p|u_{xt}||u_x|^{p-1}$, we deduce from 
\rife{bounduxt1aux3}  that
$\|H(u_t)\|_{L^q(Q_\eps)}\le C_q\eps^{-3-\alpha}$. Since $|u_t|\le M$ in $Q_\eps$, parabolic estimates now give
$$\|u_{tt}\|_{L^q(Q_\eps)}+\|u_{txx}\|_{L^q(Q_\eps)}\le C_q\eps^{-3-\alpha}.$$
Taking $q$ large enough and using standard interpolation properties, we deduce that
$\|u_{tx}\|_{L^\infty(Q_\eps)}\le C_q\eps^{-3-\alpha}$, hence the claim.

\smallskip

{\bf Step 3.} {\it Comparison argument.}
By \rife{bounduxt1claim} and the mean value theorem, we have
\be\label{bounduxt1aux4}
|w(t,x)|\le C_0x^{-3-\alpha}\quad\hbox{ in $[T_1,T_2-h]\times (0,1/2]$,}
\ee
with $C_0>0$ independent of $h$.  
It then follows from \rife{bounduxt1aux2a}, \rife{bounduxt1aux2b} that
$w$ satisfies the following parabolic inequality with {\it strong nonlinear absorption}:
$$w_t-w_{xx}+C_1|w|^\gamma w\le B(t,x)w_x\quad\hbox{ in $[T_1,T_2-h]\times (0,1/2]$,}$$
where $\gamma=2/(3+\alpha)>0$ and $C_1>0$ is independent of $h$.  

To take care of the boundary conditions at $x=0$, we write
$$|(u_x)(t+h,x)-u_x(t,x)|\le |(u_x)(t+h,x)-U_*'(x)|+|(u_x)(t,x)-U_*'(x)|$$
and apply \rife{shiftedcopy} to obtain
\be\label{bounduxt1aux6}
|w(t,x)|\le 2Kh^{-1}x\quad\hbox{ in $[T_1,T_2-h]\times (0,1/2]$.}
\ee
Therefore $w$ extends to a continuous function $w\in C([T_1,T_2-h]\times [0,1/2])$ with $w(t,0)=0$.

Now, for any fixed $t_0\in [T_1,T_2-h)$, the function
$\overline w(t,x)=\overline w(t):=[C_1\gamma(t-t_0)]^{-1/\gamma}$ satisfies
$$\overline w_t- \overline w_{xx} +C_1 \overline w^{1+\gamma}\ge  B(t,x)\overline w_x
\quad\hbox{ in $(t_0,\infty)\times (0,1/2]$.}
$$
{If $t-t_0<\de$, then $\overline w\geq (C_1\gamma \de)^{-1/\gamma}$; hence, setting $\delta=(C_0^{-\gamma}/C_1\gamma)a^{(3+\alpha)\gamma}$} and using \rife{bounduxt1aux4}, we get
$$\overline w(t,a)\ge C_0a^{-3-\alpha}\ge w(t,a)\quad\hbox{ for all $t\in (t_0,\min(T_2-h,t_0+\delta))$.}$$
Since also $|w|\le 2Kh^{-1}$ in $[T_1,T_2-h]\times [0,1/2]$, owing to \rife{bounduxt1aux6}, whereas $\overline w(t)\to \infty$ as $t\to t_0$, we may
apply the comparison principle with $\pm\overline w$, and we obtain
$$|w(t,x)|\le [C_1\gamma(t-t_0)]^{-1/\gamma}\quad\hbox{ in $(t_0,\min(T_2-h,t_0+\delta))\times (0,a]$.}$$ 
Letting $h\to 0_+$, it follows that, for any $t_0\in [T_1,T_2)$, we have
$$|u_{xt}|\le [C_1\gamma(t-t_0)]^{-1/\gamma}\quad\hbox{ in $(t_0,\min(T_2,t_0+\delta))\times (0,a]$,}$$ 
hence in $(t_0,\min(T_2,t_0+\delta)]\times (0,1/2]$ by interior regularity of $u$.
This immediately  yields the desired conclusion. 
\end{proof}

\begin{proof}[Proof of Lemma~\ref{boundutt}]
  Assume $t_0\in (0,T^*)$ without loss of generality. 
We consider the approximating sequence of solutions $u_k$ of \rife{app-1d}, with   $F_k$ given by \rife{eqaaa}.  
The function $w:=u_{k,tt}$ solves  the equation  
$$
w_t - w_{xx}=F_k'(u_{k,x})w_x+F_k''(u_{k,x})(u_{k,xt})^2\ge F_k'(u_{k,x})w_x\,.
$$
Since $u_{k,tt}=0$ on the boundary and, by \rife{eqabA0}, $u_{k,tt}(t_0,\cdot)=u_{tt}(t_0,\cdot)$ for all $k$ sufficiently large,  
we have $u_{k,tt}\ge -C(t_0)$ for all $t>t_0$, 
and the conclusion follows by passing to the limit.
\end{proof}

\begin{proof}[Proof of Theorem~\ref{proppersist}(ii)]
We first claim that 
\be\label{defphiut}
\phi(t):=\lim_{x\to 0_+} u_t(t,x)
\hbox{ exists for all $t\in (T_1,T_2]$}
\ee
and
\be\label{defphiut2}
u_t(t,0) \hbox{ exists, with $u_t(t,0)=\phi(t)$, for all $t\in (T_1,T_2]$}.
\ee
Let $t\in (T_1,T_2]$. Writing
$$u_t(t,x)=u_t(t,1/2)-\int_x^{1/2} u_{tx}(t,y)\,dy, \quad 0<x<1/2,$$
and using the bound \rife{bounduxt1}, we obtain \rife{defphiut}, {with 
$$
\phi(t)= u_t(t,1/2)-\int_0^{1/2} u_{tx}(t,y)\,dy\,.
$$
Moreover, since $u$ is smooth in $(0,1)$ and $|u_{tx}|$ is dominated, the right-hand side is a continuous function of $t$, so $\phi$ is continuous and actually satisfies for all $x\in (0,1/2)$:}
\be\label{defphiut3}
u_t(t,x)=\phi(t)+\int_0^x u_{tx}(t,y)\,dy,\quad 0<x<1/2.
\ee
Therefore, for all $t,s\in (T_1,T_2]$, by\rife{u_t-bounded}, \rife{defphiut} and dominated convergence, we have
$$
u(s,0)-u(t,0)=\lim_{x\to 0} \bigl(u(s,x)-u(t,x)\bigr)=\lim_{x\to 0} \int_t^s u_t(\sigma,x)\,d\sigma=\int_t^s \phi(\sigma)\,d\sigma,$$
hence \rife{defphiut2} {follows from the continuity of $\phi$}.

\smallskip
We next claim that 
\be\label{defphiutclaim}
\quad\hbox{the restriction of $u_t$ to $(T_1,T_2]\times[0,1/2]$ is continuous.}
\ee
Let $\eps>0$ and $t\in [T_1+\eps,T_2]$.
For any $y\in (0,1/2)$, $s\in [T_1+\eps,T_2]$ and $x\in (0,y)$, by \rife{defphiut2}, \rife{defphiut3} and \rife{bounduxt1}, we have
$$
\begin{aligned}
|u_t(s,x)-u_t(t,0)| 
&\le |u_t(s,x)-u_t(s,y)|+|u_t(s,y)-u_t(t,y))|+|u_t(t,y)-u_t(t,0)| \\
&\le \int_x^y |u_{tx}(s,\xi)|\,d\xi +|u_t(s,y)-u_t(t,y)| + \int_0^y |u_{tx}(t,\xi)|\,d\xi \\
&\le |u_t(s,y)-u_t(t,y)|+C(\eps)y.
\end{aligned}
$$
Since $u_t\in C((0,\infty)\times (0,1/2])$, it follows that
$$\limsup_{(s,x)\to (t,0)}|u_t(s,x)-u_t(t,0)|  \le C(\eps)y.$$
The claim \rife{defphiutclaim} follows by letting $y\to 0$.
\smallskip

We then show that:
\be\label{defphiutclaim2a}
\quad\hbox{the restriction of the function $t\mapsto u(t,0)$ to $[T_1,T_2]$ is of class $C^1$.}
\ee
In view of \rife{defphiut2} and \rife{defphiutclaim}, it suffices to show that
\be\label{defphiutclaim2}
\lim_{t\to {T_1}_+} \phi(t) \hbox{ exists.}
\ee
Fixing any $t_0<T^*$, by Lemma~\ref{boundutt} we may choose $C>0$ such that, 
for all $x\in (0,1)$, the function $t\to u_t(t,x)+Ct$ is nondecreasing with respect to $t\ge t_0$.
Therefore, letting $x\to 0$ and using \rife{defphiut}, we infer that $\phi(t)+Ct$ is nondecreasing with respect to $t\in (T_1,T_2]$.
Since it is bounded as a consequence of \rife{u_t-bounded} and \rife{defphiut}, it has a finite limit as $t\to T_{1,+}$, 
hence \rife{defphiutclaim2} and \rife{defphiutclaim2a}.
\smallskip

Let us finally show that:
\be\label{defphiutclaim4}
\quad\hbox{the restriction $V$ of $u-U_*$ to $Q:=[T_1,T_2]\times [0,1/2]$ is differentiable.}
\ee
It suffices to verify the differentiability at points $(t,0)$ with $t\in[T_1,T_2]$.
Fix $t\in[T_1,T_2]$ and take any $h$ such that $t+h \in [T_1,T_2]$ and $x\in (0,1/2]$.
Since $V$ is continuous in $Q$ and recalling \rife{defphiut2}, we deduce from the mean value theorem that
$$
\begin{aligned}
V(t+h,x)-V(t,0)
&=V(t+h,x)-V(t+h,0)+V(t+h,0)-V(t,0)\\
&=V(t+h,x)-V(t+h,0)+u(t+h,0)-u(t,0)\\
&=xV_x(t+h,\theta x)+u(t+h,0)-u(t,0),
\end{aligned}
$$
for some $\theta\in (0,1)$ depending on $t, h, x$.
Since, by \rife{shiftedcopy},
\be\label{boundVx}
|V_x|\le Kx\quad\hbox{ in $[T_1,T_2]\times (0,1/2]$}
\ee
and recalling \rife{defphiutclaim2a}, this guarantees \rife{defphiutclaim4}.
Moreover, we get
\be\label{defphiutclaim5}
\quad\hbox{$V_t(t,0)=u_t(t,0)$ for all $t\in (T_1,T_2]$ and $V_x(t,0)=0$ for all $t\in [T_1,T_2]$.}
\ee

\smallskip
In view of \rife{boundVx} and \rife{defphiutclaim5}, the $C^1$ regularity of $V$ 
is now  a consequence of \rife{defphiutclaim4} and \rife{defphiutclaim}.
\end{proof}

The last result of this section, which will enable us to complete the proof of Theorem~\ref{nonmin0},
establishes the two-piece monotonicity of $u$ in the time interval $[T^*,T^r]$.
The monotonicity in the weak sense is proved by using zero-number for time translates of the solution.
The possibility of a plateau is then ruled out by using the zero-number of $u_t$, Hopf's lemma
and the regularizing barriers from the proof of Lemma~\ref{barrier}.

\begin{prop} \label{twopiece}
Under the assumptions of Proposition~\ref{recon}, there exists $T_m\in (T^*,T^r)$
such that $u(t,0)$ is    increasing  on $[T^*,T_m]$ and    decreasing  on $[T_m,T^r]$,
with moreover 
\be\label{utneg3}
u_t\le 0\quad\hbox{ in $[T_m,\infty)\times (0,1)$.}
\ee
\end{prop}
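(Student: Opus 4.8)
The plan is to combine the zero-number information for time-translates of $u$ (Lemma~\ref{defz}(iii)), the sign structure of $u_t$ near $T^*$ and near $T^r$ (Propositions~\ref{nonmin} and \ref{recon}), and the boundary regularity of $t\mapsto u(t,0)$ (Theorem~\ref{proppersist}(ii)) to show that $u(\cdot,0)$ is ``at most two-piece monotone''. First I would observe that, by \eqref{estB2}, we have $u_t\ge c>0$ in $(T^*,T^*+\delta)\times(0,a)$, so in particular $u(t,0)$ is increasing just after $T^*$; and by \eqref{estB2r}, $u_t\le -c<0$ in $[t_1,T^r)\times(0,1)$, so $u(t,0)$ is decreasing just before $T^r$. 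Next, fix $t_0\in(0,T^*)$ and let $\tau_0(t_0)>0$ be given by Lemma~\ref{defz}(iii), so that $Z(u(t+\tau)-u(t))\le 2$ for all $\tau\in(0,\tau_0)$ and all $t\ge t_0$; by the same argument as in the proof of Lemma~\ref{defz}(iii) (passing through the $u_k$ and using \cite{An88}), this bound propagates for all $t\ge t_0$, in particular on $[T^*,T^r]$. Since $u(t,x)$ and $u(t+\tau,x)$ agree at $x\in\{0,1\}$ whenever the boundary conditions hold, but may differ at $x=0$ during LBC, one should track the sign of $u(t+\tau,0)-u(t,0)$: by symmetry and the structure of $w:=u(t+\tau,\cdot)-u(t,\cdot)$ (which vanishes at the symmetric point $x=1/2$ with $w_x(t,1/2)\ne 0$ generically), a zero-number of at most $2$ on $(0,1)$ forces $w$ to have constant sign on a neighborhood of $x=0$ and hence $u(\cdot+\tau,0)-u(\cdot,0)$ to change sign at most once as $t$ varies over $[T^*,T^r]$. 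Letting $\tau\to 0$, this yields that the $C^1$ function $t\mapsto u(t,0)$ (regularity from Theorem~\ref{proppersist}(ii)) is monotone increasing then monotone decreasing, with a single turning point $T_m\in[T^*,T^r]$; the endpoint behavior above shows $T_m\in(T^*,T^r)$.

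The next step is to upgrade ``nondecreasing/nonincreasing'' to ``increasing/decreasing'' and to rule out a plateau, i.e.\ an interval $[t_1,t_2]\subset[T^*,T^r]$ on which $u(t,0)$ is constant. On such an interval, $u_t(t,0)=0$, so by the representation \eqref{defphiut3} (from the proof of Theorem~\ref{proppersist}(ii)) the function $x\mapsto u_t(t,x)$ vanishes at $x=0$ together with ... well, one needs that $u_{tx}(t,0)=0$ too; here I would argue that if $u_t(t,0)=0$ for $t$ in an interval then $u_{tt}(t,0)=0$ there, and using the equation for $u_t$ together with Hopf's lemma (applied on a one-sided subdomain near $x=0$, where $u_x$ is large but the coefficients of the linear equation for $w=u_t$ are locally bounded away from the corner) one derives a contradiction with the zero-number $N(t)=2$ structure of $u_t$ inherited on $[T^*,T^r]$ from Proposition~\ref{zeronumberut2} — precisely, a degenerate zero of $u_t$ forces $Z(u_t(t))$ to drop below $2$, contradicting \eqref{estB2} (resp.\ \eqref{estB2r}) which keeps two sign changes alive. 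Alternatively, on a plateau interval one has $u(t,0)=:\beta>0$ constant; then Theorem~\ref{proppersist}(i) forces $u(t,\cdot)$ to be a shifted copy of $U_*$ near $x=0$ for all such $t$, and the regularizing barrier construction from the proof of Lemma~\ref{barrier} (with the separation $U_*-u\ge bx^2$ which must hold somewhere on $[t_1,t_2]$ by the two-piece structure, or is forced by $u_t\le 0$ at the endpoint) would already produce immediate regularization, contradicting $u(t,0)=\beta>0$.

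Finally, for \eqref{utneg3}: once $T_m$ is identified as the turning point, I would show $u_t(T_m,x)\le 0$ for $x\in(0,1)$. For $t\in(T_m,T^r)$ this is plausible from the propagation argument in Lemma~\ref{neg2} (the auxiliary-function maximum principle there starts from uniform negativity at a single time $t_1$, which here we can take as any time slightly past $T_m$ where $u_t(\cdot,0)<0$ has just become strict); and for $t\ge T^r$, once the boundary conditions are recovered and $u_t(T^r,\cdot)\le 0$ with $u(T^r,0)=0$, Proposition~\ref{utneq} gives $u_t\le 0$ for all later times. The initial slice $u_t(T_m,\cdot)\le 0$ itself should follow by continuity from $u_t(t,\cdot)\le 0$ for $t$ slightly larger than $T_m$, which in turn follows from the fact that $u(t,0)$ is nonincreasing past $T_m$ together with the $N(t)=2$ sign pattern \eqref{prelimz} of $u_t$: if $z(t)$ (the zero of $u_t$) does not tend to $0$ as $t\to T_m^+$, then $u_t(t,0)>0$ near $T_m^+$, contradicting the turning point; hence $z(t)\to 0$ and $u_t(T_m,\cdot)\le 0$.

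\medskip

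The main obstacle I anticipate is the plateau exclusion: the zero-number of $u_t$ is only known to be $\le 2$ for $t>T^*$ via the approximation argument (Proposition~\ref{zeronumberut2}), and it is \emph{not} a priori known to be exactly $2$ on all of $(T^*,T^r)$, nor is $u_t$ known to be continuous up to $(T^*,0)$. So converting ``$u(t,0)$ constant on an interval'' into a genuine degenerate-zero statement for $u_t$, to which the drop property of Proposition~\ref{zeronumberut}(iii) applies, requires care — in particular one must work on subdomains $[t_1,t_2]\times[a,1-a]$ where everything is smooth and then pass $a\to 0$ using the shifted-profile estimates \eqref{shiftedcopy0}--\eqref{shiftedcopy} of Theorem~\ref{proppersist}. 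I expect the cleanest route is via the barrier/separation contradiction (Lemma~\ref{barrier} applied at a plateau time) rather than via a direct zero-number drop, since the barrier argument only needs the quadratic separation $u(t_*,x)\le U_*(x)-bx^2$ at a single well-chosen time $t_*$, which the two-piece structure supplies on the decreasing branch.
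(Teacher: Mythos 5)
Your overall strategy matches the paper's (zero-number for time-translates, a turning point $T_m$, plateau exclusion), but several steps are loose or flawed in ways that matter.

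\textbf{On the two-piece monotonicity.} Your reduction from $Z(u(t+\tau)-u(t))\le 2$ to ``$u(\cdot+\tau,0)-u(\cdot,0)$ changes sign at most once as $t$ varies'' is not justified: during LBC the time-translated difference $w(t,\cdot)=u(t+\tau,\cdot)-u(t,\cdot)$ does \emph{not} vanish at $x=0,1$, so the lap-number theory gives no monotonicity of $t\mapsto Z(w(t,\cdot))$, and a bound $Z\le 2$ at each fixed $t$ does not by itself control how the sign of $w$ near $x=0$ varies with $t$. What the paper actually does is define $T_m=\sup E$ with $E=\{T;\,u(\cdot,0)$ nondecreasing on $[T^*,T]\}$ (nonempty by \eqref{estB2}, contained in $(T^*,T^r)$ by \eqref{estB2r}), and then at $T_m$ itself use the zero-number bound together with the symmetry and $u_t<0$ at $x=1/2$ to force $u(T_m+\eps_j)\le u(T_m)$ \emph{pointwise}, and finally the comparison principle (Proposition~\ref{compP2}) to propagate $u(t+\eps_j)\le u(t)$ for all $t\ge T_m$. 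This last comparison step is what simultaneously gives both the nonincreasing behavior past $T_m$ and the key inequality \eqref{utneg3}; you are missing it, and your separate argument for \eqref{utneg3} via ``$z(t)\to 0$'' assumes a sign structure \eqref{prelimz0} of $u_t$ on $(T^*,T^r)$ that is only known on $(0,T^*)$.

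\textbf{On plateau exclusion.} Your preferred route (barrier/separation via Lemma~\ref{barrier}) cannot work: on a plateau $u(t,0)=\beta>0$, so $u$ sits \emph{above} $U_*$ near the boundary, and the hypothesis $u(T,x)\le U_*(x)-bx^m$ of Lemma~\ref{barrier} fails near $x=0$; shifting by $\beta$ is not accommodated by that lemma. Your alternative (a) is closer to the paper's argument but stays vague; the paper's version is a clean two-case Hopf's-lemma argument. If $u_t>0$ somewhere on the plateau interval, one gets $u_t\ge 0$ on $[t_1,t_2]\times(0,a]$ via $N(t)=2$, then $u\ge\ell+U_*$ by Remark~\ref{remb0}, and Hopf applied to $w=u-\ell-U_*$ (which satisfies $w_t-w_{xx}\ge 0$ and $w(t,a)>0$) gives $w(t,x)\ge c(t)x$, contradicting the quadratic bound \eqref{shiftedcopy0}. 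If instead $u_t\le 0$ on the whole plateau, one gets $u\le\ell+U_*$ by Lemma~\ref{separation}(ii) and applies Hopf to $U_*-u+\ell$, again contradicting \eqref{shiftedcopy0}. Note that the contradiction comes from \emph{linear} separation versus the \emph{quadratic} closeness to the shifted profile, not from a zero-number drop.

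In short: the comparison-principle mechanism in Step 1 (which delivers \eqref{utneg3} for free) and the precise Hopf/quadratic-estimate contradiction in Step 2 are the two ideas your proposal lacks.
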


\begin{proof}
{\bf Step 1.} {\it Monotonicity in the weak sense.}
Fix $t_0\in (0,T^*)$. By Lemma~\ref{defz}(iii), there exists $\tau_0>0$ such that
\be\label{prelimz0D2}
Z(u(t+\tau)-u(t))\le 2\quad\hbox{ for all $\tau\in (0, \tau_0)$ and $t\ge t_0$.}
\ee
Consider $T_m:=\sup E$ where 
$$E=\{T>T^*; \, \hbox{ $u(t,0)$ is  nondecreasing on $[T^*,T]$}\}\in (T^*,T^r),$$ 
noting that $E\ne\emptyset$ owing to \eqref{estB2} and that $E\subset (T^*,T^r)$ because of \rife{ConclEta}. 
By definition there exists $\eps_j\to 0_+$ such that $u(T_m+\eps_j,0)<u(T_m,0)$.

Since $Z(u(T_m+\eps_j)-u(T_m))\le 2$ for large enough $j$ by \eqref{prelimz0D2} and $u(T_m+\eps_j,1/2)<u(T_m,1/2)$ by Lemma~\ref{basic-prop0}(ii),
it follows that $u(T_m+\eps_j)\le u(T_m)$ in $[0,1]$.
We may apply the comparison principle in Proposition~\ref{compP2} to the global viscosity 
 solutions $u$ and $\tilde u:=u(\cdot+\eps_j,\cdot)$
(indeed the assumption $\tilde u(0,\cdot)\equiv u(\eps_j,\cdot)\in X$ is satisfied for $j$ large enough).
Therefore, $u(t+\eps_j)\le u(t)$ in $[0,1]$  
for all $t\ge T_m$.
This implies \eqref{utneg3}. 

Since we know now that $u(s,x)\le u(t,x)$ for all $T_m\le t\le s\le T^r$ and all $x\in (0,1)$,
by letting $x\to 0$, we deduce that $u(t,0)$ is  nonincreasing on $[T_m,T^r ]$,
whereas  $u(t,0)$ is  nondecreasing on $[T^*,T_m]$ by the definition of $E$.

\smallskip

{\bf Step 2.} {\it Strict monotonicity.}
To conclude the proof, we shall show that $u$ cannot have a plateau.
Let us  assume for contradiction that $u(t,0)=\ell>0$ on $[T_1,T_2]$ for some $T^*<T_1<T_2<T^r$. 

\vskip2pt
We first claim that $N(t)=0$ on $[T_1,T_2]$.
Otherwise $u_t>0$ at some point of $[T_1,T_2]\times (0,1)$.
By continuity, it follows that there exist $a\in (0,1/2)$ and $T_1<t_1<t_2<T_2$, such that
$u_t(t,a)>0$ and $N(t)=2$ for all $t\in [t_1,t_2]$.   
Recalling \eqref{utcenter}, we deduce that 
$u_t\ge 0$ on $[t_1,t_2]\times (0,a]$.
By Remark~\ref{remb0}, we get $u_x\ge U_*'$ and $u\ge\ell+U_*$ on $[t_1,t_2]\times (0,a]$.
The function $w:=u-\ell-U_*$ is nonnegative, continuous up to the boundary, and verifies
\be\label{eqparabHopf}
w_t-w_{xx}=(u_x)^p-(U_*')^p\ge 0
\ee
in $Q:=(t_1,t_2)\times (0,a]$ with $w(t,a)>0$ (due to $u_t(t,a)>0$). 
Then, by comparing with the linear heat equation and using Hopf's Lemma, 
we deduce that $w(t,x)\ge c(t)x$ in $Q$ for some $c(t)>0$, i.e. 
$$u(t,x)\ge u(t,0)+U_*+c(t)x.$$
But this contradicts estimate \rife{shiftedcopy0}.

\vskip2pt
We have thus proved that $u_t\le 0$ on $[T_1,T_2]\times (0,1)$,
hence $u_x\le U_*'$ and $u\le \ell+U_*$ on $[T_1,T_2]\times (0,1/2]$  by Lemma~\ref{separation}(ii). 
The new function $w:=U_*-u+\ell$ is nonnegative and verifies~\rife{eqparabHopf} 
in $Q:=(T_1,T_2)\times (0,1/2]$. 
Since $w(T_1,\cdot)\not\equiv 0$  {(owing to $w_x(T_1,1/2)>0$)}, 
 Hopf's Lemma again gives $w(t,x)\ge d(t)x$ in $Q$ for some $d(t)>0$,
i.e. $u(t,x)\le u(t,0) +U_*-d(t)x$. 
But this again contradicts estimate \rife{shiftedcopy0}. 
\end{proof}

We have now all the ingredients to conclude the proof of Theorem~\ref{nonmin0}.
\vskip1em

\begin{proof}[Completion of proof of Theorem~\ref{nonmin0}.]
Assertion (i) follows from Proposition~\ref{nonmin} and the first part of Proposition~\ref{recon}.
\vskip2pt

In assertion (ii), estimates \rife{estA0} and \rife{BeqConclA0} are just \rife{estA} and \rife{BeqConclA} in Propositions~\ref{nonmin} and \ref{recon}
whereas \eqref{estB0} and \eqref{BeqConclAa0} follow from \eqref{estB}, \eqref{BeqConclAa} and
  the regularity of $u(t,0)$ proved in Theorem~\ref{proppersist}(ii). 
\vskip2pt

Assertion (iii) is a consequence of Theorem~\ref{proppersist} and Proposition~\ref{twopiece}.
\end{proof}

\section{   Behavior of minimal GBU solutions:  proof of Theorem~\ref{minimal0}} 
\label{Sec-min}

This section is devoted to the behavior of minimal blow-up solutions. We already know that  those solutions are characterized by the fact that the boundary condition is not lost. In addition, if $N(u_t(0))=2$, we also know from Proposition~\ref{nonmin}(i), that 
minimal blow-up solutions occur if and only if $L=0$, where $L$ is defined in \eqref{defL}.
  
We first give the:

\begin{proof}[Proof of Theorem~\ref{minimal0}(ii).] 
  By \eqref{utneg} and Proposition \ref{utneq}, it follows that
\be\label{utneg2}
u_t\le 0\quad\hbox{ in $[T^*,\infty)\times(0,1)$,}
\ee
and that $u$ never loses boundary conditions and satisfies $u\leq U_*$, $u_x\le U_*'$ for all times after $T^*$.
Now, set  $W:=U_*-u$. This function is nonnegative and verifies
$$
W_t-W_{xx}=(U_*')^p-(u_x)^p\ge 0
$$ 
on $(0,1/2)$ for $t>T^*$, with $W(t,1/2)>0$ due to \eqref{utcenter}   
(and $W$ is continuous up to the boundary).
Then,  Hopf's Lemma gives $W(T^*+t,x)\ge c(t)x$ for all $t>0$, with some (possibly bad) $c(t)>0$.
i.e. $u(T^*+t,x)\le U_*-c(t)x.$ We conclude by applying Lemma~\ref{barrier}.
\end{proof}

One of the main ingredients of the proof   of parts (i) and (iii) of Theorem~\ref{minimal0}   is the already established Proposition~\ref{contut},
where we showed that nonminimal blow-up and smoothing rates stem from the continuity of the time derivative at the boundary.
In the following steps, we aim at getting the necessary informations on the time derivative in order to apply Proposition~\ref{contut}.

\begin{lem}\label{minlem1}
Let $\phi\in W^{3,\infty}(0,1)$ be compatible at order two,
with $\phi$ symmetric and  nondecreasing on $[0,1/2]$.
Assume that $N(0)=2$  
and $T^*<\infty$.
Fix any $t_0\in (0,T^*)$. Then we have 
\be\label{boundutxA}
u_{tx}\ge -C,\quad (t,x)\in [t_0,T^*)\times [0,1/2],
\ee
for some constant $C>0$.
As a consequence, we have
\be\label{boundutxB}
u_t(t,x)\ge -Cx,\quad (t,x)\in [t_0,T^*]\times (0,1/2].
\ee
\end{lem}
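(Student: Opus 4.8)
The plan is to derive the lower bound \eqref{boundutxA} for $u_{tx}$ via the maximum principle applied to an auxiliary function on the approximate problems \eqref{app-1d}, and then to integrate it to get \eqref{boundutxB}. First I would recall that, by Lemma~\ref{defz}(i), under our hypotheses $N(0)=2$ forces $N(t)=2$ on $(0,T^*)$ with a single interior zero $z(t)$ of $u_t$ in $(0,1/2)$, and $u_t\ge 0$ on $(0,z(t))$; in particular $u_t\ge 0$, hence (since $u_t(t,0)=0$) $u_{tx}(t,0)\ge 0$, for $x$ near the boundary. This controls the boundary values from below for free, so the work is to control $u_{tx}$ in the interior part of $[0,1/2]$ up to $T^*$. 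Working with $u_k$ solving \eqref{app-1d}, with $F_k$ as in \eqref{eqaaa}, the function $w:=u_{k,tx}$ satisfies a linear parabolic equation of the form $w_t-w_{xx}=F_k'(u_{k,x})w_x+F_k''(u_{k,x})(u_{k,xx})w$. The bad term is the zeroth-order coefficient $F_k''(u_{k,x})u_{k,xx}$, which near the boundary behaves like $|u_{k,x}|^{p-2}u_{k,xx}$ and, because $u_{k,xx}<0$ there (concavity from Lemma~\ref{basic-prop0}, $u_{k,xx}=u_{k,t}-F_k(u_{k,x})$ with $u_{k,t}$ bounded), actually has the \emph{good sign} for a lower bound: it is a genuine absorption term for $w$ where $w<0$.

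The key step, then, is to choose the right comparison barrier. Following the device already used in the proof of Lemma~\ref{bounduxt} (finite-difference version, here in the infinitesimal form), I would look for a lower barrier of the form $\underline w(t,x)=-C$, a negative constant, on the cylinder $Q:=[t_0,T^*)\times[a,1/2]$ for small $a>0$; here $u_{k,x}$ is uniformly bounded (by \eqref{SGBUprofileUpperEst2}–\eqref{SGBUprofileUpperEst3} applied to $u$ and the bound $0\le u_k\le u$, together with the fact that $u$ is smooth away from $x=0$ up to $T^*$, so $u_{k,xx}$ is controlled on $[a,1/2]$ by parabolic estimates). On the parabolic boundary of $Q$: at $x=a$ one has $u_{k,tx}$ bounded below by interior estimates (uniformly in $k$, since $u_k=u$ for $t\le t_0$ and $u$ is smooth there); at $x=1/2$ one has $u_{k,tx}(t,1/2)=0$ by the symmetry of $u_k$ (Lemma~\ref{basic-prop0}(ii)); and at $t=t_0$ one has $u_{k,tx}(t_0,\cdot)=u_{tx}(t_0,\cdot)$ bounded below since $\phi$ is compatible at order two so $u_t\in C([0,T^*)\times[0,1])$ and $u$ is classical. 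Picking $C$ larger than all these boundary bounds, the maximum principle gives $u_{k,tx}\ge -C$ on $Q$. Letting $k\to\infty$ in $C^{1,2}_{loc}$ yields $u_{tx}\ge -C$ on $(t_0,T^*)\times(a,1/2)$; combining with the boundary-layer positivity $u_{tx}\ge 0$ on $(0,z(t))$ discussed above (which covers a neighborhood of $x=0$ uniformly only if $\liminf z(t)>0$)—and this is the delicate point—one gets \eqref{boundutxA} on all of $[t_0,T^*)\times[0,1/2]$.

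The main obstacle is exactly that last gluing: for \emph{minimal} solutions $L=\liminf_{t\to T^*_-}z(t)=0$ by Proposition~\ref{nonmin}(i), so the interior positivity region $(0,z(t))$ may shrink to a point, and one cannot simply cover $[0,1/2]$ by a fixed $[a,1/2]$ plus $(0,a)$. To handle this I would instead take the barrier argument on the full strip $[t_0,T^*)\times[0,1/2]$ with an $x$-dependent barrier adapted to the behavior of $u_x$ near $0$, of the type $\underline w(t,x)=-C$ still, but justifying the boundary inequality at $x=0$ by the continuity of $u_t$ up to $t=0$ and $x=0$ (order-two compatibility) combined with the sign $u_{k,xx}\le M-F_k(u_{k,x})$ which makes the zeroth-order coefficient an absorption exactly where $w$ is negative — so that the constant $-C$ is a subsolution everywhere, not just on $[a,1/2]$. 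Thus the statement reduces to: on $\{w<0\}$ we have $w_t-w_{xx}-F_k'(u_{k,x})w_x = F_k''(u_{k,x})u_{k,xx}\,w \le 0$ because $F_k''\ge 0$, $u_{k,xx}\le M$... — one must be slightly careful since $u_{k,xx}$ is only bounded above, not below, near $x=0$; but where $u_{k,xx}>0$ the coefficient has the wrong sign, yet there $u_{k,t}=u_{k,xx}+F_k(u_{k,x})>0$ so $x$ is in the positivity region and $w$ need not be compared. Making this dichotomy rigorous — partitioning according to the sign of $u_{k,xx}$ and using that the region $\{u_{k,t}\le 0\}$ is where we actually need the lower bound — is the crux. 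Once \eqref{boundutxA} is established, \eqref{boundutxB} follows immediately by integrating in $x$ from $0$, using $u_t(t,0)=0$: $u_t(t,x)=\int_0^x u_{tx}(t,y)\,dy\ge -Cx$, and the estimate extends to $t=T^*$ by continuity of $u_t$ on $(0,T^*]\times(0,1)$.
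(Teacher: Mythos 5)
Your overall strategy (maximum principle for $u_{tx}$ using a constant lower barrier, the sign $u_{tx}(t,0)\ge 0$ from $N(t)=2$, integration for \eqref{boundutxB}) is the paper's strategy, but there is a real gap at exactly the point you flag as ``the crux,'' and the resolution is simpler than the dichotomy you attempt. Writing $w:=u_{tx}$, the equation is $w_t-w_{xx}=aw_x+bw$ with $a=p|u_x|^{p-2}u_x$ and $b=p(p-1)|u_x|^{p-2}u_{xx}$. You correctly invoke $u_{xx}=u_t-|u_x|^p\le M-|u_x|^p$, but you then reason only about the \emph{sign} of $u_{xx}$ (hence of $b$), and get stuck on the region where $u_{xx}>0$. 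The observation you are missing is that $b$ is \emph{uniformly bounded above} on all of $[t_0,T^*)\times[0,1/2]$: from $u_{xx}\le M-|u_x|^p$ and $|u_x|^{p-2}\ge 0$ one gets
$$
b\le p(p-1)\,|u_x|^{p-2}\bigl(M-|u_x|^p\bigr)=:g(|u_x|),
$$
and $g(s)=p(p-1)(Ms^{p-2}-s^{2p-2})$ is a continuous function of $s\ge 0$ that tends to $-\infty$ as $s\to\infty$, hence $\sup_{s\ge 0}g(s)=C_0<\infty$. So $b\le C_0$ everywhere, without any case distinction or partition of the domain. Once you have this, the constant barrier works after the standard exponential modification: set $\tilde w=e^{-C_0t}w$, which satisfies $\tilde w_t-\tilde w_{xx}=a\tilde w_x+(b-C_0)\tilde w$ with nonpositive zeroth-order coefficient. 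The minimum of $\tilde w_-$ on $\overline{(t_0,T)\times(0,1/2)}$ is then attained on the parabolic boundary, and there you already have all the pieces: $\tilde w\ge 0$ near $x=0$ (from $u_t\ge 0$ on $(0,z(t))$ and $u_t(t,0)=0$), $\tilde w$ bounded below at $x=1/2$ by interior regularity, and $\tilde w(t_0,\cdot)$ bounded below (order-two compatibility gives $u_{tx}(t_0,\cdot)\in L^\infty$). Letting $T\to T^*$ and using $e^{C_0T^*}<\infty$ yields \eqref{boundutxA}.

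Two further remarks. First, the ``absorption when $w<0$'' heuristic you use cannot be made to carry the whole argument, precisely because $u_{xx}$ has no lower bound near $x=0$: where $u_{xx}>0$ the term $bw$ with $w<0$ is favorable, but that is not where the trouble is; the trouble is where $u_{xx}\to -\infty$, and there the \emph{upper} bound on $b$ (indeed $b\to -\infty$, hence very favorable) is what saves you. Phrasing everything through an upper bound on $b$ rather than a sign dichotomy makes the two regimes collapse into one. Second, the approximation by $u_k$ is not needed here: since $u$ is a classical solution up to the boundary on $(t_0,T^*)$ with bounded $u_t$, $w=u_{tx}$ exists and the maximum principle can be applied directly to $\tilde w$ on $(t_0,T)\times(0,1/2)$; passing through $u_k$ would only require extra (uniform-in-$k$) estimates that you would then need to justify.
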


\begin{proof}
Set $w:=u_{tx}$. Differentiating the equation in \rife{vhj1} with respect to $t$ and then to $x$, we find
$$
w_t-w_{xx}=p|u_x|^{p-2}u_x u_{xtx}+p(p-1)|u_x|^{p-2}u_{xx}u_{xt}
$$
that is, 
$$
w_t-w_{xx}=aw_x+bw
$$
with $a(t,x)=p|u_x|^{p-2}u_x$ and $b(t,x)=p(p-1)|u_x|^{p-2}u_{xx}$. 
Moreover, we observe that
$$
b=p(p-1)|u_x|^{p-2}(u_t-|u_x|^p)\le p(p-1)|u_x|^{p-2}(M-|u_x|^p)<C_0.
$$
Since the modified function $\tilde w=e^{-C_0t}w$ then satisfies
$$
\tilde w_t-\tilde w_{xx}=a\tilde w_x+b_1\tilde w,
$$
with $b_1:=b-C_0<0$, the maximum principle yields
\be\label{maxpplw}
\min_{\overline Q_T}\tilde w_-= \min_{\partial_PQ_T}\tilde w_-,
\quad\hbox{ for all $T\in (t_0,T^*)$,}
\ee
where $Q_T=(t_0,T)\times (0,1/2)$ and $\partial_PQ_T$ denotes its parabolic boundary.

On the other hand, 
  by Lemma~\ref{defz}(i),  
   for each given $t\in [t_0,T^*)$, we have 
$u_t(t,x)\ge 0$ for $x$ close to $0$, hence $u_{tx}(t,0)\ge 0$ (recalling $u_t(t,0)=0$).
Moreover, $\tilde w\ge -C$ at $x=1/2$,
since the solution remains smooth for all times away from the boundary of $(0,1)$.
Property \rife{maxpplw} then reduces to 
$$
\displaystyle\min_{\overline Q_T}\tilde w_- \ge  
\min\bigl[-C,\displaystyle\min_{x\in[0,1]}\tilde w_-(t_0,x)\bigr]
\quad\hbox{ for all $T\in (t_0,T^*)$,}
$$
and the conclusion follows.
\end{proof}

\begin{lem}\label{minlem2}
Under the assumptions of Theorem~\ref{minimal0}, we have
$$
\lim_{t\to T^*_-,\,x\to 0}u_t(t,x)=0.
$$
\end{lem}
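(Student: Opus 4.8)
The plan is to squeeze $u_t$ between two matching one-sided bounds near the corner $(T^*,0)$. The lower bound is already available: by Lemma~\ref{minlem1}, fixing $t_0\in(0,T^*)$ there is a constant $C>0$ with $u_{tx}\ge -C$ on $[t_0,T^*)\times[0,1/2]$ (see \eqref{boundutxA}) and consequently $u_t(t,x)\ge -Cx$ on $[t_0,T^*]\times(0,1/2]$ (see \eqref{boundutxB}); in particular $u_t(t,x)\ge -Cx\to 0$ uniformly in $t$ as $x\to 0$. So the real task is the matching upper bound, and this is where minimality enters: since $u$ is a minimal GBU solution, Proposition~\ref{nonmin}(i) gives $u_t(T^*,\cdot)\le 0$ on $(0,1)$ (this is \eqref{utneg}).

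Next I would transport this sign information from a suitable interior point to the boundary, paying for it with the gradient bound $u_{tx}\ge -C$. Fix $\varepsilon>0$ (it suffices to treat $\varepsilon\in(0,C)$) and set $a:=\varepsilon/(2C)\in(0,1/2)$. Since $u_t\in C((0,\infty)\times(0,1))$ by Theorem~\ref{prelimprop}(iii) and $u_t(T^*,a)\le 0$, there is $\delta\in(0,T^*-t_0)$ such that $u_t(t,a)\le \varepsilon/2$ for all $t\in(T^*-\delta,T^*)$. Then, for such $t$ and all $x\in(0,a)$, integrating $u_{tx}\ge -C$ between $x$ and $a$ yields
$$u_t(t,x)=u_t(t,a)-\int_x^a u_{tx}(t,s)\,ds\le u_t(t,a)+C(a-x)\le \frac\varepsilon2+Ca=\varepsilon,$$
while $u_t(t,x)\ge -Cx\ge -Ca=-\varepsilon/2$. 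Hence $|u_t(t,x)|\le \varepsilon$ on $(T^*-\delta,T^*)\times(0,a)$, which, $\varepsilon$ being arbitrary, is exactly the assertion $\lim_{t\to T^*_-,\,x\to 0}u_t(t,x)=0$.

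I do not expect a serious obstacle at this stage: the real work has been front-loaded into Lemma~\ref{minlem1} (the lower bound on $u_{tx}$, proved via the maximum principle for the equation satisfied by $u_{tx}$ after absorbing the bad sign of its zero-order coefficient) and into the dichotomy $L=0$ vs.\ $L>0$ of Proposition~\ref{nonmin}. What one must get right is the calibration $a\sim\varepsilon$: one would like to evaluate $u_t$ directly at the boundary, but the only sign information sits at interior points, and the error $C(a-x)$ incurred while sliding the estimate down to the boundary is harmless only because the intermediate point $a$ is chosen proportional to $\varepsilon$.
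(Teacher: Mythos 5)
Your proof is correct and follows essentially the same route as the paper's: both treat the lower bound by \eqref{boundutxB}, and both obtain the upper bound by transporting the sign information $u_t(T^*,\cdot)\le 0$ from a fixed interior point to the boundary using the one-sided bound $u_{tx}\ge -C$ from \eqref{boundutxA}, with the interior point calibrated proportionally to $\varepsilon$ (resp.\ $\eta$). The paper phrases the upper bound as a proof by contradiction and cites \eqref{utneg2} rather than \eqref{utneg}, but the mechanism is identical.
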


\begin{proof}
We have $\displaystyle\liminf_{t\to T^*_-,\,x\to 0}u_t(t,x)\ge 0$ owing to \rife{boundutxB}.
So it suffices to show $\displaystyle\limsup_{t\to T^*_-,\,x\to 0}u_t(t,x)\le 0$.   

Assume for contradiction that there exist $\eta>0$, $t_j\to T^*_-$ and $x_j\to 0$ such that
$$u_t(t_j,x_j)\ge \eta.
$$
Let $h=\eta/(2C)$ where $C$ is given by \rife{boundutxA}.
For all large $j$, we have $x_j<h$ and we deduce from \rife{boundutxA} that
$$
u_t(t_j,h):=u_t(t_j,x_j)+\int_{x_j}^h u_{tx}(t_j,x)\, dx\ge \eta-Ch=\eta/2.
$$
Letting $j\to\infty$ and using $u_t\in C((0,T^*]\times (0,1))$, we deduce that
$u_t(T^*,h)\ge\eta/2$. 
This is a contradiction    with \eqref{utneg2}.  
\end{proof}

\begin{lem}\label{minlem3}
Under the assumptions of Theorem~\ref{minimal0}, we have
$$
\lim_{t\to T^*_+,\,x\to 0}u_t(t,x)=0.
$$
\end{lem}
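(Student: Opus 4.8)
The plan is to mimic the strategy of Lemma~\ref{minlem2}, but now on the right side of $T^*$, which is where the asymmetry of the heat equation forces us to work harder. We already know from Theorem~\ref{minimal0}(ii) (proved above) that $u$ is a classical solution including boundary conditions on $(T^*,\infty)$, and from \eqref{utneg2} that $u_t\le 0$ in $[T^*,\infty)\times(0,1)$; in particular $\limsup_{t\to T^*_+,\,x\to 0}u_t(t,x)\le 0$ is automatic. Hence the whole content of the lemma is the lower bound
\be\label{minlem3claim}
\liminf_{t\to T^*_+,\,x\to 0}u_t(t,x)\ge 0,
\ee
and for this it suffices, exactly as in the proof of Lemma~\ref{minlem2}, to establish a one-sided gradient bound of the form
\be\label{minlem3grad}
u_{tx}(t,x)\ge -C,\qquad (t,x)\in (T^*,T^*+\tau)\times[0,1/2],
\ee
for some $\tau>0$ and $C>0$. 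Indeed, given \eqref{minlem3grad}, if $u_t(t_j,x_j)\le-\eta<0$ along sequences $t_j\to T^*_+$, $x_j\to 0$, then for $h=\eta/(2C)$ and $j$ large we would get $u_t(t_j,h)=u_t(t_j,x_j)+\int_{x_j}^h u_{tx}(t_j,x)\,dx\le -\eta+Ch=-\eta/2$, and letting $j\to\infty$ with $u_t\in C([T^*,\infty)\times(0,1))$ (valid since $u$ is classical after $T^*$, and we can take a one-sided limit) yields $u_t(T^*_+,h)\le-\eta/2$; but $u_t(T^*,h)\ge 0$ would follow from \eqref{minlem3grad} itself combined with $u_t(T^*,0)=0$ (using that $u_t$ extends continuously to $\{T^*\}\times(0,1/2]$ from the left by Lemma~\ref{defz}(i), and that the bound \eqref{boundutxA} also holds up to $T^*$), a contradiction. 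The argument should in fact be run symmetrically around $T^*$: since \eqref{boundutxA} already gives $u_{tx}\ge -C$ on $(T^*-\delta,T^*)\times[0,1/2]$, once we have \eqref{minlem3grad} we obtain $u_{tx}\ge -C'$ on $(T^*-\delta,T^*+\tau)\times[0,1/2]$, and then the contradiction with \eqref{utneg2} and $u_t(T^*,0)=0$ is immediate.

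The heart of the matter is therefore \eqref{minlem3grad}. I would prove it by the maximum principle applied to $w:=u_{tx}$ on a space-time box $Q_\tau:=(T^*,T^*+\tau)\times(0,1/2)$, just as in Lemma~\ref{minlem1}: differentiating \rife{vhj1} in $t$ and $x$ gives $w_t-w_{xx}=a w_x+bw$ with $a=p|u_x|^{p-2}u_x$ and $b=p(p-1)|u_x|^{p-2}u_{xx}=p(p-1)|u_x|^{p-2}(u_t-|u_x|^p)$, and since $u_t\le 0$ on $Q_\tau$ we get the favorable sign $b\le 0$ \emph{without even needing the bound $u_t\le M$} — this is the point where working after $T^*$ for a \emph{minimal} solution is cleaner. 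So $\tilde w:=w$ itself (no exponential weight needed, since $b\le 0$) satisfies $\tilde w_t-\tilde w_{xx}-a\tilde w_x-b\tilde w=0$ with $b\le 0$, whence $\min_{\overline Q_\tau}\tilde w_-=\min_{\partial_P Q_\tau}\tilde w_-$. On the lateral boundary $x=1/2$ the solution is smooth and $\tilde w$ is bounded below by a constant; on $x=0$ we have $u_t(t,0)=0$ and $u_t(t,x)$ changes sign at most twice with $u_t\ge 0$ near $x=0$? — no: after $T^*$, for a minimal solution, $u_t\le 0$ everywhere, so $u_t(t,\cdot)\le 0=u_t(t,0)$ forces $u_{tx}(t,0)\le 0$, which is the \emph{wrong} sign. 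So on $\{x=0\}$ I instead argue that $u_{tx}(t,0)$ is bounded below: this should follow from the quadratic closeness $u(t,x)\le U_*(x)-bx^2$ established via Lemma~\ref{barrier} in part (ii), since then $u_x(t,x)\le U_*'(x)$ and $u_{xx}(t,0)$ is controlled, giving $u_{tx}(t,0)=u_t(t,x)/x\big|_{x\to0}{}$ bounded via $u_t=u_{xx}+|u_x|^p$ near $0$; more directly, the barrier $z$ from Lemma~\ref{barrier} dominates $u$ and has $z_x(t,0)$ explicitly controlled, and differentiating in $t$ the comparison should control $u_{tx}(t,0)$ from below uniformly for $t\in(T^*,T^*+\tau)$. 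Finally, on the bottom $\{t=T^*\}$ I use the already-known bound $u_{tx}(T^*,x)\ge -C$ coming from passing \eqref{boundutxA} to the limit $t\to T^*_-$ together with $C^{1,2}$ regularity of $u$ at $T^*$ from the left (Lemma~\ref{defz}(i) gives $u_t\in C((0,T^*]\times(0,1))$, hence $u_{tx}(T^*,\cdot)$ is well-defined on $(0,1/2]$ and $\ge -C$).

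The main obstacle I anticipate is the control of $u_{tx}(t,0)$ from below on the lateral boundary $\{x=0\}\times(T^*,T^*+\tau)$ — the very place where the boundary condition was just lost and recovered. The clean way to handle it is to exploit the regularizing barrier from the proof of Lemma~\ref{barrier}: there $u(T^*+t,x)\le z(t,x)=U_*(x+a(t))-U_*(a(t))-bx^2$ with $a(t)=c\,t^{1/(3-m-\alpha)}$, and since this is an upper barrier touching $u$ at $x=0$ (both vanish there), one gets $u_x(T^*+t,0)\le z_x(t,0)=U_*'(a(t))$ and, crucially, one can differentiate the comparison: $u_{tx}(T^*+t,0)$ is controlled from below by $z$-side quantities that are finite for $t>0$; combined with $u_{tx}(T^*+t,0)\le 0$ (wrong-sign observation above, harmless here) this pins $u_{tx}$ at $x=0$ between two finite bounds on any interval $[T^*+\epsilon,T^*+\tau)$, and the behavior as $t\to T^*_+$ is absorbed because the claimed limit \eqref{minlem3claim} only concerns $\liminf$, so a uniform-in-$t$ lower bound on compact $x$-away-from-$0$ plus \eqref{minlem3grad} on a shrinking-but-fixed box suffices. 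If the differentiated-barrier argument proves delicate, the fallback is to estimate $u_{tx}(t,0)$ via interior parabolic estimates for $u_t$ on thin strips $(\epsilon,2\epsilon)$ near the boundary, using $u_t\le 0$ and $|u_t|\le M$ together with the profile bound $u_x\le U_*'$, $|u_{xx}|\le C\epsilon^{-\alpha-1}$ from Lemma~\ref{basic-bounds} — this mirrors Step~2 of the proof of Lemma~\ref{bounduxt} and yields $|u_{tx}(t,\epsilon)|\le C\epsilon^{-3-\alpha}$, which, while not bounded as $\epsilon\to0$, can be combined with $u_t(t,x)\le -cx^2$-type separation to still extract \eqref{minlem3grad} after a further barrier comparison with strong nonlinear absorption exactly as in Lemma~\ref{bounduxt}. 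Either route works; I would present the barrier-differentiation version as the shortest.
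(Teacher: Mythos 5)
The proposal takes a genuinely different route from the paper, but unfortunately the key estimate it rests on does not hold for minimal solutions, and in fact cannot hold.

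Your plan is to establish a uniform bound on $u_{tx}$ on a box $(T^*,T^*+\tau)\times[0,1/2]$, mimicking Lemma~\ref{minlem1}, and then to run the ``integrate $u_{tx}$ from $x_j$ to $h$'' argument of Lemma~\ref{minlem2}. Two problems arise. First, there is a sign error: to deduce $u_t(t_j,h)\le-\eta/2$ from $u_t(t_j,x_j)\le-\eta$ you need an \emph{upper} bound $u_{tx}\le C$, not the lower bound $u_{tx}\ge-C$ stated in your \eqref{minlem3grad} (the lower bound only gives a lower estimate for $u_t(t_j,h)$, which is useless for the contradiction). Second, and more seriously, no uniform bound on $u_{tx}$ near $(T^*_+,0)$ is available, and a lower bound in particular is \emph{false}: for a minimal GBU solution one has $u_x(t,0)\to\infty$ as $t\to T^*_+$ (by Lemma~\ref{unbddT} together with Theorem~\ref{minimal_rateG}(ii), even without using Theorem~\ref{minimal0}(iii)). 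Since $u_x(\cdot,0)$ is $C^1$ on $(T^*,T^*+\delta)$ and tends to $+\infty$ as $t\to T^*_+$, we must have $\liminf_{t\to T^*_+}u_{tx}(t,0)=-\infty$; thus the lateral boundary value you need to control blows up. The ``barrier differentiation'' you propose for handling this is built on the claim that a quadratic separation $u(t,x)\le U_*(x)-bx^2$ holds for $t>T^*$; this is precisely what does \emph{not} hold for minimal solutions. In the proof of Theorem~\ref{minimal0}(ii), the separation from $U_*$ obtained via Hopf's lemma is only linear, $u(T^*+t,x)\le U_*(x)-c(t)x$, with a coefficient $c(t)$ that may degenerate as $t\to T^*_+$; this is the structural reason why minimal solutions have worse regularization rates (see Remark~\ref{RemSepar}, where it is observed that a linear separation cannot give the minimal rate). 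Your fallback via strong nonlinear absorption (as in Lemma~\ref{bounduxt}) likewise uses a quadratic separation that is unavailable here.

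The paper avoids all of this by a zero-number argument on the perturbed quantity $u_t+\eps$: using Lemma~\ref{defz}, for small $\eps>0$ one has $Z(u_t(t)+\eps)\le 2$ for all $t\ge T^*$, and in fact $=2$ since $u_t(t,0)=0$ (so $u_t+\eps>0$ near $x=0$) while $u_t(t,1/2)\le-\sigma<-\eps$. Then if $u_t(t_j,x_j)<-\eps$ with $t_j\to T^*_+$, $x_j\to 0$, the sign-change structure forces $u_t(t_j,\cdot)\le-\eps$ on $[x_j,1/2]$; letting $j\to\infty$ gives $u_t(T^*,x)\le-\eps$ for every $x\in(0,1/2)$, contradicting the linear bound $u_t(T^*,x)\ge-Cx$ from Lemma~\ref{minlem1}. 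This sidesteps any pointwise control of $u_{tx}$ near the corner $(T^*,0)$, which is exactly where the minimal solution is too singular for the gradient-bound approach to work.
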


\begin{proof} 
In view of \rife{utneg2},
it suffices to prove that $\displaystyle\liminf_{t\to T^*_+,\,x\to 0}u_t(t,x)\ge 0$.  
By \eqref{utcenter}, we have  
\be\label{BoundSigma2}
\sigma:=-\max_{t\in [T^*,T^*+1]} u_t(t,1/2)>0
\ee
and, by Lemma~\ref{defz}(iii), there exists $\eps_0\in (0,\sigma)$ such that
\be\label{prelimz0C2b}
Z_\eps(t):=Z(u_t(t)+\eps)\le 2\quad\hbox{  for all $\eps\in (0,\eps_0)$ and all $t\ge T^*$.}
\ee
Moreover, for all $t\in (T^*,T^*+1]$, since $u_t(t)\in C([0,1])$ and $u_t(t,0)=0$ by Theorem~\ref{minimal0}(ii), we have $Z_\eps(t)=2$ due to \eqref{BoundSigma2}.

Now assume for contradiction that there exist $\eps\in (0,\eps_0)$, $t_j\to T^*_+$ and $x_j\to 0$ such that
$$u_t(t_j,x_j)<-\eps.
$$
Then, we have
$$
u_t(t_j,x)\le - \eps, \quad x_j\le x\le 1/2
$$
due to $Z_\eps(t_j)=2$.
For all fixed $x\in (0,1/2)$, letting $j\to\infty$, we obtain
$u_t(T^*,x)\le - \eps$, a contradiction with \rife{boundutxB} for $t=T^*$.
\end{proof}

We have now all the ingredients to conclude the proof of Theorem~\ref{minimal0}.

\begin{proof}[Proof of Theorem~\ref{minimal0}(i) and (iii).]
Assertion (i) is a direct consequence of {Proposition~\ref{contut}(i) }and Lemma~\ref{minlem2},
whereas assertion (iii) is a direct consequence of {Proposition~\ref{contut}(ii)} and Lemma~\ref{minlem3}.
\end{proof}

\section{Appendix 1. A partial monotonicity result for the zero-number of $u_t$} 
\label{Sec-monot}

 In this short appendix, we state  the following additional result, announced before Proposition~\ref{zeronumberut2},
and prove it as an a posteriori consequence of our analysis of the behavior of the global viscosity solution.
It shows the monotonicity of the zero-number of $u_t$ for the viscosity solutions starting with $N(0)\le 4$.

\begin{prop} \label{zeronumbermonotone}
Let $\phi\in W^{3,\infty}(0,1)$ be compatible at order two,
with $\phi$ symmetric and  nondecreasing on $[0,1/2]$.
Assume that $N(0)\le 4$ and $T^*<\infty$.
Then the zero-number $N(t)$ of $u_t$ is a nonincreasing function of $t$.
In particular, if $N(0)=2$, then we have
$$N(t)=
\begin{cases}
2&\hbox{ for all $t\in [0,T_d)$}, \\
0&\hbox{ for all $t\in [T_d,\infty)$,}
\end{cases}
$$
with $T_d=T_m$ if  
$u$ is nonminimal and $T_d=T^*$ if $u$ is minimal.
\end{prop}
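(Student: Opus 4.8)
The plan is to derive the statement a posteriori from the structural analysis of Sections~\ref{Sec-nonmin} and~\ref{Sec-min}, combined with the zero-number bounds already at hand. The first step is to reduce to $N(0)=2$. If $N(0)=4$, then by Lemma~\ref{higherN} there is $t_0\in(0,T^*)$ with $N(t)=2$ on $[t_0,T^*)$, while on $[0,t_0]$ the integer-valued function $N$ is nonincreasing (with values in $\{2,4\}$) by Proposition~\ref{zeronumberut}(i); by Remark~\ref{remhigherN} one may take $u(t_0,\cdot)$ as new initial datum, which still satisfies all the hypotheses and has zero-number $2$, while the times $T^*$ and $T_m$ are unchanged. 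So it suffices to treat $N(0)=2$, which I assume henceforth.

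Next I would determine the admissible values of $N(t)$. Proposition~\ref{zeronumberut2} with $t_0=0$ gives $N(t)\le N(0)=2$ for all $t>0$. On the other hand $u(t,\cdot)$ is symmetric with $u_t(t,1/2)<0$ (Lemma~\ref{basic-prop0}(ii)) and $u_t(t,\cdot)$ is continuous on $(0,1)$, so the sign blocks of $u_t(t,\cdot)$ are symmetric about $x=1/2$ with a negative central block; hence the number of sign changes is even, and therefore $N(t)\in\{0,2\}$ for every $t\ge0$. Together with Lemma~\ref{defz}(i) this yields $N(t)=2$ on $[0,T^*)$, so it remains only to analyse $t\ge T^*$.

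The key device for $t\ge T^*$ is the elementary remark that, \emph{if $u_t(t_*,\cdot)\le0$ on $(0,1)$ for some $t_*\ge T^*$, then $u(t,0)\le u(t_*,0)$ for all $t\ge t_*$}: indeed Lemma~\ref{separation}(ii) (with $b=0$), together with the symmetry and monotonicity of $U_*$, gives $u(t_*,\cdot)\le u(t_*,0)+U_*$ on $(0,1)$, and since $u(t_*,0)+U_*$ is a stationary supersolution of the truncated problem~\eqref{app-1d}, continuous up to the boundary with nonnegative boundary values, the comparison principle of Proposition~\ref{compP} gives $u_k(t,\cdot)\le u(t_*,0)+U_*$ for all $t\ge t_*$ and all $k$; passing to the limit $k\to\infty$ and then letting $x\to0$ proves the remark. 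In the minimal case, Proposition~\ref{nonmin}(i) gives $u_t(T^*,\cdot)\le0$ and \eqref{utneg2} gives $u_t\le0$ on $[T^*,\infty)\times(0,1)$, so $N(t)=0$ for all $t\ge T^*$ and $N$ drops exactly at $T_d=T^*$. In the nonminimal case, Proposition~\ref{twopiece} gives $u_t\le0$ on $[T_m,\infty)\times(0,1)$, hence $N(t)=0$ for $t\ge T_m$, and also that $t\mapsto u(t,0)$ is strictly increasing on $[T^*,T_m]$; if $N(t_*)=0$ for some $t_*\in[T^*,T_m)$ then $u_t(t_*,\cdot)\le0$ (using $u_t(t_*,1/2)<0$), so by the remark $u(T_m,0)\le u(t_*,0)$, contradicting strict monotonicity. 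Hence $N(t)=2$ on $[T^*,T_m)$ and $N$ drops exactly at $T_d=T_m$; in all cases $N$ is nonincreasing.

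I expect the points requiring some care to be the application of the remark at the endpoint $t_*=T^*$ in the nonminimal case --- there $u(T^*,0)=0$, so the remark would give $u(t,0)\equiv0$ and hence no loss of boundary conditions, contradicting nonminimality; alternatively one may invoke \eqref{eqposut} directly to get $N(T^*)=2$ --- and the verification of the hypothesis $\lim_{x\to0^+}u_x(t_*,x)=\infty$ needed in Lemma~\ref{separation}(ii), which holds by Lemma~\ref{bdl} for $t_*\in(T^*,T^r)$ and by Lemma~\ref{basic-prop} at $t_*=T^*$. The substance of the argument is the comparison with the shifted stationary profile $u(t_*,0)+U_*$, which turns a sign condition on $u_t$ into a monotonicity condition on the boundary trace.
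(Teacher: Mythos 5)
Your proposal is correct and reaches the same conclusion, but the decisive contradiction step in the nonminimal case is genuinely different from the paper's. The paper assumes $N(t_0)=0$ for some $t_0\in(T^*,T_m)$, deduces $u(t_0,\cdot)\le u(t_0,0)+U_*$ from Lemma~\ref{separation}(ii), then runs two parabolic comparisons --- first with the time-dependent supersolution $z(t,x)=u(t,0)+U_*(x)$ (whose supersolution property requires $u_t(t,0)\ge 0$, hence the regularity of $t\mapsto u(t,0)$ from Theorem~\ref{proppersist}(ii)), then with $W(t,x)=u(t,0)+U_*(x)-\eta(t-t_1)x$ --- to produce a strict linear detachment of $u-U_*$ from $u(t,0)$, contradicting \eqref{shiftedcopy0}. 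You instead use a single \emph{stationary} barrier $u(t_*,0)+U_*$, which is a direct generalization of the comparison already done in Proposition~\ref{utneq}: if $u_t(t_*,\cdot)\le 0$ at some $t_*\in[T^*,T_m)$, this barrier traps $u(t,\cdot)$ for all $t\ge t_*$, forcing $u(t,0)\le u(t_*,0)$ and contradicting the strict increase of $u(\cdot,0)$ on $[T^*,T_m]$ from Proposition~\ref{twopiece}. Your argument is shorter, avoids the second barrier and any re-invocation of \eqref{shiftedcopy0}, and does not rely on the $C^1$-in-time regularity of the boundary trace; what the paper's version buys instead is that it is self-contained at the level of the profile estimates (it does not exploit the strict monotonicity of $u(t,0)$ beyond what enters $\eqref{shiftedcopy0}$). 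The preliminary reduction to $N(0)=2$ via Lemma~\ref{higherN} and Remark~\ref{remhigherN}, the parity argument forcing $N(t)\in\{0,2\}$, and the disposal of the minimal case via \eqref{utneg2} and Lemma~\ref{defz} all coincide with the paper. Your side remark on the endpoint $t_*=T^*$ is handled correctly by either of the alternatives you list, and your caveat about \eqref{Beqdd} is in fact unnecessary: inspecting the proof, Lemma~\ref{separation}(ii) with $b=0$ goes through regardless of whether $u_x(T,0^+)$ is finite or infinite, since $V\le U_*'$ follows in every case once $V$ is decreasing.
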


\begin{proof}
By Lemma~\ref{higherN}, the monotonicity property can be reduced to the case $N(0)=2$.
The case when $N(0)=2$ and $u$ is minimal follows from \rife{utneg2} and  Lemma~\ref{defz}.
Let us thus assume that $N(0)=2$ and $u$ is nonminimal.

By Lemma~\ref{defz}, \eqref{eqposut} and Proposition~\ref{twopiece}, we have $N(t)=2$ for all $t\in (0,T^*]$ and $N(t)=0$ for all $t\in [T_m,\infty]$.
Assume for contradiction that $N(t_0)=0$ for some $t_0\in (T^*,T_m)$.
Then $u_t(t_0)\le 0$ in $(0,1)$, hence 
\be\label{monotN1}
\quad\hbox{ $u(t_0,x)\le u(t_0,0)+U_*(x)$ in $[0,1]$}
\ee
by Lemma~\ref{separation}(ii) and the symmetry of $u$.  
Since $u(t,0)$ is  increasing on $[T^*,T^m]$ and $u_t\in C((T^*,T^r)\times [0,1])$ by Theorem~\ref{proppersist}(ii),
there exist $\eps>0$ and $t_1,t_2$ with $t_0<t_1<t_2<T_m$ such that
$$\hbox{\quad $u_t(t,0)\ge \eps$ on $[t_1,t_2]$ \quad and \quad $u_t(t,0)\ge 0$ on $(T^*,T_m)$.}$$
We first define the comparison function $z(t,x)=u(t,0)+U_*(x)$,
which satisfies
$$z_t-z_{xx}-|z_x|^p=u_t(t,0)-U_*''-U_*'^p=u_t(t,0)\ge 0\quad\hbox{ in $(T^*,T_m)\times (0,1)$}.$$
In view of \rife{monotN1} and of $u(t,1)=u(t,0)$, the comparison principle in Proposition~\ref{compP} guarantees that 
\be\label{monotN2}
u(t,x)\le u(t,0)+U_*(x) \quad\hbox{ in $(t_0,T_m)\times [0,1]$}.
\ee

We next define the comparison function
$$W(t,x)=u(t,0)+U_*(x)-\eta(t-t_1)x
\quad\hbox{ in $[t_1,t_2]\times [0,1]$},$$
where $\eta\in (0,\eps)$ is chosen small enough so that
$U_*(x)-\eta(t_2-t_1)x\ge 0$ and $U_*'(x)-\eta(t_2-t_1)\ge 0$ for all $x\in (0,1]$.
We then have $0\le W_x\le U_*'$ in $[t_1,t_2]\times (0,1]$ and $W(t,x)\ge u(t,0)$ in $[t_1,t_2]\times [0,1]$.
Consequently, we obtain
$$
\begin{aligned}
W_t-W_{xx}-|W_x|^p
&=u_t(t,0)-\eta x-U_*''-(W_x)^p \\
&\ge \eps-\eta-U_*''-U_*'^p=\eps-\eta>0\quad\hbox{ in $[t_1,t_2]\times (0,1)$.}
\end{aligned}
$$
Since $W(t_1,x)=u(t_1,0)+U_*(x)\ge u(t_1,x)$ in $[0,1]$ by \rife{monotN2},
applying the comparison principle again, we deduce that
$$u(t,x)\le u(t,0)+U_*(x)-\eta(t-t_1)x  
\quad\hbox{ in $[t_1,t_2]\times [0,1]$}.$$
But this contradicts  \rife{shiftedcopy0}.
\end{proof}

\section{Appendix 2: Proof of Theorem~\ref{prelimprop2}}  
\label{Sec-App}

We first establish the following interior a priori estimate for a more general equation, 
which covers the regularized problems \rife{approxpbm}.
It relies on a Bernstein type argument
(which is a modification of \cite[Theorem~3.1]{SZ06}, where only the case $F(\nabla u)=|\nabla u|^p$ was treated;
the latter was motivated by \cite{Li85} for the elliptic case). {See also the Appendix in \cite{PZ} for {\it local in time -- 
global in space} gradient estimates of similar type.}

 \begin{lem}\label{prelimprop2lem}
Let $\Omega$ be any domain of $\R^n$.
Let $F(\xi)\in W^{1,\infty}_{loc}(\R^n)$ satisfy:
\be\label{approxHyp4a}
F(\xi)\ge C_2|\xi|^m,\quad\hbox{ for $|\xi|\ge \xi_0$,}
\ee
\be\label{approxHyp3a}
|\nabla F(\xi)|\le C_1\bigl(1+|\xi|^{-2}F^{2-\theta}(\xi)\bigr), \quad\hbox{ for all $\xi\ne 0$,}
\ee
where $m>1$, $\theta\in (0,1), C_1, C_2, \xi_0>0$.
Let $0\le t_0<T<\infty$, $v\in C^{1,2}((t_0,T)\times\Omega)$, with $\nabla v\in C([t_0,T)\times\Omega)$, be a solution of 
$$v_t-\Delta v =F(\nabla v) \quad\hbox{ in $(t_0,T)\times\Omega$},$$
and assume that
\be\label{approxHyp4a0}
|\nabla v(t_0,x)|\le M_0 \quad\hbox{ in $\Omega$}
\ee 
and
\be\label{approxHyp4a1}
|v_t|\le M_1 \quad\hbox{ in $(t_0,T)\times\Omega$}.
\ee 
Then $\nabla v$ satisfies the estimate
\be\label{approxpbm2aEstim}
|\nabla v(t,x)|\le C_3\bigl[1+\delta^{-1/(\theta m)}(x)+\delta^{-1/(m-1)}(x)\bigr] \quad\hbox{ in $(t_0,T)\times\Omega$},
\ee 
where $\delta(x)={\rm dist}(x,\partial\Omega)$ and the constant $C_3>0$ depends only on $C_1,C_2,\xi_0,M_0,M_1,m,\theta,n$.
\end{lem}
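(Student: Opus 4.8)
The plan is to establish \rife{approxpbm2aEstim} by a localized Bernstein argument in the spirit of \cite[Theorem~3.1]{SZ06} (see also \cite{Li85} and the Appendix of \cite{PZ}), the new feature being the accommodation of the general nonlinearity $F$ through its structural bounds \rife{approxHyp4a}--\rife{approxHyp3a}. First I would set $w:=|\nabla v|^2$; differentiating $v_t-\Delta v=F(\nabla v)$ in $x$, multiplying by $2\nabla v$ and summing gives
\[
w_t-\Delta w=b\cdot\nabla w-2|D^2v|^2\quad\hbox{ in $(t_0,T)\times\Omega$},\qquad b:=\nabla F(\nabla v).
\]
(By interior parabolic regularity $v$ is smooth enough in $(t_0,T)\times\Omega$ — indeed $w\in W^{2,1}_{q,\mathrm{loc}}$ for every $q$ — so that these identities and the maximum-principle arguments below hold in the strong sense; alternatively the estimate may first be proved for smooth $F$ and extended by a mollification--approximation argument, all constants being structural.) The point is that the absorbing term $-2|D^2v|^2$ is coercive in $w$ when $|\nabla v|$ is large: since $\Delta v=v_t-F(\nabla v)$, $|v_t|\le M_1$ by \rife{approxHyp4a1} and $F(\xi)\ge C_2|\xi|^m$ for $|\xi|\ge\xi_0$, there is $w_0=w_0(C_2,\xi_0,M_1,m)$ such that, at any point where $w\ge w_0$,
\[
n^{-1/2}|\Delta v|\le|D^2v|,\qquad \tfrac12 F(\nabla v)\le|\Delta v|,\qquad F(\nabla v)\le 2\sqrt n\,|D^2v|,
\]
hence $|D^2v|^2\ge c_0\,w^{m}$ with $c_0=c_0(n,C_2)>0$, and, combining the third bound with \rife{approxHyp3a}, $|b|\,w\le C_1w+C(n,\theta)\,|D^2v|^{2-\theta}$ at such points.

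Next I would localize: fixing $x_0\in\Omega$, set $\rho:=\tfrac12\delta(x_0)$ (so $\overline{B_\rho(x_0)}\subset\Omega$), pick a smooth radial cutoff $\xi$ with $\xi\equiv1$ on $B_{\rho/2}(x_0)$, $\xi\equiv0$ off $B_\rho(x_0)$, $0\le\xi\le1$, $|\nabla\xi|\le C/\rho$, $|\Delta\xi|\le C/\rho^2$, and consider $z:=\xi^{2\beta}w$ on $[t_0,T']\times\overline{B_\rho(x_0)}$ for arbitrary $T'<T$ and a large exponent $\beta=\beta(m,\theta)$ to be fixed. The continuous function $z$ attains its maximum; it is $\le M_0^2$ if this happens at $t_0$ (by \rife{approxHyp4a0}), it is $\le 0$ hence $z\equiv0$ if attained on the lateral boundary, and otherwise it is attained at an interior point $(t_1,x_1)$ where $z_t\ge0$, $\nabla z=0$, $\Delta z\le0$, so that $\nabla w=-2\beta\xi^{-1}(\nabla\xi)w$ there. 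Inserting the equation for $w$ into $0\le z_t-\Delta z$ and estimating the cutoff terms using $|\nabla\xi|\le C/\rho$, $|\Delta\xi|\le C/\rho^2$ and $\xi\le1$ leads to
\[
2\,\xi^{2\beta}\mathcal{D}^2\le C_\beta\Bigl[\xi^{2\beta-1}|b|\,w\,\rho^{-1}+\xi^{2\beta-2}w\,\rho^{-2}\Bigr],\qquad \mathcal{D}:=|D^2v(t_1,x_1)|.
\]
If $w(t_1,x_1)<w_0$ there is nothing to do at that point; otherwise I would substitute $|b|w\le C_1w+C\mathcal{D}^{2-\theta}$ and absorb $C\xi^{2\beta-1}\mathcal{D}^{2-\theta}\rho^{-1}$ into the left side by Young's inequality with exponents $\tfrac2{2-\theta},\tfrac2\theta$ — permissible precisely because $2-\theta<2$, and, once $\beta\ge1/\theta$, because $\xi^{2\beta-1}\mathcal{D}^{2-\theta}=(\xi^\beta\mathcal{D})^{2-\theta}\xi^{\beta\theta-1}\le(\xi^\beta\mathcal{D})^{2-\theta}$ — at the cost of an error $O(\rho^{-2/\theta})$.

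Then, replacing $\mathcal{D}^2$ by $c_0w^m$ on the left, I obtain $c_0\xi^{2\beta}w^m\le C[\xi^{2\beta-1}w\rho^{-1}+\xi^{2\beta-2}w\rho^{-2}]+C\rho^{-2/\theta}$, and absorb the two remaining terms by Young's inequality against $\xi^{2\beta}w^m$ (exponents $m,\tfrac m{m-1}$), which is legitimate provided $\beta\ge\tfrac m{m-1}$ and creates errors $O(\rho^{-m/(m-1)})$ and $O(\rho^{-2m/(m-1)})$. Fixing $\beta:=\max(1/\theta,\,m/(m-1))$ and using $\xi^{2\beta}w^m\ge z^m$ (as $\xi\le1$), this yields $z(t_1,x_1)^m\le C(\rho^{-2/\theta}+\rho^{-m/(m-1)}+\rho^{-2m/(m-1)})$; hence in every case $\sup z\le C\bigl(1+M_0^2+w_0+\rho^{-2/(\theta m)}+\rho^{-2/(m-1)}\bigr)$ on $[t_0,T']\times\overline{B_\rho(x_0)}$, uniformly in $T'<T$. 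Evaluating at $x=x_0$ (where $\xi=1$), recalling $\rho=\tfrac12\delta(x_0)$ and taking square roots gives exactly \rife{approxpbm2aEstim}, with $C_3$ depending only on $C_1,C_2,\xi_0,M_0,M_1,m,\theta,n$ through $c_0,w_0,\beta$ and the Young constants. The main obstacle I anticipate is the bookkeeping of the cutoff powers: one must choose $\beta$ large enough that every $\xi$-weighted term (including the $\Delta\xi$ term and the $\mathcal{D}^{2-\theta}$ term) is genuinely absorbed by Young's inequality, and check that the powers of $\rho$ thereby produced are dominated, for small $\rho$, by $\rho^{-2/(\theta m)}$ and $\rho^{-2/(m-1)}$, so that the stated exponents $1/(\theta m)$ and $1/(m-1)$ emerge after passing to $w^{1/2}$.
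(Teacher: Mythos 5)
Your proof is correct and follows essentially the same route as the paper: a localized Bernstein argument for $w=|\nabla v|^2$, exploiting the identity $|F(\nabla v)-v_t|=|\Delta v|\le\sqrt n\,|D^2v|$ together with \eqref{approxHyp4a} to make $-2|D^2v|^2$ a strong absorption, and \eqref{approxHyp3a} to tame the drift $\nabla F(\nabla v)\cdot\nabla w$; the errors from the cutoff are absorbed by Young's inequality, producing exactly the exponents $1/(\theta m)$ and $1/(m-1)$. The only difference is cosmetic: you use a high-power cutoff $z=\xi^{2\beta}w$ with $\beta=\max(1/\theta,m/(m-1))$ and argue pointwise at an interior maximum, whereas the paper uses a fractional-power cutoff $\eta$ satisfying $|\nabla\eta|\le CR^{-1}\eta^a$, $|\Delta\eta|+\eta^{-1}|\nabla\eta|^2\le CR^{-2}\eta^a$ with $a=\max((2-\theta)/2,1/m)$, derives the differential inequality $\mathcal Lz\le-\tfrac1{4n}z^m+A$ on the set $\{z\ge\xi_0^2\}$, and invokes a comparison-type maximum principle; these are two standard, interchangeable implementations of the same device.
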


\begin{proof}
Let $x_0\in\Omega$, $R=\delta(x_0)$ and $Q_R=(t_0,T)\times B_R(x_0)$.
For $i=1,\dots,n$, put $v_i={\partial v\over\partial x_i}$. 
By parabolic regularity, we have $\partial_tv_i, D^2v_i\in L^q_{loc}(Q_R)$
for all $q<\infty$.
Differentiating in $x_i$, we obtain
\begin{equation}  \label{eqLocBernsteinPhA}
\partial_t v_i-\Delta v_i= (F(\nabla v))_i.
\end{equation}
Let $w=|\nabla v|^2$. Multiplying (\ref{eqLocBernsteinPhA}) by $2 v_i$, summing up, and using
$\Delta w=2\nabla v\cdot\nabla(\Delta v)+2|D^2v|^2$ (with
$|D^2v|^2=\textstyle\sum_{i,j}(v_{ij})^2$),
we deduce that
\begin{equation}  \label{eqLocBernsteinPhB}
{\cL}w:= w_t-\Delta w+b\cdot\nabla w=-2|D^2v|^2 \ \hbox{ in $Q_R$,\quad
with } b:= -\nabla F(\nabla v).
\ee
Let $a\in (0,1)$ and put $R'=3R/4$. We can select a cut-off function $\eta\in C^2(\overline
B(x_0,R'))$, $0\leq \eta\leq 1$, with $\eta=0$
for $|x-x_0|=R'$, such that
\begin{equation}  \label{condCutoff}
|\nabla\eta| \leq C R^{-1}\eta^a,\quad |\Delta\eta|+\eta^{-1}|\nabla\eta|^2\leq C R^{-2}\eta^a,
\quad\hbox{ for $|x-x_0|<R'$,}
\end{equation}
with $C=C(a)>0$ (see e.g. \cite[p. 374]{SZ06}).
Set
$$z=\eta w.$$
In the rest of the proof, $C$ denotes a generic positive constant depending only on $C_1,C_2,\xi_0$, $M_0,M_1,m,\theta,n,a$.
We have
$$
{\cL}z =\eta{\cL}w+w{\cL}\eta-2\nabla\eta\cdot\nabla w
=-2\eta|D^2v|^2+w{\cL}\eta-2\nabla\eta\cdot\nabla w
\quad\hbox{ in $Q_{R'}$}
$$
by (\ref{eqLocBernsteinPhB}), and
$$
2|\nabla\eta\cdot\nabla w|=4\big|\sum_i\nabla\eta\cdot v_i\nabla
v_i\big| \leq 4\sum_i \eta^{-1}|\nabla\eta|^2 v_i^2+\sum_i \eta|\nabla
v_i|^2 =4\eta^{-1}|\nabla\eta|^2 w+\eta|D^2v|^2,
$$
hence, using \rife{condCutoff},
$$ 
{\cL}z+\eta|D^2v|^2
\leq ({\cL}\eta+4\eta^{-1}|\nabla\eta|^2)w
\leq C\eta^a \bigl(R^{-1} |\nabla F(\nabla v)|+R^{-2}\bigr)|\nabla v|^2
\quad\hbox{ in $Q_{R'}.$}
$$
Taking \rife{approxHyp3a} into account and denoting $\Sigma=Q_{R'}\cap\{(t,x);\,z(t,x)\ge \xi_0^2\}$, we obtain
$$
{\cL}z+\eta|D^2v|^2
\leq CR^{-1}\eta^a\,F^{2-\theta}(\nabla v)+C(R^{-2}+R^{-1})\eta^a |\nabla v|^2
\quad\hbox{ in $\Sigma.$}
$$
Choose $a=\max\bigl((2-\theta)/2,1/m\bigr)\in (0,1)$. Using Young's inequality, $\eta\le 1$ and \rife{approxHyp4a}, it follows that,
for all $\eps>0$,
\begin{align}
{\cL}z+\eta|D^2v|^2
&\leq \eps\eta^{2a/(2-\theta)} F^2(\nabla v)+C(\eps)R^{-2/\theta}
+\eps\eta^{am}|\nabla v|^{2m}+C(\eps)(1+R^{-2m/(m-1)}) \notag \\
&\leq \eps\eta(1+C_2^{-2}) F^2(\nabla v)+C(\eps)(1+R^{-2/\theta}+R^{-2m/(m-1)})
\quad\hbox{ in $\Sigma.$}
  \label{eqLocBernsteinPh2}
\end{align}
On the other hand, using the fact that $|F(\nabla v)-v_t|=|\Delta v|\leq \sqrt{n}|D^2v|$, along with \rife{approxHyp4a1}, we obtain
$$
{1\over 2n}F^2(\nabla v)\leq |D^2v|^2+|v_t|^2\leq |D^2v|^2+M_1^2.
$$
Combining this with (\ref{eqLocBernsteinPh2}) with the choice $\eps=[4n(1+C_2^{-2})]^{-1}$, it follows that
\begin{equation}  \label{eqLocBernsteinPh2b}
{\cL}z+{1\over 2n}\eta F^2(\nabla v)
\leq {1\over 4n}\eta F^2(\nabla v)+C(1+R^{-2/\theta}+R^{-2m/(m-1)})
\quad\hbox{ in $\Sigma.$}
\end{equation} 
Since $\eta F^2(\nabla v)\ge \eta |\nabla v|^{2m} \ge z^m$ in $\Sigma$, we obtain
$${\cL}z\leq -{1\over 4n}z^m+A  \quad\hbox{ in $\Sigma$,
\quad with $A=C(1+R^{-2/\theta}+R^{-2m/(m-1)})$.}$$
It follows from the maximum principle (see, e.g., \cite[Proposition~52.4 and Remark~52.11(a)]{QS07}) that
$$\sup_{Q_{R'}} z\leq \max\bigl(\max_{x\in\overline{B_{R'}}} z(t_0,x),(4nA)^{1/m}\bigr)
\leq \max\bigl(M_0^2,(4nA)^{1/m}\bigr),$$ 
hence
$$|\nabla u(t,x_0)|\leq \sup_{Q_{R'}} z^{1/2}\leq
C\bigl[1+R^{-1/(\theta m)}+R^{-1/(m-1)}\bigr], \quad t_0<t<T,$$
which proves the lemma.
\end{proof}

\begin{proof}[Proof of Theorem~\ref{prelimprop2}(i).]
{\bf Step 1.} {\it Convergence.}
For each $k$, since $F_k$ has at most quadratic growth by \rife{approxHyp4b}, 
the global classical existence of $u_k$ is well known (see, e.g., \cite[Section 35]{QS07}).
In view of~\rife{approxHyp1}, the maximum principle guarantees that $u_k$ is nondecreasing with respect to $k$ and that 
\be  \label{BernsteinPhAux0a}
e^{t\Delta}\phi\le u_k\le A_0:= \sup_\Omega\phi \quad\hbox{ in $[0,\infty)\times\Omega$,}
\ee
for all $k\ge 1$. Therefore $U(t,x):=\lim_k u_k(t,x)$ is well defined and finite for each $t>0$, $x\in\Omega$.
Moreover, by \rife{approxHyp1}, \rife{approxHyp3}, we have
\be  \label{BernsteinPhAux0}
F_k(\xi)\le |\xi|^p, \qquad |\nabla F_k(\xi)|\le C(|\xi|^q+1),
\ee
with $q=(2-\theta)p-2>0$ and $C$ independent of $k$. 
By standard parabolic regularity arguments, we deduce the existence  
of $t_0>0$ such that, for each $\eps\in (0,t_0)$,
\be  \label{BernsteinPhAux1}
C_0:=\sup_k \|\nabla u_k\|_{L^\infty([0,t_0]\times\overline\Omega)}<\infty\quad\hbox{ and }\quad
\sup_k \|u_k\|_{C^{1,2}([\eps,t_0]\times\overline\Omega)}<\infty.
\ee
In particular, $M_0:=\sup_k \|\partial_t u_k\|_{L^\infty(\Omega)}<\infty$ and,
by the maximum principle applied to $\partial_t u_k$, we deduce that
\be  \label{BernsteinPhAux2}
|\partial_t u_k|\le M_0\quad\hbox{ in $[t_0,\infty)\times\Omega$.}
\ee

On the other hand, the nonlinearities $F=F_k$ satisfy the assumptions of Lemma~\ref{prelimprop2lem} 
with $m=2$ and, by \rife{BernsteinPhAux1}, \rife{BernsteinPhAux2},
the solutions $v=u_k$ satisfy assumptions \rife{approxHyp4a0}, \rife{approxHyp4a1}, 
with constants $C_1,C_2,\xi_0,M_0,M_1$ independent of $k$.
Applying Lemma~\ref{prelimprop2lem}, we obtain an interior a priori estimate of the form
\be  \label{BernsteinPhAux3}
|\nabla u_k|\le C(\eps)\quad\hbox{ in $[t_0,\infty)\times\Omega_\eps$},
\ee
for each $\eps>0$, where $\Omega_\eps=\{x\in\Omega;\, \delta(x)>\eps\}$. 
By  \rife{BernsteinPhAux1}, \rife{BernsteinPhAux3}, 
parabolic estimates and a diagonal procedure, it follows that the sequence $u_k$ is 
relatively compact in $C^{1,2}((\eps,T)\times\Omega_\eps)$ for each $\eps, T>0$.
Consequently, using \rife{approxHyp1}, we deduce that $U\in C^{1,2}((0,\infty)\times\Omega)$, 
that some subsequence of $u_k$ converges to $U$ in $C^{1,2}_{loc}((0,\infty)\times\Omega)$
and that $U$ is a classical solution of $U_t-\Delta U=|\nabla U|^p$ in $(0,\infty)\times\Omega$,
which also satisfies 
\be  \label{BernsteinPhAux6}
U\in C^{1,2}((0,t_0]\times\overline\Omega),
\ee
 owing to \rife{BernsteinPhAux1}. 
Moreover, the whole sequence $u_k$ actually converges due to the uniqueness of the possible (pointwise) limits.

\smallskip
{\bf Step 2.} {\it Continuity of $U$ up to the boundary.}
By \rife{BernsteinPhAux2}, we have
\be  \label{BernsteinPhAux4c}
|U_t|\le M_0 \quad\hbox{ in $[t_0,\infty)\times\Omega$.}
\ee
Hence, for $t\geq t_0$,  $U(t)$ satisfies $\Delta U + |\nabla U|^p \leq M_0$ in $\Omega$. By 
\cite[Theorem 1.1]{CLP}, $U(t)$ can be extended in a continuous way up to  the boundary and belongs to $C^{1-\alpha}(\overline \Omega)$, $\alpha=1/(p-1)$, with  a uniform (in time) estimate
\be\label{hold}
|U(t,x)-U(t,y)|\leq M \, |x-y|^{1-\alpha},\qquad  t\geq t_0,\ x,y\in \overline \Omega,
\ee
where $M$ only depends on $M_0, \|U\|_\infty, p, \Omega$. On the other hand, as a consequence of \rife{BernsteinPhAux4c} and since $u$ is smooth inside $\Omega$, we also have
\be  \label{lip-time}
\|U(\tau)-U(t)\|_{L^\infty(\Omega)}\leq M_0 |\tau-t|\,,\quad \tau, t\in [t_0, \infty).
\ee
From \rife{hold} and \rife{lip-time}, one immediately deduces the continuity of $U$ up to the boundary for $t\geq t_0$, and therefore, since $U$ is classical in $[0,t_0]$, we have that $U\in C([0,\infty)\times\overline\Omega)$.

Moreover, since $U\ge e^{t\Delta}\phi$ in $[0,\infty)\times\Omega$ owing to \rife{BernsteinPhAux0a}, we also have
 \be  \label{BernsteinPhAux6c}
U\ge 0 \ \hbox{ on $(0,\infty)\times\partial\Omega$.}
\ee

\smallskip
{\bf Step 3.} {\it Uniqueness.}
We claim that $U$ is independent of the choice of the sequence $F_k$ verifying the assumptions of Theorem~\ref{prelimprop2}(i).
Let $V$ be the limit of the sequence $v_k$ associated with another sequence of nonlinearities $G_k$.
Since $\partial_t v_k-\Delta v_k=G_k(\nabla v_k)\le |\nabla v_k|^p$ and $v_k=0\le U$ on $\partial\Omega$ by~\rife{BernsteinPhAux6c},
the comparison principle in Proposition~\ref{compP} ensures that $v_k\le U$, hence $V\le U$.
Similarly, we obtain $U\le V$.
\end{proof}

\begin{proof}[Proof of Theorem~\ref{prelimprop2}(ii).]
Since $u_k=0\le u$ on $\partial\Omega$ and $u_k, u\in C([0,\infty)\times\overline\Omega)\cap C^{1,2}((0,\infty)\times\Omega)$,
the comparison principle in Proposition~\ref{compP} 
ensures that $u_k\le u$, hence $U\le u$.
On the other hand, $U\in C([0,\infty)\times\overline\Omega)\cap C^{1,2}((0,\infty)\times\Omega)$ is a supersolution of \rife{VHJ} in the classical sense, hence a viscosity supersolution.
By the comparison principle for viscosity sub-/supersolutions (see \cite{BDL04}), we conclude that $U\ge u$.
\end{proof}

{\bf Acknowledgements.}
Part of this work was done during visits of Ph.S. at the Dipartimento di Matematica of Universit\`a di Roma Tor Vergata. He wishes to thank this institution for the kind hospitality.
Ph.S is partially supported by the Labex MME-DII (ANR11-LBX-0023-01). A.P. was partially supported by the Grant \lq\lq Consolidate the Foundations 2015 (IrDyCo)\rq\rq\ of University of Rome Tor Vergata and by INDAM - GNAMPA funds (2018).

\end{document}